\documentclass[11pt]{article}

\usepackage[margin=1.25in]{geometry}

\usepackage{amssymb,amsmath}
\usepackage{amsthm}
\usepackage{hyperref}
\usepackage{mathrsfs}
\usepackage{textcomp}
\usepackage{enumerate}
\usepackage{framed}
\usepackage{ulem}
\usepackage{color,xcolor}
\usepackage{verbatim}
\usepackage{authblk}
\usepackage[title]{appendix}

\usepackage{soul}
\soulregister\cite7 
\soulregister\citep7 
\soulregister\citet7 
\soulregister\ref7 
\soulregister\pageref7 
\soulregister\eqref7

\hypersetup{
	colorlinks,%
	citecolor=blue,%
	filecolor=blue,%
	linkcolor=blue,%
	urlcolor=black
}

\newtheorem{theorem}{Theorem}[section]
\newtheorem{definition}{Definition}[section]
\newtheorem{lemma}{Lemma}[section]
\newtheorem{remark}{Remark}[section]
\newtheorem{proposition}{Proposition}[section]
\newtheorem{corollary}{Corollary}[section]

\numberwithin{equation}{section}

\makeatletter

\newdimen\bibspace
\renewenvironment{thebibliography}[1]{%
	\section*{\refname 
		\@mkboth{\MakeUppercase\refname}{\MakeUppercase\refname}}%
	\list{\@biblabel{\@arabic\c@enumiv}}%
	{\settowidth\labelwidth{\@biblabel{#1}}%
		\leftmargin\labelwidth
		\advance\leftmargin\labelsep
		\itemsep\bibspace
		\parsep\z@skip     %
		\@openbib@code
		\usecounter{enumiv}%
		\let\p@enumiv\@empty
		\renewcommand\theenumiv{\@arabic\c@enumiv}}%
	\sloppy\clubpenalty4000\widowpenalty4000%
	\sfcode`\.\@m}
{\def\@noitemerr
	{\@latex@warning{Empty `thebibliography' environment}}%
	\endlist}

\makeatother

\newcommand{\be}{\begin{equation}}      \newcommand{\ee}{\end{equation}}

\begin{document}

	\title{Compactness of Solutions to Sub-Elliptic Equations with Potential on the Heisenberg Group}
	
	\author[a]{Jiechen Qiang}
	\author[a]{Zhongwei Tang}
	\author[b]{Yichen Zhang}
	\author[c]{Ning Zhou}
	\affil[a]{School of Mathematical Sciences, Laboratory of Mathematics and Complex Systems, MOE, 
		Beijing Normal University, Beijing 100875, P. R. China}
	\affil[b]{School of Mathematical Sciences, Beihang University, Beijing 102206, P. R. China}
	\affil[c]{School of Mathematical Sciences and LPMC, Nankai University,
		Tianjin 300071, P. R. China}
	\renewcommand*{\Affilfont}{\small\it}
	\renewcommand\Authands{ and }
	\date{}

	\maketitle

	\begin{abstract}{
In this paper, we investigate the compactness of nonnegative solutions to a critical sub-elliptic equation with a nonnegative potential on the Heisenberg group. We establish compactness in all dimensions. 
Moreover, we show that if a sequence of solutions blows up, both the potential and its sub-Laplacian must vanish at the blow-up point. Our analysis overcomes the inherent geometric and analytical challenges posed by the Heisenberg group, including the degeneracy of the sub-Laplacian, its non-commutative structure, and the anisotropic dilation symmetry.

		}

	\end{abstract}
	{\bf Key words:\ Blow-up analysis,\ Heisenberg group,\ Critical exponent   } 
	
	{\noindent\bf Mathematics Subject Classification (2020):}\quad 46E35 · 35J70

\section{Introduction}

Let $\mathbb{H}^n$ be the Heisenberg group with homogeneous dimension $Q=2n+2$. 
In this paper, we investigate the compactness of nonnegative solutions to the following critical sub-elliptic equation with a potential:
	\be\label{equation}
	\begin{cases}
		-\Delta_{\mathbb{H}^{n}} u = au+ u^{p}&\text { in } \Omega , \\
		u  \geq 0  & \text{ in } \Omega,
	\end{cases}
	\ee
where $\Omega \subset \mathbb{H}^n$ is a bounded domain, $p=\frac{Q+2}{Q-2}$ is the critical Sobolev exponent, $\Delta_{\mathbb{H}^n}$ denotes the sub-Laplacian on $\mathbb{H}^n$,
and $a(\xi)$ is assumed to be nonnegative and smooth.

In Euclidean and Riemannian settings, critical semilinear elliptic equations analogous to \eqref{equation} have been studied extensively. A prototypical example is the Yamabe equation on Riemannian manifolds, where the potential is given by the scalar curvature up to a dimensional constant:
\begin{equation}\label{eq:Yamabe}
-\Delta_g u+c(n) R_g u=\lambda u^{\frac{n+2}{n-2}},\quad u>0,
\end{equation}
where $\Delta_g$ is the Laplace-Beltrami operator on $(M,g)$, $c(n)=\frac{n-2}{4(n-1)}$, $R_g$ is the scalar curvature of $(M, g)$, and $\lambda$ is a constant.

When $\lambda<0$, the Yamabe equation \eqref{eq:Yamabe} admits a unique positive solution in the normalized Yamabe class; when $\lambda=0$, the Yamabe equation \eqref{eq:Yamabe} reduces to a linear equation, and its solution exists and is unique up to a constant factor; whereas for $\lambda>0$, Schoen \cite{Schoenunpublished} constructed examples of multiple high-energy and high Morse index solutions on $\mathbb{S}^1 \times \mathbb{S}^{n-1}$. Thus, a natural question arises: what can be concluded about the solution set of the Yamabe equation \eqref{eq:Yamabe}? Schoen \cite{Schoenunpublished} proved that the standard sphere is the only compact Riemannian manifold that admits a non-compact conformal diffeomorphism group action, and proposed the following compactness conjecture: if $(M, g)$ is not conformally equivalent to the standard sphere, then for $\lambda>0$, the solution set of the Yamabe equation \eqref{eq:Yamabe} is compact in the $C^2$ topology. For the case of locally conformally flat manifolds of dimension $n \geq 3$, Schoen \cite{Schoenunpublished} provided a proof of the compactness conjecture. Schoen's proof offers a strategy for addressing such compactness problems, namely the blow-up analysis method. For the non-locally conformally flat case, Li-Zhu \cite{Li Zhu CCM} gave a proof of the compactness conjecture for $n=3$; Druet \cite{DruetCompactness04} provided proofs for $n=4,5$; Li-Zhang \cite{Li Zhang CV} and Marques \cite{Marques} independently established the proof for $n=6,7$; Khuri-Marques-Schoen \cite{KMS} proved that the compactness conjecture holds for all $3 \leq n \leq 24$. On the other hand, Brendle \cite{B} and Brendle-Marques \cite{BM} constructed sequences of blowing-up solutions to the Yamabe equation on $\mathbb{S}^n$ with smooth non-conformally flat metrics for $n \geq 52$ and $25 \leq n \leq 51$, respectively. Hence, for $n \geq 25$, the compactness conjecture no longer holds.

Inspired by these geometric developments, there has been significant interest in extending compactness theories to Yamabe-type equations with non-geometric potentials.

Consider the critical Schr\"odinger-type equation, which is more general than the Yamabe equation:
\begin{equation}\label{eq:Schrodinger}
-\Delta_g u+h u=u^{\frac{n+2}{n-2}},\quad u>0,
\end{equation}
where $(M, g)$ is an $n$-dimensional $(n \geq 3)$ compact Riemannian manifold, and $h \in C^1(M)$ is such that the operator $-\Delta_g+h$ is coercive. When $h=c(n) R_g$, equation \eqref{eq:Schrodinger} reduces to the Yamabe equation \eqref{eq:Yamabe}. Assuming that $h<c(n) R_g$ holds everywhere on the manifold $M$, Li-Zhu \cite{Li Zhu CCM} and Druet \cite{DruetCompactness04} proved the {compactness of solutions} to equation \eqref{eq:Schrodinger} for the cases $n=3$ and arbitrary $n$, respectively.

Consider equations of the form
$$
(-\Delta)^\sigma u-a(x) u=u^{\frac{n+2 \sigma}{n-2 \sigma}},
$$
where the potential $a(x)$ plays a role analogous to the scalar curvature.
Niu-Peng-Xiong \cite{Niu Peng Xiong JFA} investigated the critical equation involving the fractional Laplacian. They proved that if the nonnegative potential possesses only non-degenerate zeros, the set of solutions is compact. Niu-Tang-Zhou \cite{Niu Tang Zhou IMRN} successfully extended this framework to higher-order critical elliptic equations. By employing blow-up analysis for local integral equations, they confirmed that the non-degeneracy of the potential's zeros serves as a sufficient condition for compactness in the higher-order case as well.

As pointed out by Jerison-Lee in \cite{CR yamabe}, the clear parallels between conformal geometry and the geometry of CR manifolds which serve as abstract models of real hypersurfaces in 
complex manifolds naturally motivated researchers to investigate Yamabe-type problems within the CR setting (see the book by Dragomir-Tomassini \cite{DG}). Motivated by these striking parallels between the Euclidean and CR settings, and the recent analytical progress for equations with non-geometric potentials, 
the primary objective of this paper is to extend the compactness results of \cite{Niu Peng Xiong JFA,Niu Tang Zhou IMRN} to the Heisenberg group $\mathbb{H}^n$. 

The first main result of the paper is as follows.

	\begin{theorem}\label{main thm 3}
		Let $u \in {C^{2 }(D_3)}$ be a nonnegative solution of
		$$
		-\Delta_{\mathbb{H}^n} u-a(\xi) u=u^{p} \quad \text { in } D_3,
		$$
		where $a(\xi)$ is a nonnegative smooth function in $D_3$. 
		If either 
		\begin{itemize}
			\item[$(i)$] $a>0$ in $D_2$, or
			\item[$(ii)$] $\Delta_{\mathbb{H}^n} a>0$ on $\{\xi: a(\xi)=0\} \cap D_2$ and $n\geq 2$,
		\end{itemize}
		holds, then
		$$
		\|u\|_{\Gamma^{2,\alpha}(D_1)} \leq C,
		$$
        where \(C>0\) depends only on \(n\), \(\|a\|_{C^4(D_3)}\) and $\inf_{D_2}a$ if $(i)$ holds; otherwise, it 
        depends only on \(n\), \(\|a\|_{C^4(D_3)}\) and $\inf_{\{\xi:a(\xi)=0\}\cap D_2}\Delta_{\mathbb{H}^n} a$.
	\end{theorem}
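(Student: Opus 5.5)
We argue by contradiction and run a blow-up analysis in the spirit of Schoen, adapted to the Heisenberg group as in \cite{Niu Peng Xiong JFA,Niu Tang Zhou IMRN}. Once we have a uniform $L^\infty(D_{3/2})$ bound for $u$, the $\Gamma^{2,\alpha}(D_1)$ estimate follows from the Folland--Stein subelliptic Schauder theory applied to $-\Delta_{\mathbb{H}^n}u=au+u^p$. So it suffices to rule out a sequence of nonnegative solutions $u_i$, with potentials $a_i$ bounded in $C^4(D_3)$ and satisfying the uniform hypothesis (i) or (ii), for which $\max_{\overline{D_{3/2}}}u_i\to\infty$.

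\textbf{Step 1: Isolated blow-up points.} Using the Heisenberg dilations $\delta_\lambda$ and left translations, we rescale $u_i$ around points where $u_i(\xi)\,d(\xi,\partial D_2)^{(Q-2)/2}$ is maximal. The rescaled functions satisfy $-\Delta_{\mathbb{H}^n}v=\lambda_i^2 a_i(\cdot)v+v^p$ with $\lambda_i\to0$. By local boundedness via Moser iteration / Harnack inequality for $\Delta_{\mathbb{H}^n}$, together with the classification of positive solutions of $-\Delta_{\mathbb{H}^n}U=U^p$ (Jerison--Lee), the limits are Jerison--Lee bubbles. A standard selection argument then shows that blow-up points are isolated: $u_i(\xi)\le C d(\xi,\xi_i)^{-(Q-2)/2}$.

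\textbf{Step 2: Isolated simple blow-up and sharp bounds.} We show that isolated blow-up points are isolated simple. The ingredients are a Harnack inequality on Korányi annuli, the spherical average of $d(\cdot,\xi_i)^{(Q-2)/2}\bar u_i$ adapted to the Korányi gauge, and a CR Pohozaev identity. The Pohozaev identity uses the generator of the dilations, $Z=\sum_j(x_jX_j+y_jY_j)+2t\partial_t$, as the multiplier. At an isolated simple blow-up point we then derive the sharp upper bound $u_i(\xi)\le C\,u_i(\xi_i)^{-1}d(\xi,\xi_i)^{2-Q}$, and the convergence $u_i(\xi_i)u_i\to h=c\,d(\cdot,\bar\xi)^{2-Q}+b(\xi)$ away from $\bar\xi$, where $b$ solves $-\Delta_{\mathbb{H}^n}b=\bar a b$ with $b\ge0$.

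\textbf{Step 3: Contradiction via Pohozaev, the main obstacle.} For the equation with $-a u$, the Pohozaev identity on $D_\sigma(\xi_i)$ contains an interior term of the form
\[
\int_{D_\sigma}\Big(a_i u_i^2+\tfrac12 (Za_i)\,u_i^2\Big)\,d\xi
\]
with a definite sign, balanced against boundary terms. After multiplying by $u_i(\xi_i)^2$, the boundary terms converge to a boundary quantity for $h$, which is $O(1)$ and has a sign controlled by $b\ge0$.

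In case (i), $a_i\ge c>0$, so the interior term is at least $c\,u_i(\xi_i)^2\int u_i^2$. Since $Q\ge4$, this quantity is at least $c\lambda_i^{-2}\cdot\lambda_i^{2}\cdot(\text{divergent})$: it diverges like $\log u_i(\xi_i)$ for $Q=4$ and like $u_i(\xi_i)^{4/(Q-2)}$ for $Q>4$. This contradicts the boundedness of the boundary terms.

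In case (ii), the interior term is expanded by Taylor expanding $a_i$ at $\xi_i$ in the anisotropic (homogeneous) sense. Terms odd under the symmetry of the bubble vanish to leading order. First, this forces $a(\bar\xi)=0$ and $\nabla_{\mathbb{H}}a(\bar\xi)=0$: otherwise the zero-order or first-order part would dominate with the wrong sign, which is the ``potential vanishes at blow-up'' statement. The next contribution then comes from the horizontal second derivatives and the $\partial_t$ derivative, which is of homogeneous degree $2$. Integrated against the bubble squared, whose second horizontal moment is finite only when $Q>6$, i.e.\ $n\ge3$, and log-divergent when $n=2$, this gives a positive multiple of $\Delta_{\mathbb{H}^n}a(\bar\xi)$ times a divergent factor. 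The $t$-linear term must be shown to integrate to zero by oddness in $t$, and the mixed terms arising from noncommutativity must be controlled; this bookkeeping is the delicate point.

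Since $\Delta_{\mathbb{H}^n}a(\bar\xi)>0$, we again contradict the bounded boundary terms. The requirement $n\ge2$ is exactly what makes this second-moment term dominate the error $O(\|a\|_{C^4})$ of the expansion. The constants depend only on the stated quantities by compactness of the $a_i$ in $C^3$.
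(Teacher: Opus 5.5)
Your outline matches the paper's architecture: blow-up analysis, isolated simple blow-up points, the sharp bound of Proposition~\ref{pro 2.3 in jde} keeping $u_i(\xi_i)^2\int_{\partial D_r(\xi_i)}\mathcal D$ bounded, and a divergent lower bound for the interior Pohozaev term as in Proposition~\ref{pro 5.1 in imrn}. Your case (i) argument is fine once an isolated simple blow-up point is available, since $a_i+\frac12\mathcal X_ia_i\ge c/2$ on $D_r(\xi_i)$ for small $r$. The divergence is like $\ln u_i(\xi_i)$ for $Q=4$ and like $u_i(\xi_i)^{2(Q-4)/(Q-2)}$ for $Q\ge6$; your exponent $4/(Q-2)$ is correct only for $Q=6$.

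\textbf{Gap in Step~1 (isolatedness).} The selection argument (Proposition~\ref{6.1 in rnac}) only gives a finite set $S_i$ of bubble centres with $u_i\le C\,\mathrm{dist}_{\mathbb H^n}(\cdot,S_i)^{-(Q-2)/2}$ and disjoint balls of radius $R\,u_i(\eta)^{-2/(Q-2)}$. Nothing prevents centres from collapsing onto each other at distances $\sigma_i\to0$, and then no centre satisfies the isolated bound on a ball of fixed radius. Excluding this clustering is the separate Proposition~\ref{pro 6.3 in rnac}:
\begin{itemize}
\item rescale by a (locally) minimal mutual distance $\sigma_i$, so the two nearest centres become isolated blow-up points at unit distance for $w_i=\sigma_i^{(Q-2)/2}u_i(\zeta_{i_1}\circ\delta_{\sigma_i}\cdot)$;
\item upgrade them to isolated simple via Proposition~\ref{isolated=isolated simple}, after checking its sign conditions for the rescaled potential $\sigma_i^2a_i(\zeta_{i_1}\circ\delta_{\sigma_i}\cdot)$ (for $n\ge3$ this needs a uniform bound $\Delta_{\mathbb H^n}a>\gamma$ on $\{a<d\}$);
\item observe that $w_i(0)w_i$ converges to a function with at least two poles, so its regular part at $0$ is strictly positive; Proposition~\ref{pro 4.3 in asan} then makes the Pohozaev boundary integral negative, contradicting Proposition~\ref{pro 5.1 in imrn}.
\end{itemize}
So hypotheses (i)/(ii) are already needed in Steps~1--2, and the upgrade ``isolated $\Rightarrow$ isolated simple'' also rests on them. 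This is not a standard selection step.

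\textbf{Gap in Step~3, case (ii).} You claim odd Taylor terms ``vanish to leading order'' and the horizontal Hessian reduces to $\frac1{2n}\Delta_{\mathbb H^n}a_i(\xi_i)|\tilde z|^2$. Both hold for the symmetric bubble, not for $u_i$. Step~2 only gives the upper bound $u_i\le Cu_i(\xi_i)^{-1}d_{\mathbb H^n}(\cdot,\xi_i)^{2-Q}$ and qualitative closeness to $\Lambda_{0,1}$ on $D_{R_ir_i}(\xi_i)$. With the upper bound alone, after multiplying by $u_i(\xi_i)^2$, the first-order term $\nabla_{\mathbb H^n}a_i(\xi_i)\cdot\tilde z$ contributes up to $|\nabla_{\mathbb H^n}a_i(\xi_i)|\,u_i(\xi_i)^{2(Q-5)/(Q-2)}$. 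This is not a priori smaller than the term you want to exhibit, $\Delta_{\mathbb H^n}a_i(\xi_i)\,u_i(\xi_i)^{2(Q-6)/(Q-2)}$ (or $\ln u_i(\xi_i)$ when $n=2$).

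The paper closes this in two steps:
\begin{itemize}
\item Lemma~\ref{lem 2.6 in jde} bounds $a_i(\xi_i)$ and $\nabla_{\mathbb H^n}a_i(\xi_i)$ using the right-invariant fields $\overline X_j^{\,i},\overline Y_j^{\,i}$, which commute with $X_j,Y_j$ and so remove the commutator terms.
\item Section~\ref{section 5} bounds $\Phi_i-\Lambda_{0,1}$, together with its horizontal and $\partial_t$ derivatives, at rate $\alpha_i$. This uses a linearization argument resting on the non-degeneracy of the Jerison--Lee bubble and the normalization $V(0)=0$, $\nabla_{\mathbb H^n}V(0)=0$, $\partial_tV(0)=0$.
\end{itemize}
Only then can $u_i$ be replaced by $\Lambda_{0,\ell_i}$ in Proposition~\ref{pro 5.1 in imrn}. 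This is the main technical content, and your proposal leaves it as ``bookkeeping.''

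Two smaller points:
\begin{itemize}
\item $\nabla_{\mathbb H^n}a(\bar\xi)=0$ is automatic from $a\ge0$ and $a(\bar\xi)=0$; the quantities that matter are $a_i(\xi_i)$ and $\nabla_{\mathbb H^n}a_i(\xi_i)$ at the moving centres.
\item The blow-up limits in Step~1 are only known to be bounded. You therefore need classification without energy assumptions (Li--Liu--Ma, Li; Catino et al.\ for $n=1$), not the finite-energy Jerison--Lee theorem.
\end{itemize}
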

	
The proof of Theorem \ref{main thm 3} relies essentially on the classification of solutions to the 
critical equation $-\Delta_{\mathbb{H}^n} u = u^{p}$ on the whole space. Thanks to Li-Liu-Ma \cite{LLM} and Li \cite{LiJungangArxiv}, the complete classification results required to implement our 
blow-up arguments are established without any additional assumptions only in dimensions.

Furthermore, in the spirit of the results obtained in the Euclidean setting, we seek to derive a 
quantitative ``vanishing rate" for the potential at blow-up points. We prove that if a sequence of solutions blows up, the sub-Laplacian of the potential, 
$\Delta_{\mathbb{H}^n} a$, must vanish at the blow-up point. This result provides a CR-geometric counterpart to the analytical form of Schoen's Weyl tensor conjecture. The second main result in this paper is as follows.

	\begin{theorem}\label{main thm 2}
		 Let $u_i \in {C^{2 }(D_3)}, i=1,2, \cdots$, be nonnegative solutions of
		$$
		-\Delta_{\mathbb{H}^n} u_i-a_i(\xi) u_i=u_i^{p} \quad \text { in } D_3,
		$$
		where $a_i \geq 0,\left\|a_i\right\|_{{C^{4 }(D_3)}} \leq A_0$ for some $A_0>0$ and $a_i \rightarrow a$ in ${C^{4 }(D_3)}$. 
Suppose that $\Delta_{\mathbb{H}^n} a_i\geq 0$ in $\left\{\xi: a_i(\xi)<\varepsilon\right\} \cap D_2$ for some $\varepsilon>0$ independent of $i$ and $n \geq 2$. If $\xi_i \rightarrow \bar{\xi} \in D_1$ and $u_i\left(\xi_i\right) \rightarrow \infty$ as $i \rightarrow \infty$, then $a(\bar{\xi})=\Delta_{\mathbb{H}^n} a(\bar{\xi})=0$. Furthermore,
	
		(i) if $ n=2$, we can find $\xi_i^{\prime} \rightarrow \bar{\xi}$ such that
		$$
		a_i\left(\xi_i^{\prime}\right) u_i\left(\xi_i^{\prime}\right)\left(\ln u_i\left(\xi_i^{\prime}\right)\right)^{-1}+\Delta_{\mathbb{H}^n} a_i\left(\xi_i^{\prime}\right) \leq C\left(\ln u_i\left(\xi_i^{\prime}\right)\right)^{-1},
		$$
		where $C>0$ depends only on $\varepsilon$ and $A_0$.
		
		(ii) If $n \geq 3$, assume that
		$$
		\xi_i \text { is a local maximum point of } u_i, \quad\max _{D_{\bar{d}}\left(\xi_i\right)} u_i(\xi) \leq \bar{b} u_i\left(\xi_i\right)
		$$
		for some positive constants $\bar{b}$ and $\bar{d}$. Then,
		$$
		a_i\left(\xi_i\right) u_i\left(\xi_i\right)^{\frac{4}{Q-2}}+\Delta_{\mathbb{H}^n} a_i\left(\xi_i\right) \leq C \begin{cases}u_i\left(\xi_i\right)^{-\frac{4}{Q-2}} \ln u_i\left(\xi_i\right) & \text { for } n=3, \\ u_i\left(\xi_i\right)^{-\frac{4}{Q-2 }} & \text { for } n>3,\end{cases}
		$$
		where $C>0$ depends only on $n$, $\varepsilon$, $A_0$, as well as constants $\bar{b}$ and $\bar{d}$.
	\end{theorem}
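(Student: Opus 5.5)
The plan is to run a blow-up analysis of the type used in the Euclidean setting by Niu--Peng--Xiong and Niu--Tang--Zhou, adapted to the Heisenberg structure, and to extract the vanishing statements from a Pohozaev-type identity. The conclusion $a(\bar\xi)=0$ should come from a Harnack/energy argument, and $\Delta_{\mathbb{H}^n}a(\bar\xi)=0$ together with the rates (i)--(ii) from a sharp balance in the Pohozaev identity.

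\textbf{Step 1: reduction to an isolated simple blow-up point.} Using the classification of entire solutions of $-\Delta_{\mathbb{H}^n}u=u^p$ due to Li--Liu--Ma and Li (Jerison--Lee bubbles), I would first select local maxima and show that blow-up points are isolated. I would then show they are isolated simple, following Schoen's scheme. Concretely, rescale $v_i(\eta)=M_i^{-1}u_i(\xi_i\circ\delta_{M_i^{-2/(Q-2)}}\eta)$ with $M_i=u_i(\xi_i)$. The rescaled potential term $M_i^{-4/(Q-2)}a_i$ tends to $0$, so $v_i$ converges in $\Gamma^{2,\alpha}_{loc}$ to a bubble. A Harnack inequality for $-\Delta_{\mathbb{H}^n}-a_i$ (valid since $a_i\ge0$ is bounded) gives the upper bound $u_i(\xi)\le C M_i^{-1}|\xi_i^{-1}\xi|^{2-Q}$ away from the bubble scale. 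The matching lower bound then gives $M_i u_i\to h$ away from $\bar\xi$, where $h=c|\bar\xi^{-1}\xi|^{2-Q}+b(\xi)$ and $b$ is regular with $-\Delta_{\mathbb{H}^n}b-ab=0$.

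\textbf{Step 2: the Pohozaev identity on Heisenberg balls.} I would test the equation against the homogeneous dilation field $Z u+\frac{Q-2}{2}u$, where $Z=\sum(x_j\partial_{x_j}+y_j\partial_{y_j})+2t\partial_t$ is the generator of $\delta_\lambda$, on a gauge ball $D_r(\xi_i)$. The critical term produces only boundary contributions. The potential term gives
\begin{equation*}
\int_{D_r}\Big(a_iu_i^2+\tfrac12 (Za_i)u_i^2\Big)=\text{boundary terms}.
\end{equation*}
Next I would Taylor expand $a_i$ at $\xi_i$ in the horizontal and $t$ directions, using the $C^4$ bound. The symmetry of the bubble (radial in $|z|$ and even in $t$) kills the first-order horizontal terms, and the second-order terms integrate to a constant times $\Delta_{\mathbb{H}^n}a_i(\xi_i)$. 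The $t$-derivative counts with weight $2$ and is of higher order. This yields
\begin{equation*}
c_1 a_i(\xi_i)\int v^2\,M_i^{-\frac{8}{Q-2}}M_i^{2}\cdots+c_2\Delta_{\mathbb{H}^n}a_i(\xi_i)\int|\eta|_{\mathrm{hor}}^2v^2\cdots=O(M_i^{-2}),
\end{equation*}
where the right side comes from the boundary terms controlled by Step 1. The bubble decays like $|\eta|^{2-Q}$, so $\int_{D_R}|z|^2v^2$ converges if $Q>6$ (that is, $n>2$), up to the borderline. With $Q=8$ ($n=3$) the relevant weighted integral of order $|\eta|^{6-2Q+Q-1}$ produces the $\ln M_i$ factor. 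For $n=2$ ($Q=6$) the $L^2$ mass itself is logarithmic, which explains the different normalization in (i). Both coefficients are positive and $a_i\ge0$. Moreover $\Delta_{\mathbb{H}^n}a_i\ge0$ holds where $a_i<\varepsilon$; if instead $a(\bar\xi)>0$, the $a_i$ term alone is too large. This gives the rates and hence $a(\bar\xi)=\Delta_{\mathbb{H}^n} a(\bar\xi)=0$.

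\textbf{Step 3: case (i) and the main obstacle.} For $n=2$ no local maximum hypothesis is assumed, so I would choose $\xi_i'$ by the standard selection $\max_{D_{r}(\xi_i)}u_i(\xi)\,\mathrm{dist}(\xi,\partial D_r)^{(Q-2)/2}$. I would then run Step 2 at that point, accepting only logarithmic control. The main obstacle I expect is Step 1's decay estimate with the sharp error terms. Because of the anisotropic dilation and the noncommutative translations, the error $u_i-M_iV(\delta\,\xi_i^{-1}\xi)$ must be estimated to order $M_i^{-4/(Q-2)}$ using a linearized operator around the Jerison--Lee bubble. That requires a nondegeneracy (kernel) result for this operator, which I would prove or cite, together with sub-elliptic Schauder estimates in $\Gamma^{2,\alpha}$.
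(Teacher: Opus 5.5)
Your Pohozaev mechanism matches Proposition~\ref{pro 5.1 in imrn} and Lemma~\ref{lem 4.2 in imrn}: Taylor expansion of $a_i$ at $\xi_i$, odd terms killed by the symmetry of $\Lambda_{0,1}$, the horizontal second-order terms giving $\frac{1}{2n}\Delta_{\mathbb{H}^n}a_i(\xi_i)|z|^2$, and nondegeneracy of the linearized operator to control $\Phi_i-\Lambda_{0,1}$. The gap is in Step 1, which you treat as unconditional.

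Schoen's isolated$\Rightarrow$isolated simple argument rescales at the second critical radius and gets a limit $c_0\|\xi\|_{\mathbb{H}^n}^{2-Q}+c_0+h$. By Proposition~\ref{pro 4.3 in asan} this limit has a negative Pohozaev boundary term, and that must be contradicted by a sign on the interior term. This is Proposition~\ref{isolated=isolated simple}. For $n=2$ it needs $\beta_i\ge0$. For $n=3$ it needs $\beta_i\ge(C_0+1)\|a_i\|_{D_1}\ln u_i(\xi_i)$, and for $n\ge4$ it needs $\beta_i\ge(C_0+1)\|a_i\|_{D_1}u_i(\xi_i)^{2(Q-8)/(Q-2)}$.

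For $n=2$ the sign follows from $a_i\ge0$ and $\Delta_{\mathbb{H}^n}a_i\ge0$ on $\{a_i<\varepsilon\}$. For $n\ge3$ it is not implied by the hypotheses: after multiplying (ii) by $u_i(\xi_i)^{2(Q-6)/(Q-2)}$, it is, up to constants, exactly the negation of (ii).

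Without isolated simplicity you lack the sharp decay $u_i\le Cu_i(\xi_i)^{-1}d_{\mathbb{H}^n}(\xi,\xi_i)^{2-Q}$ of Proposition~\ref{pro 2.3 in jde}. That decay does not come from Harnack; it needs the supersolution, $\tau_i$ and B\"ocher arguments. You therefore also lack the bound $|u_i(\xi_i)^2\int_{\partial D_r(\xi_i)}\mathcal D|\le C(r)$ for the right side of your identity. So (ii) must be argued by contradiction, or by a dichotomy on the size of $\beta_i$:
\begin{itemize}
\item assume the inequality fails along a subsequence, so $\beta_i$ dominates the fourth-order error;
\item deduce that the point is isolated simple;
\item observe that the lower bound of Proposition~\ref{pro 5.1 in imrn} then tends to $+\infty$, contradicting the $C(r)$ upper bound.
\end{itemize}

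Isolatedness is not automatic either, and this is where the remaining hypotheses enter. In (i), a maximizer of $u_i\,\mathrm{dist}(\cdot,\partial D_r)^{(Q-2)/2}$ need not be an isolated blow-up point, since other bubbles can accumulate at it at scales tending to zero. The paper uses the full point-selection set $S_i$ of Proposition~\ref{6.1 in rnac} and the uniform separation of its points, Proposition~\ref{pro 6.3 in rnac}. That separation is proved by rescaling at the minimal mutual distance, showing both points are isolated simple, and contradicting the Pohozaev sign for a limit with two poles. Only then is $\xi_i'\in S_i$ an isolated, and for $n=2$ isolated simple, blow-up point.

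In (ii) you never use that $\xi_i$ is a local maximum with $\max_{D_{\bar d}(\xi_i)}u_i\le\bar b\,u_i(\xi_i)$. These hypotheses put $\xi_i$ itself in $S_i$. The rescaled limit around $\xi_i$ is a bounded entire solution, hence a Jerison--Lee bubble, and its only critical point is its center. This rules out a second element of $S_i$ at comparable scale, so the estimate holds at $\xi_i$ rather than at a nearby point.

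Your exponent bookkeeping also needs correcting. For $Q=6$ the mass $\int\Lambda_{0,1}^2$ is finite, and the logarithm in (i) comes from $\int|z|^2\Lambda_{0,1}^2$. For $Q=8$ the logarithm comes from the fourth-order Taylor remainder $\int\|\eta\|_{\mathbb{H}^n}^4\Lambda_{0,1}^2$.
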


We emphasize that the analysis on 
$\mathbb{H}^n$ presents distinct technical challenges compared to the Euclidean case, primarily due to the sub-elliptic  nature of the operator $\Delta_{\mathbb{H}^n}$ and the characteristic anisotropic dilation structure of the group. 
To derive our main results, we must overcome several significant difficulties arising from the degeneracy of the sub-Laplacian 
and the intrinsic geometric structure of the Heisenberg group. It is worth noting that the non-commutative structure, anisotropy, and sub-Riemannian geometry 
of the Heisenberg group make many classical methods in Euclidean space (or Riemannian manifolds) no longer applicable.

Since the sub-Laplacian on the Heisenberg group is degenerate elliptic, standard distance-based barriers make no sense in blow-up  analysis. While such barriers are indispensable in the Euclidean setting for constructing supersolutions, they fail in the present context 
because it is impossible to construct a strict supersolution along the characteristic directions relying only on the distance function. Consequently, 
adopting a strategy analogous to that used for fractional equations \cite{Jin Li Xiong} and inspired by Uguzzoni \cite{Uguzzoni Nodea}, we construct useful auxiliary functions  
based on cylindrically symmetric functions. This approach enables us to obtain the necessary control that the standard distance function cannot provide. (See proof of Lemma \ref{lemma 2.2 in jde}
for more details.)

Another difficulty arises from the non-commutativity of the derivative operators. Specifically, when deriving the local Pohozaev identity, the lack of commutativity generates 
extra terms that are difficult to handle in integral estimates. To overcome this, inspired by the work of Folland-Stein \cite{Follandsub}, we employ right-invariant vector fields to 
perform the integral estimates. A crucial property of these fields is that they commute with the standard left-invariant derivatives on the Heisenberg group. This commutativity 
allows us to eliminate these extra terms and successfully derive the crucial estimates. (See proof of Lemma \ref{lem 2.6 in jde} for more details.)

Besides these issues, the anisotropy of the Heisenberg group also poses significant challenges to the analysis. The distinct scaling properties of the horizontal and vertical directions necessitate a 
departure from classical Euclidean techniques. For instance, Taylor expansions must be performed with respect to the homogeneous degree rather than the standard algebraic 
degree. This structural difference significantly complicates the integral estimates, as the lack of full rotational symmetry introduces mixed terms that are difficult to control, 
posing a substantial challenge to our quantitative analysis.

The organization of the paper is as follows. In Section \ref{section 2}, we present some preliminaries and fix the notations regarding the Heisenberg group. 
In Section \ref{section 3}, we derive the Pohozaev identity and establish a B\"ocher-type theorem. In Section \ref{section 4}, we establish basic results concerning isolated simple 
blow-up points. Compared with previous works, several new ingredients are introduced to handle the geometric difficulties. In Section \ref{section 5}, we carry out 
the refined quantitative asymptotic analysis. In Section \ref{section 6}, we estimate the Pohozaev integral of blow-up solutions. Finally, the proofs of the main theorems are 
completed in Section \ref{section 7}.

\section{Preliminaries}\label{section 2}
	The Heisenberg group $\mathbb{H}^n$ is the set $\mathbb{R}^{2 n} \times \mathbb{R} $ endowed with the group action $\circ$ defined by
	\be\label{groupplus}
	\hat{\xi} \circ \xi:=(x+\hat{x}, y+\hat{y}, t+\hat{t}+2 \sum_{i=1}^n (x_i \hat{y}_i-y_i \hat{x}_i)),
	\ee
	for any $\xi=(x, y, t)$, $\hat{\xi}=(\hat{x}, \hat{y}, \hat{t})$ in ${\mathbb{H}^n}$, with $x=\left(x_1, \cdots, x_n\right)$, $\hat{x}=(\hat{x}_1, \cdots, \hat{x}_n)$, $y=\left(y_1, \cdots, y_n\right)$ and $\hat{y}=(\hat{y}_1, \cdots, \hat{y}_n)$ denoting elements of $\mathbb{R}^n$. We also use the notation $\xi=(z, t)$ with $z=x+i y$, $z \in \mathbb{C}^n \simeq \mathbb{R}^n \times \mathbb{R}^n$. Haar measure on $\mathbb{H}^n$ is the usual Lebesgue measure $\mathrm{d} \xi=\mathrm{d} z \mathrm{d} t$. 
	And $Q=2 n+2$ denotes the homogeneous dimension of ${\mathbb{H}^n}$ (see \cite{Follandstein hardyspace}).

	A basis for the Lie algebra of left-invariant vector fields on $\mathbb{H}^n$ is given by
	\be\label{left invariant vf}
	X_j=\frac{\partial}{\partial x_j}+2 y_j \frac{\partial}{\partial t}, \quad Y_j=\frac{\partial}{\partial y_j}-2 x_j \frac{\partial}{\partial t}, \quad  T=\frac{\partial}{\partial t}
	\ee
	for $j=1, \cdots, n$. From this, we obtain the following commutation relations for $j, k=1, \cdots, n$:
	$$
	\begin{gathered}
		{\left[X_j, X_k\right]=\left[Y_j, Y_k\right]=\left[X_j, \frac{\partial}{\partial t}\right]=\left[Y_j, \frac{\partial}{\partial t}\right]=0}, \quad 
		{\left[X_j, Y_k\right]=-4 \delta_{j k} T},
	\end{gathered}
	$$
where $\delta_{jk}$ denotes the Kronecker symbol. The Heisenberg gradient, or horizontal gradient, of a regular function $u$ is then defined by $\nabla_{\mathbb{H}} u:= \left(X_1 u, \ldots, X_n u, Y_1 u, \ldots, Y_n u\right)$. 

The homogeneous norm on ${\mathbb{H}^n}$ is defined by
	\be\label{groupnorm}
	\|\xi\|_{\mathbb{H}^n}:=\big(\big(\sum_{i=1}^n x_i^2+y_i^2 \big)^2+t^2 \big)^{\frac{1}{4}}=(|z|^4+t^2)^{\frac{1}{4}}.
	\ee
	Given $\xi_0 \in \mathbb{H}^n$, by \eqref{groupplus}, we have $\xi_0^{-1}=-\xi_0$. Then the corresponding distance on ${\mathbb{H}^n}$ is defined by
	$$
	d_{\mathbb{H}^n} (\xi, \hat{\xi}):=\|\hat{\xi}^{-1} \circ \xi\|_{\mathbb{H}^n}.
	$$
	For every $\xi \in {\mathbb{H}^n}$ and $R>0$, we define the following notations
	$$
	D_R(\xi):=\{\eta \in {\mathbb{H}^n} : d_{\mathbb{H}^n} (\xi, \eta)<R\},$$ $$ \partial D_R(\xi):=\{\eta \in {\mathbb{H}^n} : d_{\mathbb{H}^n} (\xi, \eta)=R\},$$
and call these sets respectively the Kor\'anyi ball and sphere centred at $\xi$ with radius $R$. For convenience, we also write $D_R(0):=D_R$.  

We also denote by $\tau_{\hat{\xi}}: {\mathbb{H}^n} \rightarrow {\mathbb{H}^n}$ the left translation by $\hat{\xi}$ on ${\mathbb{H}^n}$, defined by
	$$
	\tau_{\hat{\xi}}(\xi)=\hat{\xi} \circ \xi,
	$$
while for any $\lambda>0$ we will denote by $\delta_\lambda: {\mathbb{H}^n} \rightarrow {\mathbb{H}^n}$ the dilation defined by
	\begin{equation}\label{eq:dilation}
		\delta_\lambda(\xi):=(\lambda z, \lambda^2 t),
	\end{equation}
which satisfies
	$
	\delta_\lambda(\hat{\xi} \circ \xi)=\delta_\lambda(\hat{\xi}) \circ \delta_\lambda(\xi)
	$
for every $\xi, \hat{\xi} \in {\mathbb{H}^n}$ and every $\lambda>0$.

The sub-Laplacian on ${\mathbb{H}^n}$ is the differential operator
	$$
	\Delta_{{\mathbb{H}^n}}:=\sum_{j=1}^n \big(X_j^2+Y_j^2 \big),
	$$
where $X_j$ and $Y_j$ are defined in \eqref{left invariant vf}. 
For convenience, we can write
	\be\label{divergence form}
	\Delta_{\mathbb{H}^n} u=\operatorname{div}(A \nabla u),
	\ee
where 
	$$A=A(z):=\left(\begin{array}{ccc}
		\mathbb{I}_n & 0_n & 2 y \\
		0_n & \mathbb{I}_n & -2 x \\
		2 y & -2 x & 4|z|^2
	\end{array}\right),$$
and $\mathbb{I}_n$ denotes the $n\times n$ identity matrix.

In order to study analytical problems on the Heisenberg group, it is
natural to introduce function spaces adapted to its sub-Riemannian
structure. Following Folland--Stein \cite{Follandstein estimates complex}
and Jerison--Lee \cite{CR yamabe}, let
\[
Z_1=X_1,\cdots,Z_n=X_n,\qquad
Z_{n+1}=Y_1,\cdots,Z_{2n}=Y_n
\]
denote the standard left-invariant horizontal vector fields on
$\mathbb H^n$. For an ordered multi-index
\[
\mathcal I=(i_1,\cdots,i_m),\qquad i_j\in\{1,\cdots,2n\},
\]
we set
\[
|\mathcal I|=m,\qquad Z^{\mathcal I}=Z_{i_1}\cdots Z_{i_m},
\]
with $Z^\emptyset$ understood as the identity operator.

For $k\in\mathbb N_0$ and $1\leq p<\infty$, the Folland--Stein
Sobolev space $S^{k,p}(\mathbb H^n)$ is defined by
\[
S^{k,p}(\mathbb H^n)
=
\left\{
f\in L^p(\mathbb H^n):
Z^{\mathcal I} f\in L^p(\mathbb H^n)
\text{ for every } |\mathcal I|\leq k
\right\},
\]
where the derivatives are understood in the distributional sense,
and is equipped with the norm
\[
\|f\|_{S^{k,p}}
=
\sum_{|I|\leq k}\|Z^I f\|_{L^p}.
\]
Equivalently, for $1\leq p<\infty$, $S^{k,p}(\mathbb H^n)$ is the
completion of $C_c^\infty(\mathbb H^n)$ with respect to this norm.
For convenience, we write $S^k:=S^{k,2}$.

Analogously, using the Carnot--Carath\'eodory distance $d_{CC}$,
one defines the nonisotropic H\"older spaces
$\Gamma^{k,\alpha}(\mathbb H^n)$, $0<\alpha<1$; see
\cite{Follandstein estimates complex,Follandsub}.
We use $C^{k,\alpha}$ to denote the standard H\"older spaces.
The comparison between the nonisotropic and standard H\"older
spaces implies that sufficiently high-order
$\Gamma^{m,\alpha}$ estimates yield standard $C^{k,\alpha}$
estimates; see \cite[Theorem 2.3]{Afeltra}. Consequently, whenever
uniform $\Gamma^{m,\alpha}$ estimates are available for every
$m\in\mathbb N$, the corresponding compactness statements may
equivalently be formulated in the standard $C^{k,\alpha}$ topology.

In our proof, the classification of solutions to
\be\label{critical equation}
-\Delta_{\mathbb H^n}u=u^{p},\qquad p=\frac{Q+2}{Q-2}
\ee
is important. We use the following peak-normalized family of
Jerison--Lee bubbles. Let
\[
\bar c_n:=\frac{1}{(Q-2)^2}=\frac{1}{4n^2}
\]
and define
\be\label{talanti bubble}
\Lambda_{0,1}(z,t)
:=
\left[
\left(1+\bar c_n|z|^2\right)^2
+\bar c_n^2t^2
\right]^{\frac{2-Q}{4}}.
\ee
For $\xi_0\in\mathbb H^n$ and $\lambda>0$, set
\[
\Lambda_{\xi_0,\lambda}(\xi)
:=
\lambda^{\frac{Q-2}{2}}
\Lambda_{0,1}
\bigl(\delta_\lambda(\xi_0^{-1}\circ\xi)\bigr).
\]
Then
\[
-\Delta_{\mathbb H^n}\Lambda_{\xi_0,\lambda}
=
\Lambda_{\xi_0,\lambda}^{p}
\qquad\text{in }\mathbb H^n,
\]
and
\[
\Lambda_{0,1}(0)=1,
\qquad
\Lambda_{\xi_0,\lambda}(\xi_0)
=
\lambda^{\frac{Q-2}{2}}.
\]
Moreover,
\[
\|\Lambda_{\xi_0,\lambda}\|_{L^{Q^*}(\mathbb H^n)}
=
\|\Lambda_{0,1}\|_{L^{Q^*}(\mathbb H^n)},
\qquad
Q^*=\frac{2Q}{Q-2},
\]
which is a fixed positive constant depending only on $n$.

The classification of solutions to \eqref{critical equation} has been a central topic. While Jerison-Lee \cite{Jerison Lee JAMS} originally classified solutions 
in $L^{\frac{2Q}{Q-2}}(\mathbb{H}^n)$, recent works have significantly relaxed these energy assumptions. We summarize the key classification results 
relevant to our study as follows:

\begin{itemize}

\item \textbf{(Catino et al. \cite{Li Arxiv}):}
For $n=1$, every positive solution $u$ of \eqref{critical equation}
is a Jerison--Lee bubble.

\item \textbf{(Catino et al. \cite{Li Arxiv}):}
For $n\geq 2$, if a positive solution $u$ of \eqref{critical equation}
satisfies the decay estimate
\[
    u(\xi)\leq C(1+|\xi|_{\mathbb H^n})^{-\frac{Q-2}{2}},
\]
then $u$ is a Jerison--Lee bubble.

\item \textbf{(Flynn--V\'etois \cite{Flynn}):}
For $n\geq 2$, the classification holds under the weaker pointwise
condition
\[
    u(z,t)\leq C(|z|^2+|t|)^{-p}
    \quad \text{ for }p\geq \frac{n-2}{2}.
\]

\item \textbf{(Li-Liu-Ma \cite{LLM}, Li \cite{LiJungangArxiv}):}
For $n\geq 2$, every positive entire solution $u$ of
\eqref{critical equation} is a Jerison--Lee bubble, without any
a priori integrability, decay, boundedness, or symmetry assumption.

\end{itemize}

\section{Pohozaev identity and B\"ocher type theorem}\label{section 3}

	We denote by $\mathcal{X}$ the smooth vector fields
	\begin{align}\label{eq:vector field}
		\mathcal{X}:=\sum^n_{j=1}\Big( x_jX_j+y_jY_j\Big)+2tT.
	\end{align}
	We can observe that $\mathcal{X}$ is the generator of the group of dilations \eqref{eq:dilation} on $\mathbb{H}^n$. 

	Using this vector field, we can derive a Pohozaev type 
	identity as follows (see \cite{Ex and Nonex} for the proof).
	
	\begin{proposition}
		Let $\Omega \subset \mathbb{H}^{n}$ be a bounded, piecewise $C^1$ open set and let $u \in$ $C^2(\overline{\Omega})$. Then
		\begin{equation}\label{eq:pohozaev identity 1}
			\begin{gathered}
				2 \int_{\partial \Omega}(A \nabla u \cdot N) \mathcal{X} u \ \mathrm{d} \mathcal{H}_{Q-2}-\int_{\partial \Omega}|\nabla_{\mathbb{H}^n} u|^2 \mathcal{X} I \cdot N \ \mathrm{d} \mathcal{H}_{Q-2}\\=(2-Q) \int_\Omega|\nabla_{\mathbb{H}^{n}} u|^2\ \mathrm{d} z \mathrm{d} t+2 \int_\Omega \mathcal{X} u \Delta_{\mathbb{H}^{n}} u \ \mathrm{d} z \mathrm{d} t,
			\end{gathered}
		\end{equation}
		where $\mathrm{d}\mathcal{H}_{Q-2}$ denotes $(Q-2)$-dimensional Hausdorff measures on $\mathbb{H}^n$, $N$ is the outer unit normal to $\partial \Omega$.
	\end{proposition}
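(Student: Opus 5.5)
We obtain the identity by integrating a pointwise divergence identity built from the dilation generator $\mathcal{X}$ and applying the divergence theorem on $\Omega$. We use the divergence form \eqref{divergence form}, $\Delta_{\mathbb{H}^n} u=\operatorname{div}(A\nabla u)$, together with the elementary relation $A\nabla u\cdot\nabla v=\nabla_{\mathbb{H}^n}u\cdot\nabla_{\mathbb{H}^n}v$. This relation holds because $A=\sigma^T\sigma$, where the rows of $\sigma$ are the coefficient vectors of $X_j,Y_j$ in the Euclidean frame. We regard $\mathcal{X}$ also as the Euclidean vector field $\mathcal{X}I=(x,y,2t)$, so that $\mathcal{X}u=\mathcal{X}I\cdot\nabla u$; note that $x_jX_j+y_jY_j+2tT$ has $t$-component $2\sum(x_jy_j-y_jx_j)+2t=2t$. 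Its Euclidean divergence is $\operatorname{div}(\mathcal{X}I)=2n+2=Q$.

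The first key step is the pointwise identity
\[
\operatorname{div}\big(2(A\nabla u)\,\mathcal{X}u-|\nabla_{\mathbb{H}^n}u|^2\,\mathcal{X}I\big)=2\,\mathcal{X}u\,\Delta_{\mathbb{H}^n}u+2A\nabla u\cdot\nabla(\mathcal{X}u)-Q|\nabla_{\mathbb{H}^n}u|^2-\mathcal{X}\big(|\nabla_{\mathbb{H}^n}u|^2\big).
\]
To simplify the right-hand side we use the commutation relations of $\mathcal{X}$ with the horizontal fields. Since $\mathcal{X}$ generates the dilations $\delta_\lambda$ and $X_j,Y_j$ are homogeneous of degree one, we have $[X_j,\mathcal{X}]=X_j$ and $[Y_j,\mathcal{X}]=Y_j$. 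This can also be checked directly from $[X_j,Y_k]=-4\delta_{jk}T$ and $[X_j,2tT]=4y_jT$; the cross terms cancel. Consequently
\[
2A\nabla u\cdot\nabla(\mathcal{X}u)=2\sum_j\big(X_ju\,X_j\mathcal{X}u+Y_ju\,Y_j\mathcal{X}u\big)=2|\nabla_{\mathbb{H}^n}u|^2+\mathcal{X}\big(|\nabla_{\mathbb{H}^n}u|^2\big).
\]
Substituting this, the divergence reduces to $2\mathcal{X}u\,\Delta_{\mathbb{H}^n}u+(2-Q)|\nabla_{\mathbb{H}^n}u|^2$.

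The second step is to integrate over $\Omega$ and apply the divergence theorem, which is valid for a piecewise $C^1$ boundary and $u\in C^2(\overline\Omega)$. The boundary terms are $2\int_{\partial\Omega}(A\nabla u\cdot N)\mathcal{X}u$ and $-\int_{\partial\Omega}|\nabla_{\mathbb{H}^n}u|^2\,\mathcal{X}I\cdot N$. Here the surface measure on $\partial\Omega$ is the Euclidean $(2n)$-dimensional one, which is what the notation $\mathrm{d}\mathcal{H}_{Q-2}$ denotes (topological dimension $2n=Q-2$). This yields exactly \eqref{eq:pohozaev identity 1}.

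The main obstacle is the commutator computation $[X_j,\mathcal{X}]=X_j$, in particular handling the non-commuting $t$-derivative contributions correctly. I would verify it by writing $\mathcal{X}=\sum(x_j\partial_{x_j}+y_j\partial_{y_j})+2t\partial_t$ in Euclidean form and computing the bracket with $\partial_{x_j}+2y_j\partial_t$ directly: the bracket equals $\partial_{x_j}+2y_j\partial_t$, since the two $2y_j\partial_t$ contributions, one from each piece of $\mathcal{X}$, combine appropriately. The remaining steps are routine bookkeeping.
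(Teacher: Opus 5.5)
Your argument is correct and complete. Applying the divergence theorem to the $C^1(\overline\Omega)$ field $2(A\nabla u)\mathcal{X}u-|\nabla_{\mathbb{H}^n}u|^2\mathcal{X}I$, using $\operatorname{div}(\mathcal{X}I)=Q$ and the homogeneity relations $[X_j,\mathcal{X}]=X_j$, $[Y_j,\mathcal{X}]=Y_j$, is the standard derivation behind the Garofalo--Lanconelli identity, which the paper cites rather than proves; your reading of $\mathrm{d}\mathcal{H}_{Q-2}$ as Euclidean $2n$-dimensional surface measure with Euclidean normal $N$ also matches how the paper later evaluates these boundary integrals via the area formula.
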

	Now let $u$ be a {$C^2$} positive solution of 
	\begin{equation}\label{eq:function of u}
		-\Delta_{\mathbb{H}^n}u=au+u^p \quad \text{ in }  D_R.
	\end{equation}
	Then multiplying \eqref{eq:function of u} by $u$ and integrating by parts, we have
	\begin{equation}\label{eq:3.4}
		-\int_{\partial D_R}A \nabla u\cdot Nu\ \mathrm{d}\mathcal{H}_{Q-2}+\int_{D_R}|\nabla_{\mathbb{H}^n}u|^2\ \mathrm{d}z\mathrm{d}t=\int_{D_R}( au^2+u^{p+1})\ \mathrm{d}z\mathrm{d}t.
	\end{equation} 
	Using \eqref{eq:pohozaev identity 1} and \eqref{eq:3.4}, we have
	\begin{equation*}
		\begin{aligned}
			& \quad2 \int_{\partial D_R}(A \nabla u \cdot N) \mathcal{X} u\  \mathrm{d} \mathcal{H}_{Q-2}-\int_{\partial D_R}|\nabla_{\mathbb{H}^{n}} u|^2 \mathcal{X} I\cdot N\  \mathrm{d} \mathcal{H}_{Q-2} \\
			&=(2-Q) \int_{\partial D_R}(A \nabla u \cdot N) u\ \mathrm{d} \mathcal{H}_{Q-2}+(2-Q) \int_{D_R} (au^2+u^{p+1})\ \mathrm{d} z \mathrm{d} t \\
			&\quad -2 \int_{D_R} \mathcal{X} u \big(au+u^{p}\big)\ \mathrm{d} z \mathrm{d} t .
		\end{aligned}
	\end{equation*}
	Using the definition of $\mathcal{X}$ and integrating the last term above by parts, we have
	\begin{equation}\label{eq:pohozaev ideneity 3}
		\begin{aligned}
			&\quad \int_{D_R} \mathcal{X} u \big(au+u^{p}\big)\ \mathrm{d} z \mathrm{d} t\\
			&=\int_{D_R} (x,y,2t) \cdot\nabla u \big(au+u^{p}\big)\ \mathrm{d} z \mathrm{d} t\\
			&=\frac{1}{2}\int_{\partial D_R} au^2 \mathcal{X}I\cdot N \ \mathrm{d} \mathcal{H}_{Q-2}+\frac{1}{p+1}\int_{\partial D_R} u^{p+1}\mathcal{X}I\cdot N \ \mathrm{d} \mathcal{H}_{Q-2}-\frac{Q}{2}\int_{D_R}au^2\ \mathrm{d}z\mathrm{d}t\\
			&\quad-\frac{Q}{p+1}\int_{D_R} u^{p+1}\ \mathrm{d}z\mathrm{d}t-\frac{1}{2}\int_{D_R}  \mathcal{X}(a) u^2\ \mathrm{d}z\mathrm{d}t.
		\end{aligned}
	\end{equation}
	Thus,
	\begin{equation}\label{eq:pohozaev ideneity 2}
		\begin{gathered}
			\frac{Q-2}{2} \int_{\partial D_R}(A \nabla u \cdot N) u\ \mathrm{d} \mathcal{H}_{Q-2}-\frac{1}{2} \int_{\partial D_R}|\nabla_{\mathbb{H}^{n}} u|^2 \mathcal{X}I \cdot N \ \mathrm{d} \mathcal{H}_{Q-2} \\
			+\int_{\partial D_R}(A \nabla u \cdot N) \mathcal{X} u\ \mathrm{d} \mathcal{H}_{Q-2} \\
			= \int_{D_R} au^2\ \mathrm{d} z \mathrm{d} t+\Big(\frac{Q}{p+1}-\frac{Q-2}{2}\Big) \int_{D_R} u^{p+1}\ \mathrm{d} z \mathrm{d} t +\frac{1}{2} \int_{D_R} \mathcal{X}(a) u^{2}\ \mathrm{d} z \mathrm{d} t \\ -\frac{1}{2} \int_{\partial D_R} au^2 \mathcal{X}I \cdot N\ \mathrm{d} \mathcal{H}_{Q-2} -\frac{1}{p+1} \int_{\partial D_R} u^{p+1} \mathcal{X}I \cdot N\ \mathrm{d} \mathcal{H}_{Q-2}.
		\end{gathered}
	\end{equation}
	Denote the boundary terms on the l.h.s. of \eqref{eq:pohozaev ideneity 2} by
	\begin{equation}\label{eq:B}
		\begin{aligned}
			\mathcal{D}( \xi, u, \nabla_{\mathbb{H}^{n}} u)=\frac{Q-2}{2} & (A \nabla u \cdot N) u-\frac{1}{2}\left|\nabla_{\mathbb{H}^{n}} u\right|^2 \mathcal{X}I \cdot N +(A \nabla u \cdot N) \mathcal{X} u.
		\end{aligned}
	\end{equation}
	Hence we have the following corollary.
	
	\begin{corollary}\label{pohozaev on heisenberg}
		If $u$ is a $C^2$, positive solution of \eqref{eq:function of u}, then
		\begin{equation}
			\begin{aligned}
				&\int_{\partial D_R}\mathcal{D}( \xi, u, \nabla_{\mathbb{H}^{n}} u)=\int_{D_R} au^2\ \mathrm{d} z \mathrm{d} t+\Big(\frac{Q}{p+1}-\frac{Q-2}{2}\Big) \int_{D_R} u^{p+1} \ \mathrm{d} z \mathrm{d} t\\
				& +\frac{1}{2} \int_{D_R} \mathcal{X}(a) u^{2}\ \mathrm{d} z \mathrm{d} t
				-\frac{1}{2} \int_{\partial D_R} au^2 \mathcal{X}I \cdot N\ \mathrm{d} \mathcal{H}_{Q-2} -\frac{1}{p+1} \int_{\partial D_R} u^{p+1} \mathcal{X}I \cdot N \ \mathrm{d} \mathcal{H}_{Q-2}.
			\end{aligned}
		\end{equation}
	\end{corollary}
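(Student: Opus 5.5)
The corollary is a direct rearrangement of the general Pohozaev identity \eqref{eq:pohozaev identity 1} with $\Omega=D_R$, using the equation \eqref{eq:function of u} twice. One use removes the interior Dirichlet energy, the other turns $\mathcal{X}u\,\Delta_{\mathbb{H}^n}u$ into divergence terms. The Korányi ball $D_R$ is a bounded open set whose boundary $\{|z|^4+t^2=R^4\}$ is smooth, so the proposition applies to $u\in C^2(\overline{D_R})$. (If $u$ is only $C^2$ in $D_R$, one first works on $D_r$ with $r<R$ and lets $r\to R$, or tacitly assumes regularity up to the boundary.)

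\textbf{Step 1: the Dirichlet energy.} Multiply \eqref{eq:function of u} by $u$ and integrate by parts using the divergence form \eqref{divergence form}. Since $A\nabla u\cdot\nabla u=|\nabla_{\mathbb{H}^n}u|^2$, this gives \eqref{eq:3.4}. Solving it for $\int_{D_R}|\nabla_{\mathbb{H}^n}u|^2$ and substituting into the term $(2-Q)\int|\nabla_{\mathbb{H}^n}u|^2$ of \eqref{eq:pohozaev identity 1} produces $(2-Q)\int_{\partial D_R}(A\nabla u\cdot N)u+(2-Q)\int_{D_R}(au^2+u^{p+1})$.

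\textbf{Step 2: the nonlinear term.} In the remaining interior term, replace $\Delta_{\mathbb{H}^n}u$ by $-(au+u^p)$. Then expand $\mathcal{X}$ in Euclidean coordinates: the $t$-derivative contributions of $X_j,Y_j$ cancel, giving $\mathcal{X}=x\cdot\nabla_x+y\cdot\nabla_y+2t\partial_t=(x,y,2t)\cdot\nabla$. The key observation is that this vector field has Euclidean divergence $2n+2=Q$. Writing $\mathcal{X}u\,u^p=\mathcal{X}\bigl(u^{p+1}/(p+1)\bigr)$ and $\mathcal{X}u\,au=\tfrac12\mathcal{X}(au^2)-\tfrac12\mathcal{X}(a)u^2$, the divergence theorem yields \eqref{eq:pohozaev ideneity 3}. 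Its boundary terms carry the weight $\mathcal{X}I\cdot N$, where $\mathcal{X}I=(x,y,2t)$, and its interior terms carry the factor $-Q$.

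\textbf{Step 3: collecting terms.} Substitute Step 2 into the identity from Step 1, move every boundary term containing derivatives of $u$ to the left, and divide by $2$. The interior coefficient of $au^2$ is $\frac{2-Q}{2}+\frac{Q}{2}=1$, and that of $u^{p+1}$ is $\frac{Q}{p+1}-\frac{Q-2}{2}$. The $\mathcal{X}(a)$ term enters with $+\tfrac12$, and the two boundary terms without derivatives of $u$ appear with signs $-\tfrac12$ and $-\tfrac1{p+1}$. The left side is then exactly $\int_{\partial D_R}\mathcal{D}$ as defined in \eqref{eq:B}. Note that $\frac{Q}{p+1}-\frac{Q-2}{2}=0$ since $p+1=\frac{2Q}{Q-2}$, but the statement keeps this coefficient explicitly.

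The only genuinely delicate point is the divergence computation in Step 2: one must verify that $\mathcal{X}$ coincides with the Euclidean field $(x,y,2t)\cdot\nabla$ and that the surface measure and normal match those in the proposition. Everything else is bookkeeping of signs and constants.
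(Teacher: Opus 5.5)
Your proposal is correct and follows the paper's derivation essentially step for step. You substitute the energy identity \eqref{eq:3.4} into the general Pohozaev identity \eqref{eq:pohozaev identity 1}, then integrate $\mathcal{X}u\,(au+u^p)$ by parts using $\mathcal{X}=(x,y,2t)\cdot\nabla$, whose divergence is $Q$, to obtain \eqref{eq:pohozaev ideneity 3}, and finally collect terms and divide by $2$; this is exactly how the paper arrives at \eqref{eq:pohozaev ideneity 2}.
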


	\begin{proposition}\label{pro 4.3 in asan}
		(i) For $u(\xi)=\|\xi\|_{\mathbb{H}^n}^{2-Q}$ and $\sigma>0$, we have
		$$
		\mathcal{D}(\xi, u, \nabla_{\mathbb{H}^n} u)=0 \quad \text { for } \xi \in \partial D_\sigma .
		$$
		(ii) If $u(\xi)=\|\xi\|_{\mathbb{H}^n}^{2-Q}+A+h(\xi)$, where $A>0$ is a positive constant and $h(\xi)$ is some function differentiable near the origin with $h(0)=0$, then there exists $\sigma_0>0$ such that for any $0<\sigma<\sigma_0$, we have
		$$
		\int_{\partial D_\sigma} \mathcal{D}(\xi, u, \nabla_{\mathbb{H}^n} u)<0.
		$$
		Furthermore,
		$$
		\lim _{\sigma \rightarrow 0} \int_{\partial D_\sigma} \mathcal{D}( \xi, u, \nabla_{\mathbb{H}^n} u)=-(Q-2)^2 A|D^{2 n-1}| \int_0^{\pi / 2} \cos ^{n-1} \alpha \ \mathrm{d} \alpha,
		$$
		where $|D^{2 n-1}|$ denotes the surface measure of the Kor\'anyi sphere.
	\end{proposition}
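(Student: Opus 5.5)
The plan is to exploit the homogeneity of the Korányi norm $\rho(\xi):=\|\xi\|_{\mathbb H^n}$ under the dilations $\delta_\lambda$, reducing both parts to algebraic identities on $\partial D_\sigma$ and a scaling argument. Three facts will be used repeatedly; I expect the first two to be routine to verify. First, on $\partial D_\sigma$ the outer unit normal is $N=\nabla\rho/|\nabla\rho|$, with $\nabla$ the Euclidean gradient. Second, since $\mathcal X I=(x,y,2t)$ generates the dilations and $\rho$ is $\delta_\lambda$-homogeneous of degree $1$, Euler's identity gives $\mathcal X I\cdot\nabla\rho=\rho$, hence $\mathcal X I\cdot N=\rho/|\nabla\rho|$. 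Third, from \eqref{divergence form}, $A\nabla\rho\cdot\nabla\rho=|\nabla_{\mathbb H^n}\rho|^2$, which equals $|z|^2/\rho^2$.

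For (i), put $\Gamma=\rho^{2-Q}$. Homogeneity of degree $2-Q$ gives $\mathcal X\Gamma=(2-Q)\Gamma$, and moreover $\nabla\Gamma=(2-Q)\rho^{1-Q}\nabla\rho$. Hence
\[
A\nabla\Gamma\cdot N=(2-Q)\rho^{1-Q}\frac{|\nabla_{\mathbb H^n}\rho|^2}{|\nabla\rho|},\qquad
|\nabla_{\mathbb H^n}\Gamma|^2=(Q-2)^2\rho^{2-2Q}|\nabla_{\mathbb H^n}\rho|^2 .
\]
Substituting into \eqref{eq:B}, each of the three terms becomes a multiple of $\rho^{3-2Q}|\nabla_{\mathbb H^n}\rho|^2/|\nabla\rho|$. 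The coefficients are $-\tfrac{(Q-2)^2}{2}$, $-\tfrac{(Q-2)^2}{2}$ and $+(Q-2)^2$, so they sum to zero pointwise.

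For (ii), write $u=\Gamma+w$ with $w=A+h$. Since $\mathcal D$ is a quadratic form in $(u,\nabla u)$, we get $\mathcal D(u)=\mathcal D(\Gamma)+\mathcal B(\Gamma,w)+\mathcal D(w)$, where $\mathcal B$ is the symmetric bilinear cross term and $\mathcal D(\Gamma)=0$ by (i). The key observation is that $\nabla w=\nabla h$ is bounded near $0$ and $h=o(1)$. Also $\mathcal X h=x\cdot\nabla_xh+y\cdot\nabla_yh+2t\,\partial_th=O(\sigma)$ on $\partial D_\sigma$, and $|\nabla_{\mathbb H^n}h|=O(1)$ there.

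Rescaling $\partial D_\sigma$ to $\partial D_1$ by $\delta_\sigma$, the quantity $\int_{\partial D_\sigma}\rho^{1-Q}|\nabla_{\mathbb H^n}\rho|^2/|\nabla\rho|$ is scale invariant, because it is the flux of $A\nabla\Gamma$ through $\partial D_\sigma$. Using this as the bookkeeping device, I will check term by term that the following contributions are $O(\sigma)$ or $o(1)$:
\begin{itemize}
\item $\Gamma\,(A\nabla h\cdot N)$,
\item $\nabla_{\mathbb H^n}\Gamma\cdot\nabla_{\mathbb H^n}h\;\mathcal XI\cdot N$,
\item $(A\nabla\Gamma\cdot N)\,\mathcal Xh$,
\item $(A\nabla h\cdot N)\,\mathcal X\Gamma$,
\item $(A\nabla\Gamma\cdot N)\,h$,
\item all of $\mathcal D(w)$.
\end{itemize}
The only surviving term is $\tfrac{Q-2}{2}A\,(A\nabla\Gamma\cdot N)$. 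Therefore
\[
\int_{\partial D_\sigma}\mathcal D(u)=\frac{Q-2}{2}A\int_{\partial D_\sigma}A\nabla\Gamma\cdot N+o(1)
=-\frac{(Q-2)^2}{2}A\int_{\partial D_1}\frac{|z|^2}{|\nabla\rho|}+o(1).
\]
This limit is a strictly negative constant, which yields the negativity for $\sigma<\sigma_0$.

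The main obstacle is evaluating this constant exactly as $-(Q-2)^2A|D^{2n-1}|\int_0^{\pi/2}\cos^{n-1}\alpha\,d\alpha$, with the normalization of $\mathrm d\mathcal H_{Q-2}$ used in the paper. I would parametrize $\partial D_1$ by $|z|^2=\cos\alpha$, $t=\sin\alpha$ for $\alpha\in(-\pi/2,\pi/2)$, with $z/|z|\in S^{2n-1}$. I would then compute $|\nabla\rho|$ and the surface element in these coordinates and observe the cancellation of $|\nabla\rho|$, which leaves $|z|^{2n}\,d\alpha\,d\omega$ up to a factor. The symmetric $\alpha$-interval then folds onto $(0,\pi/2)$ and supplies the factor $2$ absorbing the $\tfrac12$.

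Alternatively, one can get the flux without parametrizing. Since $\Delta_{\mathbb H^n}\Gamma=-c_Q\delta_0$ with Folland's constant $c_Q$, the divergence theorem gives $\int_{\partial D_\sigma}A\nabla\Gamma\cdot N=-c_Q$ for every $\sigma$. It would remain to match $c_Q$ with the stated expression.
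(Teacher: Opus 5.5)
Your proof of (i), of the negativity in (ii), and of the existence of the limit is correct. The gap is the final step, where you assert without computing that the limit equals the displayed constant; carried out, your own parametrization gives exponent $n$, not $n-1$.

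\textbf{What is correct.} For (i), with $\rho=\|\xi\|_{\mathbb H^n}$, the identities $\mathcal XI\cdot N=\rho/|\nabla\rho|$, $A\nabla\rho\cdot\nabla\rho=|\nabla_{\mathbb H^n}\rho|^2=|z|^2/\rho^2$ and $\mathcal X\Gamma=(2-Q)\Gamma$ make the three terms of $\mathcal D(\Gamma)$ cancel pointwise.

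For (ii), the splitting $\mathcal D(\Gamma+w)=\mathcal D(\Gamma)+\mathcal B(\Gamma,w)+\mathcal D(w)$ and your term-by-term bounds also work, with two small additions. First, for the terms containing $\nabla h$, flux invariance alone is not enough: those integrands carry $|\nabla_{\mathbb H^n}\rho|$ only to the first power. Use $|\nabla_{\mathbb H^n}\rho|\le 1$ together with the coarea scaling $\int_{\partial D_\sigma}|\nabla\rho|^{-1}\,\mathrm d\mathcal H_{Q-2}=Q|D_1|\sigma^{Q-1}$. Second, you need $\nabla h$ bounded, i.e.\ $h\in C^1$; this is what ``differentiable'' must mean here, and it holds in the application. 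You then get
\[
\lim_{\sigma\to0}\int_{\partial D_\sigma}\mathcal D(\xi,u,\nabla_{\mathbb H^n}u)=-\frac{(Q-2)^2}{2}A\,\omega_Q ,
\]
where $\omega_Q$ is exactly the constant the paper defines in Section 4. Negativity for small $\sigma$ follows. The paper only cites \cite{A san} here, so this is a genuinely self-contained route. Only the sign of the limit is used later, in Proposition \ref{isolated=isolated simple}.

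\textbf{The gap.} In your parametrization $z=\sqrt{\cos\alpha}\,\omega$, $t=\sin\alpha$, one has $\mathrm d\mathcal H_{Q-2}/|\nabla\rho|=\cos^{n-1}\alpha\,\mathrm d\alpha\,\mathrm d\omega$ on $\partial D_1$. The flux weight $|\nabla_{\mathbb H^n}\rho|^2=|z|^2=\cos\alpha$ raises the power by one; this is your $|z|^{2n}$. Equivalently, projecting onto the $z$-variable, $\omega_Q=4|S^{2n-1}|\int_0^1 r^{2n+1}(1-r^4)^{-1/2}\,\mathrm dr$. Reading $|D^{2n-1}|$ as $|S^{2n-1}|$, the measure of the unit sphere of $\mathbb C^n$, this gives
\[
\omega_Q=2|S^{2n-1}|\int_0^{\pi/2}\cos^{n}\alpha\,\mathrm d\alpha,\qquad
\lim_{\sigma\to0}\int_{\partial D_\sigma}\mathcal D=-(Q-2)^2A\,|S^{2n-1}|\int_0^{\pi/2}\cos^{n}\alpha\,\mathrm d\alpha ,
\]
with exponent $n$, not $n-1$.

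Your alternative route via the fundamental solution confirms this. For $n=1$, a direct computation gives flux $8\pi$. This is consistent with $\frac{1}{8\pi}\|\xi\|_{\mathbb H^1}^{-2}$ being the fundamental solution of $-\Delta_{\mathbb H^1}$ for these vector fields. So the limit is $-8\pi A$, whereas the displayed formula gives $-4\pi^2A$. For general $n$, the duplication formula gives $2(Q-2)|S^{2n-1}|\int_0^{\pi/2}\cos^n\alpha\,\mathrm d\alpha=2^{4-n}\pi^{n+1}/\Gamma(n/2)^2$, which is Folland's $1/c_Q$.

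The exponent $n-1$ is what you get if the weight $|z|^2$ is dropped. That happens in the unweighted integral $\int_{|z|<r}(r^4-|z|^4)^{-1/2}\,\mathrm dz$ in the proof of Proposition \ref{pro 5.1 in imrn}. So do not claim to recover the displayed constant. State and prove the corrected value, or the value $-\frac{(Q-2)^2}{2}A\omega_Q$, and flag the discrepancy in the statement.
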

	
	\begin{proof}
		See \cite[Proposition 4.3]{A san} for more details.
	\end{proof}

	In the subsequent blow-up analysis, we need the B\"ocher type theorem for degenerate elliptic equations with isolated singularities.
	Since the sub-Laplacian on the Heisenberg group is a hypoelliptic operator, we can obtain the following results on $\mathbb{H}^n$ according to the Appendix in \cite{Li Zhu CCM} by using Bony's maximum principle in \cite{Bony} and $L^p$ estimates for the sub-Laplacian.
In the following, we provide some descriptions on singular behaviors of positive solutions to some linear elliptic equations in punctured balls. For nonnegative $\mathcal{L}$-harmonic functions in punctured open sets, one can see \cite{Bon Bocher} for more details.

				\begin{lemma}\label{lem 9.1 in ccm}
					Suppose $u \in C^2\left(D_1 \backslash\{0\}\right)$ satisfies
					\begin{equation}\label{eq:A1}
					-\Delta_{\mathbb{H}^n} u-a(\xi) u=0 \quad \text { in } D_1 \backslash\{0\},
					\end{equation}
					and $u(\xi)=o(\|\xi\|_{\mathbb{H}^n}^{2-Q})$ as $\|\xi\|_{\mathbb{H}^n} \rightarrow 0$, then $u \in C^{2, \alpha}\left(D_{1 / 2}\right)$ for any $0<\alpha<1$. 
				\end{lemma}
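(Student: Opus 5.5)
The plan is to show that the singularity is removable in the sense of distributions and then to invoke hypoelliptic regularity. Here $a$ is smooth and bounded on $D_1$, and $u\in C^2(D_1\setminus\{0\})$ with $u=o(\|\xi\|_{\mathbb{H}^n}^{2-Q})$ near the origin. Since $\|\xi\|^{2-Q}\in L^1_{loc}$ (the homogeneous dimension is $Q$), we get $u\in L^1(D_{3/4})$, so $u$ defines a distribution.

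First we show that $-\Delta_{\mathbb{H}^n}u-au=0$ holds in $\mathcal D'(D_1)$. Take $\varphi\in C_c^\infty(D_{3/4})$ and a cutoff $\eta_\varepsilon(\xi)=\eta(\|\xi\|_{\mathbb{H}^n}/\varepsilon)$, with $\eta=0$ on $[0,1]$ and $\eta=1$ on $[2,\infty)$. We use $\varphi\eta_\varepsilon$ as a test function. The error terms are
\[
\int u\bigl(2\nabla_{\mathbb H}\varphi\cdot\nabla_{\mathbb H}\eta_\varepsilon+\varphi\Delta_{\mathbb{H}^n}\eta_\varepsilon\bigr).
\]
By homogeneity of the Korányi norm, $|\nabla_{\mathbb H}\eta_\varepsilon|\le C\varepsilon^{-1}$ and $|\Delta_{\mathbb{H}^n}\eta_\varepsilon|\le C\varepsilon^{-2}$. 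Both are supported in the annulus $\varepsilon\le\|\xi\|\le2\varepsilon$, whose volume is $C\varepsilon^{Q}$. The error is therefore bounded by
\[
C\varepsilon^{Q-2}\sup_{D_{2\varepsilon}\setminus D_\varepsilon}|u| = o(1).
\]
Letting $\varepsilon\to0$ gives the distributional equation on all of $D_{3/4}$.

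Next we upgrade the regularity. Writing $\Delta_{\mathbb{H}^n}u=-au$, the right-hand side is in $L^1$ but not yet better, so we bootstrap. We use the fundamental solution $\Gamma=c_Q\|\xi\|^{2-Q}$ of $-\Delta_{\mathbb{H}^n}$ (Folland). Set $w=\Gamma*(\chi a u)$, where $\chi$ is a cutoff near $0$. Then $u-w$ is $\Delta_{\mathbb{H}^n}$-harmonic in $\mathcal D'$ near $0$, hence smooth by Hörmander hypoellipticity.

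For the bootstrap, note that $\Gamma\in L^{q}_{loc}$ for $q<Q/(Q-2)$, so Young's inequality gives $w\in L^{q}$. Iterating with the Folland–Stein $L^p$ estimates, in the form $\Delta_{\mathbb{H}^n}v\in L^p\Rightarrow v\in S^{2,p}_{loc}$ together with Sobolev embedding on $\mathbb{H}^n$, we raise the integrability of $u$ in finitely many steps to $L^\infty_{loc}$. Then $au\in L^p$ for all $p$, which gives $u\in S^{2,p}_{loc}$ and hence $u\in\Gamma^{1,\alpha}$. Once $au$ is Hölder, Folland's Schauder estimates give $u\in\Gamma^{2,\alpha}(D_{1/2})$, and this is the regularity asserted, interpreted as in the paper's conventions.

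There is also an alternative route via Bony's maximum principle, as in the appendix of Li–Zhu. One solves the Dirichlet problem $-\Delta_{\mathbb{H}^n}v-av=0$ in $D_r$ with $v=u$ on $\partial D_r$, which is solvable for small $r$ since $-\Delta_{\mathbb{H}^n}-a$ is coercive on small balls. One then compares $\pm(u-v)$ with $\delta(\|\xi\|^{2-Q}-r^{2-Q})$, adjusted by a positive multiplicative factor to absorb $a$. Letting $\delta\to0$ and using the $o(\|\xi\|^{2-Q})$ hypothesis at the inner boundary gives $u=v$.

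The main obstacle is the comparison or bootstrap step near $0$ in the presence of the zero-order term $a$. In the barrier approach, $\|\xi\|^{2-Q}$ is only $\Delta_{\mathbb{H}^n}$-harmonic, so it must be perturbed to a supersolution of $-\Delta_{\mathbb{H}^n}-a$ on a small ball. I would try $\|\xi\|^{2-Q}(1-M\|\xi\|^{2})$ or simply use smallness of $r$. In the distributional approach, the key is the initial $L^1$ bound and the cutoff computation; both rely only on the homogeneity $Q$ of the Korányi norm.
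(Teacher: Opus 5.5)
Your main argument is correct and is essentially the paper's proof: you extend the equation across the origin in the sense of distributions using a cutoff whose derivatives live on $\{\varepsilon\le\|\xi\|_{\mathbb H^n}\le 2\varepsilon\}$, where $u=o(\|\xi\|_{\mathbb H^n}^{2-Q})$ makes the error $o(1)$, and you then bootstrap with the Folland--Stein $L^p$, embedding and Schauder estimates.

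Two small corrections. First, $\Gamma^{2,\alpha}$ is weaker than the standard $C^{2,\alpha}$ in the statement, since the paper reserves $C^{k,\alpha}$ for standard H\"older spaces. Because $a$ is smooth, you can iterate the Schauder step to get $\Gamma^{m,\alpha}$ for all $m$, or simply invoke H\"ormander hypoellipticity of $\Delta_{\mathbb H^n}+a$ once the distributional equation holds. Second, in your optional barrier route, $\|\xi\|_{\mathbb H^n}^{2-Q}(1-M\|\xi\|_{\mathbb H^n}^{2})$ is a subsolution, not a supersolution, because $-\Delta_{\mathbb H^n}$ of it equals $-2M(Q-4)|z|^2\|\xi\|_{\mathbb H^n}^{-Q}\le 0$; there you would compare with the Green function $G$ of Lemma~\ref{lem 9.2 in ccm} instead.
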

				
				\begin{proof}
					We first show that $-\Delta_{\mathbb{H}^n} u(\xi)-a(\xi) u(\xi)=0$ in $D_1$ in the distribution sense. For any $\varepsilon>0$, let $\zeta_\varepsilon $ be some cutoff function:
					$$
					\zeta_\varepsilon(\xi)= \begin{cases}1 & \text { for }\|\xi\|_{\mathbb{H}^n} \leq \varepsilon, \\ 0 & \text { for }\|\xi\|_{\mathbb{H}^n} \geq 2 \varepsilon, \\ \left|\nabla_{\mathbb{H}^n} \zeta_\varepsilon\right|<\frac{C}{\varepsilon}, & \left|\nabla_{\mathbb{H}^n}^2 \zeta_\varepsilon\right|<\frac{C}{\varepsilon^2} .\end{cases}
					$$
					Then for any $\phi \in C_c^{\infty}\left(D_1\right)$ we have
					\begin{equation}\label{eq:A2}
					-\int_{D_1} u\Delta_{\mathbb{H}^n} (\phi(1-\zeta_\varepsilon))-\int_{D_1} a u \phi(1-\zeta_\varepsilon)=0 .
					\end{equation}
					In fact, integrating \eqref{eq:A1} by $\phi(1-\zeta_\varepsilon)$, we have
					$$
					-\int_{D_1} (\phi(1-\zeta_\varepsilon))\Delta_{\mathbb{H}^n}u- \int_{D_1} a u \phi(1-\zeta_\varepsilon)=0.
					$$
					Given a domain $D_1 \backslash D_\delta$ with $0<\delta<\varepsilon$ such that $$ \begin{cases}
						\operatorname{supp} (\phi(1-\zeta_\varepsilon)) \subset D_1 \backslash D_\delta,\\
						\phi(1-\zeta_\varepsilon)=0 \quad \quad \quad \ \text{ on } \partial(D_1 \backslash D_\delta),\\
						\nabla_{\mathbb{H}^n} (\phi(1-\zeta_\varepsilon))=0 \quad \text{ on } \partial(D_1 \backslash D_\delta).
					\end{cases}    
					$$ 
					By the divergence theorem, we have
					$$
-\int_{D_1 \backslash D_\delta}\Delta_{\mathbb{H}^n}u (\phi(1-\zeta_\varepsilon))=\int_{D_1 \backslash D_\delta}\nabla_{\mathbb{H}^n} u\cdot \nabla_{\mathbb{H}^n} (\phi(1-\zeta_\varepsilon)) -\int_{\partial(D_1 \backslash D_\delta)} A\nabla u\cdot N(\phi(1-\zeta_\varepsilon)),
$$
$$
-\int_{D_1 \backslash D_\delta}u\Delta_{\mathbb{H}^n}(\phi(1-\zeta_\varepsilon)) =\int_{D_1 \backslash D_\delta}\nabla_{\mathbb{H}^n} u\cdot \nabla_{\mathbb{H}^n} (\phi(1-\zeta_\varepsilon)) -\int_{\partial(D_1 \backslash D_\delta)} A \nabla (\phi(1-\zeta_\varepsilon))\cdot Nu.
$$
Then, we obtain \eqref{eq:A2} after letting $\delta\rightarrow 0$.					
					
					Notice that
					$$\Delta_{\mathbb{H}^n} (fg)=g\Delta_{\mathbb{H}^n}f+f\Delta_{\mathbb{H}^n}g+2\langle \nabla_{\mathbb{H}^n}f, \nabla_{\mathbb{H}^n}g \rangle,$$
					then we have, 
					$$
\Delta_{\mathbb{H}^n} (\phi(1-\zeta_\varepsilon))=(1-\zeta_\varepsilon) \Delta_{\mathbb{H}^n} \phi-2\nabla_{\mathbb{H}^n} \phi\cdot \nabla_{\mathbb{H}^n} \zeta_\varepsilon-\phi \Delta_{\mathbb{H}^n} \zeta_\varepsilon.
$$
	Using \eqref{eq:A2}, it follows that
	$$
-\int_{D_1} u(1-\zeta_\varepsilon) \Delta_{\mathbb{H}^n} \phi  +2 \int_{D_1}u\nabla_{\mathbb{H}^n} \phi \cdot \nabla_{\mathbb{H}^n} \zeta_\varepsilon  +\int_{D_1} u\phi \Delta_{\mathbb{H}^n} \zeta_\varepsilon  -\int_{D_1} a u \phi(1-\zeta_\varepsilon) =0.
$$				
Therefore,
	$$
	\begin{aligned}
		-\int_{D_1} (\Delta_{\mathbb{H}^n} \phi) u-\int_{D_1} a u \phi &=-\int_{D_1} u\zeta_\varepsilon \Delta_{\mathbb{H}^n} \phi  -2 \int_{D_1}u\nabla_{\mathbb{H}^n} \phi \cdot\nabla_{\mathbb{H}^n} \zeta_\varepsilon \\
		&\quad  -\int_{D_1} u\phi \Delta_{\mathbb{H}^n} \zeta_\varepsilon  -\int_{D_1} a u \phi \zeta_\varepsilon .
\end{aligned}
$$				
			It follows that		
					$$
					\left|-\int_{D_1} \Delta_{\mathbb{H}^n} \phi u-\int_{D_1} a(\xi) u \phi\right| \leq C \epsilon^{-2} \int_{D_{2\varepsilon} \backslash D_{\varepsilon}}|u|+C \int_{D_{\varepsilon}}|u|=o(1)
					$$
as $\varepsilon$ tends to zero, where we have used $u(\xi)=o(\|\xi\|_{\mathbb{H}^n}^{2-Q})$ in the last step.

					We know from $u(\xi)=o(\|\xi\|_{\mathbb{H}^n}^{2-Q})$ that $u \in L_{\text {loc }}^s\left(D_1\right)$ for $s<\frac{Q}{Q-2}$. By $L^p$ estimates for the sub-Laplacian, we have $u \in S_{\text {loc }}^{2, s}\left(D_1\right)$ (see \cite{Follandstein estimates complex}). The lemma then follows from standard bootstrap methods and standard sub-elliptic estimate for the sub-Laplacian.
				\end{proof}

				\begin{lemma}\label{lem 9.2 in ccm}
					There exists some constant $\rho>0$ depending on $n$ and $\|a(\xi)\|_{L^{\infty}(D_1)}$, 
					such that the maximum principle holds for $\Delta_{\mathbb{H}^n}+a(\xi)$ on $D_{\rho}$ and there exists a unique $G(\xi) \in C^2\left(D_{\rho} \backslash\{0\}\right)$ satisfying
					\be\label{A.1}
					\begin{cases}
						-\Delta_{\mathbb{H}^n} G-a(\xi) G  =0 & \text { in } D_{\rho} \backslash\{0\}, \\
						G  =0 & \text { on } \partial D_{\rho}, \\
						\displaystyle \lim _{\|\xi\|_{\mathbb{H}^n} \rightarrow 0}\|\xi\|_{\mathbb{H}^n}^{Q-2} G(\xi) =1, 
					\end{cases}
					\ee
					Furthermore, $G(\xi)=\|\xi\|_{\mathbb{H}^n}^{2-Q}+R(\xi)$, where $R(\xi)$ satisfies for all $0<\epsilon<1$ that
					\begin{equation}\label{eq:K}
					\|\xi\|_{\mathbb{H}^n}^{Q-4+\epsilon}|R(\xi)|+\|\xi\|_{\mathbb{H}^n}^{Q-3+\epsilon}|\nabla_{\mathbb{H}^n} R(\xi)| \leq C(\epsilon)\quad \text{ for } \xi \in D_{\rho}
					\end{equation}
					and $C(\epsilon)$ is some constant depending only on $\epsilon$,  $n$, and $\|a\|_{L^{\infty}\left(D_1\right)}$.
				\end{lemma}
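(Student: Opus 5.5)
We will construct $G$ as a perturbation of the fundamental solution. Recall that $\Gamma(\xi)=c_Q\|\xi\|_{\mathbb{H}^n}^{2-Q}$ is, for a suitable constant $c_Q>0$, the fundamental solution of $-\Delta_{\mathbb{H}^n}$ (Folland). After normalizing, $-\Delta_{\mathbb{H}^n}\|\xi\|^{2-Q}_{\mathbb{H}^n}=c\,\delta_0$.

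\textbf{Step 1: maximum principle on small balls.} First fix $\rho$. For $\phi\in S^1_0(D_\rho)$, the Folland--Stein Sobolev inequality combined with Hölder gives
\[
\int a\phi^2\le \|a\|_\infty |D_\rho|^{2/Q}\|\phi\|^2_{L^{Q^*}}\le C\|a\|_\infty\rho^2\int|\nabla_{\mathbb{H}^n}\phi|^2 .
\]
Choosing $\rho$ small, the form $\int|\nabla_{\mathbb{H}^n}\phi|^2-a\phi^2$ is coercive on $S^1_0(D_\rho)$, and this yields the weak maximum principle. Combined with Bony's maximum principle for classical subsolutions, the maximum principle holds for $\Delta_{\mathbb{H}^n}+a$ on $D_\rho$ and on every subdomain.

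\textbf{Step 2: construct $R$.} Write $G=\|\xi\|^{2-Q}_{\mathbb{H}^n}+R$. Then $R$ must solve
\[
-\Delta_{\mathbb{H}^n}R-aR=a\|\xi\|^{2-Q}_{\mathbb{H}^n}=:f \quad\text{in } D_\rho,\qquad R=-\rho^{2-Q}\ \text{on }\partial D_\rho .
\]
Since $f=O(\|\xi\|^{2-Q})\in L^s$ for $s<Q/(Q-2)$, we set $R=R_1+R_2$. Here
\[
R_1=\int \Gamma(\eta^{-1}\circ\xi)\,a\|\eta\|^{2-Q}\chi\, d\eta
\]
is a Newtonian-type potential of a cutoff of the singular part, and $R_2$ is a regular correction. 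The correction $R_2$ solves $-\Delta R_2-aR_2=aR_1$ with bounded boundary data, and is obtained by Lax--Milgram from Step 1, since $aR_1\in L^q$ for large $q$. Dirichlet regularity on the Korányi ball is available, as Korányi balls satisfy the exterior cone/Wiener condition, so a Perron solution is continuous up to the boundary.

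Estimating $R_1$ is a convolution estimate of two homogeneous kernels on $\mathbb{H}^n$:
\[
\int_{D_1}\|\eta^{-1}\circ\xi\|^{2-Q}\|\eta\|^{2-Q}\,d\eta\le C\|\xi\|^{4-Q}
\]
when $Q>4$, which holds since $n\ge2$ gives $Q\ge6$. For $n=1$ the same estimate holds with a logarithm, which is absorbed into $\|\xi\|^{-\epsilon}$. This gives the $R$ bound in \eqref{eq:K}, for $R_2$ as well since it is bounded. For the gradient, the horizontal derivative of $\Gamma$ is homogeneous of degree $1-Q$, so the same splitting into $\|\eta\|<\|\xi\|/2$, $\|\eta^{-1}\circ\xi\|<\|\xi\|/2$, and the remainder gives
\[
|\nabla_{\mathbb{H}^n}R_1|\le C\|\xi\|^{3-Q-\epsilon}.
\]
The $\epsilon$-loss covers the borderline logarithmic cases. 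For $R_2$, we obtain $\Gamma^{1,\alpha}$ bounds by hypoelliptic $L^p$ estimates and Sobolev embedding. Then $G\in C^2(D_\rho\setminus\{0\})$ by hypoellipticity, and the limit condition in \eqref{A.1} holds because $\|\xi\|^{Q-2}R\to0$.

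\textbf{Step 3: uniqueness.} If $G_1,G_2$ both satisfy \eqref{A.1}, then $w=G_1-G_2=o(\|\xi\|^{2-Q})$ solves the homogeneous equation in $D_\rho\setminus\{0\}$. By Lemma~\ref{lem 9.1 in ccm} (rescaled to $D_\rho$), $w$ extends to a $C^2$ solution in $D_\rho$ with $w=0$ on $\partial D_\rho$. The maximum principle of Step 1 then gives $w\equiv0$.

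The main obstacle is the non-commutative convolution estimate for the gradient, where one cannot use radial symmetry. The plan is to handle it purely through the quasi-triangle inequality for $d_{\mathbb{H}^n}$ and homogeneity in the three regions, accepting the $\epsilon$-loss rather than seeking sharp constants.
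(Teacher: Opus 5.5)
Your Steps 1 and 3 are fine. In fact, your uniqueness argument through Lemma~\ref{lem 9.1 in ccm} is more complete than the paper's, which only asserts uniqueness of $R$ in $S^{2,\tau}$. The gap is in Step 2, at the correction $R_2$, and it appears as soon as $n\ge 2$.

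You correctly find $|R_1(\xi)|\le C\|\xi\|_{\mathbb H^n}^{4-Q}$. But this means $R_1$ is itself singular when $Q\ge 6$, so $aR_1\in L^q(D_\rho)$ only for $q<Q/(Q-4)$, not for large $q$. The consequences are:
\begin{itemize}
\item Since $Q/(Q-4)\le Q/2$ for $Q\ge 6$, the $L^p$ theory gives no $L^\infty$ bound on $R_2$.
\item For the same reason it gives no $\Gamma^{1,\alpha}$ bound, which would need $q>Q$.
\item For $Q\ge 10$ you do not even have $aR_1\in L^{2Q/(Q+2)}$, so Lax--Milgram in $S^1_0(D_\rho)$ is not available.
\end{itemize}

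This is not just a missing justification; the claims are false. Take $a\equiv 1$. Then $R_2$ is, to leading order, $\Gamma*(aR_1\chi)$, which is bounded below near $0$ by $c\|\xi\|_{\mathbb H^n}^{6-Q}$, or by $c\ln(1/\|\xi\|_{\mathbb H^n})$ when $Q=6$. Hence neither $R_2$ nor its horizontal gradient is bounded. For $Q\ge 10$, $R_2$ is not even in $L^{2Q/(Q-2)}$ near $0$, so it is not in $S^1$ and cannot be the Lax--Milgram solution. So ``for $R_2$ as well since it is bounded'' is wrong, and your argument does not prove \eqref{eq:K} for the remainder. The inequality still holds, but only because these singularities are milder than $\|\xi\|^{4-Q-\epsilon}$ and $\|\xi\|^{3-Q-\epsilon}$, and that has to be shown.

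One repair is to iterate the parametrix finitely many times. Set $P_1=\Gamma*(a\|\cdot\|_{\mathbb H^n}^{2-Q}\chi)$ and $P_{k+1}=\Gamma*(aP_k\chi)$. Then $|P_k|\le C\|\xi\|_{\mathbb H^n}^{2k+2-Q}$ and $|\nabla_{\mathbb H^n}P_k|\le C\|\xi\|_{\mathbb H^n}^{2k+1-Q}$, up to a logarithm at the borderline exponent, which the $\epsilon$ absorbs. The remainder $E:=R-\sum_{k=1}^m P_k$ solves $-\Delta_{\mathbb H^n}E-aE=aP_m$. Once $2m+2>Q$, the right-hand side is bounded, so Lax--Milgram, your Step 1 and interior estimates give for $E$ what you claimed for $R_2$.

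The paper avoids the iteration altogether. It obtains $R\in S^{2,\tau}(D_\rho)$ for $\tau<Q/(Q-2)$ from $L^\tau$ Dirichlet theory, hence $\|R\|_{L^s(D_\rho)}\le C(s)$ for $s<Q/(Q-4)$. It then studies the rescaled function $r^{Q-2}R(\delta_r\eta)$ on the fixed annulus $\{1/5\le\|\eta\|_{\mathbb H^n}\le 5\}$. There it satisfies an equation with right-hand side $O(r^2)$ and has $L^{s_1}$ norm at most $C(\epsilon)r^{2-\epsilon}$. Interior subelliptic estimates on that annulus then yield both bounds in \eqref{eq:K} at once, without knowing the finer structure of $R$.
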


				\begin{proof}
					Clearly \eqref{A.1} is equivalent to
					\be\label{A.2}
					\begin{cases}
                    -\Delta_{\mathbb{H}^n} R-a(\xi) R=O(\|\xi\|_{\mathbb{H}^n}^{2-Q}) & \text { in } D_\rho, \\ R=-\rho^{2-Q} & \text { on } \partial D_\rho.
                    \end{cases}
					\ee

					Using the polar coordinates adapted to $\mathbb{H}^n$, it is easy to recognize that $\|\xi\|_{\mathbb{H}^n}^{2-Q} \in L^\tau\left(D_\rho\right)$ for all $\tau<Q /(Q-2)$. 
					By using the standard \(L^\tau\)-Dirichlet theory on sufficiently small balls, H\"older inequality and standard subelliptic estimates, \eqref{A.2} has a unique solution $R \in S^{2, \tau}\left(D_\rho\right)$ 
					and
					$$
					\|R\|_{L^s\left(D_\rho\right)} \leq \begin{cases}C(s) & \text{ for } s<\frac{Q}{Q-4}, n \geq 2, \\ C(s) & \text{ for } s<\infty, n=1.\end{cases}
					$$

					For $0<r \leq \rho / 5$, $\eta \in A_0=\left\{\eta: \frac{1}{5} \leq\|\eta\|_{\mathbb{H}^n}\leq 5\right\}$, 
					let
					$
					R_1(\eta)=r^{Q-2} R(\delta_r \eta).  
					$
					Then $R_1$ satisfies
					$$
					-\Delta_{\mathbb{H}^n} R_1(\eta)-a(\delta_r \eta) r^2 R_1(\eta)=O\left(r^2\right) \quad \text{ for } \eta \in A_0,
					$$
					where $\left|O(r^2)\right| \leq C r^2$ with $C$ independent of $r$. For $n \geq 2$, for all $0<\varepsilon<1$, we can choose some $s_1=$ $s_1(\varepsilon)<Q /(Q-4)$ such that
					$
					\left\|R_1\right\|_{L^{s_1}\left(A_0\right)} \leq r^{2-\varepsilon}\|R\|_{L^{s_1}\left(D_\rho\right)} \leq C(\varepsilon) r^{2-\varepsilon} .
					$

					Using $L^p$ estimates for the sub-Laplacian, Folland-Sobolev embedding and the bootstrap method finite times, we have
					$$
					\left|R_1(\eta)\right|+\left|\nabla_{\mathbb{H}^n} R_1(\eta)\right| \leq C(\varepsilon) r^{2-\varepsilon} \quad \text{ for }\frac{1}{2} \leq \|\eta\|_{\mathbb{H}^n} \leq 2,
					$$	
					which means
					$$
					\|\xi\|_{\mathbb{H}^n}^{Q-2}|R(\xi)|+\|\xi\|_{\mathbb{H}^n}^{Q-1}|\nabla_{\mathbb{H}^n} R(\xi)| \leq C(\varepsilon)\|\xi\|_{\mathbb{H}^n}^{2-\varepsilon} \quad \text{ for } \|\xi\|_{\mathbb{H}^n} \leq \frac{\rho}{5} .
					$$
                    
                    For $n=1$, choose any $s_1>\frac{4}{\varepsilon}$.
                    When \(n=1\), we have \(Q=4\). By the preceding estimate,
                    \[
                    \|R\|_{L^s(D_\rho)}\leq C(s)
                    \qquad\text{for every }1<s<\infty.
                    \]
                    For any \(0<\varepsilon<1\), choose
                    \[
                    s_1=s_1(\varepsilon)>\frac{4}{\varepsilon}.
                    \]
                    Since
                    \[
                    R_1(\eta)=r^{Q-2}R(\delta_r\eta)=r^2R(\delta_r\eta),
                    \]
                    a change of variables gives
                \[
                \begin{aligned}
                \|R_1\|_{L^{s_1}(A_0)}=
                r^{2-\frac{4}{s_1}}
                \|R\|_{L^{s_1}(\delta_rA_0)}  \leq
                C(s_1)r^{2-\frac{4}{s_1}}
                \leq
                C(\varepsilon)r^{2-\varepsilon}.
                \end{aligned}
                \]

                Moreover, since \(s_1>4=Q\), the interior \(L^{s_1}\)-estimates
                for the sub-Laplacian and the Folland--Stein embedding theorem still
                yield
                \[
                |R_1(\eta)|
                +
                |\nabla_{\mathbb H^n}R_1(\eta)|
                \leq
                C(\varepsilon)r^{2-\varepsilon}
                \]
                for $\frac12\leq\|\eta\|_{\mathbb H^n}\leq 2$.
				\end{proof}

				\begin{lemma}\label{lem 9.3 in ccm}
					Assume $u(\xi) \in C^2(D_1 \backslash\{0\})$ satisfies
					\be\label{66 in ccm}
					-\Delta_{\mathbb{H}^n} u-a(\xi) u=0, \quad u>0 \quad \text { in } D_1 \backslash\{0\},
					\ee
					then
					$$
					\alpha:=\varlimsup_{\rho \rightarrow 0} \max _{\|\xi\|_{\mathbb{H}^n}=\rho} u(\xi)\|\xi\|_{\mathbb{H}^n}^{Q-2}<+\infty .
					$$
				\end{lemma}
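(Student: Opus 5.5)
We follow the Appendix of \cite{Li Zhu CCM}, replacing the Euclidean Green's function by the fundamental solution $\|\xi\|_{\mathbb{H}^n}^{2-Q}$ (up to a constant) and using Bony's maximum principle. The aim is to compare $u$ with a multiple of the Green's function $G$ of Lemma \ref{lem 9.2 in ccm}.

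\textbf{Step 1 (Harnack on Kor\'anyi annuli).} Fix the $\rho$ of Lemma \ref{lem 9.2 in ccm}, so that the maximum principle holds for $\Delta_{\mathbb{H}^n}+a$ on $D_\rho$. For $0<r<\rho/4$ set $w(\eta)=u(\delta_r\eta)$ on $\{1/4<\|\eta\|_{\mathbb{H}^n}<4\}$. Then
\[
-\Delta_{\mathbb{H}^n}w-r^2a(\delta_r\eta)w=0,
\]
and the zeroth-order coefficient is uniformly bounded. The invariant Harnack inequality for sub-Laplacians with bounded potential, applied on the compact annulus $\{1/2\le\|\eta\|_{\mathbb{H}^n}\le2\}$ (which is connected and covered by finitely many Kor\'anyi balls), gives
\[
\max_{\partial D_r}u\le C\min_{\partial D_r}u,
\]
with $C$ independent of $r$. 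It therefore suffices to bound $\min_{\partial D_r}u\cdot r^{Q-2}$.

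\textbf{Step 2 (comparison with $G$).} Argue by contradiction: suppose there are $r_k\to0$ with $m_k:=\min_{\partial D_{r_k}}u\,r_k^{Q-2}\to\infty$. Take $G=\|\xi\|_{\mathbb{H}^n}^{2-Q}+R$ from Lemma \ref{lem 9.2 in ccm}. For small $r$, Lemma \ref{lem 9.2 in ccm} gives $G\le 2r^{2-Q}$ on $\partial D_r$. Let
\[
\psi_k=\tfrac{m_k}{2}\,G.
\]
On $\partial D_{r_k}$ we have $u\ge m_kr_k^{2-Q}\ge\psi_k$. On $\partial D_\rho$ we have $\psi_k=0<u$. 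Since $u-\psi_k$ solves the homogeneous equation in $D_\rho\setminus \overline{D_{r_k}}$, Bony's maximum principle (valid there because the operator is coercive on $D_\rho$) gives $u\ge\psi_k$ on $D_\rho\setminus D_{r_k}$.

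Fix a point $\xi_*$ with $\|\xi_*\|_{\mathbb{H}^n}=\rho/2$. Then $u(\xi_*)\ge \frac{m_k}{2}G(\xi_*)$ with $G(\xi_*)>0$, and letting $k\to\infty$ contradicts finiteness of $u(\xi_*)$. Hence $\min_{\partial D_r}u\,r^{Q-2}$ is bounded, and by Step 1 so is $\alpha$.

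\textbf{Main obstacle.} Step 2 needs $G>0$ on $D_\rho\setminus\{0\}$. This follows from the maximum principle applied to $G$: $G$ is near $+\infty$ at the origin and vanishes on $\partial D_\rho$. The step I expect to be hardest is Step 1, namely a Harnack inequality whose constant is uniform in $r$. The key point is that anisotropic dilations preserve $\Delta_{\mathbb{H}^n}$ up to the factor $r^2$, and the Kor\'anyi annulus is a fixed compact connected set. Harnack for $\Delta_{\mathbb{H}^n}+c$ with bounded $c$ (Bony, or Moser iteration in the Folland--Stein setting) then applies on a finite chain of balls.
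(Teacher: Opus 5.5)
Your proposal is correct and follows essentially the same argument as the paper: a Harnack inequality on Kor\'anyi spheres to pass from $\max$ to $\min$, followed by comparison of $u$ with a large multiple of the Green function $G$ from Lemma \ref{lem 9.2 in ccm} on $D_\rho\setminus D_{r_k}$ via the maximum principle, which contradicts the finiteness of $u$ away from the origin. The differences are cosmetic: you justify the $r$-independence of the Harnack constant through the dilation $\delta_r$, and you let the multiple $m_k/2$ grow while evaluating at one fixed point, whereas the paper fixes $A$, lets $i\to\infty$ and then $A\to\infty$; your ``main obstacle'' (positivity of $G$) disappears if you take $\xi_*$ near the origin, where $G(\xi_*)\ge\frac12\|\xi_*\|_{\mathbb{H}^n}^{2-Q}$.
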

				
				\begin{proof}
					It follows from the Harnack inequality that there exists $C>0$ such that for $0<\rho<1$,
					$$
					\max _{\|\xi\|_{\mathbb{H}^n}=\rho} u(\xi) \leq C \min _{\|\xi\|_{\mathbb{H}^n}=\rho} u(\xi) .
					$$
					
					On the contrary, suppose that $\alpha=+\infty$. Therefore for all $A>0$, there exists $\rho_i \rightarrow 0^{+}$ such that
					$$
					u(\xi)>A\|\xi\|_{\mathbb{H}^n}^{2-Q} \quad \text{ for }\|\xi\|_{\mathbb{H}^n}=\rho_i .
					$$
					Set $v_A(\xi)=\frac{A}{2} G(\xi)$, where $G(\xi)$ was defined in Lemma \ref{lem 9.2 in ccm}. Using 
					$$\lim _{\|\xi\|_{\mathbb{H}^n} \rightarrow 0}\|\xi\|_{\mathbb{H}^n}^{Q-2} G(\xi) =1,
					$$
					 we have $u(\xi) \geq v_A(\xi)$ on $\|\xi\|_{\mathbb{H}^n} = \rho_i$.

					Moreover, $G(\xi) = 0$ on the boundary $\|\xi\|_{\mathbb{H}^n} = \rho$, hence $v_A(\xi) = 0$ on the boundary $\|\xi\|_{\mathbb{H}^n} = \rho$. Since $u(\xi) > 0$, it follows that $u(\xi) \geq v_A(\xi)$ on $\|\xi\|_{\mathbb{H}^n} = \rho$. Thus, it follows from the maximum principle that for large $i$,
					$$
					u(\xi) \geq v_A(\xi) \quad \text{ for }\rho_i \leq\|\xi\|_{\mathbb{H}^n} \leq \rho .
					$$
					Sending $i$ to $+\infty$, we have
					$$
					u(\xi) \geq v_A(\xi)=\frac{A}{2} G(\xi) \quad \text{ for } 0<\|\xi\|_{\mathbb{H}^n}<\rho ,
					$$	
					which contradicts the fact that $u\in C^2(D_1 \backslash\{0\})$ as $A\rightarrow +\infty$.
				\end{proof}
				
				\begin{proposition}\label{bocher type}
					Suppose $u \in C^2(D_1 \backslash\{0\})$ satisfies
					$$
					-\Delta_{\mathbb{H}^n} u(\xi)-a(\xi) u(\xi)=0, \quad u(\xi)>0 \quad \text { in } D_1 \backslash\{0\} .
					$$
					Then there exists some constant $b \geq 0$ such that
					$$
					u(\xi)=b G(\xi)+E(\xi) \quad \text { in } D_{\rho} \backslash\{0\},
					$$
					where $G(\xi)$, $\rho$ are defined in Lemma \ref{lem 9.2 in ccm}, and $E(\xi) \in C^2\left(D_\rho\right)$ satisfies
					$$
					-\Delta_{\mathbb{H}^n} E(\xi)-a(\xi) E(\xi)=0 \quad \text { in } D_\rho .
					$$
				\end{proposition}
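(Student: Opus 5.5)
We follow the scheme of the Appendix of Li--Zhu, combining Lemmas \ref{lem 9.1 in ccm}--\ref{lem 9.3 in ccm}. By Lemma \ref{lem 9.3 in ccm},
\[
\alpha=\varlimsup_{\rho\to0}\max_{\|\xi\|_{\mathbb H^n}=\rho}u\|\xi\|_{\mathbb H^n}^{Q-2}<\infty,
\]
and by the Harnack inequality on Kor\'anyi annuli the corresponding $\varliminf$ of the minimum is also finite and comparable. We take $\rho$ from Lemma \ref{lem 9.2 in ccm}, so that the maximum principle holds for $\Delta_{\mathbb H^n}+a$ on $D_\rho$ and $G$ is available. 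We then define
\[
b:=\sup\{\beta\ge 0:\ u\ge \beta G \text{ in } D_\rho\setminus\{0\}\}.
\]
Since $G=0$ on $\partial D_\rho$ and $u>0$ there, the set contains $\beta=0$. Since $G\sim\|\xi\|_{\mathbb H^n}^{2-Q}$ and $\alpha<\infty$, we get $b\le C\alpha<\infty$. Set $E:=u-bG$, so that $E\ge0$ and $-\Delta_{\mathbb H^n}E-aE=0$ in $D_\rho\setminus\{0\}$.

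The key step is to show that $E(\xi)=o(\|\xi\|_{\mathbb H^n}^{2-Q})$. Once this holds, Lemma \ref{lem 9.1 in ccm}, applied after rescaling $D_\rho$ to the unit ball via $\delta_\rho$, gives that $E$ extends to a $C^{2}$ solution of $-\Delta_{\mathbb H^n}E-aE=0$ in $D_\rho$. To prove the decay, suppose it fails. Then there are $\rho_i\to0$ and $\varepsilon_0>0$ with $\max_{\partial D_{\rho_i}}E\,\rho_i^{Q-2}\ge\varepsilon_0$.

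\begin{itemize}
\item If $E\equiv0$ there is nothing to prove.
\item Otherwise $E>0$ in $D_\rho\setminus\{0\}$. This follows from the strong maximum principle (Bony) applied to $E\ge0$, after adding a large constant to $a$, or by the Harnack inequality.
\item In the second case, the Harnack inequality on the annuli $\{\rho_i/2\le\|\xi\|_{\mathbb H^n}\le2\rho_i\}$, which is uniform by dilation invariance because $a$ only contributes the rescaled term $\rho_i^2a$, gives $E\ge c\varepsilon_0\rho_i^{2-Q}\ge c'\varepsilon_0G$ on $\partial D_{\rho_i}$.
\item Since $G=0\le E$ on $\partial D_\rho$, the maximum principle on $D_\rho\setminus D_{\rho_i}$ yields $E\ge c'\varepsilon_0G$ there.
\item Letting $i\to\infty$ gives $E\ge c'\varepsilon_0G$ on all of $D_\rho\setminus\{0\}$, that is, $u\ge(b+c'\varepsilon_0)G$. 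This contradicts the maximality of $b$.
\end{itemize}

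Hence $E=o(\|\xi\|_{\mathbb H^n}^{2-Q})$, and the representation $u=bG+E$ follows.

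The main obstacle is the uniform Harnack inequality on the annuli for the operator $\Delta_{\mathbb H^n}+a$. We handle it by rescaling with $\delta_{\rho_i}$ and using left-invariance. After rescaling, the equation becomes $-\Delta_{\mathbb H^n}w-\rho_i^2a(\delta_{\rho_i}\cdot)w=0$ on a fixed annulus, whose zeroth-order coefficient is bounded uniformly in $i$. Then the Harnack inequality for sub-Laplacians with bounded potential, as already used in Lemma \ref{lem 9.3 in ccm}, applies with a constant independent of $i$.

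A secondary technical point is applying the maximum principle on the annular region $D_\rho\setminus D_{\rho_i}$. This is legitimate because the maximum principle was established on $D_\rho$ and is inherited by subdomains.
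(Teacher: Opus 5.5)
Your proposal is correct and follows essentially the paper's argument. You take $b$ as the supremum of admissible multiples of $G$ and show $E=u-bG=o(\|\xi\|_{\mathbb H^n}^{2-Q})$ by Harnack on Kor\'anyi annuli plus the maximum principle on $D_\rho\setminus D_{\rho_i}$, which contradicts the maximality of $b$, and you conclude with Lemma \ref{lem 9.1 in ccm}. The only difference is organizational: the paper splits into the cases $b=0$ and $b>0$ (reducing the latter to the former via $b(u-bG)=0$), whereas you treat $E=u-bG$ directly in a single pass.
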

				
				\begin{proof}
					Set
					$$
					b=b(u)=\sup \left\{\lambda \geq 0 \mid \lambda G \leq u \text { in } D_{\rho} \backslash\{0\}\right\} .
					$$
					Combining with Lemma \ref{lem 9.3 in ccm}, we know from the previous definition  that $0 \leq b \leq \alpha<\infty$.
					
					Case 1: $b=0$.\\
					In this case we claim: for any $\varepsilon>0$, there exists $\rho_\varepsilon \in\left(0, \rho\right)$ such that
					$$
					\min _{\|\xi\|_{\mathbb{H}^n}=r}\{u(\xi)-\epsilon G(\xi)\} \leq 0 \quad \text{ for } 0<r<\rho_\varepsilon .
					$$

					If the above claim were false, then there would exist some $\epsilon_0>0$ and $r_j \rightarrow 0^{+}$ such that
					$$
					\min _{\|\xi\|_{\mathbb{H}^n}=r_j}\left\{u(\xi)-\varepsilon_0 G(\xi)\right\}>0 .
					$$
					Notice that $u(\xi)-\varepsilon_0 G(\xi) \geq 0$ for $\|\xi\|_{\mathbb{H}^n}=\rho$. We derive from the maximum principle that $u(\xi)-\varepsilon_0 G(\xi) \geq 0$ on $D_{\rho} \backslash D_{r_j}$ which means $u(\xi)-\varepsilon_0 G(\xi) \geq 0$ in $D_{\rho} \backslash\{0\}$. From the definition of $b(u)$, we know  that $b \geq \varepsilon_0>0$, a contradiction.
					
					Therefore, for any $\varepsilon>0$, and $0<r<\rho_\varepsilon$, there exists $\xi_\varepsilon$ with $\|\xi_\varepsilon\|_{\mathbb{H}^n}=r$, such that $u\left(\xi_\varepsilon\right) \leq \varepsilon G\left(\xi_\varepsilon\right)$. By the Harnack inequality, we have
					$$
					\max _{\|\xi\|_{\mathbb{H}^n}=r} u(\xi) \leq C u\left(\xi_\varepsilon\right) \leq C \varepsilon G\left(\xi_\varepsilon\right) .
					$$
					It follows that
					$$
					u(\xi)=o(\|\xi\|_{\mathbb{H}^n}^{2-Q}) \quad \text { as }\|\xi\|_{\mathbb{H}^n} \rightarrow 0 .
					$$
					Setting $E(\xi)=u(\xi)$, our result in this case follows from Lemma \ref{lem 9.1 in ccm}.
					
					Case 2: $b>0$.\\
					We consider $v(\xi)=u(\xi)-b G(\xi)$. From the definition of $b(u)$, we know that $v(\xi) \geq 0$. By the maximum principle (see \cite{Bony}) , 
					we know that either $v(\xi)=0$ or $v(\xi)>0$ in $D_{\rho} \backslash\{0\}$. In the former case we are done by choosing $E(\xi)=0$. In the latter case, $v(\xi)$ satisfies \eqref{66 in ccm}. Set
					$$
					b(v)=\sup \left\{\mu \geq 0 \mid \mu G \leq v  \text { in } D_{\rho} \backslash\{0\}\right\} .
					$$
					It is easy to see that $b(v)=0$. As in Case 1, we know $v(\xi)=o(\|\xi\|_{\mathbb{H}^n}^{2-Q})$. Letting $E(\xi)=v(\xi)$, our result in this case follows from Lemma \ref{lem 9.1 in ccm}.
				\end{proof}

\section{Analysis of Isolated Blow Up Points}\label{section 4}

	Let $\tau_i\geq 0$ satisfy $\lim_{i\rightarrow \infty}\tau_i=0$, $p_i=(Q+2)/(Q-2)-\tau_i$, and $a_i\geq 0$ be a sequence of functions converging to $a$ in  $C^2(D_3)$.
	Let $u_i>0$ be a sequence of $C^2(D_3)$ solutions of
	\be\label{equation2}
		-\Delta_{\mathbb{H}^{n}} u_i -a_i u_i=  u_i^{p_i}\quad \text { in } D_3.
	\ee

	\begin{definition}\label{isolated blow-up point}
		Suppose that $\left\{u_i\right\}$ satisfies \eqref{equation2}. We say a point $\bar{\xi} \in D_2$ is a blow-up point of $\{u_i\}$ if $u_i(\xi_i) \rightarrow \infty$ for some $\xi_i \rightarrow \bar{\xi}$. 
		And a point $\bar{\xi} \in D_2$ is called an isolated blow-up point of $\left\{u_i\right\}$ if there exists $0<\bar{r}<\operatorname{dist}_{\mathbb{H}^n}(\bar{\xi}, \partial D_3)$, $C>0$ and a sequence $\xi_i \rightarrow \bar{\xi}$ such that $\xi_i$ is a local maximum of $u_i$, $u_i\left(\xi_i\right) \rightarrow \infty$ and
		$$
		u_i(\xi) \leq C d_{\mathbb{H}^n}(\xi, \xi_i)^{-2 /(p_i-1)} \quad \text { for }  \xi \in D_{\bar{r}} (\xi_i).
		$$
		
	\end{definition}
	
The usual Euclidean surface measures on the Kor\'anyi spheres are not
homogeneous under the anisotropic Heisenberg dilations. We therefore use
the weighted spherical mean introduced by Garofalo and Lanconelli, which is
adapted to the sub-Laplacian and has the correct homogeneity under left
translations and Heisenberg dilations.

For $\xi_0=(z_0,t_0)\in\mathbb H^n$, let
\[
\psi_{\xi_0}(\xi)
:=|\nabla_{\mathbb H^n}d_{\mathbb H^n}(\xi,\xi_0)|^2
=\frac{|z-z_0|^2}{d_{\mathbb H^n}(\xi,\xi_0)^2}
\quad \text{ for }\xi\neq\xi_0.
\]
Here and below, we define the normalizing constant
\[
\begin{aligned}
\omega_Q:=
\int_{\partial D_1}
\frac{\psi_0(\theta)}
{|\nabla d_{\mathbb H^n}(\theta,0)|}
\,\mathrm d\mathcal H_{Q-2}(\theta)=
\int_{\partial D_1}
\frac{|z|^2}
{|\nabla\|\theta\|_{\mathbb H^n}|}
\,\mathrm d\mathcal H_{Q-2}(\theta),
\end{aligned}
\]
where $\theta=(z,t)\in\partial D_1$.
For a continuous function $v$ on $\partial D_r(\xi_0)$, define
\be\label{Garofalo weighted spherical mean}
\begin{aligned}
\mathcal M_{\xi_0}(v,r)
&:=
\frac{1}{\omega_Qr^{Q-1}}
\int_{\partial D_r(\xi_0)}
 v(\xi)
 \frac{\psi_{\xi_0}(\xi)}{|\nabla d_{\mathbb H^n}(\xi,\xi_0)|}
 \,\mathrm d\mathcal H_{Q-2}(\xi).
\end{aligned}
\ee

\begin{lemma}\label{properties of Garofalo mean}
Let $\mathcal M_{\xi_0}$ be defined as above.

\begin{enumerate}
\item If $h\in C^2(D_R(\xi_0))$ satisfies
\[
\Delta_{\mathbb H^n}h=0
\qquad\text{in }D_R(\xi_0),
\]
then
\[
\mathcal M_{\xi_0}(h,r)=h(\xi_0),
\qquad 0<r<R.
\]

\item If
\[
v_\mu(\xi)
=
\mu^{\alpha}
v(\xi_0\circ\delta_\mu\xi),
\]
then, whenever $\mu r<R$,
\[
\mathcal M_0(v_\mu,r)
=
\mu^{\alpha}
\mathcal M_{\xi_0}(v,\mu r).
\]
\end{enumerate}
\end{lemma}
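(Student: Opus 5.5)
Both parts reduce to the invariance properties of the weight $\psi_{\xi_0}/|\nabla d_{\mathbb H^n}(\cdot,\xi_0)|$ and of the surface measure under left translations and dilations. Part (1) additionally needs the Garofalo--Lanconelli mean value formula, which comes from Green's identity with the fundamental solution.

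\textbf{Part (2).} This is a change of variables. We apply it in the form $\mathcal M_0(v_\mu,r)$ with the map $\Phi(\xi)=\xi_0\circ\delta_\mu\xi$, which sends $\partial D_r(0)$ onto $\partial D_{\mu r}(\xi_0)$.
\begin{itemize}
\item Left translation $\tau_{\xi_0}$ is a linear-affine map of $\mathbb R^{2n+1}$ with unit Jacobian. It is not Euclidean-orthogonal, however, so $\mathrm d\mathcal H_{Q-2}$ is not invariant by itself.
\item The correct approach is to rewrite the surface integral through the coarea formula: for $f\ge 0$,
\[
\int_{\partial D_r(\xi_0)} f\,\frac{\mathrm d\mathcal H_{Q-2}}{|\nabla d|}=\frac{\mathrm d}{\mathrm dr}\int_{D_r(\xi_0)} f\,\mathrm d\xi .
\]
This expresses $\mathcal M_{\xi_0}(v,r)$ as $\frac{1}{\omega_Q r^{Q-1}}\frac{\mathrm d}{\mathrm dr}\int_{D_r(\xi_0)} v\,\psi_{\xi_0}\,\mathrm d\xi$.
\item Lebesgue measure is invariant under $\tau_{\xi_0}$ and scales by $\mu^{Q}$ under $\delta_\mu$.
\item $d_{\mathbb H^n}$ is left invariant and $1$-homogeneous, and $|z|$ is invariant under translation of the $z$-part. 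Hence $\psi_{\xi_0}(\Phi(\xi))=\psi_0(\xi)$ is $0$-homogeneous.
\item Therefore
\[
\int_{D_r(0)} v_\mu\psi_0\,\mathrm d\xi=\mu^{\alpha-Q}\int_{D_{\mu r}(\xi_0)} v\,\psi_{\xi_0}\,\mathrm d\eta .
\]
Differentiating in $r$ gives a factor $\mu^{\alpha-Q}\cdot\mu$, and dividing by $\omega_Q r^{Q-1}=\mu^{1-Q}\omega_Q(\mu r)^{Q-1}$ yields exactly $\mu^\alpha\mathcal M_{\xi_0}(v,\mu r)$.
\end{itemize}

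\textbf{Part (1).} By the invariance just shown (the case $\mu=1$, $\alpha=0$, together with left invariance of $\Delta_{\mathbb H^n}$), we may take $\xi_0=0$. Set $\Gamma=\|\xi\|_{\mathbb H^n}^{2-Q}$, which is a multiple of the fundamental solution of $-\Delta_{\mathbb H^n}$ (Folland). Apply Green's identity for the divergence form operator $\operatorname{div}(A\nabla\cdot)$ from \eqref{divergence form} on $D_r\setminus D_\epsilon$ to $h$ and $\Gamma$.
\begin{itemize}
\item On a Kor\'anyi sphere $\partial D_s$, since $N=\nabla d/|\nabla d|$,
\[
A\nabla\Gamma\cdot N=(2-Q)s^{1-Q}\,\frac{A\nabla d\cdot\nabla d}{|\nabla d|}=(2-Q)s^{1-Q}\,\frac{\psi_0}{|\nabla d|}.
\]
Here we use $A\nabla d\cdot\nabla d=|\nabla_{\mathbb H^n}d|^2=\psi_0$.
\item The flux term $\int_{\partial D_s}\Gamma\,A\nabla h\cdot N$ equals $s^{2-Q}\int_{D_s}\Delta_{\mathbb H^n}h=0$.
\item Consequently
\[
\int_{\partial D_s} h\,\frac{\psi_0}{|\nabla d|}\,s^{1-Q}\,\mathrm d\mathcal H_{Q-2}
\]
is independent of $s$, i.e.\ $\mathcal M_0(h,s)$ is constant in $s$.
\item Letting $s\to0$ and using continuity of $h$ and part (2) with $v=1$, we get $\mathcal M_0(h,s)\to h(0)\,\mathcal M_0(1,1)=h(0)$ by the definition of $\omega_Q$.
\end{itemize}

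\textbf{Main obstacle.} The delicate point is justifying the boundary computation on Kor\'anyi spheres, which are smooth except possibly where $\nabla d=0$; this happens at characteristic points ($z=0$). There the weight $\psi_0/|\nabla d|$ must be checked to be integrable, and in fact $\psi_0=0$ at those points. If needed, I would avoid the issue by working throughout in the coarea/volume formulation, differentiating $\int_{D_s}(\cdot)$ in $s$, so that no pointwise surface regularity is needed.
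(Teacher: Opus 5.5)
Your proof is correct. In part (2) you follow the paper's route: the coarea formula plus left invariance and homogeneity of $d_{\mathbb H^n}$, Lebesgue measure and $\psi$. You only make the coarea step explicit, writing the weighted surface integral as $\frac{\mathrm d}{\mathrm dr}\int_{D_r(\xi_0)}v\,\psi_{\xi_0}\,\mathrm d\xi$ and rescaling the volume integral.

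Part (1) is where you genuinely differ. The paper simply invokes Garofalo--Lanconelli's weighted mean value formula (their Theorem 2.1). You instead rederive it from Green's second identity for $\operatorname{div}(A\nabla\cdot)$ on $D_r\setminus\overline{D_\epsilon}$ with $\Gamma=\|\xi\|_{\mathbb H^n}^{2-Q}$. You use $A\nabla\Gamma\cdot N=(2-Q)s^{1-Q}\psi_0/|\nabla d_{\mathbb H^n}|$ on $\partial D_s$ and the vanishing flux $\int_{\partial D_s}\Gamma\,A\nabla h\cdot N=s^{2-Q}\int_{D_s}\Delta_{\mathbb H^n}h=0$. You then identify the constant by letting $s\to0$ with $\mathcal M_0(1,s)=1$ from part (2), so you reverse the paper's order and deduce (1) from (2). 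The citation is shorter. Your argument is self-contained and uses only that $\|\xi\|_{\mathbb H^n}^{2-Q}$ is $\Delta_{\mathbb H^n}$-harmonic away from the origin; the fundamental-solution constant never enters. It also shows directly that the paper's normalization $\omega_Q$ is the right one, which with the citation must be checked against Garofalo--Lanconelli's conventions.

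One correction to your closing remark: the Euclidean gradient of $d_{\mathbb H^n}(\cdot,\xi_0)$ does not vanish at characteristic points. For $\xi_0=0$, $\nabla\|\xi\|_{\mathbb H^n}=\|\xi\|_{\mathbb H^n}^{-3}\bigl(|z|^2x,|z|^2y,t/2\bigr)$, which is nonzero away from the origin, so Kor\'anyi spheres are smooth hypersurfaces. At characteristic points ($z=z_0$) it is the horizontal gradient that vanishes. The weight $\psi_{\xi_0}/|\nabla d_{\mathbb H^n}|$ is therefore just zero there and is bounded and continuous on the whole sphere. There is no integrability issue and no need for your fallback.
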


\begin{proof}
The first assertion follows directly from the weighted mean-value
formula of Garofalo-Lanconelli
\cite[Theorem 2.1]{GL}.
For the second assertion, by the coarea formula and the homogeneity of the Heisenberg dilation, under the change of variables
$
\xi=\xi_0\circ\delta_\mu\eta,
$
we have
$$
\begin{aligned}
&\int_{\partial D_{\mu r}(\xi_0)}
v(\xi)
\frac{\psi_{\xi_0}(\xi)}
{|\nabla d_{\mathbb H^n}(\xi,\xi_0)|}
\mathrm d\mathcal H_{Q-2}(\xi)\\
&\qquad=
\mu^{Q-1}
\int_{\partial D_r}
v(\xi_0\circ\delta_\mu\eta)
\frac{\psi_0(\eta)}
{|\nabla d_{\mathbb H^n}(\eta,0)|}
\mathrm d\mathcal H_{Q-2}(\eta).
\end{aligned}
$$
Therefore,
$$
\begin{aligned}
\mathcal M_0(v_\mu,r)
&=
\frac{\mu^\alpha}{\omega_Qr^{Q-1}}
\int_{\partial D_r}
v(\xi_0\circ\delta_\mu\eta)
\frac{\psi_0(\eta)}
{|\nabla d_{\mathbb H^n}(\eta,0)|}
\mathrm d\mathcal H_{Q-2}(\eta)\\
&=
\frac{\mu^{\alpha+1-Q}}{\omega_Qr^{Q-1}}
\int_{\partial D_{\mu r}(\xi_0)}
v(\xi)
\frac{\psi_{\xi_0}(\xi)}
{|\nabla d_{\mathbb H^n}(\xi,\xi_0)|}
\mathrm d\mathcal H_{Q-2}(\xi)\\
&=
\mu^\alpha\mathcal M_{\xi_0}(v,\mu r).
\end{aligned}
$$

\end{proof}



Let $\xi_i\to\bar\xi$ be an isolated blow-up point of $u_i$. For
$0<r<\bar r$, define
\be\label{weighted averaged profiles}
\bar u_i^{\,G}(r):=\mathcal M_{\xi_i}(u_i,r),
\qquad
\bar w_i^{\,G}(r):=r^{\frac{2}{p_i-1}}\bar u_i^{\,G}(r).
\ee

\begin{definition}\label{isolated simple blow-up point}
We say that $\xi_i\to\bar\xi$ is an isolated simple blow-up point if it
is an isolated blow-up point and there exists $\rho\in(0,\bar r)$,
independent of $i$, such that $\bar w_i^{\,G}$ has precisely one critical point
in $(0,\rho)$ for all sufficiently large $i$.
\end{definition}


	If $\xi_i \rightarrow 0$ is an isolated blow-up point, then we will have the following Harnack inequality in the annulus centered at $0$.

	\begin{lemma}\label{Harnack ineq 2}
		Suppose that $u_i$ satisfies \eqref{equation2} and $\xi_i \rightarrow 0$ is an isolated blow-up point of $u_i$, that is, for some positive constants $A_1$ and $\bar{r}$ independent of $i$,
		\begin{align}\label{eq:5.21 as a san}
			d_{\mathbb{H}^n}(\xi,\xi_i)^{2  /(p_i-1)} u_i(\xi) \leq A_1 \quad \text { for } \xi \in D_{\bar{r}}(\xi_i) \subset D_3 .
		\end{align}
		Then for any $0<r<\bar{r} / 3$, we have the following Harnack inequality:
		$$
		\sup _{D_{2 r}(\xi_i) \backslash \overline{D_{r / 2}(\xi_i)}} u_i \leq C \inf _{D_{2 r}(\xi_i) \backslash \overline{D_{r / 2}(\xi_i)}} u_i,
		$$
		where $C>0$ depends only on $n$, $A_1$, $\bar{r}$, and $\sup_i\|a_i\|_{L^{\infty}(D_{\bar{r}}(\xi_i))}$.
	\end{lemma}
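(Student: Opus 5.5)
We rescale the annulus to unit size and then apply the standard Harnack inequality for the sub-Laplacian with a bounded zeroth-order coefficient. Fix $0<r<\bar r/3$ and set
\[
v_i(\eta):=r^{\frac{2}{p_i-1}}\,u_i\bigl(\xi_i\circ\delta_r\eta\bigr),\qquad \eta\in D_3\setminus\overline{D_{1/4}} .
\]
Since $3r<\bar r$, the image $\xi_i\circ\delta_r(D_3)$ lies in $D_{\bar r}(\xi_i)$. Left translations and dilations are compatible with $\Delta_{\mathbb H^n}$, namely $\Delta_{\mathbb H^n}[f(\xi_i\circ\delta_r\eta)]=r^2(\Delta_{\mathbb H^n}f)(\xi_i\circ\delta_r\eta)$. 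Combining this with $r^{2}\cdot r^{\frac{2}{p_i-1}}=r^{\frac{2p_i}{p_i-1}}$, we obtain
\[
-\Delta_{\mathbb H^n}v_i-r^2a_i(\xi_i\circ\delta_r\eta)\,v_i=v_i^{p_i}\quad\text{in } D_3\setminus\overline{D_{1/4}}.
\]
Because $d_{\mathbb H^n}(\xi_i\circ\delta_r\eta,\xi_i)=r\|\eta\|_{\mathbb H^n}$, the hypothesis \eqref{eq:5.21 as a san} gives
\[
v_i(\eta)\le A_1\|\eta\|_{\mathbb H^n}^{-2/(p_i-1)}\le A_1 4^{2/(p_i-1)}\le C(n,A_1)
\]
on this annulus. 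Here we used that $p_i\to (Q+2)/(Q-2)$, so $2/(p_i-1)$ stays bounded.

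Next we write the equation as a linear one, $-\Delta_{\mathbb H^n}v_i-c_i(\eta)v_i=0$, with $c_i:=r^2a_i(\xi_i\circ\delta_r\eta)+v_i^{p_i-1}$. By the previous step, $\|c_i\|_{L^\infty}\le \bar r^2\sup_i\|a_i\|_{L^\infty(D_{\bar r}(\xi_i))}+C(n,A_1)$. This bound is uniform in $i$ and $r$. Since $v_i>0$, the Harnack inequality for nonnegative solutions of $\Delta_{\mathbb H^n}+c$ with bounded $c$ applies (the same inequality already used in Lemma \ref{lem 9.3 in ccm}; it holds for sub-Laplacians with bounded potentials by the Bony/Folland–Stein theory, cf.\ \cite{Bony,Follandstein estimates complex}). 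It gives $\sup_K v_i\le C\inf_K v_i$ on any connected compact $K\subset D_3\setminus\overline{D_{1/4}}$, with $C$ depending only on $n$, $K$ and the bound for $c_i$.

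We take $K=\overline{D_2}\setminus D_{1/2}$. It suffices to note that this Korányi annulus is connected; this holds because dilations $\delta_s$, $s\in[1/2,2]$, connect points of $\partial D_1$ radially and $\partial D_1$ is connected. A chaining argument with finitely many small Korányi balls covering $K$ then gives the uniform constant. Scaling back, $\eta\in K$ corresponds exactly to $\xi\in \overline{D_{2r}(\xi_i)}\setminus D_{r/2}(\xi_i)$, and the factor $r^{2/(p_i-1)}$ cancels from both sides. This yields the claim with $C=C(n,A_1,\bar r,\sup_i\|a_i\|_{L^\infty})$.

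The only delicate point is the Harnack inequality itself: its constant must depend solely on the $L^\infty$ bound of the zeroth-order term and on the geometry of $K$, not on $i$ or $r$. The rescaling is designed to reduce everything to a fixed annulus where this holds. The connectedness and covering of the Korányi annulus must also be checked, because the metric balls are not Euclidean.
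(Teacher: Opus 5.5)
Your argument is correct and is essentially the paper's proof: rescale via $\eta\mapsto\xi_i\circ\delta_r\eta$ with the factor $r^{2/(p_i-1)}$, and use the isolated blow-up bound to make the rescaled function (hence the potential $r^2a_i+v_i^{p_i-1}$) uniformly bounded on a fixed Kor\'anyi annulus; then apply a subelliptic Harnack inequality there and scale back. The one point to tighten is the reference: you need a Harnack constant depending only on $\|c\|_{L^\infty}$, which is the Moser-iteration result for sums of squares plus a bounded potential (Citti--Garofalo--Lanconelli, the result the paper invokes), rather than Bony's qualitative theory or Folland--Stein.
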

	
	\begin{proof}
		For $0<r<\bar{r} / 3$, define
		$
			w_i(\xi):=r^{2  /(p_i-1)} u_i(\xi_i \circ \delta_r\xi).
		$
		By the equation of $u_i$, we have
		\begin{equation*}
			-\Delta_{\mathbb{H}^{n}} w_i(\xi)= r^2a_i(\xi_i \circ \delta_r\xi)w_i(\xi)+w_i(\xi)^{p_i}\quad \text { in } D_3.
		\end{equation*}
		Since $\xi_i \rightarrow 0$ is an isolated blow-up point of $u_i$, we have
		$w_i(\xi) \leq A_1\|\xi\|_{\mathbb{H}^n}^{-2  /(p_i-1)}$ for $\xi \in D_3$.
		Applying  the Harnack inequality in \cite{Citti} in the annulus $D_{9/4}\backslash D_{1/4}\subset D_3$, we have
		$$
 \text{ ess } \sup _{D_{9 / 4} \backslash D_{1 / 4}} w_i \leq C\text { ess } \inf _{D_{9 / 4} \backslash D_{1 / 4}} w_i ,
$$
where $C=C(n, A_1, \bar{r}, \sup \| a_i \|_{L^\infty (D_{\bar{r}}(\xi_i))})$. And the constant $C$ is independent of $i$ and $r$. Then, the lemma follows after rescaling back to $u_i$.
	\end{proof}

	\begin{proposition}\label{solution is bubble}
		Suppose that the hypotheses of Lemma \ref{Harnack ineq 2} hold. Suppose also that $\|a_i\|_{C^2(D_3)} \leq A_0$. 
		Then for any $R_i \rightarrow \infty$ and $\varepsilon_i \rightarrow 0^{+}$, we have, after passing to a subsequence (still denoted as $u_i,\xi_i$, etc.), that
		
		\be\label{equivalent}
		\|u_i(\xi_i)^{-1} u_i(\xi_i \circ \delta_{u_i(\xi_i)^{-(p_i-1) / 2}\ } \cdot)-\Lambda_{0,1}(\cdot)\|_{\Gamma^{2,\alpha}(D_{2 R_i})} \leq \varepsilon_i,
		\ee
		\be\label{19 of rnac}
		r_i:= R_i u_i(\xi_i)^{-(p_i-1) / 2} \rightarrow 0 \quad \text { as }  i \rightarrow \infty,
		\ee
		where $\Lambda$ is defined as in \eqref{talanti bubble}.
	\end{proposition}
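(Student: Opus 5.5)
The plan is the standard rescaling–compactness argument, combined with the Liouville-type classification on $\mathbb H^n$. Set $M_i:=u_i(\xi_i)\to\infty$ and $\lambda_i:=M_i^{-(p_i-1)/2}\to 0$. Define the rescaled functions
\[
v_i(\eta):=M_i^{-1}u_i(\xi_i\circ\delta_{\lambda_i}\eta),\qquad \|\eta\|_{\mathbb H^n}<\bar r\lambda_i^{-1}.
\]
Because left translations and dilations commute with $\Delta_{\mathbb H^n}$ in the appropriate homogeneous way, $v_i$ satisfies
\[
-\Delta_{\mathbb H^n}v_i=\lambda_i^2a_i(\xi_i\circ\delta_{\lambda_i}\eta)v_i+v_i^{p_i}.
\]
It also satisfies $v_i(0)=1$, and $0$ is a local maximum of $v_i$. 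The isolated blow-up bound \eqref{eq:5.21 as a san} rescales to
\[
v_i(\eta)\le A_1\|\eta\|_{\mathbb H^n}^{-2/(p_i-1)}
\]
on the rescaled domain. The first goal is a uniform local bound $v_i\le C(R)$ on $D_R$ for each fixed $R$. Away from the origin, on $\{1\le\|\eta\|\le R\}$, this follows directly from the bound above. Near the origin, on $D_1$, I would argue via Lemma \ref{Harnack ineq 2}, applied after rescaling. Write the equation as a linear one, $-\Delta_{\mathbb H^n}v_i=c_iv_i$ with $c_i=\lambda_i^2a_i+v_i^{p_i-1}$. The Harnack inequality on annuli bounds $v_i$ on $\partial D_{1}$ in terms of its values on a slightly larger annulus. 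Combined with $v_i(0)=1$ being a local maximum and a maximum principle/comparison on $D_1$, this gives $\sup_{D_1}v_i\le C$.

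The step I expect to be the main obstacle is this local bound near the origin. The local-maximum property alone does not bound $v_i$ in $D_1$. My first attempt is a blow-up-within-blow-up argument. Suppose $\sup_{D_1}v_i=v_i(\eta_i)\to\infty$. Rescale again around $\eta_i$ at scale $v_i(\eta_i)^{-(p_i-1)/2}$, choosing $\eta_i$ by the standard doubling lemma so that the new rescaled functions are locally bounded. This produces an entire nonnegative solution of $-\Delta_{\mathbb H^n}w=w^{p}$, with value $1$ at the origin. By the classification of Li-Liu-Ma and Li (for $n=1$, Catino et al., once the decay coming from the isolated bound is checked), $w$ is a Jerison–Lee bubble. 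I would then try to contradict the upper bound $A_1\|\eta\|^{-2/(p_i-1)}$ together with $v_i(0)=1$. If this is too delicate, I would instead use the weighted sub-mean value property of Lemma \ref{properties of Garofalo mean}, applied to the subsolution, to dominate $\sup_{D_{1/2}}v_i$ by averages on $\partial D_r$. Those averages are controlled by the isolated bound.

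Once the uniform local bound holds, the compactness step is routine. Folland–Stein $L^p$ estimates and Schauder estimates in $\Gamma^{k,\alpha}$ give uniform $\Gamma^{2,\alpha'}$ bounds (with $\alpha'>\alpha$) on each $D_R$. The coefficients $\lambda_i^2a_i(\xi_i\circ\delta_{\lambda_i}\cdot)$ tend to $0$ in $C^1$ because $\|a_i\|_{C^2}\le A_0$. By Arzelà–Ascoli and a diagonal argument, a subsequence converges in $\Gamma^{2,\alpha}_{\rm loc}(\mathbb H^n)$ to some $v\ge0$. The limit solves
\[
-\Delta_{\mathbb H^n}v=v^{(Q+2)/(Q-2)},\qquad v(0)=1,\qquad \nabla_{\mathbb H}v(0)=0,
\]
since $0$ is a local maximum. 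By Bony's strong maximum principle $v>0$, and the classification makes $v$ a Jerison–Lee bubble. The normalization $v(0)=1=\max v$ and the maximum at $0$ force the center to be $0$ and the scale to be $1$, so $v=\Lambda_{0,1}$ exactly.

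Finally, I would pass to a subsequence to get \eqref{equivalent}. Given $R_i\to\infty$ and $\varepsilon_i\to0$, local convergence gives for each $k$ an index $i_k$ with $\|v_{i}-\Lambda_{0,1}\|_{\Gamma^{2,\alpha}(D_{2k})}\le 1/k$ for all $i\ge i_k$. A diagonal extraction then selects a subsequence along which $\|v_i-\Lambda_{0,1}\|_{\Gamma^{2,\alpha}(D_{2R_i})}\le\varepsilon_i$. Here I may, as is standard, first replace $R_i$ by a more slowly growing sequence; since the statement allows passing to subsequences, this amounts to relabeling. Then \eqref{19 of rnac} is obtained by additionally requiring $R_i\lambda_i\to0$ along the chosen subsequence, which is possible because $\lambda_i\to0$.
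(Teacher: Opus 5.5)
\textbf{There is a genuine gap at the one non-routine step.} Your architecture matches the paper's: rescaling, a uniform local bound, subelliptic compactness, classification via Li--Liu--Ma/Li, and a diagonal extraction. But the bound $\sup_{D_1}v_i\le C$ is left open, and none of the mechanisms you sketch for it works as stated.

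Since $a_i\ge 0$ and $v_i>0$, $v_i$ is a \emph{supersolution}, $-\Delta_{\mathbb H^n}v_i\ge 0$. A maximum principle or comparison on $D_1$ therefore only gives \emph{lower} bounds inside $D_1$. So Harnack on one annulus near $\partial D_1$, plus the maximum principle, says nothing about $v_i$ near the origin.

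The sub-mean-value fallback fails for the same reason. For superharmonic functions the Garofalo--Lanconelli inequality reads $\mathcal M_0(v_i,r)\le v_i(0)$, not the reverse. A Moser-type bound for $v_i$ as a subsolution of $-\Delta_{\mathbb H^n}v\le c_iv$, with $c_i=\lambda_i^2a_i+v_i^{p_i-1}$, would need $c_i\in L^q$ for some $q>Q/2$. The isolated bound only gives $c_i\le C\|\eta\|_{\mathbb H^n}^{-2}$, which is not in $L^{Q/2}(D_1)$. Your blow-up-within-blow-up stops exactly where the contradiction has to be produced.

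\textbf{The missing idea: Lemma \ref{Harnack ineq 2} is scale-invariant.} After rescaling it holds on $D_{2r}\setminus\overline{D_{r/2}}$ for \emph{every} $0<r<1$ (for $i$ large), with a constant $C_0$ independent of $r$ and $i$. This is precisely what the isolated blow-up hypothesis provides. The minimum principle for the supersolution $v_i$ on $D_r$ gives $\min_{\partial D_r}v_i\le v_i(0)=1$. Hence $\max_{\partial D_r}v_i\le C_0\min_{\partial D_r}v_i\le C_0$ for all $r\in(0,1)$, i.e.\ $\sup_{D_1}v_i\le C_0$. This is the paper's argument, and the local-maximum property is not needed for it.

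Your first fallback can also be completed. The isolated bound gives $\|\eta_i\|_{\mathbb H^n}\,v_i(\eta_i)^{(p_i-1)/2}\le A_1^{(p_i-1)/2}$. So the original origin stays at bounded distance in the doubly rescaled coordinates, and there the new functions take the value $v_i(\eta_i)^{-1}\to0$. The limit then has an interior zero, contradicting the strong maximum principle; no classification is needed.

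A smaller point: to identify the limit as $\Lambda_{0,1}$ you need $\partial_tv(0)=0$ as well as $\nabla_{\mathbb H}v(0)=0$. The reason is that $\nabla_{\mathbb H}\Lambda_{\xi_0,\lambda}$ vanishes on the whole vertical line through $\xi_0$. The extra condition follows because the full gradient of $v_i$ vanishes at the local maximum, and $[X_j,Y_j]=-4T$ lets $\partial_t$ pass to the limit under $\Gamma^{2,\alpha}$ convergence.
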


	\begin{proof}
		Define
		$$
		w_i(\xi):=u_i(\xi_i)^{-1} u_i(\xi_i \circ \delta_{u_i(\xi_i)^{-(p_i-1) / 2}} \xi) \quad \text { for }\|\xi\|_{\mathbb{H}^n}<\bar{r} u_i(\xi_i)^{\left(p_i-1\right) / 2 }.
		$$
		It satisfies the equations
		\begin{equation}	\label{eq:5.7 in asan}
		\begin{aligned}
		-\Delta_{\mathbb{H}^{n}}w_i(\xi)  =u_i(\xi_i)^{1-p_i}a_i(\xi_i \circ \delta_{u_i(\xi_i)^{-(p_i-1) / 2}} \xi)w_i(\xi)+w_i(\xi)^{p_i}\\ \text { for }\|\xi\|_{\mathbb{H}^n}<\bar{r} u_i(\xi_i)^{\left(p_i-1\right) / 2 } ,
		\end{aligned}
		\end{equation}
		\be\label{eq:5.8 5.9 in asan}
		w_i(0)  =1,\quad \nabla_{\mathbb{H}^n} w_i(0)  =0 , {\quad \partial_t w_i(0)=0},
		\ee
		\be\label{def of isolated}
		0<w_i(\xi)  <\bar{C}\|\xi\|_{\mathbb{H}^n}^{-2 /(p_i-1)} \quad \text { for }\|\xi\|_{\mathbb{H}^n}<\bar{r} u_i(\xi_i)^{\left(p_i-1\right) / 2 } ,
		\ee
	where \eqref{def of isolated} follows from the definition of isolated blow-up point.

		Now, for every fixed \(R>0\), we claim that
\[
\sup_{D_R}w_i\leq C_R
\]
for all sufficiently large \(i\).

Indeed, by Lemma \ref{Harnack ineq 2}, for every
\(0<r<1\) and all sufficiently large \(i\), we have
\[
\begin{aligned}
\max_{\partial D_r} w_i
\leq
\sup_{D_{2r}\setminus\overline{D_{r/2}}}w_i \leq
C_0\inf_{D_{2r}\setminus\overline{D_{r/2}}}w_i \leq
C_0\min_{\partial D_r}w_i,
\end{aligned}
\]
where \(C>0\) is independent of \(i\) and \(r\).

On the other hand, since \(a_i\geq0\) and \(w_i>0\),
\[
-\Delta_{\mathbb H^n}w_i
=
u_i(\xi_i)^{1-p_i}
a_i(
\xi_i\circ
\delta_{u_i(\xi_i)^{-(p_i-1)/2}}\xi
)w_i
+
w_i^{p_i}
\geq0.
\]
Hence, by the maximum principle,
\[
\min_{\partial D_r}w_i\leq C_1\min_{ D_r}w_i\leq
C_1 w_i(0)=C_1.
\]
Consequently,
\[
\max_{\partial D_r}w_i\leq C
\qquad\text{for every }0<r<1.
\]
It follows that
\[
\sup_{D_1}w_i\leq C.
\]
for any $i$ and some $C$. 
For \(1\leq\|\xi\|_{\mathbb H^n}\leq R\), the isolated blow-up
estimate gives
\[
w_i(\xi)
\leq
\bar{C}
\|\xi\|_{\mathbb H^n}^{-2/(p_i-1)}
\leq C.
\]
Therefore,
\[
\sup_{D_R}w_i\leq C_R.
\] 
By the interior subelliptic estimates and a standard bootstrap
argument, for every fixed \(R>0\),
\[
\|w_i\|_{\Gamma^{2,\alpha}(D_R)}\leq C_R.
\]
Therefore, after passing to a subsequence and using a diagonal
argument,
\[
w_i\rightarrow w
\qquad\text{in }\Gamma_{\mathrm{loc}}^{2,\alpha}(\mathbb H^n).
\]

To continue the proof, we need the following lemma. 
		
		\begin{lemma}\label{w's converges}
			There exists a subsequence of $\{w_i\}$ which converges in $C_{\text {loc }}^2(\mathbb{H}^{n})$ to a positive function $w$.
		\end{lemma}
		
		\begin{proof}
			See \cite[Proposition 4.5]{Afeltra} for proof.
		\end{proof}

		It is easy to see that $w$ satisfies
		$
		-\Delta_{\mathbb{H}^{n}}w(\xi)=w(\xi)^p.
		$
		By 
		Li-Liu-Ma \cite[Corollary 1.3]{LLM} or Li \cite[theorem 1.2]{LiJungangArxiv}, we obtain that $w$ is a Jerison-Lee bubble.

 Now, given $R_i \rightarrow \infty$ and $\varepsilon_i \rightarrow 0^{+}$, one can always choose a subsequence of $\{w_i\}$, such that \eqref{equivalent} holds and the proposition follows. 
	\end{proof}

	\begin{remark}
		Note that since passing to subsequences does not affect our proofs, in the rest of the paper we will always choose $R_i \rightarrow \infty$ with $R_i^{\tau_i}=1+o(1)$ first, and then $\varepsilon_i \rightarrow 0^{+}$as small as we wish (depending on $R_i$) and then choose our subsequence $\{u_i\}$ to work with. 
	\end{remark}

	\begin{proposition}\label{2.2 in jde}
		Under the hypotheses of Lemma \ref{Harnack ineq 2}, there exists some positive constant $C=C(n, A_0, A_1)$, such that,
		\begin{align*}
			u_i(\xi) \geq C u_i(\xi_i) \Lambda_{0,1}(\delta_{u_i(\xi_i)^{(p_i-1) / 2}} (\xi_i^{-1}\circ \xi)) 
		\end{align*}
		for all $d_{\mathbb{H}^n}(\xi, \xi_i) \leq 1$. In particular, for any $e \in \mathbb{H}^n$ with $\|e\|_{\mathbb{H}^n}=1$, we have
		$$
		u_i(\xi_i \circ e) \geq C^{-1} u_i(\xi_i)^{-1+((Q-2) / 2) \tau_i}.
		$$
	\end{proposition}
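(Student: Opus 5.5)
Since $a_i\ge 0$, each $u_i$ is a positive supersolution, $-\Delta_{\mathbb{H}^n}u_i\ge u_i^{p_i}\ge 0$ in $D_3$. The plan is a comparison argument on the annular region $\{r_i\le d_{\mathbb{H}^n}(\xi,\xi_i)\le 1\}$, where $r_i=R_iu_i(\xi_i)^{-(p_i-1)/2}$ comes from Proposition \ref{solution is bubble}. In the inner region $d_{\mathbb{H}^n}(\xi,\xi_i)\le r_i$ the claim follows directly from \eqref{equivalent}: there $u_i(\xi)\ge \frac12 u_i(\xi_i)\Lambda_{0,1}(\delta_{u_i(\xi_i)^{(p_i-1)/2}}(\xi_i^{-1}\circ\xi))$ once $\varepsilon_i$ is small. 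In the outer region the bubble is comparable to the fundamental solution, namely
\[
u_i(\xi_i)\Lambda_{0,1}\bigl(\delta_{u_i(\xi_i)^{(p_i-1)/2}}(\xi_i^{-1}\circ\xi)\bigr)\approx u_i(\xi_i)^{1-\frac{(p_i-1)(Q-2)}{2}}d_{\mathbb{H}^n}(\xi,\xi_i)^{2-Q}.
\]
Because $R_i^{\tau_i}=1+o(1)$, the exponent $1-\frac{(p_i-1)(Q-2)}{2}$ equals $-1+\frac{Q-2}{2}\tau_i$. It therefore suffices to show $u_i(\xi)\ge c\,u_i(\xi_i)^{-1+\frac{Q-2}{2}\tau_i}\,d_{\mathbb{H}^n}(\xi,\xi_i)^{2-Q}$ on the annulus.

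For the outer bound I would use the barrier $\varphi_i(\xi)=\mu_i\bigl(d_{\mathbb{H}^n}(\xi,\xi_i)^{2-Q}-1\bigr)$. By left invariance, $\|\cdot\|_{\mathbb{H}^n}^{2-Q}$ is, up to a constant, the fundamental solution of $\Delta_{\mathbb{H}^n}$, so $\varphi_i$ is $\Delta_{\mathbb{H}^n}$-harmonic away from $\xi_i$ and vanishes on $\partial D_1(\xi_i)$. On $\partial D_{r_i}(\xi_i)$ the bubble approximation gives $u_i\ge c\,u_i(\xi_i)R_i^{2-Q}$. Choosing $\mu_i=c\,u_i(\xi_i)R_i^{2-Q}r_i^{Q-2}=c\,u_i(\xi_i)^{-1+\frac{Q-2}{2}\tau_i}$ then makes $\varphi_i\le u_i$ on $\partial D_{r_i}(\xi_i)$, while $\varphi_i=0<u_i$ on $\partial D_1(\xi_i)$. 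Since $-\Delta_{\mathbb{H}^n}(u_i-\varphi_i)\ge0$, Bony's weak maximum principle for the sub-Laplacian on the Korányi annulus gives $u_i\ge\varphi_i$ there. Note that the potential term has the favorable sign, so no smallness of the domain is needed; I use $-\Delta u_i\ge 0$ rather than the operator $\Delta+a_i$.

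One issue remains: $\varphi_i$ degenerates near $\partial D_1(\xi_i)$, while the statement asks for the bound for all $d_{\mathbb{H}^n}\le 1$, including the sphere itself. To handle this, I would run the comparison on the larger annulus $D_2(\xi_i)\setminus D_{r_i}(\xi_i)$ (which sits inside $D_3$ for large $i$) with $\varphi_i=\mu_i\bigl(d^{2-Q}-2^{2-Q}\bigr)$. On $d\le 1$ this barrier is $\ge c\mu_i d^{2-Q}$. Combining with the inner estimate yields the first claim, with a constant depending only on $n$ (and implicitly on $A_0,A_1$ through Proposition \ref{solution is bubble}). The particular consequence follows by evaluating at $\xi=\xi_i\circ e$ with $\|e\|_{\mathbb{H}^n}=1$, where the bubble expression is $\approx u_i(\xi_i)^{-1+\frac{Q-2}{2}\tau_i}$ by the computation above.

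The main obstacle is the inner boundary bound. I need $u_i\ge c\,u_i(\xi_i)R_i^{2-Q}$ uniformly on the Korányi sphere $\partial D_{r_i}(\xi_i)$. This requires the $\Gamma^{2,\alpha}$ closeness in \eqref{equivalent} to beat the smallest value of $\Lambda_{0,1}$ on $\partial D_{R_i}$, which is $\sim R_i^{2-Q}$ by the explicit formula \eqref{talanti bubble} (note the anisotropic bubble is still comparable to $\|\cdot\|^{2-Q}$, uniformly in direction). This works provided $\varepsilon_i\ll R_i^{2-Q}$, which is permitted by the remark choosing $\varepsilon_i$ after $R_i$.
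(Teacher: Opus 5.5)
Your proposal is correct and follows the paper's proof: both use the fundamental-solution barrier $\mu_i\bigl(d_{\mathbb{H}^n}(\xi,\xi_i)^{2-Q}-\rho_0^{2-Q}\bigr)$ on the annulus $r_i\le d_{\mathbb{H}^n}(\xi,\xi_i)\le\rho_0$, with $\mu_i$ matched to the bubble lower bound $u_i\ge c\,u_i(\xi_i)R_i^{2-Q}$ on $\partial D_{r_i}(\xi_i)$, and conclude by Bony's maximum principle using $-\Delta_{\mathbb{H}^n}u_i\ge 0$ (the paper takes $\rho_0=3/2$ where you take $\rho_0=2$, for the same reason of keeping the barrier nondegenerate up to $d_{\mathbb{H}^n}=1$). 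Your remark that the inner-boundary bound requires $\varepsilon_i\ll R_i^{2-Q}$ spells out a point the paper uses without comment.
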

	
	\begin{proof}
		Let us denote $r_i=R_i u_i(\xi_i)^{-(p_i-1) / 2}$. It follows from Proposition \ref{solution is bubble} that for all $d_{\mathbb{H}^n}(\xi, \xi_i)=r_i$,
		$$
		\begin{aligned}
			u_i(\xi) & \geq C^{-1} u_i(\xi_i)(u_i(\xi_i)^{2(p_i-1)}(t-\hat{t}+2 \sum_{i=1}^n (y_i \hat{x}_i-x_i \hat{y}_i))^2\\
			&\quad +(1+u_i(\xi_i)^{p_i-1} |z-\hat{z}|^2)^2)^{(2-Q) / 4} , \\
			&=C^{-1}u_i(\xi_i)(u_i(\xi_i)^{2(p_i-1)}r_i^4+1+2u_i(\xi_i)^{p_i-1} |z-\hat{z}|^2)^{(2-Q) / 4}\\
			& =C^{-1}u_i(\xi_i)(R_i^4+1+2u_i(\xi_i)^{p_i-1} |z-\hat{z}|^2)^{(2-Q) / 4}\\
			& \geq C^{-1} u_i(\xi_i) R_i^{2-Q},
		\end{aligned}
		$$
		where $\xi=(z, t)=\left(x_1, \cdots, x_n, y_1, \cdots, y_n, t\right)$ and $\xi_i=(\hat{z}, \hat{t})=(\hat{x}_1, \cdots, \hat{x}_n, \hat{y}_1, \cdots, \hat{y}_n, \hat{t})$.
		Set
$$
\varphi_i(\xi)=C^{-1} R_i^{2-Q} r_i^{Q-2} u_i(\xi_i)(d_{\mathbb{H}^n}(\xi,\xi_i)^{2-Q}-(\frac{3}{2})^{2-Q}) \quad \text{ for } r_i \leq d_{\mathbb{H}^n}(\xi,\xi_i)\leq  \frac{3}{2}.
$$
Clearly, $\varphi_i$ satisfies
$$
\begin{cases}
\Delta_{\mathbb{H}^n} \varphi_i(\xi)  =0 \geq \Delta_{\mathbb{H}^n} u_i(\xi)  & \text{ for } r_i \leq d_{\mathbb{H}^n}(\xi,\xi_i) \leq \frac{3}{2}, \\
\varphi_i(\xi)  =0 \leq u_i(\xi)  & \text{ for } d_{\mathbb{H}^n}(\xi,\xi_i)=\frac{3}{2}, \\
\varphi_i(\xi)  \leq u_i(\xi)  & \text{ for }d_{\mathbb{H}^n}(\xi,\xi_i)=r_i .
\end{cases}
$$
It follows from Bony's maximum principle \cite{Bony} that

$$
u_i(\xi) \geq \varphi_i(\xi) \quad \text { for }  r_i \leq d_{\mathbb{H}^n}(\xi,\xi_i) \leq \frac{3}{2}.
$$
Hence the proposition follows immediately from above and Proposition \ref{solution is bubble}.
	\end{proof}
	
	\begin{lemma}\label{lemma 2.2 in jde}
		Under the hypotheses of Lemma \ref{Harnack ineq 2}, and assume in addition that $\xi_i \rightarrow 0$ is also an isolated simple blow-up point with the constant $\rho$. Then, for any fixed $0<\delta\leq \frac{Q-2}{2}$ sufficiently small, we have
		\be\label{5.23 in asan}
		u_i(\xi) \leq C u_i(\xi_i)^{-\lambda_i} d_{\mathbb{H}^n}(\xi,\xi_i)^{2-Q+\delta} \quad \text { for } r_i \leq d_{\mathbb{H}^n}(\xi,\xi_i) \leq 1,
		\ee
		where $\lambda_i=(Q-2 -\delta)(p_i-1) / 2 -1$ and $C>0$ depends only on $n$, $A_0$, $A_1$, $\delta$ and $\rho$.
	\end{lemma}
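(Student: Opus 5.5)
The argument follows the Euclidean scheme of Schoen / Li--Zhu / Jin--Li--Xiong. First I use the isolated simple hypothesis to obtain decay on an intermediate annulus, then I compare with a barrier built from two homogeneous powers of a sub-Laplacian-adapted function.

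\textbf{Step 1: $w_i$ is small on the annulus.} By Proposition \ref{solution is bubble}, $\bar w_i^{\,G}(r)$ coincides (up to $o(1)$) with the weighted mean of the rescaled bubble. Via Lemma \ref{properties of Garofalo mean}(ii),
\[
r^{\frac{2}{p_i-1}}\mathcal M_0(\Lambda_{0,1},\,r\,u_i(\xi_i)^{(p_i-1)/2}).
\]
This function has its unique critical point at a radius of order $u_i(\xi_i)^{-(p_i-1)/2}$ and is strictly decreasing afterwards. Since $\xi_i$ is isolated simple, $\bar w_i^{\,G}$ has no other critical point in $(0,\rho)$, so it is nonincreasing on $(r_i,\rho)$. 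Lemma \ref{Harnack ineq 2} converts the mean bound into a pointwise bound. Combined with the smallness of $\bar w_i^{\,G}(r_i)\approx R_i^{-(Q-2)/2}$, this gives
\[
u_i(\xi)\le C R_i^{-\frac{Q-2}{2}}\,d_{\mathbb H^n}(\xi,\xi_i)^{-\frac{2}{p_i-1}}\quad\text{for } r_i\le d_{\mathbb H^n}(\xi,\xi_i)\le\rho .
\]
Consequently $u_i^{p_i-1}(\xi)\le C R_i^{-2+o(1)}d_{\mathbb H^n}(\xi,\xi_i)^{-2}$ there, so on this annulus the linear operator $L_i:=\Delta_{\mathbb H^n}+a_i+u_i^{p_i-1}$ is a small perturbation of $\Delta_{\mathbb H^n}$.

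\textbf{Step 2: barrier.} This is the main obstacle. In $\mathbb R^n$ one uses $M|x|^{-\delta}+A|x|^{2-n+\delta}$, but $\Delta_{\mathbb H^n}\|\xi\|^{-\mu}$ contains the factor $\psi=|z|^2/\|\xi\|^2$, which vanishes on the $t$-axis. There the negative term degenerates, and a strict supersolution built from the Korányi norm alone fails.

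Following Uguzzoni, I will instead use cylindrical functions. Let $\eta=\xi_i^{-1}\circ\xi$ and set
\[
f_\mu(\eta)=\big(|z|^2+\kappa\|\eta\|^2\big)^{-\mu/2}
\]
for a small parameter $\kappa$ (equivalently, a norm of the form $(|z|^4+\beta t^2)^{1/4}$ combined with $|z|^2$). The goal is to show that for $0<\mu<Q-2$,
\[
\Delta_{\mathbb H^n}f_\mu\le -c(\mu,n)\,\|\eta\|^{-2}f_\mu ,
\]
with $c>0$ uniform on the whole sphere, including the characteristic directions. The computation gives a term $\mu(\mu-(Q-2))|z|^2$-type plus a $\kappa$-term whose sign is controlled when $\kappa$ is small. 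This step should be checked first.

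With this in hand, $\Phi_i=M_i f_\delta+A\,u_i(\xi_i)^{-\lambda_i}f_{Q-2-\delta}$ satisfies $L_i\Phi_i\le 0$ on $r_i\le d\le\rho$. Here I use Step 1 and $\|a_i\|_\infty\le A_0$, shrinking $\rho$ if necessary so that $a_i\le c/2\cdot d^{-2}$.

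\textbf{Step 3: comparison.} On $d=r_i$ the bubble profile gives $u_i\le C u_i(\xi_i)R_i^{2-Q}$, which is dominated by the second term once $A$ is large. Set $M_i=2\max_{\partial D_\rho(\xi_i)}u_i$; on $d=\rho$ the first term dominates. Since $\Phi_i>0$, the maximum principle for $L_i$ applies, and Bony's principle gives $u_i\le\Phi_i$ on the annulus.

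\textbf{Step 4: bound $M_i$.} By Harnack, $M_i\le C u_i(\xi_i\circ\delta_\rho e)$. Using the monotonicity of $\bar w_i^{\,G}$, $M_i\,\theta^{-\delta}$ is controlled by $\theta^{2/(p_i-1)}\bar u_i^{\,G}(\theta)$ for small fixed $\theta$. That quantity is in turn bounded by $\Phi_i$ at $\theta$, which gives
\[
M_i\le C\theta^{\delta}\big(M_i\theta^{-\delta}\cdot\theta^{2/(p_i-1)}\big)+C u_i(\xi_i)^{-\lambda_i}.
\]
Choosing $\theta$ small absorbs the first term, so $M_i\le Cu_i(\xi_i)^{-\lambda_i}$. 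Substituting back gives \eqref{5.23 in asan} for $d\le\rho$. For $\rho\le d\le1$, the estimate follows from Harnack (Lemma \ref{Harnack ineq 2}) and the bound on $\partial D_\rho(\xi_i)$.
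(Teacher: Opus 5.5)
Steps 1, 3 and 4 follow the paper's proof. However, the barrier in Step 2 fails in the regime you chose, so as written the argument has a gap.

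\textbf{Why small $\kappa$ fails.} For small $\kappa$, $f_\mu=(|z|^2+\kappa\|\eta\|^2)^{-\mu/2}$ is close to the cylindrical function $|z|^{-\mu}$. On functions of $z$ alone, $X_j,Y_j$ act as $\partial_{x_j},\partial_{y_j}$, so $\Delta_{\mathbb H^n}|z|^{-\mu}$ is the Euclidean Laplacian on $\mathbb R^{2n}$, namely $\mu(\mu-(Q-4))|z|^{-\mu-2}$. The relevant threshold is therefore $Q-4=2n-2$, not $Q-2$. This quantity is positive for the decaying exponent $\mu=Q-2-\delta$ that your second term needs, and when $n=1$ it is positive for every $\mu>0$.

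To see this exactly, set $\psi=|z|^2/\|\eta\|^2$ and use
\[
\Delta_{\mathbb H^n}|z|^2=4n,\quad \Delta_{\mathbb H^n}\|\eta\|^2=2Q\psi,\quad |\nabla_{\mathbb H^n}|z|^2|^2=|\nabla_{\mathbb H^n}\|\eta\|^2|^2=4|z|^2,\quad \nabla_{\mathbb H^n}|z|^2\cdot\nabla_{\mathbb H^n}\|\eta\|^2=4\psi|z|^2 .
\]
Then
\[
\Delta_{\mathbb H^n}f_\mu=\frac{\mu\,B_\mu(\psi)}{2(\psi+\kappa)^2}\,\|\eta\|^{-2}f_\mu,
\qquad
B_\mu(s)=-(s+\kappa)(4n+2Q\kappa s)+(2\mu+4)\,s\,(1+2\kappa s+\kappa^2),
\]
and
\[
B_\mu(1)=2(1+\kappa)\bigl[(\mu+4-Q)-\kappa(Q-2-\mu)\bigr].
\]
For $\mu=Q-2-\delta$ this equals $2(1+\kappa)(2-\delta-\kappa\delta)$, which is positive unless $\kappa>(2-\delta)/\delta$. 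So for small $\kappa$ your second barrier is strictly subharmonic on the set where the $t$-component of $\eta=\xi_i^{-1}\circ\xi$ vanishes. The comparison in Step 3 then cannot be carried out.

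\textbf{The repair.} Take $\kappa$ \emph{large} instead. For $\kappa>(2-\delta)/\delta$:
\begin{itemize}
\item $B_{Q-2-\delta}$ is convex in $s$, with $B(0)=-4n\kappa<0$ and $B(1)<0$, so it is negative on $[0,1]$.
\item $B_\delta<0$ on $[0,1]$ as well.
\end{itemize}
Hence both $f_\delta$ and $f_{Q-2-\delta}$ satisfy $\Delta_{\mathbb H^n}f_\mu\le -c(n,\delta)\|\eta\|^{-2}f_\mu$, and the rest of your argument goes through.

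In this regime $f_\mu\approx\kappa^{-\mu/2}\|\eta\|^{-\mu}\bigl(1-\tfrac{\mu}{2\kappa}\psi\bigr)$. This is essentially the paper's barrier $d_{\mathbb H^n}(\xi,\xi_i)^{-\mu}-\varepsilon_0|z-z_i|^2d_{\mathbb H^n}(\xi,\xi_i)^{-\mu-2}$ with $\varepsilon_0$ small. The paper's form is more convenient. Its sub-Laplacian is
\[
\|\eta\|^{-\mu-2}\bigl[-\mu(Q-2-\mu)\psi-4n\varepsilon_0+\varepsilon_0(\mu+2)(Q-\mu)\psi^2\bigr],
\]
and the bracket is invariant under $\mu\mapsto Q-2-\mu$. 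So one small $\varepsilon_0$, proportional to $\delta$, handles both exponents at once.

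\textbf{Minor slip in Step 4.} The absorption inequality should read $M_i\le C\theta^{\frac{2}{p_i-1}-\delta}M_i+C_\theta u_i(\xi_i)^{-\lambda_i}$. Absorbing the first term uses $\frac{2}{p_i-1}>\delta$.
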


	\begin{proof}
		Note that
		$$
		\Lambda_{0,1}(\xi)=(1+2 \bar c_n|z|^2+\bar c_n^2(|z|^4+t^2))^{(2-Q) / 4} \leq C\|\xi\|_{\mathbb{H}^n}^{2-Q}
		$$
		and from Proposition \ref{solution is bubble}, we have
		\begin{align}\label{5.24 in sasan}
			u_i(\xi) & \leq C u_i(\xi_i) \Lambda(u_i(\xi_i)^{(p_i-1) / 2}(\xi_i^{-1} \circ \xi))\nonumber \\
			& \leq C u_i(\xi_i)(u_i(\xi_i)^{(p_i-1) / 2} r_i)^{2-Q}\nonumber \\
			& =C u_i(\xi_i) R_i^{2-Q}\quad \text{ for }d_{\mathbb{H}^n} (\xi,\xi_i)=r_i=R_i u_i(\xi_i)^{-(p_i-1) / 2 }.
		\end{align}
		
Let $\bar u_i^{,G}(r)$ be defined as in \eqref{weighted averaged profiles}. It follows from the assumption that $\xi_i\to0$ is an isolated simple blow-up point and Proposition \ref{solution is bubble} that there exists $\rho>0$, independent of $i$, such that, for all sufficiently large $i$,
\be\label{weighted profile is decreasing}
r^{\frac{2}{p_i-1}}\bar u_i^{,G}(r)
=\bar w_i^{,G}(r)
\quad\text{is strictly decreasing for}\quad
r_i<r<\rho.
\ee

Let
\[
s=d_{\mathbb H^n}(\xi,\xi_i)
\quad \text{ for }r_i<s<\rho.
\]
By Lemma \ref{Harnack ineq 2} and the definition of the weighted mean,
\[
\max_{\partial D_s(\xi_i)}u_i
\leq C\min_{\partial D_s(\xi_i)}u_i
\leq C\bar u_i^{\,G}(s).
\]
Therefore, using \eqref{5.24 in sasan},
\[
\begin{aligned}
s^{\frac{2}{p_i-1}}u_i(\xi)
&\leq C s^{\frac{2}{p_i-1}}\bar u_i^{\,G}(s)
=C\bar w_i^{\,G}(s)\\
&\leq C\bar w_i^{\,G}(r_i)
=C r_i^{\frac{2}{p_i-1}}\bar u_i^{\,G}(r_i)\\
&\leq C r_i^{\frac{2}{p_i-1}}
 u_i(\xi_i)R_i^{2-Q}\\
&=C R_i^{\frac{2}{p_i-1}+2-Q}
\leq C R_i^{\frac{2-Q}{2}+o(1)}.
\end{aligned}
\]
Here we used $R_i^{\tau_i}=1+o(1)$ in the last inequality. Hence
\be\label{eq: new}
u_i(\xi)^{p_i-1}
\leq
O(R_i^{-2+o(1)})
 d_{\mathbb H^n}(\xi,\xi_i)^{-2}
\quad\text{for }
 r_i\leq d_{\mathbb H^n}(\xi,\xi_i)\leq\rho.
\ee

		Consider the operator
		$$
		\mathcal{L}_i =\Delta_{\mathbb{H}^{n}} +a_i + u_i^{p_i-1} .
		$$
		Now, we look for a supersolution $\varphi_i$ of the operator $\mathcal{L}_i$ such that $\varphi_i \geq u_i$ on the boundary of the annulus $\mathscr{A}_i$, where
		$
		\mathscr{A}_i:=\{\xi \in \mathbb{H}^{n}: r_i<d_{\mathbb{H}^n}(\xi, \xi_i)<\rho_\delta\}.
		$
		Consider the function $d_{\mathbb{H}^n}(\xi,\xi_i)^{-\mu}-\varepsilon_0 |z-z_i|^2d_{\mathbb{H}^n}(\xi,\xi_i)^{-\mu-2}$ for $\xi=(z,t)\in\mathscr{A}_i$, $\xi_i=(z_i,t_i)$ and $\varepsilon_0$ is a small constant to be chosen later. By direct calculations, we have
		$$
		\begin{aligned}
			&\quad \mathcal{L}_i(d_{\mathbb{H}^n}(\xi,\xi_i)^{-\mu}-\varepsilon_0 |z-z_i|^2d_{\mathbb{H}^n}(\xi,\xi_i)^{-\mu-2}) \\
			&=\Delta_{\mathbb{H}^n}(d_{\mathbb{H}^n}(\xi, \xi_i)^{-\mu}-\varepsilon_0 |z-z_i|^2d_{\mathbb{H}^n}(\xi,\xi_i)^{-\mu-2})\\
			&\quad + (a_i+u_i^{p_i-1})(d_{\mathbb{H}^n}(\xi, \xi_i)^{-\mu}-\varepsilon_0 |z-z_i|^2d_{\mathbb{H}^n}(\xi,\xi_i)^{-\mu-2})\\
			& \leq-\mu(Q-2-\mu)|z-z_i|^2 d_{\mathbb{H}^n}(\xi, \xi_i)^{-\mu-4}\\
			& \quad -4\varepsilon_0nd_{\mathbb{H}^n}(\xi, \xi_i)^{-\mu-2}
			 +\varepsilon_0(\mu+2)(Q-\mu)|z-z_i|^4 d_{\mathbb{H}^n}(\xi, \xi_i)^{-\mu-6}\\
			& \quad +(a_i+O( R_i^{-2+o(1)}) d_{\mathbb{H}^n}(\xi, \xi_i)^{-2})(d_{\mathbb{H}^n}(\xi,\xi_i)^{-\mu}-\varepsilon_0 |z-z_i|^2d_{\mathbb{H}^n}(\xi,\xi_i)^{-\mu-2}),
		\end{aligned}
		$$
		where we have used \eqref{eq: new} in the inequality above.
		
		We first consider the case $|z-z_i|=0$. It follows that
		$$
		\begin{aligned}
			&\quad \mathcal{L}_i(d_{\mathbb{H}^n}(\xi,\xi_i)^{-\mu}-\varepsilon_0 |z-z_i|^2d_{\mathbb{H}^n}(\xi,\xi_i)^{-\mu-2}) \\
			& \leq-4\varepsilon_0nd_{\mathbb{H}^n}(\xi, \xi_i)^{-\mu-2}+(a_i+C R_i^{-2+o(1)} d_{\mathbb{H}^n}(\xi, \xi_i)^{-2})d_{\mathbb{H}^n}(\xi,\xi_i)^{-\mu},
		\end{aligned}
		$$
		Then, we consider the case $|z-z_i|\neq 0$. Noting that $\frac{|z-z_i|^2}{d_{\mathbb{H}^n}(\xi,\xi_i)^2}\leq 1$, it follows that
		$$
\begin{aligned}
& \mathcal{L}_i(d_{\mathbb{H}^n}(\xi, \xi_i)^{-\mu}-\varepsilon_0|z-z_i|^2 d_{\mathbb{H}^n}(\xi, \xi_i)^{-\mu-2}) \\
\leq\ & |z-z_i|^2d_{\mathbb{H}^n}(\xi, \xi_i)^{-\mu-6} (\varepsilon_0(\mu+2)(Q-\mu)|z-z_i|^2 -\mu(Q-2-\mu) d_{\mathbb{H}^n}(\xi, \xi_i)^{2})\\
& -4 \varepsilon_0 n d_{\mathbb{H}^n}(\xi, \xi_i)^{-\mu-2}  +(a_i+u_i^{p_i-1})(d_{\mathbb{H}^n}(\xi, \xi_i)^{-\mu}-\varepsilon_0|z-z_i|^2 d_{\mathbb{H}^n}(\xi, \xi_i)^{-\mu-2}).
\end{aligned}
$$

Fix \(0<\delta\ll 1\). Choose \(\varepsilon_0=\kappa\delta>0\) with \(\kappa\) sufficiently small. 
Shrinking \(\rho\) if necessary, we may find a fixed \(0<\rho_\delta\le \rho\), independent of \(i\), such that for all sufficiently large \(i\),
\be\label{supersoluton 1}
\mathcal{L}_i(d_{\mathbb H^n}(\xi,\xi_i)^{-\delta}-\varepsilon_0 |z-z_i|^2d_{\mathbb H^n}(\xi,\xi_i)^{-\delta-2})\le 0
\ee
for \(r_i\le d_{\mathbb H^n}(\xi,\xi_i)\le \rho_\delta\). Similarly,
\be\label{supersoluton 2}
\mathcal{L}_i(d_{\mathbb H^n}(\xi,\xi_i)^{2-Q+\delta}-\varepsilon_0 |z-z_i|^2d_{\mathbb H^n}(\xi,\xi_i)^{-Q+\delta})\le 0
\ee
for \(r_i\le d_{\mathbb H^n}(\xi,\xi_i)\le \rho_\delta\).

	Now set $\mathfrak{M}_i=\max _{\partial D_{\rho_{\delta}}(\xi_i)}  u_i$, $\lambda_i=((Q-2-\delta)(p_i-1))/2-1$ and
		$$
		\begin{aligned}
		\varphi_i(\xi)&=\frac{\mathfrak{M}_i}{1-\varepsilon_0} \rho_{\delta}^{\delta} (d_{\mathbb{H}^n}(\xi, \xi_i)^{-\delta}-\varepsilon_0 \frac{|z-z_i|^2}{d_{\mathbb{H}^n}(\xi,\xi_i)^2}d_{\mathbb{H}^n}(\xi,\xi_i)^{-\delta})\\
		&\quad +B u_i(\xi_i)^{-\lambda_i} (d_{\mathbb{H}^n}(\xi, \xi_i)^{2-Q+\delta}-\varepsilon_0 \frac{|z-z_i|^2}{d_{\mathbb{H}^n}(\xi,\xi_i)^2}d_{\mathbb{H}^n}(\xi,\xi_i)^{2-Q+\delta}) \quad \text { in }  \mathscr{A}_i,
		\end{aligned}
		$$
		 where $B>1$ will be chosen later. By \eqref{supersoluton 1} and \eqref{supersoluton 2}, we see that $\varphi_i$ is a supersolution of $\mathcal{L}_i$ in $\mathscr{A}_i$. Furthermore,
\begin{equation}\label{5.28}
\varphi_i(\xi) \geq \mathfrak{M}_i \geq u_i(\xi) \quad \text { for } d_{\mathbb{H}^n}\left(\xi, \xi_i\right)=\rho_{\delta} .
\end{equation}
Also,
\begin{equation}\label{5.29}
\begin{aligned}
\varphi_i(\xi) & \geq B(1-\varepsilon_0) u_i\left(\xi_i\right)^{-\lambda_i} d_{\mathbb{H}^n}\left(\xi, \xi_i\right)^{2-Q+\delta} \\
& \geq B(1-\varepsilon_0) u_i\left(\xi_i\right) R_i^{2-Q} \quad \text { for } d_{\mathbb{H}^n}\left(\xi, \xi_i\right)=r_i.
\end{aligned}
\end{equation}
Comparing \eqref{5.29} with \eqref{5.24 in sasan}, we choose $B$ large enough such that $B(1-\varepsilon_0) \geq C$, where $C$ is the constant occurring in equation \eqref{5.24 in sasan}. With this choice of $B$, we have
$$
\varphi_i(\xi) \geq u_i(\xi)\quad \text { for } d_{\mathbb{H}^n}\left(\xi, \xi_i\right)=r_i .
$$
From \eqref{5.28}, \eqref{5.29} and the maximum principle, it follows that
\begin{equation}\label{ui phi}
u_i(\xi) \leq \varphi_i(\xi) \quad\text { for all } r_i \leq d_{\mathbb{H}^n}\left(\xi, \xi_i\right) \leq \rho_{\delta} .
\end{equation}
By Lemma \ref{Harnack ineq 2} and the definition of $\bar u_i^{\,G}$,
\[
\mathfrak M_i
\leq C\bar u_i^{\,G}(\rho_\delta).
\]
Since $\bar w_i^{\,G}$ is decreasing on $(r_i,\rho_\delta)$, for every
$r_i<s<\rho_\delta$,
\[
\begin{aligned}
\rho_\delta^{\frac{2}{p_i-1}}\mathfrak M_i
&\leq C\rho_\delta^{\frac{2}{p_i-1}}
\bar u_i^{\,G}(\rho_\delta)
=C\bar w_i^{\,G}(\rho_\delta)\\
&\leq C\bar w_i^{\,G}(s)
=C s^{\frac{2}{p_i-1}}\bar u_i^{\,G}(s).
\end{aligned}
\]
By \eqref{ui phi},
\[
\bar u_i^{\,G}(s)
\leq
\frac{\mathfrak M_i}{1-\varepsilon_0}
\rho_\delta^\delta s^{-\delta}
+B u_i(\xi_i)^{-\lambda_i}s^{2-Q+\delta}.
\]
Thus,
\[
\begin{aligned}
\rho_\delta^{\frac{2}{p_i-1}}\mathfrak M_i
&\leq
C s^{\frac{2}{p_i-1}}
\left[
\frac{\mathfrak M_i}{1-\varepsilon_0}
\rho_\delta^\delta s^{-\delta}
+B u_i(\xi_i)^{-\lambda_i}s^{2-Q+\delta}
\right].
\end{aligned}
\]
Choose $s_0=s_0(\rho,n,A_0,A_1)>0$ sufficiently small so that
\[
C s_0^{\frac{2}{p_i-1}}
\rho_\delta^\delta s_0^{-\delta}
<
\frac{1-\varepsilon_0}{2}
\rho_\delta^{\frac{2}{p_i-1}}.
\]
Taking $s=s_0$, we obtain
\be\label{2.12}
\mathfrak M_i\leq C u_i(\xi_i)^{-\lambda_i}.
\ee
Combining \eqref{ui phi}, \eqref{2.12}, and Lemma
\ref{Harnack ineq 2}, we complete the proof of Lemma
\ref{lemma 2.2 in jde}.

	\end{proof}

	\begin{lemma}\label{lemma 5.9 in asan}
		Under the hypothesis of Lemma \ref{lemma 2.2 in jde}, we have
		
		$$
		\tau_i=O(u_i(\xi_i)^{-\min{(\frac{4}{Q-2},1)}+o(1)}),
		$$
		and therefore
		$$
		u_i(\xi_i)^{\tau_i}=1+o(1).
		$$
	\end{lemma}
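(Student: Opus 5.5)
The natural route is the Pohozaev identity of Corollary \ref{pohozaev on heisenberg}, applied to $u_i$ on the ball $D_r(\xi_i)$ with a small fixed $r$. Since the exponent is subcritical, we must use the version with exponent $p_i$ in place of $p$. Redoing the computation leading to \eqref{eq:pohozaev ideneity 2}, the coefficient of $\int u_i^{p_i+1}$ becomes
$$\frac{Q}{p_i+1}-\frac{Q-2}{2},$$
which is a positive multiple of $\tau_i$; precisely, it equals $c(n)\tau_i+O(\tau_i^2)$ with $c(n)>0$. We also centre the dilation vector field $\mathcal X$ at $\xi_i$, i.e. we conjugate by left translation, which preserves $\Delta_{\mathbb H^n}$. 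The identity then reads
$$c\,\tau_i\int_{D_r(\xi_i)}u_i^{p_i+1}\le \int_{\partial D_r(\xi_i)}\mathcal D+C\int_{\partial D_r(\xi_i)}(a_iu_i^2+u_i^{p_i+1})+\Big|\int_{D_r(\xi_i)}\big(a_i+\tfrac12\mathcal X a_i\big)u_i^2\Big|.$$

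\textbf{Lower bound for the left side.} By Proposition \ref{solution is bubble}, on $D_{r_i}(\xi_i)$ the function $u_i$ is close to the rescaled bubble. Since $u_i(\xi_i)^{\tau_i}$ is at least bounded below there, we get $\int_{D_r(\xi_i)}u_i^{p_i+1}\ge c>0$. Hence the left side is $\ge c\,\tau_i$.

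\textbf{Boundary terms.} Lemma \ref{lemma 2.2 in jde} gives $u_i\le Cu_i(\xi_i)^{-\lambda_i}$ on $\partial D_r(\xi_i)$, with $\lambda_i=1-\delta/2\cdot$ and small corrections, so $\lambda_i=1-O(\delta)-O(\tau_i)$. The boundary integrals involve $u_i\,\nabla_{\mathbb H}u_i$, $|\nabla_{\mathbb H}u_i|^2$ and $u_i^2$. To control $\nabla_{\mathbb H}u_i$ on the sphere, rescale $u_i(\xi_i)^{\lambda_i}u_i$ on an annulus. The rescaled functions are bounded, and interior subelliptic estimates give the gradient bound. Altogether the boundary terms are $O(u_i(\xi_i)^{-2+O(\delta)})$.

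\textbf{Potential term.} We bound $\int_{D_r(\xi_i)}u_i^2$ by splitting the ball.
\begin{itemize}
\item Inside $D_{r_i}(\xi_i)$, use the bubble: the contribution is $\le Cu_i(\xi_i)^{2-4/(Q-2)\cdot\frac{Q}{2}\cdots}$. Concretely, with scaling $\lambda=u_i(\xi_i)^{(p_i-1)/2}$, we have $\int\Lambda_{\xi_i,\lambda}^2\approx\lambda^{-2}\int_{D_{R_i}}\Lambda_{0,1}^2$. This is $\lambda^{-2}$ when $Q>4$, $\lambda^{-2}\ln R_i$ when $Q=4$, and $\lambda^{-2}R_i^{4-Q}$ when $Q<4$.
\item On $r_i\le d\le r$, use Lemma \ref{lemma 2.2 in jde}: the contribution is $u_i(\xi_i)^{-2\lambda_i}\int s^{4-2Q+2\delta}s^{Q-1}\,ds$. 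This gives $u_i(\xi_i)^{-4/(Q-2)+o(1)}$ when $Q>4$. In the borderline case $n=1$, i.e. $Q=4$, it gives $u_i(\xi_i)^{-2+O(\delta)}=u_i(\xi_i)^{-1\cdot\ldots}$.
\end{itemize}
In every case the total is $O(u_i(\xi_i)^{-\min(4/(Q-2),1)+o(1)})$, where the $o(1)$ absorbs $\delta$ (taken arbitrarily small) and the log factors. The $\min$ reflects this dimension-dependent integrability: it is $4/(Q-2)$ in high dimensions and $1$ for small $Q$, which matches the expected exponent.

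\textbf{The main obstacle.} The difficulty is the circularity: both the lower bound $\int u_i^{p_i+1}\ge c$ and the exponents $\lambda_i\to 1$ need $u_i(\xi_i)^{\tau_i}$ to be controlled. I would first use $\lambda_i\ge 1-O(\delta)$, which holds since $p_i\le p$, and $u_i(\xi_i)^{\tau_i}\ge1$. These give a crude bound $\tau_i\le Cu_i(\xi_i)^{-c'}$. From it, $\tau_i\ln u_i(\xi_i)\to0$, i.e. $u_i(\xi_i)^{\tau_i}=1+o(1)$, and feeding this back in yields the sharp exponent. The bubble lower bound must be stated with $u_i(\xi_i)^{\tau_i}$ factors tracked, using the remark's choice $R_i^{\tau_i}=1+o(1)$.
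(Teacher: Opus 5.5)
Your proposal is correct and follows essentially the paper's route: the Pohozaev identity centred at $\xi_i$, the bubble lower bound $\int u_i^{p_i+1}\geq C\,u_i(\xi_i)^{\tau_i(Q-2)/2}\geq C$, splitting $\int u_i^2$ into the bubble core and the annulus controlled by Lemma \ref{lemma 2.2 in jde}, boundary terms of order $u_i(\xi_i)^{-2+O(\delta)+o(1)}$ via subelliptic estimates, and then a fixed small $\delta$. One small correction: $\lambda_i\geq 1-O(\delta)$ follows from $\tau_i\to 0$, not from $p_i\leq p$ (which gives the opposite inequality $\lambda_i\leq 1-\frac{2\delta}{Q-2}$); and the ``circularity'' you raise is not a real obstacle, since any factor $u_i(\xi_i)^{O(\tau_i)}$ is already $u_i(\xi_i)^{o(1)}$ because $\tau_i\to0$.
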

	
	\begin{proof}
		First of all, observe that the generator of the one-parameter family of dilations centered at 
\(\xi_i=(\hat{x},\hat{y},\hat{t})\) is given by
$$
\begin{aligned}
\mathcal{X}_i&=\sum_{j=1}^n
\Big((\bar{x}-\hat{x})_j X_j+(\bar{y}-\hat{y})_j Y_j\Big)+
2\left(\bar{t}-\hat{t}+2(\hat{x}\cdot\bar{y}-\hat{y}\cdot\bar{x})\right)T .
\end{aligned}
$$
Equivalently, in the Euclidean coordinate frame, this vector field can be written as
$$
\mathcal{X}_i=\nu_i(\eta)\cdot \nabla,
$$
where
$$
\nu_i(\eta)=\left(\bar{x}-\hat{x},\bar{y}-\hat{y},2\left(\bar{t}-\hat{t}+\hat{x}\cdot\bar{y}-\hat{y}\cdot\bar{x}\right)\right).
$$
		We can notice that
		\begin{align}\label{eq:XN}
			\mathcal{X}_iI \cdot N=\mathcal{X}_iI \cdot \frac{\nabla d_{\mathbb{H}^n}(\eta,\xi_i)}{|\nabla d_{\mathbb{H}^n}(\eta,\xi_i)|}=\frac{\mathcal{X}_i d_{\mathbb{H}^n}(\eta,\xi_i)}{|\nabla d_{\mathbb{H}^n}(\eta,\xi_i)|}=\frac{d_{\mathbb{H}^n}(\eta,\xi_i)}{|\nabla d_{\mathbb{H}^n}(\eta,\xi_i)|}. 
		\end{align}

		Now, we define 
		$$
		\bar{u}_i:=u_i(\xi_i \circ \xi):=u_i(\eta).
		$$ 
		and
		$$
		\bar{a}_i:=a_i(\xi_i \circ \xi):=a_i(\eta).
		$$ 
		
		By Corollary \ref{pohozaev on heisenberg} with $R=1$, we have
		\begin{equation}\label{D}
		\begin{aligned}
			&\quad\int_{\partial D_1 }\mathcal{D}( \xi, \bar{u}_i, \nabla_{\mathbb{H}^{n}} \bar{u}_i)\ \mathrm{d}\mathcal{H}_{Q-2}\\
			&= \int_{D_1} \bar{a}_i \bar{u}_i^2 \ \mathrm{d} z \mathrm{d} t+\Big(\frac{Q}{p_i+1}-\frac{Q-2}{2}\Big) \int_{ D_1 } \bar{u}_i^{p_i+1}\ \mathrm{d} z \mathrm{d} t +\frac{1}{2} \int_{ D_1 } \mathcal{X}(\bar{a}_i) \bar{u}_i^{2}\ \mathrm{d} z \mathrm{d} t \\
			&\quad -\frac{1}{2} \int_{\partial  D_1 } \bar{a}_i \bar{u}_i^2 \mathcal{X}I \cdot N\ \mathrm{d} \mathcal{H}_{Q-2} -\frac{1}{p_i+1} \int_{\partial  D_1 } \bar{u}_i^{p_i+1} \mathcal{X}I \cdot N\ \mathrm{d} \mathcal{H}_{Q-2}\\
			&=\int_{D_1(\xi_i)} a_i(\eta) u_i(\eta)^2 \ \mathrm{d} \bar{z} \mathrm{d} \bar{t}+\Big(\frac{Q}{p_i+1}-\frac{Q-2}{2}\Big) \int_{ D_1 (\xi_i)} u_i(\eta)^{p_i+1}\ \mathrm{d} \bar{z} \mathrm{d} \bar{t} \\
			& \quad +\frac{1}{2} \int_{ D_1 (\xi_i)} \mathcal{X}_i(a_i) u_i(\eta)^{2}\ \mathrm{d} \bar{z} \mathrm{d} \bar{t} 
			-\frac{1}{2} \int_{\partial  D_1(\xi_i) } a_i(\eta) u_i(\eta)^2 \mathcal{X}_iI \cdot N\ \mathrm{d} \mathcal{H}_{Q-2}\\
			&\quad -\frac{1}{p_i+1} \int_{\partial  D_1 (\xi_i)} u_i(\eta)^{p_i+1} \mathcal{X}_iI \cdot N\ \mathrm{d} \mathcal{H}_{Q-2},
		\end{aligned}
			\end{equation}
		where
		$$
		\begin{aligned}
			\mathcal{D}( \xi, \bar{u}_i, \nabla_{\mathbb{H}^{n}} \bar{u}_i)=\frac{Q-2}{2}  (A \nabla \bar{u}_i \cdot N) \bar{u}_i-\frac{1}{2}|\nabla_{\mathbb{H}^{n}} \bar{u}_i|^2 \mathcal{X}I \cdot N 
			+(A \nabla \bar{u}_i \cdot N) \mathcal{X} \bar{u}_i
		\end{aligned}.
		$$

		Notice that
		$$|\mathcal{X}_i(a_i)|\leq Cd_{\mathbb{H}^n}(\eta,\xi_i)|\nabla_{\mathbb{H}^n} a_i|+Cd_{\mathbb{H}^n}(\eta,\xi_i)^2|T a_i|\leq Cd_{\mathbb{H}^n}(\eta,\xi_i)$$
		for $d_{\mathbb{H}^n}(\eta,\xi_i)\leq 1$.

		Hence, through variable substitution and using \eqref{D}, we have
		$$
		\begin{aligned}
			\tau_i &\int_{D_1(\xi_i)} u_i^{p_i+1}
			\leq C\bigg(\int_{D_1(\xi_i)} a_i u_i^2+\int_{D_1(\xi_i)}d_{\mathbb{H}^n}(\eta,\xi_i)  u_i^{2}+\int_{\partial  D_1 (\xi_i)} a_i u_i^2 \mathcal{X}_iI \cdot N\ \mathrm{d} \mathcal{H}_{Q-2} \\
			&+\int_{\partial  D_1 (\xi_i)} u_i^{p_i+1} \mathcal{X}_iI \cdot N\ \mathrm{d} \mathcal{H}_{Q-2} +\left|\int_{\partial  D_1 (\xi_i)} \mathcal{D}( \xi, u_i, \nabla_{\mathbb{H}^{n}} u_i)\ \mathrm{d} \mathcal{H}_{Q-2}\right|\bigg).
		\end{aligned}
		$$
		By Proposition \ref{solution is bubble}, we have
		$$
		\begin{aligned}
			&\quad\int_{D_1 (\xi_i)} u_i(\eta)^{p_i+1}\, \mathrm{d} \bar{z} \mathrm{d} \bar{t}\\
			& \geq C \int_{D_{r_i}(\xi_i)} u_i(\xi_i)^{p_i+1}[\bar c_n^2u_i(\xi_i)^{2(p_i-1) }(\bar{t}-\hat{t}+2  (\bar{y} \cdot \hat{x}-\bar{x}\cdot \hat{y}))^2\\& 
            \quad +(1+ \bar c_nu_i(\xi_i)^{p_i-1 }|\bar{z}-\hat{z}|^2)^2 ]^{(2-Q )(p_i+1) / 4} \ \mathrm{d} \bar{z} \mathrm{d} \bar{t}\\
			& \geq C u_i(\xi_i)^{\tau_i(Q/ 2-1)} \int_{D_{R_i}} \big(\bar c_n^2\tilde{t}^2+(1+\bar c_n|\tilde{z}|^2)^2\big)^{(2-Q )(p_i+1) / 4}\ \mathrm{d} \tilde{z} \mathrm{d} \tilde{t}\\
			& \geq C u_i(\xi_i)^{\tau_i(Q/ 2-1)},
		\end{aligned}
		$$
		and
		$$
		\begin{aligned}
			&\int_{D_{r_i}(\xi_i)} u_i(\eta)^2  \,\mathrm{d} \bar{z} \mathrm{d} \bar{t}
			\leq C u_i(\xi_i)^{-4/(Q-2)+o(1)}.
		\end{aligned}
		$$
		Similarly, 
		$$
		\begin{aligned}
			&\quad \int_{D_{r_i}(\xi_i)}d_{\mathbb{H}^n}(\eta,\xi_i) u_i(\eta)^{2}\ \mathrm{d} \bar{z}\mathrm{d} \bar{t}\\
			& \leq C u_i(\xi_i)^{-6 /(Q-2 )+o(1)} \int_{D_{R_i}}  \|\tilde{\xi}\|_{\mathbb{H}^n}{\big(\bar c_n^2\tilde{t}^2+(1+\bar c_n|\tilde{z}|^2)^2\big)^{(2-Q ) /2}}\ \mathrm{d} \tilde{z} \mathrm{d} \tilde{t}\\
			& \leq \begin{cases}
				O({u_i(\xi_i)^{-6 /(Q-2)+o(1)}}) & \text{ if } Q> 5,\\
				O(u_i(\xi_i)^{-2+o(1)}) & \text{ if } Q< 5 .
			\end{cases}
		\end{aligned}
		$$
		It follows from the exterior estimate and standard subelliptic estimates that 
		$$\left|\int_{\partial D_1\left(\xi_i\right)} \mathcal{D}\left(\xi, u_i, \nabla_{\mathbb{H}^n} u_i\right)\right| \leq C u_i\left(\xi_i\right)^{-2+\frac{4\delta}{Q-2}+o(1)}.$$

		By Lemma \ref{lemma 2.2 in jde}, as $R_i \rightarrow \infty$, we have
		$$
		\int_{D_1(\xi_i) \backslash D_{r_i}(\xi_i)} u_i(\eta)^{2}\ \mathrm{d}\bar{z}\mathrm{d} \bar{t} \leq  \begin{cases}O(u_i(\xi_i)^{-2 \lambda_i}) & \text{ if } Q<4 +2\delta, \\ 
			O(u_i(\xi_i)^{-4 /(Q-2)+o(1)}) & \text { if } Q>4+2\delta,\end{cases}
		$$
		$$
		\int_{D_1(\xi_i) \backslash D_{r_i}(\xi_i)} d_{\mathbb{H}^n}(\eta,\xi_i)u_i(\eta)^{2}\ \mathrm{d}\bar{z}\mathrm{d} \bar{t} \leq  \begin{cases}O(u_i(\xi_i)^{-2 \lambda_i}) & \text{ if } Q<5 +2\delta, \\ 
			O(u_i(\xi_i)^{-6 /(Q-2)+o(1)}) & \text { if } Q>5+2\delta.\end{cases}
		$$
		Also, we have
		
		$$
		\int_{\partial D_1(\xi_i)} u_i(\eta)^2\ \mathrm{d}\mathcal{H}_{Q-2} \leq C u_i(\xi_i)^{-2+\frac{4\delta}{Q-2}+o(1)},
		$$
		and
		$$
		\int_{\partial D_1(\xi_i)} u_i(\eta)^{p_i+1}\ \mathrm{d}\mathcal{H}_{Q-2} \leq \int_{\partial D_1(\xi_i)} u_i(\eta)^2\ \mathrm{d}\mathcal{H}_{Q-2} \leq C u_i(\xi_i)^{-2+\frac{4\delta}{Q-2}+o(1)} .
		$$
		Combining the above estimates, using $\tau_i=o(1)$ and choosing $\delta$ sufficiently small such that $-\frac{4\delta}{Q-2} +2>\min(\frac{4}{Q-2},1)$, we complete the proof.
	\end{proof}
	
	\begin{proposition}\label{pro 2.3 in jde}
		Under the assumptions of Lemma \ref{lemma 2.2 in jde}, we have 
		
		\be\label{5.22 in asan}
		u_i(\xi) \leq C u_i(\xi_i)^{-1}d_{\mathbb{H}^n}(\xi,\xi_i)^{2-Q}\quad \text{ for }d_{\mathbb{H}^n}(\xi,\xi_i) \leq 1.
		\ee
		
	\end{proposition}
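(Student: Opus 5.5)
The plan is the classical Schoen / Li--Zhu argument: argue by contradiction, rescale at the point where the estimate fails, and use the Bôcher-type theorem together with the Pohozaev identity to rule out the resulting limit. By Proposition \ref{solution is bubble} the estimate \eqref{5.22 in asan} already holds for $d_{\mathbb{H}^n}(\xi,\xi_i)\le r_i$, since there $u_i\le C u_i(\xi_i)\Lambda_{0,1}(\cdot)\le C u_i(\xi_i)^{-1}d^{2-Q}$ once $u_i(\xi_i)^{\tau_i}=1+o(1)$ (Lemma \ref{lemma 5.9 in asan}). By Lemma \ref{Harnack ineq 2} it therefore suffices to prove
\[
\max_{\partial D_\rho(\xi_i)}u_i\le C u_i(\xi_i)^{-1}
\]
for one fixed small $\rho$, and to propagate this down to $r_i$. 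To propagate, I would redo the comparison argument of Lemma \ref{lemma 2.2 in jde}, using the bound on $\partial D_{\rho}(\xi_i)$ and on $\partial D_{r_i}(\xi_i)$. Alternatively, I would argue on each sphere $\partial D_s(\xi_i)$ by applying the rescaled version of the same contradiction. Concretely, I would first prove $u_i(\xi_i)u_i(\xi_i\circ e)\le C$ for $\|e\|_{\mathbb{H}^n}=\rho$, and then get the intermediate radii from the same argument applied to $\tilde u_i(\xi)=s^{2/(p_i-1)}u_i(\xi_i\circ\delta_s\xi)$, which again has an isolated simple blow-up.

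Suppose, to the contrary, that $u_i(\xi_i)u_i(\xi_i\circ e_i)\to\infty$ along a subsequence. Set $v_i(\xi)=u_i(\xi_i\circ e_i)^{-1}u_i(\xi_i\circ\xi)$. It satisfies
\[
-\Delta_{\mathbb{H}^n}v_i=\bar a_iv_i+u_i(\xi_i\circ e_i)^{p_i-1}v_i^{p_i}.
\]
By Lemma \ref{Harnack ineq 2}, $v_i$ is locally bounded in $D_{\rho'}\setminus\{0\}$. Lemma \ref{lemma 2.2 in jde} gives $u_i(\xi_i\circ e_i)\le Cu_i(\xi_i)^{-\lambda_i}\to0$, so the nonlinear coefficient vanishes. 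Subelliptic estimates then give $v_i\to v$ in $C^2_{\rm loc}(D_{\rho'}\setminus\{0\})$, where $v>0$ solves $-\Delta_{\mathbb{H}^n}v=av$. The isolated simple assumption, read through $\bar w_i^{\,G}$ and the weighted mean of Lemma \ref{properties of Garofalo mean}, forces $r^{(Q-2)/2}\bar v^{\,G}(r)$ to be nonincreasing on $(0,\rho)$. Hence $v$ is genuinely singular at $0$, and Proposition \ref{bocher type} gives
\[
v=bG+E, \qquad b>0.
\]

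Next I apply the Pohozaev identity of Corollary \ref{pohozaev on heisenberg} on $D_r(\xi_i)$, written in the translated form \eqref{D} with the vector field $\mathcal{X}_i$, and multiply it by $u_i(\xi_i\circ e_i)^{-2}$. Letting $i\to\infty$, the left-hand side tends to $\int_{\partial D_r}\mathcal{D}(\xi,v,\nabla_{\mathbb{H}^n}v)$. On the right-hand side, the $\tau_i$-term is nonnegative since $Q/(p_i+1)-(Q-2)/2\ge0$, and the boundary $u_i^{p_i+1}$ term vanishes in the limit. The remaining interior terms are bounded by
\[
u_i(\xi_i\circ e_i)^{-2}\int_{D_r(\xi_i)}\bigl(a_i+|\mathcal{X}_i a_i|\bigr)u_i^2 .
\]
Using Lemma \ref{lemma 2.2 in jde} and the bubble region, this is $o(1)+O(r^{2})\cdot(\text{limit})$, giving a quantity that is small as $r\to0$. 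Since $a_i\ge0$, the first right-hand term is in any case nonnegative. Together these give
\[
\liminf_{r\to0}\int_{\partial D_r}\mathcal{D}(\xi,v,\nabla_{\mathbb{H}^n}v)\ge 0 .
\]
On the other hand, $v=b\|\xi\|^{2-Q}+bR+E$, and $E(0)+bR$ tends to a positive constant $A$, where positivity of $E$ comes from the maximum principle. Proposition \ref{pro 4.3 in asan}(ii) then says the boundary integral is strictly negative for small $r$. This is a contradiction.

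The main obstacle is the step that controls the interior terms divided by $u_i(\xi_i\circ e_i)^2$, in particular $\int u_i^2$ near the bubble in low dimensions. For $Q\le4+2\delta$ the bound $u_i(\xi_i)^{-2\lambda_i}$ must be compared with $u_i(\xi_i\circ e_i)^2\ge C u_i(\xi_i)^{-2+o(1)}$, which comes from Proposition \ref{2.2 in jde}, and this comparison is only marginal. I would handle it by using the lower bound of Proposition \ref{2.2 in jde} and choosing $\delta$ small. A second, related difficulty is checking that $R$ and $E$ contribute a positive constant $A$ when $a\ge0$. For that I would use that $h=bR+E$ is continuous with $h(0)>0$, since $v\ge bG$ and $E\ge0$ with $E\not\equiv0$ or $R(0)$ handled via \eqref{eq:K}; if needed I would replace $A$ by a $\liminf$ and use only the sign conclusion of Proposition \ref{pro 4.3 in asan}.
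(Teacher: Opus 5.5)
Your setup matches the paper's: the normalized $v_i$, its convergence to a positive solution of $\Delta_{\mathbb H^n}v+av=0$, the singularity of $v$ from the monotone weighted profile, and $v=bG+E$ with $b>0$ by Proposition \ref{bocher type}. Rescaling at intermediate radii is also how the paper, following Li, finishes.

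The gap is the contradiction step. Proposition \ref{pro 4.3 in asan}(ii) needs the constant term $A$ of $v$ at the pole to be strictly positive, and your inference ``$E\ge 0$, hence $A>0$'' is false. The Green function $G$ has a negative regular part: for $a\equiv0$, $G=\|\xi\|_{\mathbb H^n}^{2-Q}-\rho^{2-Q}$. So $A=E(0)-b\rho^{2-Q}$ can have either sign, and in this model case the monotonicity of $s^{(Q-2)/2}\bar v(s)$ only bounds $A$ from above.

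A concrete example: take $a_i\equiv0$, $\tau_i=0$ and $u_i=\Lambda_{0,\lambda_i}$ with $\lambda_i\to\infty$. The normalized limit is exactly $\|\xi\|_{\mathbb H^n}^{2-Q}$, with $A=0$, and $\mathcal D\equiv0$ by Proposition \ref{pro 4.3 in asan}(i). This limit has every property you extract, and your inequality $\liminf_{r\to0}\int_{\partial D_r}\mathcal D\ge0$ is then no contradiction. In your setting the Pohozaev identity can at best give $A\le 0$. In Proposition \ref{isolated=isolated simple} the positivity $k(0)=c_0$ came from a second critical radius of $\bar w_i^{\,G}$, which you do not have here.

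The interior terms are not small either. Since $Q\ge4$, $\int_{D_r}av^2=\infty$ when $a(0)>0$. In that case $R$ is itself singular (of order $\|\xi\|_{\mathbb H^n}^{4-Q}$, logarithmic for $Q=4$), so $v$ is not of the form required in Proposition \ref{pro 4.3 in asan}(ii). Moreover, the sign-indefinite term $u_i(\xi_i\circ e_i)^{-2}\int_{D_r(\xi_i)}\mathcal X_i(a_i)u_i^2$ is, by Lemma \ref{lemma 2.2 in jde} and Proposition \ref{2.2 in jde}, only bounded by a positive power of $u_i(\xi_i)$ once $Q\ge 6$.

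What is missing is a flux (mass) balance. It uses only $b>0$, and it uses the hypothesis $u_i(\xi_i)u_i(\xi_i\circ\theta_0)\to\infty$ quantitatively. The paper takes the first Dirichlet eigenfunction $\varphi$ of $-\Delta_{\mathbb H^n}$ on a small $D_d$ with $\lambda_1>\sup_i\|a_i\|_{L^\infty}+1$, and integrates over $D_\theta(\xi_i)$:
\[
-\int_{\partial D_\theta(\xi_i)}A(\varphi\nabla v_i-v_i\nabla\varphi)\cdot N\,\mathrm d\mathcal H_{Q-2}
=\int_{D_\theta(\xi_i)}\Bigl[(a_i-\lambda_1)\varphi v_i+\varphi\,u_i(\xi_i\circ\theta_0)^{-1}u_i^{p_i}\Bigr]
\le C\,u_i(\xi_i\circ\theta_0)^{-1}u_i(\xi_i)^{-1}.
\]
Here one uses $\int_{D_\theta(\xi_i)}u_i^{p_i}=O(u_i(\xi_i)^{-1})$, from the bubble region together with Lemma \ref{lemma 2.2 in jde}. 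Under the contradiction hypothesis the right side tends to $0$. The left side converges to the weighted flux of $v$, which is at least $b\gamma_Q\min_{\overline{D_{d/2}}}\varphi-C\theta^2>0$ because of the pole.

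In other words, the singular part of the limit must be produced by the normalized mass $u_i(\xi_i\circ\theta_0)^{-1}\int u_i^{p_i}$. This is exactly what forces $u_i(\xi_i\circ\theta_0)\le Cu_i(\xi_i)^{-1}$, and the Pohozaev identity does not detect this mismatch of scales.
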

	
	\begin{proof}
		Without loss of generality, we assume that $\rho$ occurring in the Definition \ref{isolated simple blow-up point} is less than $1 / 2$ and the proof follows from Proposition \ref{solution is bubble} and Lemma \ref{lemma 5.9 in asan} for $d_{\mathbb{H}^n}(\xi,\xi_i)<r_i$.
		
		Fix $\theta_0 \in \mathbb{H}^{n}$ with $\|\theta_0\|_{\mathbb{H}^n}=1$ and set $v_i(\xi)=u_i(\xi_i \circ \theta_0)^{-1} u_i(\xi)$. Then $v_i$ satisfies
		\be\label{vi}
		-\Delta_{\mathbb{H}^{n}} v_i= a_i(\xi)v_i+u_i(\xi_i \circ \theta_0)^{p_i-1}v_i^{p_i} \quad \text { in } D_2.
		\ee
		From Lemma \ref{Harnack ineq 2} and arguing as in Lemma \ref{w's converges}, after passing to a subsequence, $\{v_i\}$ converges in $C_{\text {loc }}^2(D_2 \backslash\{0\})$ to a positive function $v \in C^2(D_2 \backslash\{0\})$. By Lemma \ref{lemma 2.2 in jde}, $u_i(\xi_i \circ \theta_0) \rightarrow 0$,  and passing to the limit as $i \rightarrow \infty$ in \eqref{vi}, we see that $v$ satisfies
		$$
		\Delta_{\mathbb{H}^{n}} v + a(\xi)v=0 \quad \text { in }  D_2 \backslash\{0\}.
		$$
		
Define the weighted spherical means centered at $\xi_i$ by
\[
\bar v_i^{\,G}(s):=\mathcal M_{\xi_i}(v_i,s).
\]
Since $v_i=u_i(\xi_i\circ\theta_0)^{-1}u_i$, we have
\[
s^{\frac{2}{p_i-1}}\bar v_i^{\,G}(s)
=
 u_i(\xi_i\circ\theta_0)^{-1}\bar w_i^{\,G}(s).
\]
By the proof of Lemma \ref{lemma 2.2 in jde}, this function is strictly
decreasing on $(r_i,\rho)$. On every compact interval
$[s_1,s_2]\Subset(0,\rho)$, the fixed-unit-sphere expression in
\eqref{Garofalo weighted spherical mean}, the convergence
$v_i\to v$ in $C^2_{\rm loc}(D_2\setminus\{0\})$, and $\xi_i\to0$
give
\[
\bar v_i^{\,G}(s)\rightarrow
\bar v(s):=\mathcal M_0(v,s)
\]
uniformly for $s\in[s_1,s_2]$. Passing to the limit, we conclude that
\[
s^{\frac{Q-2}{2}}\bar v(s)
\]
is nonincreasing on $(0,\rho)$.

If $v$ were regular at the origin, then, by positivity and the strong
maximum principle, $v(0)>0$, and hence
\[
\bar v(s)\rightarrow v(0)
\qquad\text{as }s\to0^+.
\]
It would follow that
\[
s^{\frac{Q-2}{2}}\bar v(s)\rightarrow0
\qquad\text{as }s\to0^+,
\]
which is impossible for a positive nonincreasing function on
$(0,\rho)$. Therefore, $v$ is singular at the origin. Hence, by
Proposition \ref{bocher type},
\[
v(\xi)=cG(\xi)+k(\xi)
\quad \text{ for }c>0,
\]
where
\[
\Delta_{\mathbb H^n}k+a(\xi)k=0
\qquad\text{in }D_1.
\]

		We first prove the inequality \eqref{5.22 in asan} for $d_{\mathbb{H}^n}(\xi,\xi_i)=1$,
		i.e.,
		\be\label{boundness of eigenfunction}
		u_i(\xi_i \circ \theta_0) \leq C u_i(\xi_i)^{-1} .
		\ee
		We assume by contradiction that we have
		\be\label{contradiction 4.15}
		u_i(\xi_i) u_i(\xi_i \circ \theta_0) \rightarrow \infty \quad \text { as }  i \rightarrow \infty.
		\ee

      Fix \(0<d<\min\{\rho,1/2\}\) sufficiently small such that
the first eigenvalue \(\lambda_1=\lambda_1(D_d)\) of
\(-\Delta_{\mathbb H^n}\) satisfies
\[
\lambda_1>
\sup_i\|a_i\|_{L^\infty(D_d)}+1.
\]
Let \(\varphi>0\) be the corresponding first eigenfunction,
normalized by \(\varphi(0)=1\), namely,
\be\label{eigenvalue problem}
\begin{cases}
-\Delta_{\mathbb H^n}\varphi=\lambda_1\varphi
&\text{in }D_d,\\
\varphi=0
&\text{on }\partial D_d.
\end{cases}
\ee
Thus,
\[
-\Delta_{\mathbb H^n}\varphi\geq a_i\varphi
\qquad\text{in }D_d
\]
for every \(i\). Moreover, by the positivity of \(\varphi\),
\[
m_d:=\min_{\overline{D_{d/2}}}\varphi>0.
\]

Fix \(0<\theta\leq d/2\), to be chosen sufficiently small below.
Since \(\xi_i\to0\), we have
\[
D_\theta(\xi_i)\subset D_d
\]
for all sufficiently large \(i\). Set
\[
\bar v_i(\xi):=\varphi(\xi)^{-1}v_i(\xi).
\]
 Integrating by parts over  $D_{\theta} (\xi_i)$, by the boundedness of eigenfunction, \eqref{contradiction 4.15} and Proposition \ref{solution is bubble}, we obtain
\[
\begin{aligned}
&\quad
-\int_{\partial D_\theta(\xi_i)}
\varphi^2A\nabla\bar v_i\cdot N\,
\mathrm d\mathcal H_{Q-2}\\
&=
-\int_{\partial D_\theta(\xi_i)}
A(\varphi\nabla v_i-v_i\nabla\varphi)\cdot N\,
\mathrm d\mathcal H_{Q-2}\\
&=
\int_{D_\theta(\xi_i)}
\left[
(a_i-\lambda_1)\varphi v_i
+
\varphi
u_i(\xi_i\circ\theta_0)^{p_i-1}v_i^{p_i}
\right]\,\mathrm d\xi\\
&\leq
u_i(\xi_i\circ\theta_0)^{-1}
\int_{D_\theta(\xi_i)}
\varphi u_i^{p_i}\,\mathrm d\xi\\
&\leq
C(d,\theta)
u_i(\xi_i\circ\theta_0)^{-1}u_i(\xi_i)^{-1}
\rightarrow0.
\end{aligned}
\]

On the other hand, set $\Gamma(\xi):=\|\xi\|_{\mathbb H^n}^{2-Q}$ and
\[
\gamma_Q:=
-\int_{\partial D_\theta}
A\nabla\Gamma\cdot N\,
\mathrm d\mathcal H_{Q-2}>0.
\]


By the symmetry of the first eigenfunction, the regularity of $k$, and the estimate \eqref{eq:K} for the regular part of the Green function, we have, for \(i\) sufficiently large,
\[
\begin{aligned}
&\quad
-\int_{\partial D_\theta(\xi_i)}
\varphi^2A\nabla\bar v_i\cdot N\,
\mathrm d\mathcal H_{Q-2}\\
&\geq
-c\int_{\partial D_\theta}
\varphi A\nabla\Gamma\cdot N\,
\mathrm d\mathcal H_{Q-2}
-C_d\theta^2\\
&\geq
-c\min_{\overline{D_{d/2}}}\varphi
\int_{\partial D_\theta}
A\nabla\Gamma\cdot N\,
\mathrm d\mathcal H_{Q-2}
-C_d\theta^2\\
&=
c\gamma_Q\min_{\overline{D_{d/2}}}\varphi
-C_d\theta^2\\
&=:m(\theta)>0.
\end{aligned}
\]

Combining this estimate with the preceding integral identity, we
obtain
\[
\begin{aligned}
0<m(\theta)
&\leq
-\int_{\partial D_\theta(\xi_i)}
\varphi^2A\nabla\bar v_i\cdot N\,
\mathrm d\mathcal H_{Q-2}\\
&\leq
C(d,\theta)
u_i(\xi_i\circ\theta_0)^{-1}
u_i(\xi_i)^{-1}
\rightarrow0,
\end{aligned}
\]
which is a contradiction. Therefore,
\[
u_i(\xi_i\circ\theta_0)
\leq Cu_i(\xi_i)^{-1}.
\]

		We can now proceed as in the proof of Proposition 2.3 in \cite{Li JDE} to complete the proof of Proposition \ref{pro 2.3 in jde}.
	\end{proof}

	\begin{lemma}\label{lem 2.4 in jde}
		Under the hypothesis of Proposition \ref{pro 2.3 in jde}, we have
		$$
		\begin{aligned}
			\int_{d_{\mathbb{H}^n}(\xi,\xi_i) \leq r_i} d_{\mathbb{H}^n}(\xi,\xi_i)^s u_i(\xi)^{2} 
			=\begin{cases}
				O(u_i(\xi_i)^{-(4+2s)/(Q-2)})  & \text{ if } s+4<Q, \\
				O(u_i(\xi_i)^{-2} \ln u_i(\xi_i)) & \text{ if }s+4=Q, \\
				o(u_i(\xi_i)^{-2}) \ &\text{ if }s+4>Q .
			\end{cases}
		\end{aligned}
		$$
		Meanwhile, we have
		$$
		\begin{aligned}
			& \int_{r_i \leq d_{\mathbb{H}^n}(\xi,\xi_i) \leq 1} d_{\mathbb{H}^n}(\xi,\xi_i)^s u_i(\xi)^{2} \leq\begin{cases}
				o(u_i(\xi_i)^{-(4+2s)/(Q-2)}) & \text{ if }s+4<Q, \\
				O(u_i(\xi_i)^{-2} \ln u_i(\xi_i)) & \text{ if }s+4=Q, \\
				O(u_i(\xi_i)^{-2}) & \text{ if }s+4>Q.
			\end{cases}
		\end{aligned}
		$$
	\end{lemma}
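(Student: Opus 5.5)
The proof is a direct computation in two regions. In both, we use the upper bounds already established together with the homogeneous polar-coordinate structure of $\mathbb H^n$. Write $M_i:=u_i(\xi_i)$ and $\lambda:=M_i^{(p_i-1)/2}$, so that $r_i=R_i\lambda^{-1}$. We use the dilation change of variables $\xi=\xi_i\circ\delta_{\lambda^{-1}}\eta$. Its Jacobian is $\lambda^{-Q}$, because left translation preserves Lebesgue measure and $\delta_\mu$ has Jacobian $\mu^{Q}$. By Lemma \ref{lemma 5.9 in asan} we have $M_i^{\tau_i}=1+o(1)$, so every power $\lambda^{\gamma}$ equals $M_i^{2\gamma/(Q-2)+o(1)}$; in fact the $o(1)$ disappears, since $\tau_i\ln M_i\to0$. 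Finally, Kor\'anyi polar coordinates give the identity $\int_{r_1\le\|\eta\|\le r_2}\|\eta\|^{\beta}\,\mathrm d\eta=c_Q\int_{r_1}^{r_2}\rho^{\beta+Q-1}\,\mathrm d\rho$, which holds because $|D_\rho|=|D_1|\rho^{Q}$.

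\textbf{Inner region.} On $d_{\mathbb H^n}(\xi,\xi_i)\le r_i$, Proposition \ref{solution is bubble} gives $u_i(\xi)\le C M_i\Lambda_{0,1}(\eta)\le CM_i(1+\|\eta\|_{\mathbb H^n})^{2-Q}$, where $\eta=\delta_\lambda(\xi_i^{-1}\circ\xi)$. Changing variables,
\[
\int_{d\le r_i} d^s u_i^2\le C M_i^2\lambda^{-s-Q}\int_{\|\eta\|\le R_i}\|\eta\|^s(1+\|\eta\|)^{4-2Q}\,\mathrm d\eta .
\]
By the identity above, the remaining integral behaves like $\int_0^{R_i}\rho^{s+Q-1}(1+\rho)^{4-2Q}\,\mathrm d\rho$. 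This is $O(1)$ if $s+4<Q$, $O(\ln R_i)$ if $s+4=Q$, and $O(R_i^{s+4-Q})$ if $s+4>Q$. The prefactor is $M_i^2\lambda^{-s-Q}=M_i^{2-(2s+2Q)/(Q-2)}=M_i^{-(4+2s)/(Q-2)}$. In the first case this is the claim. In the second case $M_i^{-(4+2s)/(Q-2)}=M_i^{-2}$, and $\ln R_i\le C\ln M_i$ provided $R_i$ is chosen (as permitted by the Remark after Proposition \ref{solution is bubble}) with $R_i\le M_i$. In the third case the product is $M_i^{-2}(R_i/\lambda)^{s+4-Q}=M_i^{-2}r_i^{s+4-Q}=o(M_i^{-2})$, because $r_i\to0$ by \eqref{19 of rnac}.

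\textbf{Outer region.} On $r_i\le d\le1$, Proposition \ref{pro 2.3 in jde} gives $u_i\le CM_i^{-1}d^{2-Q}$. Therefore
\[
\int_{r_i\le d\le1}d^su_i^2\le CM_i^{-2}\int_{r_i}^1\rho^{s+3-Q}\,\mathrm d\rho .
\]
If $s+4>Q$, the integral is bounded, which gives $O(M_i^{-2})$. If $s+4=Q$, it equals $\ln(1/r_i)\le C\ln M_i$. If $s+4<Q$, it is bounded by $Cr_i^{s+4-Q}=CR_i^{s+4-Q}\lambda^{Q-s-4}$. Multiplying by $M_i^{-2}$ and using $M_i^{-2}\lambda^{Q-s-4}=M_i^{-(4+2s)/(Q-2)}$ gives $CR_i^{s+4-Q}M_i^{-(4+2s)/(Q-2)}$. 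This is $o(M_i^{-(4+2s)/(Q-2)})$ because $R_i\to\infty$ and the exponent $s+4-Q$ is negative.

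\textbf{Main obstacle.} The only delicate point is exponent bookkeeping when $p_i\ne p$. The substitution produces exponents involving $p_i$, and these must be converted to the clean powers of $M_i$ in the statement. This is exactly where Lemma \ref{lemma 5.9 in asan} is needed: $M_i^{\tau_i}=1+o(1)$ absorbs all the $\tau_i$-corrections into bounded factors. The same conversion is used to compare $\ln R_i$ and $\ln(1/r_i)$ with $\ln M_i$.
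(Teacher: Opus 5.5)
Your proof is correct and follows the paper's argument. On $D_{r_i}(\xi_i)$ you use the bubble upper bound from Proposition \ref{solution is bubble} with the dilation change of variables, and on the annulus you use the decay bound of Proposition \ref{pro 2.3 in jde}; the paper only calls the annulus case ``similar'' and omits it, so your computation there fills in that step. The only differences are minor: you bound the radial integral via $\Lambda_{0,1}(\eta)\le C(1+\|\eta\|_{\mathbb H^n})^{2-Q}$ and Kor\'anyi polar coordinates instead of the Jerison--Lee Gamma-function formula, and your extra requirement $R_i\le M_i$ is automatic, since $r_i\to0$ already forces $R_i\le M_i^{(p_i-1)/2}$.
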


	\begin{proof}
		  We first prove the case
$d_{\mathbb{H}^n}(\xi,\xi_i) \leq r_i$.
By Proposition \ref{solution is bubble}, we have
\[
\begin{aligned}
	&\quad
	\int_{d_{\mathbb{H}^n}(\xi,\xi_i) \leq r_i}
	d_{\mathbb{H}^n}(\xi,\xi_i)^s u_i(\xi)^{2}\\
	&\leq
	C\int_{d_{\mathbb{H}^n}(\xi,\xi_i) \leq r_i}
	\|\xi_i^{-1}\circ \xi\|_{\mathbb{H}^n}^s
	u_i(\xi_i)^2
	\Lambda_{0,1}\left(
	u_i(\xi_i)^{\frac{p_i-1}{2}}
	(\xi_i^{-1}\circ\xi)
	\right)^2\\
	&=
	C\int_{D_{R_i}}
	\|\eta\|_{\mathbb{H}^n}^s
	u_i(\xi_i)^{\frac{s(1-p_i)}{2}}
	u_i(\xi_i)^2
	\Lambda_{0,1}(\eta)^2
	u_i(\xi_i)^{\frac{Q(1-p_i)}{2}}\\
	&=
	Cu_i(\xi_i)^{-\frac{4+2s}{Q-2}}
	\int_{D_{R_i}}
	\left(|\bar z|^4+\bar t^2\right)^{\frac{s}{4}}
	\left[
	\bar c_n^2\bar t^2+
	\left(1+\bar c_n|\bar z|^2\right)^2
	\right]^{\frac{2-Q}{2}}
	\,\mathrm{d}\bar z\,\mathrm{d}\bar t\\
	&\leq
	Cu_i(\xi_i)^{-\frac{4+2s}{Q-2}}
	\int_{D_{R_i}}
	\max\left\{|\bar z|^4,\bar t^2\right\}^{\frac{s}{4}}
	\left[
	\bar c_n^2\bar t^2+
	\left(1+\bar c_n|\bar z|^2\right)^2
	\right]^{\frac{2-Q}{2}}
	\,\mathrm{d}\bar z\,\mathrm{d}\bar t .
\end{aligned}
\]

        Set
\[
\widetilde{\eta}
=
(\widetilde z,\widetilde t)
:=
\delta_{\sqrt{\bar c_n}}\eta
=
\left(
\sqrt{\bar c_n}\,\bar z,
\bar c_n\bar t
\right).
\]
Then
\[
\mathrm{d}\bar z\,\mathrm{d}\bar t
=
\bar c_n^{-\frac Q2}
\,\mathrm{d}\widetilde z\,\mathrm{d}\widetilde t,
\]
and
\[
\max\left\{|\bar z|^4,\bar t^2\right\}^{\frac{s}{4}}
=
\bar c_n^{-\frac s2}
\max\left\{
|\widetilde z|^4,\widetilde t^2
\right\}^{\frac{s}{4}}.
\]
Moreover,
\[
\delta_{\sqrt{\bar c_n}}(D_{R_i})
=
D_{\sqrt{\bar c_n}R_i}
\]
and
\[
\bar c_n^2\bar t^2+
\left(1+\bar c_n|\bar z|^2\right)^2
=
\widetilde t^2+
\left(1+|\widetilde z|^2\right)^2.
\]

Using the formula in \cite[Lemma 5.5]{JL intrinsic} and Lemma \ref{lemma 5.9 in asan}, for $s<Q-4$, we obtain
		$$
		\begin{aligned}
		&\int_{D_{R_i}} \max\{|\bar{z}|^4,\bar{t}^2\}^{\frac{s}{4}}(\bar c_n^2\bar{t}^2+(1+\bar c_n|\bar{z}|^2)^2)^{\frac{2-Q}{2}}\\
        &\leq C(n)\left(\frac{\Gamma(\frac{Q-2+s}{2})\Gamma(\frac{Q-4-s}{2})\Gamma(\frac{1}{2})\Gamma(\frac{Q-3}{2})}{2\Gamma(Q-3)\Gamma(\frac{Q-2}{2})}+\frac{\Gamma\left(\frac{Q-4-s}{2}\right) \Gamma\left(\frac{s+2}{4}\right) \Gamma\left(\frac{2 Q-6-s}{4}\right)}{2 \Gamma\left(Q-3-\frac{s}{2}\right)}\right),
		\end{aligned}
		$$
		which means
		$$\int_{d_{\mathbb{H}^n}(\xi,\xi_i) \leq r_i} d_{\mathbb{H}^n}(\xi,\xi_i)^s u_i(\xi)^{2}= O(u_i(\xi_i)^{-\frac{4+2s}{Q-2}}).$$
		Similarly, for $s=Q-4$, we have  
		$$\int_{d_{\mathbb{H}^n}(\xi,\xi_i) \leq r_i} d_{\mathbb{H}^n}(\xi,\xi_i)^s u_i(\xi)^{2}= O(u_i(\xi_i)^{-2}\ln(u_i(\xi_i))).$$

		For $s>Q-4$, we have
\[
\begin{aligned}
	&\quad
	u_i(\xi_i)^{-\frac{4+2s}{Q-2}}
	\int_{D_{R_i}}
	\left(|\bar z|^4+\bar t^2\right)^{\frac{s}{4}}
	\left[
	\bar c_n^2\bar t^2+
	\left(1+\bar c_n|\bar z|^2\right)^2
	\right]^{\frac{2-Q}{2}}
	\,\mathrm{d}\bar z\,\mathrm{d}\bar t\\
	&=
	u_i(\xi_i)^{-\frac{4+2s}{Q-2}}
	\int_{D_{R_i}}
	\|\eta\|_{\mathbb{H}^n}^{s}
	\left[
	\bar c_n^2\|\eta\|_{\mathbb{H}^n}^4
	+1+2\bar c_n|\bar z|^2
	\right]^{\frac{2-Q}{2}}
	\,\mathrm{d}\bar z\,\mathrm{d}\bar t\\
	&\leq
	\bar c_n^{2-Q}
	u_i(\xi_i)^{-\frac{4+2s}{Q-2}}
	\int_{D_{R_i}}
	\|\eta\|_{\mathbb{H}^n}^{s+4-2Q}
	\,\mathrm{d}\bar z\,\mathrm{d}\bar t\\
	&\leq
	Cu_i(\xi_i)^{
	-\frac{4+2s}{Q-2}
	+\frac{(p_i-1)(s+4-Q)}{2}
	}
r_i^{s+4-Q}\\
	&=
	o\left(u_i(\xi_i)^{-2}\right).
\end{aligned}
\]
which means
		$$\int_{d_{\mathbb{H}^n}(\xi,\xi_i)\leq r_i} d_{\mathbb{H}^n}(\xi,\xi_i)^s u_i(\xi)^{2}= o(u_i(\xi_i)^{-2}).$$
		The proof of the case $r_i \leq d_{\mathbb{H}^n}(\xi,\xi_i) \leq 1$ is similar by using Proposition \ref{pro 2.3 in jde}. We omit it here.

	\end{proof}

	\begin{lemma}\label{lem 2.5 in jde}
		Suppose that $u_i$ satisfies \eqref{equation2}, $\xi_i \rightarrow 0$ is an isolated simple blow-up point 
		for some constant $A_1$ and $\|a_i\|_{C^2(D_3)} \leq A_0$. 
		Then we have
		
		$$
		\begin{aligned}
			\tau_i \leq  C
			\begin{cases}
				(\|\nabla_{\mathbb{H}^n} a_i\|_{L^{\infty}(D_1)}+\|\nabla_{\mathbb{H}^n}^2 a_i\|_{L^{\infty}(D_1)})u_i(\xi_i)^{-2} &\text{ if }Q=4 , \\
				\|\nabla_{\mathbb{H}^n} a_i\|_{L^{\infty}(D_1)}u_i(\xi_i)^{-\frac{6}{Q-2}}+\|\nabla_{\mathbb{H}^n}^2 a_i\|_{L^{\infty}(D_1)}O(u_i(\xi_i)^{-2}\ln u_i(\xi_i)) & \text{ if } Q=6, \\
				\|\nabla_{\mathbb{H}^n} a_i\|_{L^{\infty}(D_1)}u_i(\xi_i)^{-\frac{6}{Q-2}}+\|\nabla_{\mathbb{H}^n}^2 a_i\|_{L^{\infty}(D_1)}O(u_i(\xi_i)^{-\frac{8}{Q-2}})  & \text{ if } Q\geq 8,
			\end{cases}
		\end{aligned}
		$$	
		where $C=C(n, A_0, A_1, \rho)>0$.
	\end{lemma}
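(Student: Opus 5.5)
We apply the Pohozaev identity of Corollary \ref{pohozaev on heisenberg} on $D_1(\xi_i)$, in the translated form \eqref{D} used in the proof of Lemma \ref{lemma 5.9 in asan}. This time we keep the potential terms together instead of estimating them crudely. The coefficient of $\int u_i^{p_i+1}$ is
\[
\frac{Q}{p_i+1}-\frac{Q-2}{2}=\frac{(Q-2)^2}{4Q}\tau_i+O(\tau_i^2).
\]
By Proposition \ref{solution is bubble} and Lemma \ref{lemma 5.9 in asan}, $\int_{D_1(\xi_i)}u_i^{p_i+1}\geq c>0$. Hence $\tau_i$ is bounded by the sum of three quantities: the boundary terms on $\partial D_1(\xi_i)$, the combination $\int_{D_1(\xi_i)}\big(a_iu_i^2+\tfrac12\mathcal X_i(a_i)u_i^2\big)$, and the corresponding $a_iu_i^2$ boundary term.

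The boundary terms are handled by Proposition \ref{pro 2.3 in jde} together with interior subelliptic estimates on the annulus $\{1/2\leq d_{\mathbb H^n}(\xi,\xi_i)\leq 2\}$. These give $u_i+|\nabla_{\mathbb H^n}u_i|+|Tu_i|\leq Cu_i(\xi_i)^{-1}$ there. Consequently every boundary integral, including $\int\mathcal D$, $\int a_iu_i^2\,\mathcal X_iI\cdot N$ and $\int u_i^{p_i+1}\mathcal X_iI\cdot N$, is $O(u_i(\xi_i)^{-2})$. This is absorbed into the claimed bounds in every dimension.

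The main obstacle is the interior term $a_i u_i^2$, since the statement contains no $\|a_i\|_{L^\infty}$ contribution. We first rewrite that term using $a_iu_i^2=a_i(\xi_i)u_i^2+(a_i-a_i(\xi_i))u_i^2$. Since $Q\int a_iu_i^2+\int\mathcal X_i(a_i)u_i^2$ is tied to the dilation structure, I would instead use the identity
\[
\int_{D_1}\bar a_i\bar u_i^2+\tfrac12\int_{D_1}\mathcal X(\bar a_i)\bar u_i^2
\]
combined with the energy identity \eqref{eq:3.4}. Those pieces can be rearranged so that only $\mathcal X(\bar a_i)$ and the Taylor remainder appear. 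Alternatively, we can multiply by a suitable test quantity to cancel $a_i(\xi_i)\int u_i^2$.

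If this cancellation does not come out cleanly, I would fall back on the following observation. The hypothesis ``isolated simple'' with $a_i\geq0$ allows one to rerun the argument with $a_i-a_i(\xi_i)$, absorbing the constant part of the potential into the linear operator. The terms $-\frac{Q-2}{2}$ and $\frac{Q}{p+1}$ are affected by $a_i(\xi_i)$ only through the energy identity.

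Granting that reduction, we expand $\bar a_i(\xi)-\bar a_i(0)$ in homogeneous Taylor form. The first-order part is $\nabla_{\mathbb H^n}a_i(\xi_i)\cdot z$, and its integral against the bubble vanishes to leading order by the symmetry $z\mapsto -z$ of $\Lambda_{0,1}$. The remainder is controlled by $\|\nabla_{\mathbb H^n}a_i\|_{L^\infty}\,d_{\mathbb H^n}(\xi,\xi_i)$ plus $\|\nabla^2_{\mathbb H^n}a_i\|_{L^\infty}\,d_{\mathbb H^n}(\xi,\xi_i)^2$. Here the $t$-derivative $Ta_i$ counts as second order, and weighted by $d^2$ it is bounded via the commutator $[X_j,Y_j]=-4T$.

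Similarly $\mathcal X_i(a_i)$ is bounded by $C\|\nabla_{\mathbb H^n}a_i\|_{L^\infty}d+C\|\nabla^2_{\mathbb H^n}a_i\|_{L^\infty}d^2$. The remaining integrals are $\int d^su_i^2$ with $s=1,2$, and Lemma \ref{lem 2.4 in jde} evaluates them. For $s=1$ this gives $u_i(\xi_i)^{-6/(Q-2)}$ when $Q\geq 6$, and $O(u_i(\xi_i)^{-2})$ when $Q=4$. For $s=2$ it gives $u_i(\xi_i)^{-8/(Q-2)}$ when $Q\geq8$, $u_i(\xi_i)^{-2}\ln u_i(\xi_i)$ when $Q=6$, and $u_i(\xi_i)^{-2}$ when $Q=4$. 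These are exactly the three cases of the statement.
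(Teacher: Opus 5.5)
Your treatment of the boundary terms, your bound $|\mathcal X_i(a_i)|\le C\,d_{\mathbb H^n}(\xi,\xi_i)|\nabla_{\mathbb H^n}a_i|+C\,d_{\mathbb H^n}(\xi,\xi_i)^2|Ta_i|$ (with $|Ta_i|$ controlled through $[X_j,Y_j]=-4T$), and your use of Lemma \ref{lem 2.4 in jde} with $s=1,2$ are exactly the paper's.

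\textbf{The gap.} The gap is the step you flag as the main obstacle, the interior term $\int_{D_1(\xi_i)}a_iu_i^2$. Your argument leaves it at ``granting that reduction'', and the reduction is not valid.

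\begin{itemize}
\item The coefficient $1$ in front of $\int a_iu_i^2$ in Corollary \ref{pohozaev on heisenberg} is already the result of combining the dilation integration by parts with the energy identity \eqref{eq:3.4}. Redoing that combination for a constant potential $a$ just returns $a\int u^2$ plus a boundary term.
\item Writing $a_i=a_i(\xi_i)+(a_i-a_i(\xi_i))$ and moving the constant into the operator only relocates $a_i(\xi_i)\int u_i^2$ within the identity. It also gives up $a_i\ge0$.
\item The term cannot be estimated away either. By \eqref{54 in imrn} and Lemma \ref{lem 2.4 in jde}, its size is governed by $a_i(\xi_i)\,u_i(\xi_i)^{-4/(Q-2)}$ (times $\ln u_i(\xi_i)$ if $Q=4$). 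But $a_i(\xi_i)$ does not appear on the right-hand side of the lemma.
\end{itemize}

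\textbf{The missing idea is a sign, which you lost.} You wrote that $\tau_i$ is bounded by the combination $\int(a_iu_i^2+\frac12\mathcal X_i(a_i)u_i^2)$. In fact, when \eqref{3.6.1} is solved for the $\tau_i$-term, this combination enters with a minus sign. Since $a_i\ge0$, $\int_{D_1(\xi_i)}a_iu_i^2\ge0$ stands on the same side as $\bigl(\frac{Q}{p_i+1}-\frac{Q-2}{2}\bigr)\int_{D_1(\xi_i)}u_i^{p_i+1}$. That coefficient is the nonnegative quantity $\frac{(Q-2)^2}{4Q}\tau_i+O(\tau_i^2)$ you computed. So the $a_iu_i^2$ term can simply be discarded, giving
\[
c\,\tau_i\int_{D_1(\xi_i)}u_i^{p_i+1}\le\Bigl|\int_{\partial D_1(\xi_i)}\mathcal D\bigl(\xi_i^{-1}\circ\xi,u_i,\nabla_{\mathbb H^n}u_i\bigr)\Bigr|+\frac12\int_{D_1(\xi_i)}|\mathcal X_i(a_i)|\,u_i^2+Cu_i(\xi_i)^{-2}.
\]
This is precisely the inequality $\tau_i\le Cu_i(\xi_i)^{-2}+C\int_{D_1(\xi_i)}\mathcal X_i(a_i)u_i^2$ with which the paper's proof begins. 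From there your remaining steps finish the argument. The Taylor expansion of $a_i-a_i(\xi_i)$ and the $z\mapsto-z$ symmetry are not needed.

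\textbf{A smaller point.} The $O(u_i(\xi_i)^{-2})$ boundary contribution is not literally absorbed into the displayed bound; it would not be if $\nabla_{\mathbb H^n}a_i$ vanished. What this argument actually proves, like the paper's, is the stated bound plus $Cu_i(\xi_i)^{-2}$. That is the form used afterwards in Lemma \ref{lem 2.6 in jde}.
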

	
	\begin{proof}
		By the Pohozaev identity in Corollary \ref{pohozaev on heisenberg}, we have
		\begin{equation}\label{3.6.1}
		\begin{aligned}
			& \quad\int_{\partial D_1(\xi_i)} \mathcal{D}(\xi_i^{-1} \circ \xi, u_i, \nabla_{\mathbb{H}^{n}} u_i)\ \mathrm{d} \mathcal{H}_{Q-2}\\
			&=\frac{1}{2} \int_{D_1(\xi_i)} \mathcal{X}_i( a_i) u_i^{2}\ \mathrm{d} z \mathrm{d} t
			+\Big(\frac{Q}{p_i+1}-\frac{Q-2}{2}\Big) \int_{D_1(\xi_i)}  u_i^{p_i+1}\ \mathrm{d} z \mathrm{d} t +\int_{D_1(\xi_i)}  a_i u_i^{2}\ \mathrm{d} z \mathrm{d} t\\
			& \quad -\frac{1}{2} \int_{\partial D_1(\xi_i)}a_i u_i^2 \mathcal{X}_i I\cdot N \ \mathrm{d} \mathcal{H}_{Q-2}-\frac{1}{p_i+1} \int_{\partial D_1(\xi_i)} u_i^{p_i+1} \mathcal{X}_i I \cdot N\ \mathrm{d} \mathcal{H}_{Q-2}.
		\end{aligned}
		\end{equation}
		It follows from \eqref{3.6.1}, Propositions \ref{solution is bubble} and \ref{pro 2.3 in jde} that
		$$
		\begin{aligned}
			\tau_i \leq & C u_i(\xi_i)^{-2}+C \int_{D_1(\xi_i)} \mathcal{X}_i( a_i)  u_i(\xi)^2\ \mathrm{d} z \mathrm{d} t.
		\end{aligned}
		$$
Using the definition of $X_i$, $Y_i$ and Lemma \ref{lem 2.4 in jde}, we have
		$$
		\begin{aligned}\label{2.24 in jde}
			&\quad\int_{D_1(\xi_i)} \mathcal{X}_i( a_i)  u_i(\xi)^2\\
			& \leq\int_{D_1(\xi_i)}d_{\mathbb{H}^n}(\xi,\xi_i) |\nabla_{\mathbb{H}^n} a_i(\xi)|u_i(\xi)^{2}+d_{\mathbb{H}^n}(\xi,\xi_i)^2|\partial_t a_i(\xi)| u_i(\xi)^{2}\\
			& \leq C\|\nabla_{\mathbb{H}^n} a_i\|_{L^{\infty}(D_1)} \int_{D_1(\xi_i)}d_{\mathbb{H}^n}(\xi,\xi_i) u_i(\xi)^{2} +C\|\nabla_{\mathbb{H}^n}^2 a_i\|_{L^{\infty}(D_1)} \int_{D_1(\xi_i)}d_{\mathbb{H}^n}(\xi,\xi_i)^2 u_i(\xi)^{2}\\
			&\leq {C
			\begin{cases}
				\|\nabla_{\mathbb{H}^n} a_i\|_{L^{\infty}(D_1)}u_i(\xi_i)^{-2}+\|\nabla_{\mathbb{H}^n}^2 a_i\|_{L^{\infty}(D_1)}u_i(\xi_i)^{-2} \ &\text{ if }Q=4, \\
				\|\nabla_{\mathbb{H}^n} a_i\|_{L^{\infty}(D_1)}u_i(\xi_i)^{\frac{-6}{Q-2}}+\|\nabla_{\mathbb{H}^n}^2 a_i\|_{L^{\infty}(D_1)}u_i(\xi_i)^{-2}\ln u_i(\xi_i) & \text{ if } Q=6, \\
			\|\nabla_{\mathbb{H}^n} a_i\|_{L^{\infty}(D_1)}u_i(\xi_i)^{\frac{-6}{Q-2}}+\|\nabla_{\mathbb{H}^n}^2 a_i\|_{L^{\infty}(D_1)}u_i(\xi_i)^{\frac{-8}{Q-2}}  & \text{ if } Q\geq 8	 ,
			\end{cases}}\\
		\end{aligned}
		$$
where we used the definition of $X_i$, $Y_i$ in the first inequality. This completes the proof.
	\end{proof}
	
	\begin{lemma}\label{lem 2.6 in jde}
		Under the hypothesis of Lemma \ref{lem 2.5 in jde}, we have,
		
		$$
		\begin{aligned}
			|a_i(\xi_i)| \leq  C 
			\begin{cases}
				(\ln u_i(\xi_i))^{-1}(1+\|\nabla_{\mathbb{H}^n}^2 a_i\|_{L^{\infty}(D_1)}) &\text{ if }Q=4,  \\
				u_i(\xi_i)^{-2+\frac{4}{Q-2}}(1+\|\nabla_{\mathbb{H}^n}^2 a_i\|_{L^{\infty}(D_1)} \ln u_i(\xi_i))  & \text{ if } Q=6, \\
			u_i(\xi_i)^{-2+\frac{4}{Q-2}}(1+\|\nabla_{\mathbb{H}^n}^2 a_i\|_{L^{\infty}(D_1)} u_i(\xi_i)^{\frac{2Q-12}{Q-2}})  & \text{ if } Q\geq 8,	
			\end{cases}
		\end{aligned}
		$$
	and
		$$
		\begin{aligned}
			|\nabla_{\mathbb{H}^n}  a_i(\xi_i)|  \leq  C 
			\begin{cases}
				u_i(\xi_i)^{-2+\frac{4}{Q-2}}(\ln u_i(\xi_i))^{-1}(1+\|\nabla_{\mathbb{H}^n}^2 a_i\|_{L^{\infty}(D_1)}) \ &\text{ if }Q=4 , \\
				u_i(\xi_i)^{-2+\frac{4}{Q-2}}(1+\|\nabla_{\mathbb{H}^n}^2 a_i\|_{L^{\infty}(D_1)}u_i(\xi_i)^{\frac{2Q-10}{Q-2}})  & \text{ if } Q\geq 6,
			\end{cases}
		\end{aligned}
		$$
		where $C=C(n, A_0, A_1, \rho)$.
	\end{lemma}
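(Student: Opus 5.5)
The plan follows the Euclidean argument of Li (JDE) for $|a_i(\xi_i)|$ and $|\nabla a_i(\xi_i)|$. The estimate on $a_i(\xi_i)$ comes from the Pohozaev identity of Corollary \ref{pohozaev on heisenberg}. The estimate on the horizontal gradient comes from a translation-type Pohozaev identity, obtained by multiplying the equation by a right-invariant derivative of $u_i$.

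For the bound on $a_i(\xi_i)$, I start from \eqref{3.6.1}. The term $\big(\frac{Q}{p_i+1}-\frac{Q-2}{2}\big)\int u_i^{p_i+1}$ is nonnegative, and the boundary terms are $O(u_i(\xi_i)^{-2})$ by Proposition \ref{pro 2.3 in jde} and subelliptic estimates. The term $\int_{D_1(\xi_i)}a_iu_i^2$ I split as $a_i(\xi_i)\int u_i^2$ plus a remainder. I Taylor expand $a_i$ at $\xi_i$ with respect to homogeneous degree: a degree-one horizontal part, and a degree-two part containing $\nabla^2_{\mathbb H^n}a_i$ and $Ta_i$, which is bounded by $\|\nabla_{\mathbb H^n}^2 a_i\|_{L^\infty}$ up to a commutator. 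The term $\mathcal X_i(a_i)$ is expanded the same way.

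The first-order contributions $\nabla_{\mathbb H^n}a_i(\xi_i)\cdot(z-z_i)$ integrated against $u_i^2$ must be killed. On $D_{r_i}(\xi_i)$ the bubble $\Lambda_{0,1}$ is even in $z$, so these integrals vanish up to the error $\varepsilon_i$. Outside $D_{r_i}(\xi_i)$, Lemma \ref{lem 2.4 in jde} with $s=1$ gives a lower-order bound. The degree-two terms are bounded by Lemma \ref{lem 2.4 in jde} with $s=2$. From below, $\int u_i^2\ge c\,u_i(\xi_i)^{-4/(Q-2)}$ for $Q\ge 6$, and $\ge c\,u_i(\xi_i)^{-2}\ln u_i(\xi_i)$ for $Q=4$, using the bubble profile and Proposition \ref{2.2 in jde}. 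Since $a_i\ge0$, dividing yields the stated cases. The $Q=4$ case has the $(\ln u_i(\xi_i))^{-1}$ factor because the $L^2$ mass is logarithmic there.

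For the gradient bound, I use a Pohozaev identity with the vector field $\tilde X_j$, the right-invariant counterpart of $X_j$ (and similarly $\tilde Y_j$), in place of $\mathcal X$. I multiply the equation by $\tilde X_j u_i$ and integrate over $D_1(\xi_i)$. Because $\tilde X_j$ commutes with every $X_k$ and $Y_k$, we get $\int \Delta_{\mathbb H^n}u\,\tilde X_ju=$ boundary terms, and $\int u^{p}\tilde X_j u$ is a pure boundary term because $\tilde X_j$ is divergence free. This leaves
\[
\tfrac12\int_{D_1(\xi_i)}(\tilde X_j a_i)\,u_i^2=\text{boundary terms}=O(u_i(\xi_i)^{-2}).
\]
I then expand $\tilde X_ja_i=\tilde X_ja_i(\xi_i)+O(d_{\mathbb H^n}(\xi,\xi_i))\,\|\nabla^2 a_i\|$ and apply Lemma \ref{lem 2.4 in jde} with $s=1$. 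Combined with the lower bound on $\int u_i^2$, this gives the claimed rates, including the exponent $\frac{2Q-10}{Q-2}$ from $s=1$.

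The main obstacle is this last step. $\tilde X_j a_i(\xi_i)$ differs from $X_ja_i(\xi_i)$ by a multiple of $T a_i(\xi_i)$ with coefficient linear in the coordinates of $\xi_i$. I plan to handle this by first left-translating so that $\xi_i$ becomes the origin, where $\tilde X_j=X_j$, and replacing $a_i$ by $a_i(\xi_i\circ\cdot)$. The translated function has the same horizontal derivatives, so the $C^2$ bounds are unchanged. A secondary difficulty is the mixed terms in the homogeneous Taylor expansion, where $Ta_i$ carries homogeneous weight two; I will absorb these into the $\|\nabla^2_{\mathbb H^n}a_i\|$ term, using $T=-\frac14[X_j,Y_j]$.
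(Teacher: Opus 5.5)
Your gradient estimate matches the paper. Left-translating $\xi_i$ to the origin and using the right-invariant $\tilde X_j$ is the same as the paper's fields $\overline X_j^{\,i}=X_j-4(y_j-\hat y_j)\partial_t$, which are exactly $\tilde X_j$ conjugated by $\tau_{\xi_i}$. The paper uses a cut-off $\eta_i$ where you keep boundary terms on $\partial D_1(\xi_i)$; that difference is immaterial. The gap is in your bound for $a_i(\xi_i)$.

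The first-order Taylor terms $\nabla_{\mathbb H^n}a_i(\xi_i)\cdot\int (z-z_i)u_i^2$ arise from both $\int a_iu_i^2$ and $\int\mathcal X_i(a_i)u_i^2$. They cannot be disposed of by symmetry.
\begin{itemize}
\item On $D_{r_i}(\xi_i)$, after rescaling, the evenness of $\Lambda_{0,1}$ in $z$ leaves an error of size $\varepsilon_i C(R_i)\,u_i(\xi_i)^{-6/(Q-2)}$.
\item On the outer region, Lemma \ref{lem 2.4 in jde} with $s=1$ only gives $o(u_i(\xi_i)^{-6/(Q-2)})$; the actual gain is a factor $R_i^{5-Q}$.
\end{itemize}
Neither $o(1)$ is a power of $u_i(\xi_i)$. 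The $\varepsilon_i$ comes from the compactness argument of Proposition \ref{solution is bubble} and carries no rate. The quantitative closeness $|\Phi_i-\Lambda_{0,1}|\le C\alpha_i$ of Lemma \ref{lem 4.2 in imrn} is itself proved \emph{using} the present lemma, so it is not available here.

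Without a bound on $\nabla_{\mathbb H^n}a_i(\xi_i)$ beyond $A_0$, you therefore only get $a_i(\xi_i)u_i(\xi_i)^{-4/(Q-2)}\le Cu_i(\xi_i)^{-2}(\cdots)+o(u_i(\xi_i)^{-6/(Q-2)})$. For $Q\ge6$ this does not yield the claimed rates. For $Q\ge8$ the right-hand side must be $O\bigl(u_i(\xi_i)^{-2}+\|\nabla^2_{\mathbb H^n}a_i\|_{L^\infty(D_1)}u_i(\xi_i)^{-8/(Q-2)}\bigr)$. Moreover, the statement tracks the factor $\|\nabla^2_{\mathbb H^n}a_i\|$, which may be small. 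For $Q=4$ your argument is fine: there $\int d_{\mathbb H^n}(\xi,\xi_i)u_i^2=O(u_i(\xi_i)^{-2})$, so the crude bound $|\nabla_{\mathbb H^n}a_i(\xi_i)|\le A_0$ suffices.

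The repair is to reverse the order, as the paper does. Your Part 2 does not use Part 1, so prove the gradient bound first. Then estimate the linear terms by $C|\nabla_{\mathbb H^n}a_i(\xi_i)|\int_{D_1(\xi_i)}d_{\mathbb H^n}(\xi,\xi_i)u_i^2$ and insert the gradient bound together with Lemma \ref{lem 2.4 in jde} for $s=1$. For $Q\ge8$ this gives
\[
u_i(\xi_i)^{-2+\frac{4}{Q-2}}\Bigl(1+\|\nabla^2_{\mathbb H^n}a_i\|_{L^\infty(D_1)}u_i(\xi_i)^{\frac{2Q-10}{Q-2}}\Bigr)u_i(\xi_i)^{-\frac{6}{Q-2}}\le Cu_i(\xi_i)^{-2}\Bigl(1+\|\nabla^2_{\mathbb H^n}a_i\|_{L^\infty(D_1)}u_i(\xi_i)^{\frac{2Q-12}{Q-2}}\Bigr).
\]
For $Q=6$ the analogous product is $O\bigl(u_i(\xi_i)^{-5/2}+\|\nabla^2_{\mathbb H^n}a_i\|_{L^\infty(D_1)}u_i(\xi_i)^{-2}\bigr)$. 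Either way this is of the same order as the quadratic terms, which you handle by Lemma \ref{lem 2.4 in jde} with $s=2$. Dividing by your lower bound for $\int u_i^2$ then gives the stated estimates for $a_i(\xi_i)$.
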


\begin{proof}
	Let $\eta\in C_c^\infty(D_{1/2})$ be a cut-off function such that
	\[
	\begin{aligned}
		\eta(\xi)&=1
		\qquad\text{for }\|\xi\|_{\mathbb H^n}\leq\frac14,\\
		\eta(\xi)&=0
		\qquad\text{for }\|\xi\|_{\mathbb H^n}\geq\frac12.
	\end{aligned}
	\]
	We use the cut-off function centered at $\xi_i$, namely,
	\[
	\eta_i(\xi):=\eta(\xi_i^{-1}\circ\xi).
	\]
	Then
	\[
	\eta_i=1
	\quad\text{in }D_{1/4}(\xi_i),
	\qquad
	\operatorname{supp}\eta_i\subset D_{1/2}(\xi_i).
	\]
	Since $\xi_i\to0$, for all sufficiently large $i$,
	\[
	D_{1/2}(\xi_i)\subset D_1.
	\]

	We next derive the lower bound for the weighted $L^2$-mass
	which will be used throughout the proof. Since
	$\eta_i=1$ in $D_{1/4}(\xi_i)$, Proposition
	\ref{2.2 in jde} gives
	\begin{equation}\label{54 in imrn}
	\begin{aligned}
		\int_{D_1}u_i(\xi)^2\eta_i(\xi)\,\mathrm d\xi
		&\geq
		\int_{D_{1/4}(\xi_i)}u_i(\xi)^2\,\mathrm d\xi\\
		&\geq
		Cu_i(\xi_i)^2
		\int_{D_{1/4}(\xi_i)}
		\Lambda_{0,1}\left(
		\delta_{u_i(\xi_i)^{(p_i-1)/2}}
		(\xi_i^{-1}\circ\xi)
		\right)^2
		\,\mathrm d\xi\\
		&=
		Cu_i(\xi_i)^{
		2-\frac Q2(p_i-1)}
		\int_{D_{\frac14u_i(\xi_i)^{(p_i-1)/2}}}
		\Lambda_{0,1}(\bar\xi)^2
		\,\mathrm d\bar\xi\\
		&=
		Cu_i(\xi_i)^{
		-\frac4{Q-2}+\frac Q2\tau_i}
		\int_{D_{\frac14u_i(\xi_i)^{(p_i-1)/2}}}
		\Lambda_{0,1}(\bar\xi)^2
		\,\mathrm d\bar\xi\\
		&\geq
		C
		\begin{cases}
			u_i(\xi_i)^{-\frac4{Q-2}}
			\ln u_i(\xi_i),
			&\text{if }Q=4,\\[1ex]
			u_i(\xi_i)^{-\frac4{Q-2}},
			&\text{if }Q\geq6.
		\end{cases}
	\end{aligned}
	\end{equation}

    We now introduce the translated right-invariant vector fields
centered at $\xi_i$. Writing
\[
\xi_i=(\hat x,\hat y,\hat t),
\]
we define
\[
\begin{aligned}
\overline X_j^{\,i}
&:=
\overline X_j+4\hat y_j\partial_t
=
X_j-4(y_j-\hat y_j)\partial_t,\\
\overline Y_j^{\,i}
&:=
\overline Y_j-4\hat x_j\partial_t
=
Y_j+4(x_j-\hat x_j)\partial_t.
\end{aligned}
\]
They are divergence-free, commute with all the left-invariant
horizontal vector fields, and satisfy
\[
\overline X_j^{\,i}\big|_{\xi_i}=X_j\big|_{\xi_i},
\qquad
\overline Y_j^{\,i}\big|_{\xi_i}=Y_j\big|_{\xi_i}.
\]
Moreover, $\overline X_j^{\,i}\eta_i$ and
$\overline Y_j^{\,i}\eta_i$ are uniformly bounded and supported
in
\[
D_{1/2}(\xi_i)\setminus\overline{D_{1/4}(\xi_i)}.
\]

	We first estimate $X_ja_i(\xi_i)$. Multiplying
	\eqref{equation2} by
	$(\overline X_j^{\,i}u_i)\eta_i$ and integrating by parts on
	$D_1$, we have
	\[
	\begin{aligned}
	&\frac12
	\int_{D_1}
	(\overline X_j^{\,i}a_i)u_i^2\eta_i\\
	&=
	-\int_{D_1}
	\eta_i A\nabla u_i\cdot
	\nabla(\overline X_j^{\,i}u_i)-
	\int_{D_{1/2}(\xi_i)\setminus
	\overline{D_{1/4}(\xi_i)}}
	(\overline X_j^{\,i}u_i)
	A\nabla u_i\cdot\nabla\eta_i\\
	&\quad-
	\frac12
	\int_{D_{1/2}(\xi_i)\setminus
	\overline{D_{1/4}(\xi_i)}}
	a_i u_i^2\overline X_j^{\,i}\eta_i-
	\frac1{p_i+1}
	\int_{D_{1/2}(\xi_i)\setminus
	\overline{D_{1/4}(\xi_i)}}
	u_i^{p_i+1}\overline X_j^{\,i}\eta_i .
	\end{aligned}
	\]
	Since $\overline X_j^{\,i}$ commutes with all the
	left-invariant horizontal vector fields,
	\[
	\begin{aligned}
	-\int_{D_1}
	\eta_i A\nabla u_i\cdot
	\nabla(\overline X_j^{\,i}u_i)
	&=
	-\frac12
	\int_{D_1}
	\eta_i\overline X_j^{\,i}
	|\nabla_{\mathbb H^n}u_i|^2\\
	&=
	\frac12
	\int_{D_{1/2}(\xi_i)\setminus
	\overline{D_{1/4}(\xi_i)}}
	|\nabla_{\mathbb H^n}u_i|^2
	\overline X_j^{\,i}\eta_i.
	\end{aligned}
	\]
	Therefore,
	\[
	\begin{aligned}
	&\frac12
	\int_{D_1}
	(\overline X_j^{\,i}a_i)u_i^2\eta_i\\
	&=
	\frac12
	\int_{D_{1/2}(\xi_i)\setminus
	\overline{D_{1/4}(\xi_i)}}
	|\nabla_{\mathbb H^n}u_i|^2
	\overline X_j^{\,i}\eta_i-
	\int_{D_{1/2}(\xi_i)\setminus
	\overline{D_{1/4}(\xi_i)}}
	(\overline X_j^{\,i}u_i)
	A\nabla u_i\cdot\nabla\eta_i\\
	&\quad-
	\frac12
	\int_{D_{1/2}(\xi_i)\setminus
	\overline{D_{1/4}(\xi_i)}}
	a_i u_i^2\overline X_j^{\,i}\eta_i-
	\frac1{p_i+1}
	\int_{D_{1/2}(\xi_i)\setminus
	\overline{D_{1/4}(\xi_i)}}
	u_i^{p_i+1}\overline X_j^{\,i}\eta_i .
	\end{aligned}
	\]

	By Young's inequality,
	\[
	\begin{aligned}
	\left|
	\int_{D_{1/2}(\xi_i)\setminus
	\overline{D_{1/4}(\xi_i)}}
	(\overline X_j^{\,i}u_i)
	A\nabla u_i\cdot\nabla\eta_i
	\right|\leq
	C
	\int_{D_{1/2}(\xi_i)\setminus
	\overline{D_{1/4}(\xi_i)}}
	\left(
	|\nabla_{\mathbb H^n}u_i|^2
	+
	|\overline X_j^{\,i}u_i|^2
	\right).
	\end{aligned}
	\]
	Consequently,
	\begin{equation}\label{5.49 in asan}
	\begin{aligned}
	\left|
	\int_{D_1}
	(\overline X_j^{\,i}a_i)u_i^2\eta_i
	\right|&\leq C\int_{D_{1/2}(\xi_i)\setminus
	\overline{D_{1/4}(\xi_i)}}
	|\nabla_{\mathbb H^n}u_i|^2
    +C\int_{D_{1/2}(\xi_i)\setminus
	\overline{D_{1/4}(\xi_i)}}
	|\overline X_j^{\,i}u_i|^2\\
    &+Cu_i(\xi_i)^{-p_i-1}
	+Cu_i(\xi_i)^{-2}.
	\end{aligned}
	\end{equation}

	Using
	\[
	\overline X_j^{\,i}u_i
	=
	X_ju_i
	-
	4(y_j-\hat y_j)\partial_tu_i,
	\]
	we have
	\[
	\begin{aligned}
	\int_{D_{1/2}(\xi_i)\setminus
	\overline{D_{1/4}(\xi_i)}}
	|\overline X_j^{\,i}u_i|^2\leq
	C\int_{D_{1/2}(\xi_i)\setminus
	\overline{D_{1/4}(\xi_i)}}
	\left(
	|\nabla_{\mathbb H^n}u_i|^2
	+
	|\partial_tu_i|^2
	\right)
	\end{aligned}
	\]

    Indeed, by Proposition \ref{pro 2.3 in jde}, for any fixed
	$\theta\in\partial D_1$, after translating the coordinates by
	$\xi_i$, the sequence
	\[
	u_i(\xi_i\circ\theta)^{-1}
	u_i(\xi_i\circ\xi)
	\]
	converges in
	$C_{\mathrm{loc}}^2(D_2\setminus\{0\})$ to a function of the
	form
	\[
	v(\xi)=a_1\|\xi\|_{\mathbb H^n}^{2-Q}+k(\xi).
	\]
	Consequently, in the annular region $D_{1/2}(\xi_i)\setminus
	\overline{D_{1/4}(\xi_i)}$, we have
	\[
	u_i(\xi)\leq Cu_i(\xi_i)^{-1}.
	\]
	The local subelliptic estimates therefore give
	\begin{equation}\label{5.50 in asan}
	\int_{D_{1/2}(\xi_i)\setminus
	\overline{D_{1/4}(\xi_i)}}
	|\nabla_{\mathbb H^n}u_i|^2
	\leq
	Cu_i(\xi_i\circ\theta)^2
	\leq
	Cu_i(\xi_i)^{-2}.
	\end{equation}
	By applying the local second-order subelliptic estimate in a
	slightly larger annular region, we also have
	\[
	\begin{aligned}
	\int_{D_{1/2}(\xi_i)\setminus\overline{D_{1/4}(\xi_i)}}
	\left(|\nabla_{\mathbb H^n}^2u_i|^2
	+|\partial_tu_i|^2\right)\leq Cu_i(\xi_i)^{-2},
	\end{aligned}
	\]
	where we have used
	\[
	[X_j,Y_j]=-4\partial_t.
	\]

	Repeating the local estimate used in
	\eqref{5.50 in asan}, we obtain
	\begin{equation}\label{5.50(2) in asan}
	\begin{aligned}
	\int_{D_{1/2}(\xi_i)\setminus
	\overline{D_{1/4}(\xi_i)}}
	|\overline X_j^{\,i}u_i|^2
	\leq
	Cu_i(\xi_i\circ\theta)^2
	\leq
	Cu_i(\xi_i)^{-2}.
	\end{aligned}
	\end{equation}
	It follows from
	\eqref{5.49 in asan}, \eqref{5.50 in asan}, and
	\eqref{5.50(2) in asan} that
	\[
	\left|
	\int_{D_1}
	(\overline X_j^{\,i}a_i)u_i^2\eta_i
	\right|
	\leq
	Cu_i(\xi_i)^{-2}.
	\]

	At the center $\xi_i$, we have
	\[
	\overline X_j^{\,i}a_i(\xi_i)
	=
	X_ja_i(\xi_i).
	\]
	Therefore,
	\[
	\begin{aligned}
	&|X_ja_i(\xi_i)|
	\int_{D_1}u_i(\xi)^2\eta_i(\xi)\,\mathrm d\xi\\
	&\leq
	\left|
	\int_{D_1}
	(\overline X_j^{\,i}a_i)(\xi)
	u_i(\xi)^2\eta_i(\xi)\,\mathrm d\xi
	\right|+
	\int_{D_1}
	\left|
	(\overline X_j^{\,i}a_i)(\xi)
	-
	X_ja_i(\xi_i)
	\right|
	u_i(\xi)^2\eta_i(\xi)\,\mathrm d\xi.
	\end{aligned}
	\]
	Using
	\[
	\overline X_j^{\,i}
	=
	X_j-4(y_j-\hat y_j)\partial_t,
	\]
	we have
	\[
	\begin{aligned}
	\left|
	(\overline X_j^{\,i}a_i)(\xi)
	-
	X_ja_i(\xi_i)
	\right|\leq
	|X_ja_i(\xi)-X_ja_i(\xi_i)|
	+
	4|y_j-\hat y_j||\partial_ta_i(\xi)|.
	\end{aligned}
	\]
	By the first-order Taylor estimate on the Heisenberg group,
	see \cite[Theorem 20.3.1]{BEU},
	\[
	|X_ja_i(\xi)-X_ja_i(\xi_i)|
	\leq
	C
	\|\nabla_{\mathbb H^n}^2a_i\|_{L^\infty(D_1)}
	d_{\mathbb H^n}(\xi,\xi_i).
	\]
	Moreover, since
	\[
	[X_j,Y_j]=-4\partial_t,
	\]
	we have
	\[
	|\partial_ta_i(\xi)|
	\leq
	C
	\|\nabla_{\mathbb H^n}^2a_i\|_{L^\infty(D_1)}.
	\]
	Also,
	\[
	|y_j-\hat y_j|
	\leq
	d_{\mathbb H^n}(\xi,\xi_i).
	\]
	It follows that
	\[
	\left|
	(\overline X_j^{\,i}a_i)(\xi)
	-
	X_ja_i(\xi_i)
	\right|
	\leq
	C
	\|\nabla_{\mathbb H^n}^2a_i\|_{L^\infty(D_1)}
	d_{\mathbb H^n}(\xi,\xi_i).
	\]
	Consequently,
	\[
	\begin{aligned}
	&|X_ja_i(\xi_i)|
	\int_{D_1}u_i(\xi)^2\eta_i(\xi)\,\mathrm d\xi\\
	&\leq
	Cu_i(\xi_i)^{-2}+C\|\nabla_{\mathbb H^n}^2a_i\|_{L^\infty(D_1)}
	\int_{D_1}d_{\mathbb H^n}(\xi,\xi_i)u_i(\xi)^2\eta_i(\xi)\,\mathrm d\xi.
	\end{aligned}
	\]
	By Lemma \ref{lem 2.4 in jde}, this gives
	\[
	\begin{aligned}
	|X_ja_i(\xi_i)|
	\int_{D_1}u_i^2\eta_i\leq
	C
	\begin{cases}
		u_i(\xi_i)^{-2}
		\left(
		1+
		\|\nabla_{\mathbb H^n}^2a_i\|_{L^\infty(D_1)}
		\right)
		&\text{if }Q=4,\\[1ex]
		u_i(\xi_i)^{-2}
		(
		1+
		\|\nabla_{\mathbb H^n}^2a_i\|_{L^\infty(D_1)}
		u_i(\xi_i)^{\frac{2Q-10}{Q-2}}
		)
		&\text{if }Q\geq6.
		\end{cases}
	\end{aligned}
	\]
	Using \eqref{54 in imrn}, we obtain
	\[
	\begin{aligned}
	|X_ja_i(\xi_i)|
	\leq
	C
	\begin{cases}
	u_i(\xi_i)^{-2+\frac4{Q-2}}
	(\ln u_i(\xi_i))^{-1}
	\left(
	1+
	\|\nabla_{\mathbb H^n}^2a_i\|_{L^\infty(D_1)}
	\right)
	&\text{if }Q=4,\\[1ex]
	u_i(\xi_i)^{-2+\frac4{Q-2}}
	(
	1+
	\|\nabla_{\mathbb H^n}^2a_i\|_{L^\infty(D_1)}
	u_i(\xi_i)^{\frac{2Q-10}{Q-2}}
	)
	&\text{if }Q\geq6.
	\end{cases}
	\end{aligned}
	\]

    Similarly, applying the same argument with
$(\overline Y_j^{\,i}u_i)\eta_i$ in place of
$(\overline X_j^{\,i}u_i)\eta_i$, we obtain
	\[
	\begin{aligned}
	|Y_ja_i(\xi_i)|
	\leq
	C
	\begin{cases}
	u_i(\xi_i)^{-2+\frac4{Q-2}}
	(\ln u_i(\xi_i))^{-1}
	(
	1+
	\|\nabla_{\mathbb H^n}^2a_i\|_{L^\infty(D_1)}
	)
	&\text{if }Q=4,\\[1ex]
	u_i(\xi_i)^{-2+\frac4{Q-2}}
	(
	1+
	\|\nabla_{\mathbb H^n}^2a_i\|_{L^\infty(D_1)}
	u_i(\xi_i)^{\frac{2Q-10}{Q-2}}
	)
	&\text{if }Q\geq6.
	\end{cases}
	\end{aligned}
	\]
	Summing the estimates for $X_ja_i(\xi_i)$ and
	$Y_ja_i(\xi_i)$ over $j=1,\ldots,n$, we conclude that
	\[
	\begin{aligned}
	|\nabla_{\mathbb H^n}a_i(\xi_i)|
	\leq
	C
	\begin{cases}
	u_i(\xi_i)^{-2+\frac4{Q-2}}
	(\ln u_i(\xi_i))^{-1}
	(
	1+
	\|\nabla_{\mathbb H^n}^2a_i\|_{L^\infty(D_1)}
	)
	&\text{if }Q=4,\\[1ex]
	u_i(\xi_i)^{-2+\frac4{Q-2}}
	(
	1+
	\|\nabla_{\mathbb H^n}^2a_i\|_{L^\infty(D_1)}
	u_i(\xi_i)^{\frac{2Q-10}{Q-2}}
	)
	&\text{if }Q\geq6.
	\end{cases}
	\end{aligned}
	\]

	We next improve the estimate for $\tau_i$. Applying the
	Pohozaev identity \eqref{3.6.1} with $D_1(\xi_i)$ replaced by
	$D_{1/2}(\xi_i)$, we have
	\[
	\begin{aligned}
	\tau_i
	\leq
	Cu_i(\xi_i)^{-2}
	+
	C
	\int_{D_{1/2}(\xi_i)}
	|\mathcal X_i(a_i)(\xi)|u_i(\xi)^2\,\mathrm d\xi.
	\end{aligned}
	\]

	By the definition of $\mathcal X_i$,
	\[
	\begin{aligned}
	|\mathcal X_i(a_i)(\xi)|
	&\leq
	Cd_{\mathbb H^n}(\xi,\xi_i)
	|\nabla_{\mathbb H^n}a_i(\xi)|+
	Cd_{\mathbb H^n}(\xi,\xi_i)^2
	|\partial_ta_i(\xi)|.
	\end{aligned}
	\]
	The Taylor formula on the Heisenberg group give
	\[
	\begin{aligned}
	|\nabla_{\mathbb H^n}a_i(\xi)|
	&\leq
	|\nabla_{\mathbb H^n}a_i(\xi_i)|+
	C
	\|\nabla_{\mathbb H^n}^2a_i\|_{L^\infty(D_1)}
	d_{\mathbb H^n}(\xi,\xi_i),
	\end{aligned}
	\]
	and
	\[
	|\partial_ta_i(\xi)|
	\leq
	C
	\|\nabla_{\mathbb H^n}^2a_i\|_{L^\infty(D_1)}.
	\]
	Consequently,
	\[
	\begin{aligned}
	|\mathcal X_i(a_i)(\xi)|
	&\leq
	Cd_{\mathbb H^n}(\xi,\xi_i)
	|\nabla_{\mathbb H^n}a_i(\xi_i)|+
	C\|\nabla_{\mathbb H^n}^2a_i\|_{L^\infty(D_1)}
	d_{\mathbb H^n}(\xi,\xi_i)^2.
	\end{aligned}
	\]
	Hence,
	\[
	\begin{aligned}
	\tau_i
	&\leq
	Cu_i(\xi_i)^{-2}+
	C|\nabla_{\mathbb H^n}a_i(\xi_i)|
	\int_{D_{1/2}(\xi_i)}
	d_{\mathbb H^n}(\xi,\xi_i)u_i(\xi)^2
	\,\mathrm d\xi\\
	&\quad+
	C
	\|\nabla_{\mathbb H^n}^2a_i\|_{L^\infty(D_1)}
	\int_{D_{1/2}(\xi_i)}
	d_{\mathbb H^n}(\xi,\xi_i)^2u_i(\xi)^2
	\,\mathrm d\xi.
	\end{aligned}
	\]
    Thus, combining Lemma \ref{lem 2.4 in jde} and the estimate of  $|\nabla_{\mathbb H^n}a_i(\xi_i)|$, we have
	\[
	\tau_i
	\leq
	C
	\begin{cases}
	u_i(\xi_i)^{-2}
	(
	1+
	\|\nabla_{\mathbb H^n}^2a_i\|_{L^\infty(D_1)}
	)
	&\text{if }Q=4,\\[1ex]
	u_i(\xi_i)^{-2}
	(
	1+
	\|\nabla_{\mathbb H^n}^2a_i\|_{L^\infty(D_1)}
	\ln u_i(\xi_i)
	)
	&\text{if }Q=6,\\[1ex]
	u_i(\xi_i)^{-2}
	(
	1+
	\|\nabla_{\mathbb H^n}^2a_i\|_{L^\infty(D_1)}
	u_i(\xi_i)^{\frac{2Q-12}{Q-2}}
	)
	&\text{if }Q\geq8.
	\end{cases}
	\]

    Using Pohozaev identity \eqref{3.6.1}, the estimates for $\left|\nabla_{\mathbb{H}^n} a_i\left(\xi_i\right)\right|$, 
	 Lemma \ref{lem 2.5 in jde} and \eqref{54 in imrn}, the estimates of $a_i\left(\xi_i\right)$ follows immediately.

\end{proof}

\section{Expansions of Blow Up Solutions}\label{section 5}

In the following we will adapt some arguments from Marques \cite{Marques} for the Yamabe equation; see also Li-Zhang \cite{Li Zhang}, Niu-Peng-Xiong \cite{Niu Peng Xiong JFA}, and Jin-Li-Xiong \cite{Li Xiong}.

\begin{lemma}\label{lem 4.2 in imrn}
	Assume as in Lemma \ref{lemma 2.2 in jde}. Suppose $\rho=1$, we have,
	$$
	\begin{aligned}
	\left|\Phi_i(\xi)-\Lambda_{0,1}(\xi)\right| \leq C 
			\begin{cases}
				u_i(\xi_i)^{-2} (1+\|\nabla_{\mathbb{H}^n}^2 a_i\|_{L^{\infty}(D_1)}) & \text{ if } Q=4, \\
				u_i(\xi_i)^{-2} (1+\|\nabla_{\mathbb{H}^n}^2 a_i\|_{L^{\infty}(D_1)}\ln u_i(\xi_i)) & \text{ if } Q=6, \\
				u_i(\xi_i)^{-2} (1+\|\nabla_{\mathbb{H}^n}^2 a_i\|_{L^{\infty}(D_1)}u_i(\xi_i)^{\frac{2Q-12}{Q-2}}) \ &\text{ if }Q\geq 8 .
			\end{cases}
	\end{aligned}
	$$
where $\Phi_i(\xi):=u_i(\xi_i)^{-1} u_i(\xi_i \circ \delta_{u_i(\xi_i)^{-(p_i-1) / 2}} \xi)$, and $C>0$ depends only on $n$, $A_0$ and $A_1$.
\end{lemma}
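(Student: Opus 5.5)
The plan is to adapt the Marques / Niu–Peng–Xiong argument: write the rescaled equation for $\Phi_i$, compare it with the bubble through the linearized operator, and prove the bound by contradiction using the nondegeneracy of $\Lambda_{0,1}$.

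\textbf{Setting up the error equation.} Set $M_i:=u_i(\xi_i)$ and $\ell_i:=M_i^{-(p_i-1)/2}$, so that $\ell_i^2=M_i^{1-p_i}=M_i^{-4/(Q-2)+\tau_i}$. Then
\[
-\Delta_{\mathbb H^n}\Phi_i=\ell_i^2\,a_i(\xi_i\circ\delta_{\ell_i}\xi)\Phi_i+\Phi_i^{p_i}
\quad\text{in }\ \|\xi\|_{\mathbb H^n}\le \ell_i^{-1},
\]
with $\Phi_i(0)=1$, $\nabla_{\mathbb H^n}\Phi_i(0)=0$ and $\partial_t\Phi_i(0)=0$. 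Write $\epsilon_i$ for the right-hand side of the claimed bound and $w_i:=\Phi_i-\Lambda_{0,1}$. Subtracting the bubble equation gives
\[
-\Delta_{\mathbb H^n}w_i-b_iw_i=\ell_i^2a_i(\xi_i\circ\delta_{\ell_i}\cdot)\Phi_i+(\Lambda_{0,1}^{p_i}-\Lambda_{0,1}^{p})=:f_i ,
\]
where $b_i$ is the mean-value coefficient of $s\mapsto s^{p_i}$ between $\Phi_i$ and $\Lambda_{0,1}$. Pointwise, Proposition \ref{pro 2.3 in jde} gives $\Phi_i\le C(1+\|\xi\|_{\mathbb H^n})^{2-Q}$, hence $b_i\le C(1+\|\xi\|_{\mathbb H^n})^{-4}$. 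For the forcing term, Lemma \ref{lem 2.6 in jde} supplies $|\tau_i|\log(\cdot)$, $|a_i(\xi_i)|$ and $|\nabla_{\mathbb H^n}a_i(\xi_i)|$. Expanding $a_i$ by the Heisenberg Taylor formula \cite[Theorem 20.3.1]{BEU} in homogeneous degree then gives
\[
|f_i(\xi)|\le C\ell_i^2\bigl(|a_i(\xi_i)|+\ell_i|\nabla_{\mathbb H^n}a_i(\xi_i)|\,\|\xi\|+\ell_i^2\|\nabla^2_{\mathbb H^n}a_i\|\,\|\xi\|^2\bigr)(1+\|\xi\|)^{2-Q}+C\tau_i\log(2+\|\xi\|)(1+\|\xi\|)^{-Q-2}.
\]
Checking each term against the exponents in Lemmas \ref{lem 2.5 in jde}–\ref{lem 2.6 in jde}, all of them are bounded by $C\epsilon_i(1+\|\xi\|)^{-4}$, up to harmless logs when $Q=4,6$.

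\textbf{Boundary control.} On $\|\xi\|=\ell_i^{-1}$, Proposition \ref{pro 2.3 in jde} gives $\Phi_i\le CM_i^{-1}\cdot M_i^{-1}\ell_i^{2-Q}=CM_i^{-2}\cdot M_i^{0}$. Hence both $\Phi_i$ and $\Lambda_{0,1}$ are $O(M_i^{-2})\le C\epsilon_i$ there.

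\textbf{Contradiction argument.} Let $\Lambda_i:=\|w_i\|_{L^\infty(D_{\ell_i^{-1}})}$ and suppose $\Lambda_i/\epsilon_i\to\infty$. Put $\tilde w_i:=w_i/\Lambda_i$. The right-hand side of the equation for $\tilde w_i$ is then $o(1)(1+\|\xi\|)^{-4}$ and the boundary values are $o(1)$. Representing $\tilde w_i$ by the Green function of $-\Delta_{\mathbb H^n}$ on $D_{\ell_i^{-1}}$, i.e.\ convolution with the kernel $\|\cdot\|^{2-Q}$ as in Folland–Stein, against $b_i\tilde w_i+f_i/\Lambda_i$ yields $|\tilde w_i(\xi)|\le C(1+\|\xi\|)^{-2}$ uniformly. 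Iterating this once more, using the decay of $b_i$, improves the bound to $|\tilde w_i(\xi)|\le C(1+\|\xi\|)^{2-Q}$, up to logs. Subelliptic estimates then give convergence of $\tilde w_i$ in $C^2_{\rm loc}$ to a bounded $\tilde w$ with
\[
-\Delta_{\mathbb H^n}\tilde w=p\Lambda_{0,1}^{p-1}\tilde w,\qquad \tilde w(0)=0,\quad \nabla_{\mathbb H^n}\tilde w(0)=0,\quad \partial_t\tilde w(0)=0,
\]
and with decay $\tilde w=O(\|\xi\|^{2-Q})$.

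\textbf{Nondegeneracy and the main obstacle.} By the nondegeneracy of the Jerison–Lee bubble, bounded decaying solutions of the linearized equation form the $(2n+2)$-dimensional span of the derivatives of $\Lambda_{\xi_0,\lambda}$ in $\xi_0$ and $\lambda$. (The rotation $(z,t)\mapsto(e^{i\theta}z,t)$ leaves $\Lambda_{0,1}$ fixed, and the CR inversion-type kernels do not decay, so they are excluded.) The $2n+2$ conditions at $0$ then force $\tilde w\equiv0$. Finally, let $\bar\xi_i$ be a point with $|\tilde w_i(\bar\xi_i)|=1$. It cannot stay bounded, by local convergence, and it cannot escape to infinity, because $|\tilde w_i|\le C(1+\|\xi\|)^{2-Q}+o(1)$. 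This is a contradiction, so $\Lambda_i\le C\epsilon_i$.

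I expect the main obstacle to be this nondegeneracy classification on $\mathbb H^n$ together with the weighted-decay bootstrap. For nondegeneracy I would invoke the Malchiodi–Uguzzoni result. For the bootstrap, the sub-Laplacian convolution estimates $\int\|\eta^{-1}\circ\xi\|^{2-Q}(1+\|\eta\|)^{-4-k}\,d\eta\lesssim(1+\|\xi\|)^{-2-k}$ have to be checked in the homogeneous norm, with care at the threshold exponents where the $Q=4$ and $Q=6$ logarithms appear.
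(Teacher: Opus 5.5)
Your overall route is the paper's. You normalize $\Phi_i-\Lambda_{0,1}$ by its supremum, use the Green representation on the rescaled ball, and pass to a limit solving the linearized equation. You then kill that limit with the Malchiodi--Uguzzoni nondegeneracy and the $2n+2$ conditions at the origin, and show that the point where $|\tilde w_i|=1$ can neither stay bounded nor escape. Using the boundary values on $\partial D_{\ell_i^{-1}}$ directly is fine, and somewhat cleaner than the paper's detour through $D_{\ell_i/2}$. There $d_{\mathbb H^n}(\cdot,\xi_i)=1$, so $\Phi_i\le CM_i^{-2}$ follows straight from Proposition \ref{pro 2.3 in jde}; your intermediate expression with $\ell_i^{2-Q}$ is garbled, but the conclusion is right.

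One step is false as stated, however. The pointwise bound $|f_i|\le C\epsilon_i(1+\|\xi\|_{\mathbb H^n})^{-4}$ holds for $Q\ge 8$ but fails for $Q=4,6$, and the losses there are polynomial, not logarithmic.

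For $Q=4$ one has $\ell_i^{-1}\approx M_i$. The quadratic Taylor term $\ell_i^4\|\nabla^2_{\mathbb H^n}a_i\|_{L^\infty(D_1)}\|\xi\|^2(1+\|\xi\|)^{-2}$ does not decay at all, the gradient term decays like $(1+\|\xi\|)^{-1}$, and the $a_i(\xi_i)$ term only like $(1+\|\xi\|)^{-2}$. For $Q=6$ the quadratic term is $\approx M_i^{-2}\|\nabla^2_{\mathbb H^n}a_i\|_{L^\infty(D_1)}(1+\|\xi\|)^{-2}$, and the gradient term decays like $(1+\|\xi\|)^{-3}$.

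What is true, and what the paper actually uses (its Green-potential bound for the $\tilde a_i$ term, with constant $\alpha_i$, your $\epsilon_i$), is
\[
\sup_{\xi\in D_{\ell_i^{-1}}}\int_{D_{\ell_i^{-1}}}G_i(\xi,\eta)\,|f_i(\eta)|\,\mathrm d\eta\le C\epsilon_i .
\]
The potential of $(1+\|\eta\|)^{-2}$ diverges logarithmically. For $Q=4$ this is compensated by the factor $(\ln u_i(\xi_i))^{-1}$ in Lemma \ref{lem 2.6 in jde}; for $Q=6$ it is exactly where the $\ln u_i(\xi_i)$ in the statement comes from. The non-decaying $Q=4$ term has potential $\lesssim\ell_i^{2}\|\nabla^2_{\mathbb H^n}a_i\|_{L^\infty(D_1)}\approx M_i^{-2}\|\nabla^2_{\mathbb H^n}a_i\|_{L^\infty(D_1)}$, since a constant has potential $O(\ell_i^{-2})$ over a Kor\'anyi ball of radius $\ell_i^{-1}$ when $Q=4$.

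Consequently the right-hand side for $\tilde w_i$ is not $o(1)(1+\|\xi\|)^{-4}$. What you get instead is $|\tilde w_i(\xi)|\le C(1+\|\xi\|)^{-2}\log(2+\|\xi\|)+o(1)$, with an additive $o(1)$ uniform in $\xi$; this matches the paper's $|V_i|\le o(1)+C(1+\|\xi\|)^{-1}$. That weaker bound is all your final step needs. The $O(\|\xi\|^{2-Q})$ decay then holds for $\tilde w_i$ only modulo this $o(1)$, or exactly for the limit $\tilde w$. It takes finitely many iterations of the potential estimate, not one. With this replacement the argument goes through.
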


\begin{proof}
	For brevity, let $\ell_i:=u_i(\xi_i)^{(p_i-1)/2}$. By the equation which $u_i$ satisfies, we have
	\be\label{4.48 in imrn}
	-\Delta_{\mathbb{H}^{n}}\Phi_i(\xi)  =\ell_i^{-2}\tilde{a}_i(\xi)\Phi_i(\xi)+\Phi_i(\xi)^{p_i} ,
	\ee
	where $\widetilde{a}_i(\xi)=a_i(\xi_i \circ \delta_{\ell_i^{-1}} \xi)$. 

	Let
	\be\label{4.52 in imrn}
	T_i(\xi)=\Lambda_{0, 1}(\xi)^{\frac{Q+2}{Q-2 }}-\Lambda_{0, 1}(\xi)^{p_i}
	\ee
	and
	\be\label{V in imrn}
	V_i(\xi):=\frac{\Phi_i(\xi)-\Lambda_{0, 1}(\xi)}{\varphi_i},
	\ee
	where 
	$$
	\varphi_i=\max _{\|\xi\|_{\mathbb{H}^n} \leq \ell_i}|\Phi_i(\xi)-\Lambda_{0, 1}(\xi)| .
	$$
	By Lemma \ref{Harnack ineq 2} and Proposition \ref{pro 2.3 in jde}, we have for any $0<\varepsilon<1$ and $\varepsilon \ell_i\leq\|\xi\|_{\mathbb{H}^n} \leq \ell_i$,
	$$
	|\Phi_i(\xi)-\Lambda_{0, 1}(\xi)| \leq |\Phi_i(\xi)|+|\Lambda_{0, 1}(\xi)| \leq C(\varepsilon) u_i(\xi_i)^{-2},
	$$
	where we use $u_i(\xi_i)^{\tau_i}=1+o(1)$. Hence, we may assume that $\varphi_i$ is achieved at some point $\|\zeta_i\|_{\mathbb{H}^n} \leq \ell_i/2$, otherwise the proof is finished. In fact, taking \(\varepsilon=1/2\), if the maximum defining
\(\varphi_i\) is attained in
\(D_{\ell_i}\setminus D_{\ell_i/2}\), then
\[
\varphi_i
\leq
C u_i(\xi_i)^{-2}
\leq
C\alpha_i,
\]
and the desired estimate follows. 
	
	By \eqref{4.48 in imrn}, we have
	\be\label{4.54(1) in imrn}
	-\Delta_{\mathbb{H}^{n}}V_i(\xi)  =\frac{1}{\varphi_i}(\ell_i^{-2}\tilde{a}_i(\xi)\Phi_i(\xi)+\Phi_i(\xi)^{p_i}-\Lambda_{0, 1}(\xi)^{p_i}-T_i(\xi)) .
	\ee
	Denoting that 
	\be\label{4.55 in imrn}
	c_i(\xi)=\frac{\Phi_i(\xi)^{p_i}-\Lambda_{0, 1}(\xi)^{p_i}}{\Phi_i(\xi)-\Lambda_{0, 1}(\xi)},
	\ee
	we have
	\be\label{4.54 in imrn}
	-\Delta_{\mathbb{H}^{n}}V_i(\xi)  =F_i+c_iV_i(\xi)\qquad\text{in }D_{\ell_i}
	\ee
    where $F_i=\frac{\ell_i^{-2}\widetilde{a}_i(\xi)\Phi_i(\xi)}{\varphi_i} -\frac{T_i(\xi)}{\varphi_i}$.

Let \(G_i(\xi,\eta)\) be the Dirichlet Green function of
\(-\Delta_{\mathbb H^n}\) in \(D_{\ell_i}\); namely, for each
fixed \(\eta\in D_{\ell_i}\),
\[
\begin{cases}
-\Delta_{\mathbb H^n,\xi}G_i(\xi,\eta)=\delta_\eta
&\text{in }D_{\ell_i},\\
G_i(\xi,\eta)=0
&\text{on }\partial D_{\ell_i}.
\end{cases}
\]

Let \(H_i\) be the harmonic extension of the boundary values of
\(V_i\), namely,
\[
\begin{cases}
-\Delta_{\mathbb H^n}H_i=0
&\text{in }D_{\ell_i},\\
H_i=V_i
&\text{on }\partial D_{\ell_i}.
\end{cases}
\]
By Green's representation formula, we have
\begin{equation}\label{eq:green 3}
\begin{aligned}
V_i(\xi)
&=
H_i(\xi)
+\int_{D_{\ell_i}}
G_i(\xi,\eta)
\bigl(
F_i(\eta)+c_i(\eta)V_i(\eta)
\bigr)
\mathrm{~d}\eta
\\
&=
H_i(\xi)
+\int_{D_{\ell_i}}
G_i(\xi,\eta)
\left(
\frac{\ell_i^{-2}\tilde a_i(\eta)\Phi_i(\eta)}
{\varphi_i}
-\frac{T_i(\eta)}{\varphi_i}
+c_i(\eta)V_i(\eta)
\right)
\mathrm{~d}\eta .
\end{aligned}
\end{equation}

By the second-order Taylor estimate on \(\mathbb H^n\)(see \cite{BEU}), and using
the fact that the \(t\)-derivative is controlled by ordered
second horizontal derivatives, we have
	\be\label{eq:Taylor}
	\begin{aligned}
		\tilde{a}_i(\eta):&=a_i(\xi_i \circ \delta_{\ell_i^{-1}} \eta) \\
		&\leq a_i(\xi_i)+\ell_i^{-1} \nabla_{\mathbb{H}^n} a_i(\xi_i)(\bar{x},\bar{y})+\ell_i^{-2}\partial_t a_i(\xi_i)t\\
		& \quad+\ell_i^{-2}\|\nabla^2_{\mathbb{H}^n} a_i\|_{L^{\infty}(D_1)}(|\bar{x}|^2+|\bar{y}|^2) \\
		&\leq a_i(\xi_i)+\ell_i^{-1} |\nabla_{\mathbb{H}^n} a_i(\xi_i)|\|\eta\|_{\mathbb{H}^n}+\ell_i^{-2}\|\nabla^2_{\mathbb{H}^n} a_i\|_{L^{\infty}(D_1)}\|\eta\|_{\mathbb{H}^n}^2.		
	\end{aligned}
	\ee
	Since
\[
0<\Phi_i(\eta)
\leq
C\Lambda_{0,1}(\eta)
\leq
C(1+\|\eta\|_{\mathbb H^n})^{2-Q},
\]
it follows from \eqref{eq:Taylor} and Lemma \ref{lem 2.6 in jde} that
	\be\label{estimate 1}
	\int_{D_{\ell_i}}  G_i(\xi,\eta)  \ell_i^{-2}\tilde{a} (\eta)\Phi_i(\eta) \leq C \alpha_i,
	\ee
	where
	\be\label{65 in imrn}
	\begin{aligned}
		\alpha_i=
			\begin{cases}
				u_i(\xi_i)^{-2} (1+\|\nabla_{\mathbb{H}^n}^2 a_i\|_{L^{\infty}(D_1)}) & \text{ if } Q=4, \\
				u_i(\xi_i)^{-2} (1+\|\nabla_{\mathbb{H}^n}^2 a_i\|_{L^{\infty}(D_1)}\ln u_i(\xi_i)) & \text{ if } Q=6, \\
				u_i(\xi_i)^{-2} (1+\|\nabla_{\mathbb{H}^n}^2 a_i\|_{L^{\infty}(D_1)}u_i(\xi_i)^{\frac{2Q-12}{Q-2}}) \ &\text{ if }Q\geq 8 .
			\end{cases}
	\end{aligned}
	\ee

	Meanwhile, we have	
	\be\label{estimate 2}
	c_i(\eta)\leq C \Lambda_{0, 1}(\eta)^{p_i-1}
	\ee
	and
	\be\label{estimate 3}
	|T_i(\eta)| \leq C \tau_i|\log \Lambda_{0, 1}(\eta)|\Lambda_{0, 1}(\eta)^{p_i}.
	\ee

We claim that
\[
\varphi_i\leq C\alpha_i.
\]
Suppose otherwise. After passing to a subsequence, we may assume
that
\be\label{contradiction alpha phi}
\frac{\alpha_i}{\varphi_i}\rightarrow0.
\ee

Since \(\|V_i\|_{L^\infty(D_{\ell_i})}=1\), it follows from
\eqref{estimate 1}, \eqref{estimate 2}, \eqref{estimate 3},
and the corresponding potential estimates that
\be\label{estimate Wi}
|V_i(\xi)-H_i(\xi)|
\leq
\frac{C}{1+\|\xi\|_{\mathbb H^n}}
+
C\frac{\alpha_i+\tau_i}{\varphi_i}
\quad \text{ for }
\xi\in D_{\ell_i/2}.
\ee

On the other hand, taking \(\varepsilon=1/2\) in the estimate on
the outer annulus, we have
\[
\sup_{\eta\in\partial D_{\ell_i/2}}
|V_i(\eta)|
\leq
C\frac{u_i(\xi_i)^{-2}}{\varphi_i}.
\]
Therefore, by \eqref{estimate Wi},
\[
\begin{aligned}
\sup_{\eta\in\partial D_{\ell_i/2}}
|H_i(\eta)|
&\leq
\sup_{\eta\in\partial D_{\ell_i/2}}
|V_i(\eta)|
+
\sup_{\eta\in\partial D_{\ell_i/2}}
|V_i(\eta)-H_i(\eta)|
\\
&\leq
C\ell_i^{-1}
+
C\frac{
u_i(\xi_i)^{-2}+\alpha_i+\tau_i
}{\varphi_i}.
\end{aligned}
\]
Applying the maximum principle to \(H_i\) and \(-H_i\) in
\(D_{\ell_i/2}\), we obtain
\be\label{Vi minus Wi small}
\begin{aligned}
\|H_i\|_{L^\infty(D_{\ell_i/2})}\leq
C\ell_i^{-1}+C\frac{u_i(\xi_i)^{-2}+\alpha_i+\tau_i}{\varphi_i}.
\end{aligned}
\ee
Since
\[
u_i(\xi_i)^{-2}\leq\alpha_i,
\qquad
\tau_i\leq C\alpha_i,
\]
and \(\ell_i\to\infty\), it follows from
\eqref{contradiction alpha phi} that
\be\label{Vi minus Wi o1}
\|H_i\|_{L^\infty(D_{\ell_i/2})}
=o(1).
\ee

Combining \eqref{estimate Wi} and
\eqref{Vi minus Wi o1}, we obtain
\be\label{pointwise estimate for Vi}
|V_i(\xi)|
\leq
o(1)
+
\frac{C}{1+\|\xi\|_{\mathbb H^n}}
\quad \text{ for }
\xi\in D_{\ell_i/2},
\ee
where \(o(1)\) is independent of
\(\xi\in D_{\ell_i/2}\).

 Let \(R>0\) be fixed.
Since \(\ell_i\to\infty\), we have
$
D_R\subset D_{\ell_i/2}
$
for all sufficiently large \(i\). By the  estimate
\eqref{eq:Taylor} and  \eqref{estimate 3}, we have
\be\label{Fi converges to zero}
\|F_i\|_{L^\infty(D_R)}
\leq
C_R\frac{\alpha_i+\tau_i}{\varphi_i}
\rightarrow0.
\ee

On the other hand, since
\[
\|V_i\|_{L^\infty(D_{\ell_i})}=1,
\]
the local subelliptic estimates applied to
\[
-\Delta_{\mathbb H^n}V_i
=
F_i+c_iV_i
\]
show that, after passing to a subsequence,
\[
V_i\rightarrow V
\qquad\text{in }C_{\mathrm{loc}}^2(\mathbb H^n),
\]
where \(V\) satisfies
\be\label{linearized equation limit V}
-\Delta_{\mathbb H^n}V
=
\frac{Q+2}{Q-2}
\Lambda_{0,1}^{\frac{4}{Q-2}}V
\qquad\text{in }\mathbb H^n.
\ee

By
\[
\Phi_i(0)=\Lambda_{0,1}(0)=1,
\]
we have
\[
V_i(0)=0.
\]
Moreover, since \(\xi_i\) is a local maximum point of \(u_i\), we have
\[
\nabla_{\mathbb H^n}\Phi_i(0)=0,
\qquad
\partial_t\Phi_i(0)=0,
\]
It follows that
\[
\nabla_{\mathbb H^n}V_i(0)=0,
\qquad
\partial_tV_i(0)=0.
\]
Passing to the limit, we obtain
\be\label{normalization limit V}
V(0)=0,
\qquad
\nabla_{\mathbb H^n}V(0)=0,
\qquad
\partial_tV(0)=0.
\ee

Meanwhile, applying Green's representation formula, \eqref{pointwise estimate for Vi}, and then iterating the potential estimate finitely many times, we obtain
\be\label{sharp decay limit V}
|V(\xi)|
\leq
C(1+\|\xi\|_{\mathbb H^n})^{2-Q}
\qquad\text{in }\mathbb H^n.
\ee

It follows from the non-degeneracy result in \cite{Mal} (see also \cite{Qiang}) that
	$$
	V(\xi)=\bigg[\sum_{j=1}^n (c_j \overline{X}_j \Lambda_{\xi_0,\lambda}(\xi)+c_{j+n} \overline{Y}_j \Lambda_{\xi_0,\lambda}(\xi))+c_{2n+1}\partial_t \Lambda_{\xi_0,\lambda}(\xi) +c_{2n+2}\partial_\lambda \Lambda_{\xi_0,\lambda}(\xi)\bigg]\bigg|_{\xi_0=0,\lambda=1},
	$$
where $c_j$ for $j=1,\cdots, 2n+2$ are constants. Therefore,
\[
V\equiv0
\qquad\text{in }\mathbb H^n.
\]

Recall that, by the choice of \(\zeta_i\),
\[
|V_i(\zeta_i)|=1,
\qquad
\|\zeta_i\|_{\mathbb H^n}\leq\frac{\ell_i}{2}.
\]
We claim that
\[
\|\zeta_i\|_{\mathbb H^n}\rightarrow\infty.
\]
Indeed, otherwise, after passing to a subsequence,
\[
\zeta_i\rightarrow\zeta_\infty
\qquad\text{in }\mathbb H^n.
\]
The local uniform convergence \(V_i\to V\equiv0\) would then give
\[
1
=
\lim_{i\to\infty}|V_i(\zeta_i)|
=
|V(\zeta_\infty)|
=
0,
\]
which is impossible.

Finally, evaluating \eqref{pointwise estimate for Vi} at
\(\xi=\zeta_i\), we obtain
\[
1
=
|V_i(\zeta_i)|
\leq
o(1)
+
\frac{C}{1+\|\zeta_i\|_{\mathbb H^n}}
\rightarrow0,
\]
again a contradiction. Therefore,
\[
\varphi_i\leq C\alpha_i.
\]
This completes the proof.

	\end{proof}

	\begin{lemma}\label{lem 4.2}
		Assume as in Lemma \ref{lem 4.2 in imrn}, we have, for every $\|\xi\|_{\mathbb{H}^n} \leq \ell_i$,
		$$
		\begin{aligned}
			& |\Phi_i(\xi)-\Lambda_{0,1}(\xi)| \\
			& \leq C \begin{cases}u_i(\xi_i)^{-2} & \text { if } Q=4, \\ 
				\max \{u_i(\xi_i)^{-2}, (1+\|\xi\|_{\mathbb{H}^n})^{-1}\|\nabla_{\mathbb{H}^n}^2 a_i\|_{L^{\infty}\left(D_1\right)} u_i\left(\xi_i\right)^{-2+2 /(Q-2)}\} & \text { if } Q=6, \\ \max \{u_i(\xi_i)^{-2},
				(1+\|\xi\|_{\mathbb{H}^n})^{6-Q}\|\nabla_{\mathbb{H}^n}^2 a_i\|_{L^{\infty}\left(D_1\right)} u_i\left(\xi_i\right)^{-2+(2 Q-12) /(Q-2)} \}& \text { if } Q\geq 8,\end{cases}
		\end{aligned}
		$$
	where $C>0$ depends only on $n$, $A_0$ and $A_1$.
	\end{lemma}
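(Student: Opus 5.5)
We plan to sharpen Lemma \ref{lem 4.2 in imrn}, which gives only a uniform bound, into a pointwise decay bound. The tool is Green's representation for $w_i:=\Phi_i-\Lambda_{0,1}$ on $D_{\ell_i}$, with $\ell_i=u_i(\xi_i)^{(p_i-1)/2}$, followed by a bootstrap. By \eqref{4.48 in imrn},
\[
-\Delta_{\mathbb H^n}w_i=c_iw_i+\ell_i^{-2}\tilde a_i\Phi_i-T_i\qquad\text{in }D_{\ell_i},
\]
so that
\[
w_i(\xi)=H_i(\xi)+\int_{D_{\ell_i}}G_i(\xi,\eta)\bigl(c_iw_i+\ell_i^{-2}\tilde a_i\Phi_i-T_i\bigr)(\eta)\,\mathrm d\eta ,
\]
where $H_i$ is the $\Delta_{\mathbb H^n}$-harmonic extension of $w_i|_{\partial D_{\ell_i}}$. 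We will use the standard bound $0\le G_i(\xi,\eta)\le C\,d_{\mathbb H^n}(\xi,\eta)^{2-Q}$ and the convolution estimate
\[
\int_{\mathbb H^n}d_{\mathbb H^n}(\xi,\eta)^{2-Q}(1+\|\eta\|_{\mathbb H^n})^{-\beta}\,\mathrm d\eta\le C(1+\|\xi\|_{\mathbb H^n})^{2-\beta}\qquad\text{for }2<\beta<Q.
\]
This follows from homogeneity by splitting into the regions $d_{\mathbb H^n}(\xi,\eta)<\|\xi\|_{\mathbb H^n}/2$ and its complement. When the integral is restricted to $D_{\ell_i}$, the borderline cases $\beta\le 2$ produce logarithmic or $\ell_i^{2-\beta}$ factors instead.

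We estimate the terms one at a time. For the boundary term, Proposition \ref{pro 2.3 in jde} gives $|w_i|\le Cu_i(\xi_i)^{-2}$ on $\partial D_{\ell_i}$, and the maximum principle then gives $|H_i|\le Cu_i(\xi_i)^{-2}$. For the potential term, we insert the Taylor bound \eqref{eq:Taylor} together with $\Phi_i\le C(1+\|\eta\|_{\mathbb H^n})^{2-Q}$. This splits the source into three pieces:
\[
\ell_i^{-2}a_i(\xi_i)(1+\|\eta\|_{\mathbb H^n})^{2-Q},\qquad \ell_i^{-3}|\nabla_{\mathbb H^n}a_i(\xi_i)|(1+\|\eta\|_{\mathbb H^n})^{3-Q},\qquad \ell_i^{-4}\|\nabla^2_{\mathbb H^n}a_i\|_{L^\infty(D_1)}(1+\|\eta\|_{\mathbb H^n})^{4-Q}.
\]
For the first two pieces we use the bounds on $a_i(\xi_i)$ and $\nabla_{\mathbb H^n}a_i(\xi_i)$ from Lemma \ref{lem 2.6 in jde}; after convolution they are bounded by $C$ times the right-hand side of the statement. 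The factor $\ell_i^{-2}=u_i(\xi_i)^{-4/(Q-2)+o(1)}$ exactly cancels the $u_i(\xi_i)^{4/(Q-2)}$ in those bounds, and $u_i(\xi_i)^{\tau_i}=1+o(1)$ by Lemma \ref{lemma 5.9 in asan}.

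The third piece determines the form of the statement. For $Q\ge8$ we have $\ell_i^{-4}=u_i(\xi_i)^{-8/(Q-2)}=u_i(\xi_i)^{-2+(2Q-12)/(Q-2)}$, and the convolution gives $(1+\|\xi\|_{\mathbb H^n})^{6-Q}$. For $Q=6$ the integrand is $\|\eta\|^{-2}$ in a $6$-dimensional group, and the integral over $D_{\ell_i}$ produces $\ell_i^{-4}\ln\bigl(\ell_i/(1+\|\xi\|_{\mathbb H^n})\bigr)$. Using $\ln(\ell_i/r)\le C\ell_i/r$, this is at most $C\ell_i^{-3}(1+\|\xi\|_{\mathbb H^n})^{-1}=Cu_i(\xi_i)^{-2+2/(Q-2)}(1+\|\xi\|_{\mathbb H^n})^{-1}$. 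For $Q=4$, every contribution is $O(u_i(\xi_i)^{-2})$ after using $\ell_i^{-4}=u_i(\xi_i)^{-2+o(1)}$ and the $Q=4$ bounds of Lemma \ref{lem 2.6 in jde}. The term $T_i$ is controlled by \eqref{estimate 3} and the bound $\tau_i\le C\alpha_i$ from the proof of Lemma \ref{lem 2.6 in jde}. Since $|\log\Lambda_{0,1}|\Lambda_{0,1}^{p_i}$ decays faster than $(1+\|\eta\|)^{-Q-1}$, its contribution is $C\tau_i(1+\|\xi\|_{\mathbb H^n})^{2-Q}$, which is dominated by the target.

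The main obstacle is the linear term $\int G_i\,c_iw_i$, where $c_i\le C(1+\|\eta\|_{\mathbb H^n})^{-4}$ by \eqref{estimate 2}. We handle it by bootstrapping. Start from the uniform bound $|w_i|\le C\alpha_i$ of Lemma \ref{lem 4.2 in imrn}. One application of the representation formula improves the $c_iw_i$ contribution to $C\alpha_i(1+\|\xi\|_{\mathbb H^n})^{-2}$, and each further iteration gains another factor $(1+\|\xi\|_{\mathbb H^n})^{-2}$. Iteration stops once the decay reaches $(1+\|\xi\|_{\mathbb H^n})^{6-Q}$ (respectively $(1+\|\xi\|_{\mathbb H^n})^{-1}$ when $Q=6$), or until the contribution falls below $u_i(\xi_i)^{-2}$. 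The delicate point is checking that $\alpha_i$ times the accumulated decay is always dominated by the maximum in the statement, rather than leaving a bare $\alpha_i$. We expect this to follow from comparing $\alpha_i$ with the two entries of the maximum: for $\|\xi\|_{\mathbb H^n}$ small compared with $\ell_i$, $\alpha_i$ is comparable to the $\nabla^2_{\mathbb H^n}a_i$ term at bounded $\|\xi\|$. The iteration must also be organized so that the logarithm in the $Q=6$ case does not propagate. Summing the four contributions then yields the claimed estimate for every $\|\xi\|_{\mathbb H^n}\le\ell_i$.
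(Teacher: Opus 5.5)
Your proposal is correct and follows essentially the paper's route. You start from the uniform bound $|\Phi_i-\Lambda_{0,1}|\le C\alpha_i$ of Lemma~\ref{lem 4.2 in imrn}, write Green's representation on $D_{\ell_i}$, control the potential term by \eqref{eq:Taylor} and Lemma~\ref{lem 2.6 in jde} and the $T_i$ term by \eqref{estimate 3} with $\tau_i\le C\alpha_i$, and iterate the potential estimate on $c_i(\Phi_i-\Lambda_{0,1})$; the paper differs only cosmetically, normalizing by $\alpha_i'$ after reducing to $u_i(\xi_i)^{-2}/\alpha_i\to0$.

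Two small corrections. First, the step you flag as delicate is immediate: for $Q\ge8$ one has $\alpha_i\le 2\max\{u_i(\xi_i)^{-2},\|\nabla^2_{\mathbb H^n}a_i\|_{L^\infty(D_1)}u_i(\xi_i)^{-2+(2Q-12)/(Q-2)}\}$, so $\alpha_i$ times any accumulated decay $(1+\|\xi\|_{\mathbb H^n})^{-\gamma}$ with $\gamma\ge Q-6$ is dominated by the target. For $Q=6$, one iteration already yields $\alpha_i(1+\|\xi\|_{\mathbb H^n})^{-2}$, which is dominated because $u_i(\xi_i)^{-1/2}\ln u_i(\xi_i)\to0$. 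Second, for $Q=4$ one has $\ell_i^{-4}=u_i(\xi_i)^{-4+o(1)}$, not $u_i(\xi_i)^{-2+o(1)}$. The $O(u_i(\xi_i)^{-2})$ bound comes from $\ell_i^{-4}\cdot\ell_i^{2}$ after integrating over $D_{\ell_i}$. That case is in any event immediate from Lemma~\ref{lem 4.2 in imrn}, since $\alpha_i\le Cu_i(\xi_i)^{-2}$ there.
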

			
	\begin{proof}
		In the case $Q = 4$, the conclusion is obvious. We will only prove the case when $Q\geq 6$ in the following. Assume that $u_i(\xi_i)^{-2} / \alpha_i \rightarrow 0$ as $i \rightarrow \infty$; otherwise, there exists a subsequence $i_l$ of $\{i\}$
		 such that $u_{i_l}(\xi_{i_l})^{-2} \geq C \alpha_{i_l}$ for some $C>0$ and the lemma follows from Lemma \ref{lem 4.2 in imrn}. 
         Define
		$$
		\begin{aligned}
			\alpha^{\prime}_i=  \begin{cases}u_i(\xi_i)^{-2} & \text { if } Q=4, \\ 
				\|\nabla_{\mathbb{H}^n}^2 a_i\|_{L^{\infty}\left(D_1\right)} u_i\left(\xi_i\right)^{-2+2 /(Q-2)} & \text { if } Q=6, \\ 
				\|\nabla_{\mathbb{H}^n}^2 a_i\|_{L^{\infty}\left(D_1\right)} u_i\left(\xi_i\right)^{-2+(2 Q-12) /(Q-2)} & \text { if } Q\geq 8.\end{cases}
		\end{aligned}		
		$$
         and let
		$$
		V_i^{\prime}(\xi):=\frac{\Phi_i(\xi)-\Lambda_{0,1}(\xi)}{\alpha_i^{\prime}} \quad \text{ for }\|\xi\|_{\mathbb{H}^n} \leq \ell_i,
		$$
		where $\ell_i=u_i(\xi_i)^{(p_i-1)/2}$. Since
\[
\frac{u_i(\xi_i)^{-2}}{\alpha_i}\rightarrow0
\]
and \(\alpha_i\leq C\alpha_i'\), it follows from Lemma
\ref{lem 4.2 in imrn} that
\[
|V_i'(\xi)|\leq C
\qquad\text{for }
\|\xi\|_{\mathbb H^n}\leq\ell_i.
\]
Since $0<\Phi_i\leq C\Lambda_{0,1}$ and $u_i(\xi_i)^{\tau_i}=1+o(1)$, we have
\[
|V_i'(\xi)|
\leq
\begin{cases}
C u_i(\xi_i)^{-\frac{2}{Q-2}},
&\text{if }Q=6,\\[2mm]
C u_i(\xi_i)^{-\frac{2(Q-6)}{Q-2}},
&\text{if }Q\geq8.
\end{cases}
\]
Thus we only need to prove the proposition when $\|\xi\|_{\mathbb{H}^n} \leq \ell_i / 2$. 

          By a calculation similar to \eqref{4.54 in imrn}, $V_i^{\prime}(\xi)$ satisfies
		\be
		-\Delta_{\mathbb{H}^{n}}V_i^{\prime}(\xi)  =\frac{\ell_i^{-2}\tilde{a}(\xi)\Phi_i(\xi)}{\alpha_i^{\prime}} -\frac{T_i(\xi)}{\alpha_i^{\prime}}+c_iV_i^{\prime}(\xi),
		\ee
		where $\tilde{a}_i$, $T_i$ and $c_i$ are given by \eqref{4.48 in imrn}, \eqref{4.52 in imrn} and \eqref{4.55 in imrn}, respectively. By Green representation, we have
		\begin{equation}\label{eq:green 4}
		\begin{split}
			V_i^\prime (\xi)
			=H_i(\xi)+\int_{D_{\ell_i}}G_i( \xi,\eta) \Big(\frac{\ell_i^{-2}\tilde{a}(\eta)\Phi_i(\eta)}{\alpha^{\prime}_i}-\frac{T_i(\eta)}{\alpha^{\prime}_i}+c_i(\eta)V^{\prime}_i(\eta)\Big) \ \mathrm{d} \bar{z}\mathrm{d} \bar{t}.
		\end{split}
	\end{equation}
	By Taylor expansion of $a_i$ at $\xi_i$, we have
	$$
	\begin{aligned}
		a_i(\xi_i\circ \delta_{\ell_i^{-1}} \eta) \leq 
		a_i(\xi_i)+\ell_i^{-1} \|\nabla_{\mathbb{H}^n} a_i(\xi_i)\|_{L^{\infty}(D_1)}\|\eta\|_{\mathbb{H}^n}
		+\ell_i^{-2 }\|\nabla^2_{\mathbb{H}^n} a_i\|_{L^{\infty}(D_1)}\|\eta\|^2_{\mathbb{H}^n}.		
	\end{aligned}
	$$
	Since $\Phi_i(\eta) \leq C \Lambda_{0,1}(\eta)$, similar to Lemma \ref{lem 2.6 in jde} and Lemma 4.1 in \cite{Niu Tang Zhou IMRN}, we have the following: for $Q=6$,
	$$
	\begin{aligned}
	\int_{D_{\ell_i}}G_i( \xi,\eta)\frac{  \ell_i^{-2}\tilde{a} (\eta) \Phi_i(\eta)}{\alpha_i^\prime}\ \mathrm{d} \bar{z}\mathrm{d} \bar{t}&\leq C\int_{D_{\ell_i}}\frac{u_i(\xi_i)^{\frac{-2}{Q-2}}}{\|\eta^{-1}\circ\xi\|^{Q-2}_{\mathbb{H}^n}(1+\|\eta\|_{\mathbb{H}^n})^{Q-4}}\ \mathrm{d} \bar{z}\mathrm{d} \bar{t}\\
	&\leq C\int_{D_{\ell_i}}\frac{1}{\|\eta^{-1}\circ\xi\|^{Q-2}_{\mathbb{H}^n}(1+\|\eta\|_{\mathbb{H}^n})^{Q-3}}\ \mathrm{d} \bar{z}\mathrm{d} \bar{t}\\
	&\leq C(1+\|\xi\|_{\mathbb{H}^n})^{-1};
	\end{aligned}
	$$
for $Q\geq 8$,
	$$
	\begin{aligned}
	\int_{D_{\ell_i}}G_i( \xi,\eta) \frac{ \ell_i^{-2}\tilde{a} (\eta) \Phi_i(\eta)}{\alpha^\prime_i}\ \mathrm{d} \bar{z}\mathrm{d} \bar{t}
	&\leq C\int_{D_{\ell_i}}\frac{1}{\|\eta^{-1}\circ\xi\|^{Q-2}_{\mathbb{H}^n}(1+\|\eta\|_{\mathbb{H}^n})^{Q-4}}\ \mathrm{d} \bar{z}\mathrm{d} \bar{t}\\
	&\leq C(1+\|\xi\|_{\mathbb{H}^n})^{6-Q}.
	\end{aligned}
	$$
	On the other hand, it follows from Lemma \ref{lem 4.2 in imrn} that
	$$c_i(\xi)\leq C \Lambda_{0, 1}(\xi)^{p_i-1}\leq C(\bar c_n^2 t^2+(1+\bar c_n|z|^2)^2)^{O(\tau_i)-1}$$
	and
	$$
	|T_i(\xi)| \leq C \tau_i|\log \Lambda_{0, 1}(\xi)|(\bar c_n^2 t^2+(1+\bar c_n|z|^2)^2)^{\frac{-Q-2}{4} }.
	$$
	Repeating the process of Lemma \ref{lem 4.2 in imrn}, we complete the proof.
	\end{proof}

    Considering the equation of \(W_i:=\Phi_i-\Lambda_{0,1}\), we obtain the following corollary. In fact, The horizontal gradient estimate follows by differentiating the Green representation formula and using
the standard pointwise estimates for the Green kernel on
\(\mathbb H^n\). Combining these estimates with the scale-invariant
interior subelliptic Schauder estimate and the identity
\([X_j,Y_j]=-4\partial_t\), we also obtain the estimate for
\(\partial_tW_i\). The argument for the horizontal derivatives is
parallel to the Euclidean case
\cite{Marques,Niu Peng Xiong JFA,Niu Tang Zhou IMRN}, with \(Q\)
replacing \(n\).
	\begin{corollary}\label{corollary 4.4 in imrn}
		Assume as in Lemma \ref{lem 4.2 in imrn}, we have
		$$
		\begin{aligned}
			& |\nabla_{\mathbb{H}^n}(\Phi_i(\xi)-\Lambda_{0,1}(\xi))| \leq C(1+\|\xi\|_{\mathbb{H}^n})^{-1} \\
			&\quad  \times \begin{cases}u_i(\xi_i)^{-2} & \text { if }  Q=4, \\
				\max \{u_i(\xi_i)^{-2}, \left\|\nabla^2_{\mathbb{H}^n} a_i\right\|_{L^{\infty}(D_1)} u_i(\xi_i)^{-2+2 /(Q-2 )}(1+\|\xi\|_{\mathbb{H}^n})^{-1}\} & \text { if } Q=6, \\
				\max \{u_i(\xi_i)^{-2},\|\nabla^2_{\mathbb{H}^n} a_i\|_{L^{\infty}(D_1)} u_i(\xi_i)^{-2+(2 Q-12) /(Q-2 )}(1+\|\xi\|_{\mathbb{H}^n})^{6-Q}\} & \text { if } Q\geq 8,\end{cases}
		\end{aligned}
	$$
	and
	$$
		\begin{aligned}
			& |\partial_t(\Phi_i(\xi)-\Lambda_{0,1}(\xi))| \leq C(1+\|\xi\|_{\mathbb{H}^n})^{-2} \\
			&\quad  \times \begin{cases}u_i(\xi_i)^{-2} & \text { if }  Q=4, \\
				\max \{u_i(\xi_i)^{-2}, \left\|\nabla^2_{\mathbb{H}^n} a_i\right\|_{L^{\infty}(D_1)} u_i(\xi_i)^{-2+2 /(Q-2 )}(1+\|\xi\|_{\mathbb{H}^n})^{-1}\} & \text { if } Q=6, \\
				\max \{u_i(\xi_i)^{-2},\|\nabla^2_{\mathbb{H}^n} a_i\|_{L^{\infty}(D_1)} u_i(\xi_i)^{-2+(2 Q-12) /(Q-2 )}(1+\|\xi\|_{\mathbb{H}^n})^{6-Q}\} & \text { if } Q\geq 8,\end{cases}
		\end{aligned}
	$$
	where $C>0$ depends only on $n$, $A_0$ and $A_1$.
	\end{corollary}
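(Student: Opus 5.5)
We plan to work with $W_i:=\Phi_i-\Lambda_{0,1}$ on $D_{\ell_i}$, where it satisfies
\[
-\Delta_{\mathbb H^n}W_i=\ell_i^{-2}\tilde a_i\Phi_i-T_i+c_iW_i=:f_i .
\]
Write $\beta_i(\xi)$ for the bracket in Lemma \ref{lem 4.2}, so that $|W_i|\le C\beta_i$. The estimate for $\nabla_{\mathbb H^n}W_i$ will come from a scale-invariant argument rather than from differentiating the Green kernel globally. Fix $\xi$ with $1\le \|\xi\|_{\mathbb H^n}=:2r\le \ell_i/2$ and rescale,
\[
\widetilde W(\eta):=W_i\bigl(\xi\circ\delta_r\eta\bigr),\qquad \eta\in D_1 ,
\]
so that $-\Delta_{\mathbb H^n}\widetilde W=r^2 f_i(\xi\circ\delta_r\cdot)$. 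On $D_1$ we have $\|\xi\circ\delta_r\eta\|_{\mathbb H^n}\simeq r$, by the quasi-triangle inequality for the Korányi norm after shrinking the radius by a fixed factor. Hence Lemma \ref{lem 4.2} gives $\|\widetilde W\|_{L^\infty(D_1)}\le C\beta_i(\xi)$, because $\beta_i$ is comparable to a power of $(1+\|\cdot\|)$ on that region.

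For the right-hand side we bound the three pieces separately. By the Taylor bound \eqref{eq:Taylor} and Lemma \ref{lem 2.6 in jde}, $r^2\ell_i^{-2}|\tilde a_i|\Phi_i$ is controlled by exactly the quantity whose Green potential produced $\beta_i$, so it is $\le C\beta_i(\xi)$. By \eqref{estimate 3} and the bound $\tau_i\le C\alpha_i$ from Lemma \ref{lem 2.6 in jde}, $r^2|T_i|\le C\alpha_i r^{2-Q-2}\log r$, which is $\le C\beta_i$. Finally $r^2c_i|W_i|\le Cr^{-2}\beta_i\le C\beta_i$. The interior subelliptic Schauder estimate (Folland–Stein) on $D_{1/2}$ then gives $\|\widetilde W\|_{\Gamma^{2,\alpha}(D_{1/2})}\le C\beta_i(\xi)$.

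We scale back using the homogeneities of the derivatives: horizontal fields scale by $r^{-1}$, and $\partial_t$, being a commutator of two horizontal fields ($[X_j,Y_j]=-4\partial_t$), scales by $r^{-2}$. This gives
\[
|\nabla_{\mathbb H^n}W_i(\xi)|\le Cr^{-1}\beta_i(\xi),\qquad |\partial_tW_i(\xi)|\le Cr^{-2}\beta_i(\xi),
\]
which are the two claimed bounds with the factors $(1+\|\xi\|)^{-1}$ and $(1+\|\xi\|)^{-2}$. For $\|\xi\|_{\mathbb H^n}\le 1$ the same Schauder estimate is applied on $D_2$ with $r=1$.

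In the outer region $\ell_i/2\le\|\xi\|\le\ell_i$ this rescaling would leave $D_{\ell_i}$, so we argue differently there. Apply the same argument to $\Phi_i$ and $\Lambda_{0,1}$ separately, using Proposition \ref{pro 2.3 in jde} and the Harnack inequality of Lemma \ref{Harnack ineq 2} to get $\Phi_i\le Cu_i(\xi_i)^{-2}$ on a slightly larger annulus. This gives the gradient bound $Cu_i(\xi_i)^{-2}\ell_i^{-1}$, consistent with the claim.

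The main obstacle is checking that the forcing term really is $\le C\beta_i(\xi)r^{-2}$ uniformly in each dimension regime. This matters especially for $Q\ge8$, where the weight $(1+\|\xi\|)^{6-Q}$ has to match $r^2\ell_i^{-2}\|\nabla^2_{\mathbb H^n}a_i\|r^2\Lambda_{0,1}$ after dividing by $\alpha_i'$. We will verify it by redoing the computation in the proof of Lemma \ref{lem 4.2} pointwise instead of through the potential.
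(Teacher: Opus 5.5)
Your argument is correct, but for the horizontal gradient it takes a different route from the paper. The paper gets $\nabla_{\mathbb H^n}W_i$ by differentiating the Green representation \eqref{eq:green 4}. This means convolving a kernel bound $|\nabla_{\mathbb H^n}G_i(\xi,\eta)|\lesssim d_{\mathbb H^n}(\xi,\eta)^{1-Q}$ against the weighted forcing of Lemma \ref{lem 4.2}. The paper uses the scale-invariant Schauder estimate and $[X_j,Y_j]=-4\partial_t$ only for $\partial_tW_i$.

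You use the rescaled interior estimate for both derivatives, with the sup bound of Lemma \ref{lem 4.2} as the only global input. Your route is purely local and treats $\nabla_{\mathbb H^n}$ and $\partial_t$ uniformly. It needs no derivative bounds for the Dirichlet Green function of $D_{\ell_i}$ or for the harmonic part $H_i$, which are delicate near the characteristic points of a Kor\'anyi sphere. The price is a pointwise rather than integrated bound on the forcing. That bound does hold, so your ``main obstacle'' is harmless. For $Q\ge 8$, with $\alpha_i'$ as in Lemma \ref{lem 4.2}, the three Taylor pieces of $r^2\ell_i^{-2}\tilde a_i\Phi_i$ on the shell $\|\eta\|_{\mathbb H^n}\simeq r$ are bounded by $r^{4-Q}(u_i(\xi_i)^{-2}+\alpha_i')$, by $r^{5-Q}(u_i(\xi_i)^{-2-\frac{2}{Q-2}}+\alpha_i')$, and by exactly $r^{6-Q}\alpha_i'$. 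All three are $\le C\beta_i(\xi)$, and $Q=4,6$ check the same way.

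Two details to tighten.
\begin{enumerate}
\item The Schauder estimate needs a $\Gamma^{\alpha}$ bound on the rescaled forcing, not only an $L^\infty$ bound. First apply the $S^{2,p}$ estimates; via the Folland--Stein embedding these already give the horizontal gradient bound. Then use the resulting $\Gamma^{1,\alpha}$ control of $\widetilde W$ and of the rescaled $\Phi_i$, $\Lambda_{0,1}$ to bound the H\"older seminorm of $c_iW_i=\Phi_i^{p_i}-\Lambda_{0,1}^{p_i}$ and of the other two terms. This step is what the second-derivative (hence $\partial_t$) bound actually requires.
\item Proposition \ref{pro 2.3 in jde} stops at $d_{\mathbb H^n}(\cdot,\xi_i)=1$, so in the outer annulus it is cleaner not to invoke Harnack on a larger annulus. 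Instead use the $C^2_{\rm loc}(D_2\setminus\{0\})$ convergence of $u_i(\xi_i\circ\theta_0)^{-1}u_i$ from that proof, together with $u_i(\xi_i\circ\theta_0)\le Cu_i(\xi_i)^{-1}$. This gives $|\nabla_{\mathbb H^n}u_i|+|\partial_tu_i|\le Cu_i(\xi_i)^{-1}$ on $\{1/2\le d_{\mathbb H^n}(\cdot,\xi_i)\le 1\}$, and scaling then yields both outer-region bounds directly.
\end{enumerate}
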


\section{Local results}\label{section 6}
	In this section we prove some local results regarding isolated blow-up points, namely that an isolated blow-up point is a critical point for the function $a=\lim _{i \rightarrow \infty} a_i$ and we give sufficient conditions for an isolated blow-up point to be an isolated simple blow-up point.

	\begin{proposition}\label{pro 5.1 in imrn}
		Assume as in Lemma \ref{lemma 2.2 in jde}. Assume further that $\left\|a_i\right\|_{C^4(D_1)} \leq A_0$. Then for $0<r<\rho$ there holds
		$$
		\begin{aligned}
			& u_i(\xi_i)^2 \int_{\partial D_r(\xi_i)}\mathcal{D}( \xi_i^{-1}\circ \xi, u_i, \nabla_{\mathbb{H}^{n}} u_i)\geq-C_0 r^{-Q} u_i(\xi_i)^{-\frac{4 }{Q-2 }}-C_0\|a_i\|_{L^{\infty}(D_1)} r^{4-Q} \\
			& \quad +C_1 \begin{cases} a_i(\xi_i) \ln (r u_i(\xi_i)^{\frac{2}{Q-2 }}) & \text{ if }Q=4, \\
			a_i(\xi_i) u_i(\xi_i)^{\frac{2(Q-4 )}{Q-2 }}+\frac{1}{2 n} \Delta_{\mathbb{H}^n} a_i(\xi_i) \ln (r u_i(\xi_i)^{\frac{2}{Q-2 }}) & \text{ if }Q=6, \\
			\beta_i-C_0\|a_i\|_{D_1} \ln (r u_i(\xi_i)^{\frac{2}{Q-2 }}) & \text{ if }Q=8, \\
			\beta_i-C_0\|a_i\|_{D_1} u_i(\xi_i)^{\frac{2(Q-8)}{Q-2 }} & \text{ if }Q\geq 10, \end{cases}
		\end{aligned}
		$$
		where 
		\begin{equation*}
			\begin{aligned}
					\beta_i&:= a_i(\xi_i) u_i(\xi_i)^{\frac{2(Q-4 )}{Q-2 }}+\frac{1}{2n} \Delta_{\mathbb{H}^n} a_i(\xi_i) u_i(\xi_i)^{\frac{2(Q-6)}{Q-2 }},\\ \|a_i\|_{D_1}&:=\|a_i\|_{L^{\infty}(D_1)} \times\|\nabla_{\mathbb{H}^n}^2 a_i\|_{L^{\infty}(D_1)}+\|\nabla_{\mathbb{H}^n}^4 a_i\|_{L^{\infty}\left(D_1\right)},
			\end{aligned}\end{equation*} 
			$C_0>0$ depends only on $n$,  $A_0$, $A_1$, $\rho$ and is independent of $r$ if $i$ is sufficiently large, and $C_1>0$ depends only on $n$.
		\end{proposition}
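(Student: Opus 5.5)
The plan is to apply the Pohozaev identity of Corollary \ref{pohozaev on heisenberg}, centered at $\xi_i$, on $D_r(\xi_i)$, exactly as in \eqref{3.6.1} with radius $r$ in place of $1$. Since $\frac{Q}{p_i+1}-\frac{Q-2}{2}\geq 0$ for $p_i\leq p$, the $u_i^{p_i+1}$ interior term is nonnegative and can be discarded. This gives
\[
\int_{\partial D_r(\xi_i)}\mathcal D\geq \int_{D_r(\xi_i)}\Big(a_i+\tfrac12\mathcal X_i(a_i)\Big)u_i^2
-\tfrac12\int_{\partial D_r(\xi_i)}a_iu_i^2\,\mathcal X_iI\cdot N-\tfrac1{p_i+1}\int_{\partial D_r(\xi_i)}u_i^{p_i+1}\mathcal X_iI\cdot N .
\]
By \eqref{eq:XN}, $\mathcal X_iI\cdot N=r/|\nabla d_{\mathbb H^n}|$, and Proposition \ref{pro 2.3 in jde} gives $u_i\leq Cu_i(\xi_i)^{-1}r^{2-Q}$ on $\partial D_r(\xi_i)$. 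Hence the boundary $a_iu_i^2$ term contributes $O(\|a_i\|_{L^\infty}u_i(\xi_i)^{-2}r^{4-Q})$. The $u_i^{p_i+1}$ boundary term contributes $O(u_i(\xi_i)^{-p_i-1}r^{-Q-1}\cdot r^{Q-1}\cdot r)$, which after multiplying by $u_i(\xi_i)^2$ is bounded by $C_0r^{-Q}u_i(\xi_i)^{-4/(Q-2)}$, using Lemma \ref{lemma 5.9 in asan}.

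The main work is the interior term. Change variables $\xi=\xi_i\circ\delta_{\ell_i^{-1}}\eta$ with $\ell_i=u_i(\xi_i)^{(p_i-1)/2}$, so that $u_i^2\,d\xi=u_i(\xi_i)^2\ell_i^{-Q}\Phi_i(\eta)^2\,d\eta$. Next, Taylor expand $a_i(\xi_i\circ\delta_{\ell_i^{-1}}\eta)$ to homogeneous order four, using the Taylor formula of \cite{BEU} and $\|a_i\|_{C^4}\leq A_0$. We also write $a_i+\frac12\mathcal X_i a_i$ as a sum of homogeneous polynomials: a polynomial $P_k$ of degree $k$ picks up the factor $1+k/2$. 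Then replace $\Phi_i$ by $\Lambda_{0,1}$, with error controlled by Lemma \ref{lem 4.2} and Corollary \ref{corollary 4.4 in imrn}. The odd homogeneous degrees (1 and 3) integrate to zero against $\Lambda_{0,1}^2$ over $D_{r\ell_i}$, because $\Lambda_{0,1}$ and the Korányi ball are invariant under $(z,t)\mapsto(-z,t)$ (and under $t\mapsto -t$). The one exception is the $t$-part of degree two, $\partial_ta_i(\xi_i)t$, which vanishes by $t\mapsto -t$.

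What remains at degree two is $\sum_{j,k}Z_jZ_ka_i(\xi_i)z_jz_k$. By the $U(n)$-symmetry of $\Lambda_{0,1}$, only the trace survives, giving $\frac{1}{2n}\Delta_{\mathbb H^n}a_i(\xi_i)|z|^2$; this is where the constant $\frac1{2n}$ appears, with $Q-2=2n$. Writing $u_i(\xi_i)^2\ell_i^{-Q}\ell_i^{-k}=u_i(\xi_i)^{-\frac{4+2k}{Q-2}+o(1)}$ and multiplying by $u_i(\xi_i)^2$ yields the scalings $u_i(\xi_i)^{\frac{2(Q-4)}{Q-2}}$ for $a_i(\xi_i)$ and $u_i(\xi_i)^{\frac{2(Q-6)}{Q-2}}$ for $\Delta_{\mathbb H^n}a_i(\xi_i)$. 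The integrals $\int_{D_{r\ell_i}}|z|^k\Lambda_{0,1}^2$ are finite exactly when $k+4<Q$, and grow like $\ln(r\ell_i)$ when $k+4=Q$. This produces the log terms for $Q=4$ and $k=0$, and for $Q=6$ and $k=2$. When $Q=4$, the degree-two term is bounded by $Cr^{0}\|a_i\|$ and is absorbed. The degree-four remainder and the error terms are bounded by $C_0\|\nabla^4a_i\|u_i(\xi_i)^{\frac{2(Q-8)}{Q-2}}$, or by the logarithm when $Q=8$.

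The error from $\Phi_i-\Lambda_{0,1}$ is controlled via Lemma \ref{lem 4.2}. It multiplies terms of size $a_i(\xi_i)$ and $\|\nabla^2 a_i\|$, and produces the mixed product $\|a_i\|_{L^\infty}\|\nabla^2_{\mathbb H^n}a_i\|$ that appears in $\|a_i\|_{D_1}$. Here we use $a_i\geq0$ and the size of $a_i(\xi_i)$ from Lemma \ref{lem 2.6 in jde} to control the cross terms. The outer region $r\ell_i$-scale, i.e. $d\geq r_i$, is estimated through Proposition \ref{pro 2.3 in jde} and Lemma \ref{lem 2.4 in jde}, giving the $r^{4-Q}\|a_i\|_{L^\infty}$ term. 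We expect this bookkeeping of error terms to be the hardest part: checking that every cross term is dominated by $\beta_i$, by $\|a_i\|_{D_1}$ times the stated power, or by the $r$-dependent negative terms. The anisotropic mixed terms from the horizontal/vertical Taylor expansion are the delicate point, and we would handle them with the parity symmetries first.
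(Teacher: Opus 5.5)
Your handling of the Pohozaev identity, the two boundary terms, and the parity/$U(n)$ computation of the bubble integral is fine. However, the step you defer as ``bookkeeping'' does not go through as you have set it up, and the missing idea is structural.

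\textbf{The comparison error carries the wrong coefficients.} You keep $\mathcal X_i$ on $a_i$, Taylor expand $a_i+\frac12\mathcal X_ia_i$, and estimate $\int(a_i+\frac12\mathcal X_ia_i)(\Phi_i^2-\Lambda_{0,1}^2)$ piece by piece with Lemma \ref{lem 4.2}. The factors in front of $|\Phi_i-\Lambda_{0,1}|$ are then the Taylor coefficients, not $\|a_i\|_{L^\infty}$. Take the degree-two coefficient, which has size $\|\nabla^2_{\mathbb H^n}a_i\|$ and includes $\partial_ta_i(\xi_i)$. Paired with the $u_i(\xi_i)^{-2}$ part of Lemma \ref{lem 4.2}, it yields
\[
u_i(\xi_i)^{\frac{2Q-8}{Q-2}}\,\ell_i^{-2}\,\|\nabla^2_{\mathbb H^n}a_i\|_{L^\infty}\,u_i(\xi_i)^{-2}\int_{D_{r\ell_i}}\|\eta\|_{\mathbb H^n}^2\Lambda_{0,1}(\eta)\,\mathrm d\eta\ \approx\ \|\nabla^2_{\mathbb H^n}a_i\|_{L^\infty}\,r^4 .
\]
Paired with the other part of Lemma \ref{lem 4.2}, it yields $\|\nabla^2_{\mathbb H^n}a_i\|^2_{L^\infty}$ times a power of $u_i(\xi_i)$, and third-order coefficients behave similarly.

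None of these terms carries the factor $\|a_i\|_{L^\infty}$. So they are not of the form $\|a_i\|_{L^\infty}r^{4-Q}$ or $\|a_i\|_{D_1}u_i(\xi_i)^{2(Q-8)/(Q-2)}$. Neither $a_i\ge0$ nor Lemma \ref{lem 2.6 in jde} (which only controls $a_i(\xi_i)$ and $\nabla_{\mathbb H^n}a_i(\xi_i)$) repairs this. In particular, your claim that the error ``produces the mixed product $\|a_i\|_{L^\infty}\|\nabla^2_{\mathbb H^n}a_i\|$'' is not what your scheme yields.

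The full cross term is in fact of the required size, but only through a cancellation among the Taylor pieces that a term-by-term estimate cannot see. The paper exposes this cancellation by integrating by parts \emph{before} comparing:
\[
\tfrac12\int_{D_r(\xi_i)}\mathcal X_i(a_i)u_i^2+\int_{D_r(\xi_i)}a_iu_i^2-\tfrac12\int_{\partial D_r(\xi_i)}a_iu_i^2\,\mathcal X_iI\cdot N=-\int_{D_r(\xi_i)}a_iu_i\Big(\mathcal X_iu_i+\tfrac{Q-2}{2}u_i\Big).
\]
After this rewriting, the comparison with the bubble has only $a_i$ as coefficient. That is exactly why Corollary \ref{corollary 4.4 in imrn} (the bounds on $\nabla_{\mathbb H^n}$ and $\partial_t$ of $\Phi_i-\Lambda_{0,1}$) is needed; in your arrangement it never actually enters. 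This route produces precisely $\|a_i\|_{L^\infty}r^2$ and $\|a_i\|_{L^\infty}\|\nabla^2_{\mathbb H^n}a_i\|_{L^\infty}$. Only afterwards is the integration by parts undone for the bubble alone, at the cost of a boundary term $\le Cu_i(\xi_i)^{-2}\|a_i\|_{L^\infty}r^{4-Q}$. Your Taylor and symmetry computation is then applied to that exact bubble integral.

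One could instead try to absorb your extra terms with an interpolation inequality $\|\nabla^k_{\mathbb H^n}a_i\|\lesssim\|a_i\|_{L^\infty}+\|\nabla^4_{\mathbb H^n}a_i\|$, $k=2,3$, on a smaller ball. That is an additional ingredient your proposal does not supply.

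\textbf{The outer-region estimate is too lossy.} You treat the region $d_{\mathbb H^n}(\xi,\xi_i)\ge r_i$ separately via Proposition \ref{pro 2.3 in jde} and Lemma \ref{lem 2.4 in jde}, claiming this gives $\|a_i\|_{L^\infty}r^{4-Q}$. It does not. With $s=0$, after multiplying by $u_i(\xi_i)^2$, Lemma \ref{lem 2.4 in jde} gives only $\|a_i\|_{L^\infty}\,o\big(u_i(\xi_i)^{\frac{2(Q-4)}{Q-2}}\big)$ for $Q>4$, with no rate. This swamps the main term $a_i(\xi_i)u_i(\xi_i)^{\frac{2(Q-4)}{Q-2}}$ exactly when $a_i(\xi_i)\to0$, which is the case that matters for Theorem \ref{main thm 3}(ii).

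There should be no inner/outer split. Lemma \ref{lem 4.2} and Corollary \ref{corollary 4.4 in imrn} hold on all of $D_{\ell_i}$. The bubble's tail on $D_{r\ell_i}\setminus D_{R_i}$ belongs to the explicitly computed main term. The $r^{4-Q}$ contributions come only from the boundary integrals.
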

			
		\begin{proof}
			The corresponding Pohozaev identity centered at $\xi_i$ is
			$$
			\begin{aligned}
				& \int_{\partial D_r(\xi_i)} \mathcal{D}(\xi_i^{-1} \circ \xi, u_i, \nabla_{\mathbb{H}^{n}} u_i)\ \mathrm{d} \mathcal{H}_{Q-2}\\
				&=\frac{1}{2} \int_{D_r(\xi_i)} \mathcal{X}_i( a_i) u_i^{2}\ \mathrm{d} z \mathrm{d} t
				+\Big(\frac{Q}{p_i+1}-\frac{Q-2}{2}\Big) \int_{D_r(\xi_i)}  u_i^{p_i+1}\ \mathrm{d} z \mathrm{d} t +\int_{D_r(\xi_i)}  a_i u_i^{2}\ \mathrm{d} z \mathrm{d} t\\
				& \quad -\frac{1}{2} \int_{\partial D_r(\xi_i)}a_i u_i^2 \mathcal{X}_i I\cdot N\ \mathrm{d} \mathcal{H}_{Q-2}-\frac{1}{p_i+1} \int_{\partial D_r(\xi_i)} u_i^{p_i+1} \mathcal{X}_i I \cdot N\ \mathrm{d} \mathcal{H}_{Q-2},
			\end{aligned}
			$$
			where $\mathcal{X}_i $ is defined as in Lemma \ref{lemma 5.9 in asan}.
			Note that
			$	\mathcal{X}_i I\cdot N=\nu_i \cdot N.$
			Thus, we get
			$$
			\begin{aligned}
				&\quad \mathcal{G}_i(r)\\&:=  \frac{1}{2} \int_{D_r(\xi_i)} \mathcal{X}_i( a_i) u_i^{2}\ \mathrm{d} z \mathrm{d} t
				+\int_{D_r(\xi_i)}  a_i u_i^{2}\ \mathrm{d} z \mathrm{d} t -\frac{1}{2} \int_{\partial D_r(\xi_i)}a_i u_i^2 \mathcal{X}_i I\cdot N \ \mathrm{d} \mathcal{H}_{Q-2}\\
				&=  \frac{1}{2} \int_{D_r(\xi_i)} (\nu_i\cdot\nabla a_i) u_i^{2}\ \mathrm{d} z \mathrm{d} t
				+\int_{D_r(\xi_i)}  a_i u_i^{2}\ \mathrm{d} z \mathrm{d} t -\frac{1}{2} \int_{\partial D_r(\xi_i)}a_i u_i^2 \mathcal{X}_i I\cdot N \ \mathrm{d} \mathcal{H}_{Q-2}\\
				&=  \frac{1}{2} \int_{\partial D_r(\xi_i)}  a_i u_i^{2}\nu_i\cdot N\ \mathrm{d} \mathcal{H}_{Q-2}
				-\frac{1}{2} \int_{D_r(\xi_i)} Q a_i u_i^{2}\ \mathrm{d} z \mathrm{d} t\\
				&\quad -\frac{1}{2} \int_{D_r(\xi_i)} 2a_i u_i\nu_i\cdot \nabla u_i\ \mathrm{d} z \mathrm{d} t
				+\int_{D_r(\xi_i)}  a_i u_i^{2}\ \mathrm{d} z \mathrm{d} t -\frac{1}{2} \int_{\partial D_r(\xi_i)}a_i u_i^2 \mathcal{X}_i I\cdot N \ \mathrm{d} \mathcal{H}_{Q-2}\\
				&=\frac{2-Q}{2}\int_{ D_r(\xi_i)}a_i u_i^2\ \mathrm{d} z \mathrm{d} t-\int_{ D_r(\xi_i)}a_i u_i \mathcal{X}_i(u_i)\ \mathrm{d} z \mathrm{d} t.
			\end{aligned}
			$$	
			By a change of variables $\bar{\xi}=(\bar{z} ,\bar{t})=\ell_i(\xi_i^{-1}\circ \xi)$ with $\ell_i=u_i(\xi_i)^{(p_i-1) / 2 }$, we have
			$$
			\begin{aligned}
				\mathcal{G}_i(r)=-u_i(\xi_i)^{2-Q(p_i-1) / 2} \int_{D_{\ell_i r}}(\bar{\mathcal{X}}_i(\Phi_i)+\frac{Q-2}{2} \Phi_i) a_i(\xi_i\circ \delta_{\ell_i^{-1}} \bar{\xi}) \Phi_i\ \mathrm{d} \bar{z} \mathrm{d} \bar{t},
			\end{aligned}
			$$
			where $\Phi_i(\bar{\xi})=u_i(\xi_i)^{-1} u_i(\xi_i\circ \delta_{\ell_i^{-1}} \bar{\xi})$ and $\bar{\mathcal{X}}_i(\Phi_i)=(\bar{x}, \bar{y}, 2\bar{t})\nabla \Phi_i$.
			Let
			$$
			\begin{aligned}
				\mathcal{\hat{G}}_i(r)=-u_i(\xi_i)^{2-Q(p_i-1) / 2} \int_{D_{\ell_i r}}(\bar{\mathcal{X}}_i(\Lambda_{0, 1})+\frac{Q-2}{2} \Lambda_{0, 1}) a_i(\xi_i\circ\delta_{\ell_i^{-1}} \bar{\xi}) \Lambda_{0, 1}\ \mathrm{d} \bar{z} \mathrm{d} \bar{t}.
			\end{aligned}
			$$

		Notice that
		$$
		\begin{aligned}
			& |(\bar{\mathcal{X}}_i(\Phi_i)+\frac{Q-2}{2} \Phi_i) a_i(\xi_i\circ\delta_{\ell_i^{-1}} \bar{\xi}) \Phi_i-(\bar{\mathcal{X}}_i(\Lambda_{0, 1})+\frac{Q-2}{2} \Lambda_{0, 1}) a_i(\xi_i\circ\delta_{\ell_i^{-1}} \bar{\xi}) \Lambda_{0, 1}| \\
			& \leq C\|a_i\|_{L^{\infty}(D_1)}  |(\bar{\mathcal{X}}_i(\Phi_i)+\frac{Q-2}{2} \Phi_i)  \Phi_i-(\bar{\mathcal{X}}_i(\Lambda_{0, 1})+\frac{Q-2}{2} \Lambda_{0, 1})  \Lambda_{0, 1}| \\
			& = C\|a_i\|_{L^{\infty}(D_1)}  |(\bar{\mathcal{X}}_i(\Phi_i-\Lambda_{0, 1})+\frac{Q-2}{2} (\Phi_i-\Lambda_{0, 1}))  \Phi_i-(\bar{\mathcal{X}}_i(\Lambda_{0, 1})+\frac{Q-2}{2} \Lambda_{0, 1})  (\Lambda_{0, 1}-\Phi_i)| \\
&\leq C\|a_i\|_{L^{\infty}(D_1)}(\|\bar{\xi}\|_{\mathbb{H}^n}|\nabla_{\mathbb{H}^n}(\Phi_i-\Lambda_{0,1})|+\|\bar{\xi}\|_{\mathbb{H}^n}^2|\partial_t(\Phi_i-\Lambda_{0,1})|+|\Phi_i-\Lambda_{0,1}|)\Phi_i\\
&\quad +C\|a_i\|_{L^{\infty}(D_1)}(\|\bar{\xi}\|_{\mathbb{H}^n}|\nabla_{\mathbb{H}^n}\Lambda_{0,1}|+\|\bar{\xi}\|_{\mathbb{H}^n}^2|\partial_t\Lambda_{0,1}|+\Lambda_{0,1})|\Phi_i-\Lambda_{0,1}|
		\end{aligned}
		$$
and 
$$
\begin{aligned}
\left|\Lambda_{0,1}(\xi)\right| & \leq C\left(1+\|\xi\|_{\mathbb{H}^n}\right)^{2-Q}, \\
\left|\nabla_{\mathbb{H}^n} \Lambda_{0,1}(\xi)\right| & \leq C\left(1+\|\xi\|_{\mathbb{H}^n}\right)^{1-Q}, \\
\left|\partial_t \Lambda_{0,1}(\xi)\right| & \leq C\left(1+\|\xi\|_{\mathbb{H}^n}\right)^{-Q} .
\end{aligned}
$$
		By Proposition \ref{pro 2.3 in jde}, we have
		$$
		\Phi_i=u_i(\xi_i)^{-1} u_i(\xi_i\circ \delta_{\ell_i^{-1}} \bar{\xi})\leq Cu_i(\xi_i)^{-2}\ell_i^{Q-2}\|\bar{\xi}\|^{2-Q}_{\mathbb{H}^n}=Cu_i(\xi_i)^{\tau_i}\|\bar{\xi}\|^{2-Q}_{\mathbb{H}^n}.
		$$
		According to Lemma \ref{lemma 5.9 in asan}, we have 
		$
		\Phi_i\leq C(1+\|\bar{\xi}\|_{\mathbb{H}^n})^{2-Q}.
		$
		Using Lemma \ref{lemma 5.9 in asan} and Corollary \ref{corollary 4.4 in imrn}, we have
		$$
		\begin{aligned}
			& \quad u_i(\xi_i)^2|\mathcal{G}_i(r)-\hat{\mathcal{G}}_i(r)| \\
			& \leq C\|a_i\|_{L^{\infty}(D_1)}  u_i(\xi_i)^{(2Q-8) /(Q-2 )}\\
			&\quad \times \int_{D_{\ell_i r}} \sum_{j=0}^1|\nabla_{\mathbb{H}^n}^j(\Phi_i-\Lambda_{0, 1})|(1+{|\bar{\xi}|}_{\mathbb{H}^n})^{2 -Q+j}+|\partial_t(\Phi_i-\Lambda_{0, 1})|(1+{|\bar{\xi}|}_{\mathbb{H}^n})^{4-Q}\ \mathrm{d} \bar{z} \mathrm{d} \bar{t} \\
			& \leq C\|a_i\|_{L^{\infty}(D_1)} \begin{cases} r^2 & \text { if } Q=4, \\
				\max \{r^2,{\left\|\nabla_{\mathbb{H}^n}^2 a_i\right\|_{L^{\infty}\left(D_1\right)}} r\} & \text { if } Q=6, \\
				\max \{r^{2},{\|\nabla_{\mathbb{H}^n}^2 a_i\|_{L^{\infty}(D_1)}} \ln (r u_i(\xi_i)^{2 /(Q-2 )})\} & \text { if } Q=8, \\
				\max \{r^{2 },{\|\nabla_{\mathbb{H}^n}^2 a_i\|_{L^{\infty}(D_1)}} u_i(\xi_i)^{2(Q-8) /(Q-2 )}\} & \text { if } Q\geq 10 ,\end{cases}
		\end{aligned}
		$$
		where $C>0$ depends only on $n$, $A_0$ and $A_1$. Next, by a change of variables, the divergence theorem and direct computations, we see that
		$$
		\begin{aligned} 
			\hat{\mathcal{G}}_i(r) & =- \int_{D_r}((\tilde{x},\tilde{y},2\tilde{t})\cdot \nabla \Lambda_{0,\ell_i}+\frac{Q-2 }{2} \Lambda_{0,\ell_i}) a_i( \xi_i\circ\tilde{\xi}) \Lambda_{0,\ell_i}\ \mathrm{d} \tilde{z} \mathrm{d} \tilde{t} \\ 
			& =- \int_{D_r}(\frac{1}{2} a_i( \xi_i\circ\tilde{\xi}) (\tilde{x},\tilde{y},2\tilde{t}) \cdot\nabla \Lambda^2_{0,\ell_i} +\frac{Q-2 }{2}  a_i( \xi_i\circ\tilde{\xi}) \Lambda^2_{0,\ell_i})\ \mathrm{d} \tilde{z} \mathrm{d} \tilde{t} \\ 
			& =- \int_{\partial D_r}\frac{1}{2}a_i( \xi_i\circ\tilde{\xi})(\tilde{x},\tilde{y},2\tilde{t}) \Lambda^2_{0,\ell_i}\cdot N\ \mathrm{d} H_{Q-2}+\int_{D_r}\frac{Q}{2} a_i( \xi_i\circ\tilde{\xi}) \Lambda^2_{0,\ell_i}\ \mathrm{d} \tilde{z} \mathrm{d} \tilde{t} \\
			& \quad+\int_{D_r}\frac{1}{2} \nabla a_i( \xi_i\circ\tilde{\xi})\cdot (\tilde{x},\tilde{y},2\tilde{t}) \Lambda^2_{0,\ell_i}\ \mathrm{d} \tilde{z} \mathrm{d} \tilde{t}-\int_{D_r}\frac{Q-2 }{2}  a_i( \xi_i\circ\tilde{\xi}) \Lambda^2_{0,\ell_i}\ \mathrm{d} \tilde{z} \mathrm{d} \tilde{t} \\ 
			& \geq \int_{D_r}(\frac{1}{2} (\tilde{x},\tilde{y},2\tilde{t})\cdot \nabla a_i(\xi_i\circ\tilde{\xi})+ a_i(\xi_i\circ\tilde{\xi})) \Lambda_{0,\ell_i}^2\ \mathrm{d} \tilde{z} \mathrm{d} \tilde{t}-Cu_i(\xi_i)^{-2}\|a_i\|_{L^{\infty}(D_1)} r^{4 -Q}\\
			&=\int_{D_r}(\frac{1}{2} (\tilde{x},\tilde{y}) \cdot\nabla_{\mathbb{H}^n}a_i(\xi_i\circ\tilde{\xi})+\tilde{t} \partial_ta_i(\xi_i\circ\tilde{\xi})+ a_i(\xi_i\circ\tilde{\xi})) \Lambda_{0,\ell_i}^2\ \mathrm{d} \tilde{z} \mathrm{d} \tilde{t}\\
			&\quad -Cu_i(\xi_i)^{-2}\|a_i\|_{L^{\infty}(D_1)} r^{4 -Q},
		\end{aligned}
		$$
		where we used the following estimate
		$$
		\begin{aligned}
			&\quad \int_{\partial D_r}a_i( \xi_i\circ\tilde{\xi})(\tilde{x},\tilde{y},2\tilde{t}) \Lambda^2_{0,\ell_i}\cdot N\ \mathrm{d} \mathcal{H}_{Q-2}\\
			&\leq C\|a_i\|_{L^{\infty}(D_1)}\int_{\partial D_r} (\tilde{x},\tilde{y},2\tilde{t}) \Lambda^2_{0,\ell_i}\cdot N\ \mathrm{d} \mathcal{H}_{Q-2}\\
			&\leq C\|a_i\|_{L^{\infty}(D_1)}\int_{\partial D_r}  \Lambda^2_{0,\ell_i}\frac{2r^4}{(4|\tilde{z}|^6+\tilde{t}^2)^{\frac{1}{2}}}\ \mathrm{d} \mathcal{H}_{Q-2}\\
			&\leq C\|a_i\|_{L^{\infty}(D_1)}\int_{\partial D_r}  \ell_i^{2-Q}r^{4-2Q}\frac{2r^4}{(4|\tilde{z}|^6+\tilde{t}^2)^{\frac{1}{2}}}\ \mathrm{d} \mathcal{H}_{Q-2}\\
			&\leq C\|a_i\|_{L^{\infty}(D_1)}\int_{|z|\leq r} \ell_i^{2-Q}r^{8-2Q}\frac{1}{(r^4-|\tilde{z}|^4)^{\frac{1}{2}}}\ \mathrm{d} \tilde{z}\\
			&\leq C\|a_i\|_{L^{\infty}(D_1)}\int_{0}^{{\frac{\pi}{2}}} \ell_i^{2-Q}r^{8-2Q+Q-4}\cos^{n-1}\theta\ \mathrm{d} \theta\\
			&\leq Cu_i(\xi_i)^{-2}\|a_i\|_{L^{\infty}(D_1)}r^{4-Q}.
		\end{aligned}
		$$
		In the third-to-last inequality above, we employed the identity $\mathrm{d} \mathcal{H}_{Q-2} = \frac{\sqrt{4|\tilde{z}|^6+\tilde{t}^2}}{|\tilde{t}|} \mathrm{~d} \bar{z}$, 
		which is derived from the Area Formula (see \cite[Theorem 3.2.3]{Federer}).

        By the homogeneous Taylor formula on step-two Carnot groups, we have
\[
\begin{aligned}
a_i(\xi_i\circ\tilde\xi)
={}&a_i(\xi_i)
+\nabla_{\mathbb H^n}a_i(\xi_i)\cdot(\tilde x,\tilde y)
+\partial_ta_i(\xi_i)\tilde t\\
&+\frac12(\tilde x,\tilde y)\cdot
\operatorname{Hess}_{\mathrm{sym}}a_i(\xi_i)
\cdot\binom{\tilde x}{\tilde y}\\
&+\tilde t\sum_{j=1}^n
\left[
\tilde x_jX_j\partial_ta_i(\xi_i)
+\tilde y_jY_j\partial_ta_i(\xi_i)
\right]\\
&+\frac1{3!}
\left[
\sum_{j=1}^n
\left(\tilde x_jX_j+\tilde y_jY_j\right)
\right]^3a_i(\xi_i)
+R_4(\tilde\xi),
\end{aligned}
\]
where the coefficients \(\tilde x_j,\tilde y_j\) in the
third-order term are regarded as constants, and
\[
|R_4(\tilde\xi)|
\leq
C\left\|\nabla_{\mathbb H^n}^{4}a_i\right\|_{L^\infty(D_1)}
\|\tilde\xi\|_{\mathbb H^n}^{4}.
\]

		Thus, by direct computations we see that	
	
		\begin{equation}
				\begin{aligned}
					\hat{\mathcal{G}}_i(r)&\geq \int_{D_r} \big(a_i(\xi_i)+\sum_{k=1}^n X_kX_ka_i(\xi_i)\tilde{x}\tilde{x}+\sum_{k=1}^nY_kY_ka_i(\xi_i)\tilde{y}\tilde{y}\big)\Lambda_{0,\ell_i}^2 \ \mathrm{d}\tilde{z}\mathrm{d}\tilde{t} \\
					&\quad  -Cu_i(\xi_i)^{-2}\|a_i\|_{L^{\infty}(D_1)} r^{4 -Q}\\
					&\geq 	\int_{D_r}( a_i(\xi_i)+\frac{1}{2 n} \Delta_{\mathbb{H}^n} a_i(\xi_i)|\tilde{z}|^2) \Lambda_{0,\ell_i}^2\ \mathrm{d} \tilde{z}\mathrm{d} \tilde{t} \\
				& \quad-C \|\nabla_{\mathbb{H}^n}^4 a_i\|_{L^{\infty}(D_1)} \int_{D_r}\|\tilde{\xi}\|^4 \Lambda_{0,\ell_i}^2\ \mathrm{d} \tilde{z}\mathrm{d} \tilde{t}-Cu_i(\xi_i)^{-2}\|a_i\|_{L^{\infty}(D_1)} r^{4 -Q},
				\end{aligned}
			\end{equation}
			$$
			\begin{aligned}
				&\quad u_i(\xi_i)^2 \int_{D_r}( a_i(\xi_i)+\frac{1}{2 n} \Delta_{\mathbb{H}^n} a_i(\xi_i)|\tilde{z}|^2) \Lambda_{0,\ell_i}^2\ \mathrm{d} \tilde{z}\mathrm{d} \tilde{t}\\
				& \geq C \begin{cases} a_i(\xi_i) \ln (r u_i(\xi_i)^{\frac{2}{Q-2}}) & \text { if } Q=4 ,  \\
					a_i(\xi_i) u_i(\xi_i)^{\frac{2(Q-4 )}{Q-2 }}+\frac{1}{2 n} \Delta_{\mathbb{H}^n} a_i(\xi_i) \ln (r u_i(\xi_i)^{\frac{2}{Q-2 }}) & \text { if } Q=6, \\
					a_i(\xi_i) u_i(\xi_i)^{\frac{2(Q-4 )}{Q-2 }}+\frac{1}{2 n} \Delta_{\mathbb{H}^n} a_i(\xi_i) u_i(\xi_i)^{\frac{2(Q-6 )}{Q-2 }} & \text { if } Q\geq 8 ,\end{cases}
			\end{aligned}
			$$
			and
			$$
			u_i(\xi_i)^2 \int_{D_r}\|\tilde{\xi}\|^4 \Lambda_{0,\ell_i}^2\ \mathrm{d} \tilde{z}\mathrm{d} \tilde{t} \leq C \begin{cases}r^{8-Q} & \text { if } Q=4,6, \\ \ln (r 	u_i(\xi_i)^{\frac{2}{Q-2 }}) & \text { if } Q=8, \\ u_i(\xi_i)^{\frac{2(Q-8)}{Q-2 }} & \text { if } Q\geq 10, \end{cases}
			$$
			where $C>0$ depends only on $n$ and $\sup\|\nabla_{\mathbb{H}^n}^4 a_i\|_{L^{\infty}(D_1)}$.
				
			On the other hand, by Proposition \ref{pro 2.3 in jde},
			\[
\begin{aligned}
&\left(\frac{Q}{p_i+1}-\frac{Q-2}{2}\right)
\int_{D_r(\xi_i)}u_i^{p_i+1}\,\mathrm d\xi\\
&\qquad
-\frac{1}{p_i+1}
\int_{\partial D_r(\xi_i)}
u_i^{p_i+1}\mathcal X_iI\cdot N\,
\mathrm d\mathcal H_{Q-2}
\geq
-Cr^{-Q}u_i(\xi_i)^{-1-p_i}.
\end{aligned}
\]

			Thus, 
			\begin{equation*}
			u_i(\xi_i)^2 \int_{\partial D_r(\xi_i)}\mathcal{D}( \xi_i^{-1}\circ \xi, u_i, \nabla_{\mathbb{H}^{n}} u_i) \geq \ u_i(\xi_i)^2\hat{\mathcal{G}}_i(r)-u_i(\xi_i)^2|\mathcal{G}_i(r)-\hat{\mathcal{G}}_i(r)|-C  r^{-Q} u_i(\xi_i)^{1-p_i},
			\end{equation*}
			then the proposition follows immediately.
			\end{proof}

			\begin{proposition}\label{isolated=isolated simple}
				Assume as in Lemma \ref{Harnack ineq 2}. Suppose that for large $i$,
				\begin{enumerate}

				\item[$(1)$] $\beta_i \geq 0$ \quad if $n=2$,
				
				\item[$(2)$] $\beta_i \geq (C_0+1)\|a_i\|_{D_1} \ln u_i(\xi_i)$ \quad if $n=3$,
				
				\item[$(3)$] $\beta_i \geq (C_0+1)\left\|a_i\right\|_{D_1} u_i(\xi_i)^{\frac{2(Q-8)}{Q-2}}$\quad  if $n\geq 4 $,
				\end{enumerate}
				where
				$$
				\beta_i:= \begin{cases} a_i(\xi_i) u_i(\xi_i)^{\frac{2(Q-4 )}{Q-2}}+\frac{1}{2 n} \Delta_{\mathbb{H}^n} a_i(\xi_i) \ln u_i(\xi_i) & \text { if } n=2, \\  a_i(\xi_i) u_i(\xi_i)^{\frac{2(Q-4 )}{Q-2 }}+\frac{1}{2 n} \Delta_{\mathbb{H}^n} a_i(\xi_i) u_i(\xi_i)^{\frac{2(Q-6)}{Q-2 }} & \text { if } n\geq 3,\end{cases}
				$$
				$C_0$ is the constant in Proposition \ref{pro 5.1 in imrn} with $\rho=1$. Then, after passing to a subsequence, $\xi_i \rightarrow 0$ is an isolated simple blow-up point of $u_i$.
			\end{proposition}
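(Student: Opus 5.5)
We argue by contradiction, following the scheme of Li \cite{Li JDE} and Niu--Tang--Zhou \cite{Niu Tang Zhou IMRN}. Suppose $\xi_i\to0$ is not an isolated simple blow-up point. By Proposition \ref{solution is bubble}, for large $i$ the weighted profile $\bar w_i^{\,G}$ has exactly one critical point in $(0,r_i)$; this holds because it inherits the profile of $r^{(Q-2)/2}\mathcal M_0(\Lambda_{0,1},r\ell_i)$, which has a unique critical point. The failure of simplicity must therefore produce a second critical point $\mu_i\in[r_i,\rho)$ with $\mu_i\to0$, after passing to a subsequence. We then rescale at scale $\mu_i$:
\[
\zeta_i(\xi):=\mu_i^{2/(p_i-1)}u_i(\xi_i\circ\delta_{\mu_i}\xi),\qquad \|\xi\|_{\mathbb H^n}<\bar r/\mu_i .
\]
This function satisfies
\[
-\Delta_{\mathbb H^n}\zeta_i=\mu_i^2a_i(\xi_i\circ\delta_{\mu_i}\xi)\zeta_i+\zeta_i^{p_i}.
\]
By Lemma \ref{properties of Garofalo mean}(2), $r^{2/(p_i-1)}\mathcal M_0(\zeta_i,r)$ has a critical point at $r=1$. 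Moreover, $0$ is still an isolated blow-up point of $\{\zeta_i\}$, and since there is only one critical point in $(0,1)$ it is an isolated \emph{simple} blow-up point, with $\rho=1$. The rescaled potentials $\tilde a_i:=\mu_i^2a_i(\xi_i\circ\delta_{\mu_i}\cdot)$ satisfy $\|\tilde a_i\|_{C^4}\le CA_0\mu_i^2$.

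Next we apply Proposition \ref{pro 2.3 in jde} and the Bôcher-type Proposition \ref{bocher type} to $\zeta_i$. The sequence $\zeta_i(0)\zeta_i$ converges in $C^2_{\rm loc}(\mathbb H^n\setminus\{0\})$ to a positive solution $h$ of $\Delta_{\mathbb H^n}h=0$ on $\mathbb H^n\setminus\{0\}$, with the limit potential vanishing since $\mu_i\to0$. By the Bôcher theorem and Liouville, $h=c\|\xi\|_{\mathbb H^n}^{2-Q}+A$. The critical point at $r=1$ passes to the limit in $\frac{d}{dr}\bigl(r^{(Q-2)/2}\mathcal M_0(h,r)\bigr)|_{r=1}=0$. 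Using the mean value property in Lemma \ref{properties of Garofalo mean}(1), $\mathcal M_0(h,r)=cr^{2-Q}+A$, which forces $A=c>0$. Proposition \ref{pro 4.3 in asan}(ii) then gives
\[
\lim_{r\to0}\lim_{i}\zeta_i(0)^2\int_{\partial D_r}\mathcal D(\xi,\zeta_i,\nabla_{\mathbb H^n}\zeta_i)<0,
\]
and the sign is strictly negative for all small fixed $r$.

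On the other hand, we apply Proposition \ref{pro 5.1 in imrn} to $\zeta_i$ with $a_i$ replaced by $\tilde a_i$ and $\rho=1$. Everything now reduces to scaling bookkeeping, and this step is the main obstacle. We have $\zeta_i(0)=\mu_i^{(Q-2)/2}u_i(\xi_i)$ (up to $1+o(1)$, using Lemma \ref{lemma 5.9 in asan}), $\tilde a_i(0)=\mu_i^2a_i(\xi_i)$, $\Delta\tilde a_i(0)=\mu_i^4\Delta a_i(\xi_i)$, and $\|\tilde a_i\|_{D_1}\le\mu_i^{4}\|a_i\|_{D_1}$ up to $O(\mu_i^6)$. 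For $n\ge4$ the corresponding $\tilde\beta_i$ equals
\[
\mu_i^{Q-2}\Bigl(a_iu_i^{\frac{2(Q-4)}{Q-2}}+\tfrac1{2n}\Delta a_i\,u_i^{\frac{2(Q-6)}{Q-2}}\Bigr)
\]
evaluated at $\xi_i$, while the term $C_0\|\tilde a_i\|_{D_1}\zeta_i(0)^{2(Q-8)/(Q-2)}$ scales as $\mu_i^{Q-4}\|a_i\|_{D_1}u_i^{\frac{2(Q-8)}{Q-2}}\cdot\mu_i^{4-2}$, which is the same power $\mu_i^{Q-2}$. Hypothesis (3) then makes the bracket nonnegative. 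For $n=3$ the logarithm $\ln(r\zeta_i(0)^{2/(Q-2)})\le\ln u_i(\xi_i)+O(1)$ is handled by hypothesis (2), the $+1$ in $C_0+1$ absorbing the $\ln r$ and constant errors. For $n=2$, $\tilde\beta_i$ is again a positive multiple of $\beta_i$ up to replacing $\ln u_i$ by $\ln(\mu_i u_i^{1/2})\le\ln u_i$; this is where the sign of $\Delta a_i$ must be tracked, and we would split according to whether $\Delta a_i(\xi_i)\ge0$ (replacement decreases the term; fine since then $a_i$-term alone handles) or use $\beta_i\ge0$ directly. The remaining error terms, $C_0r^{-Q}\zeta_i(0)^{-4/(Q-2)}$ and $C_0\|\tilde a_i\|_{L^\infty}r^{4-Q}=O(\mu_i^2)r^{4-Q}$, tend to $0$ as $i\to\infty$ for fixed $r$. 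Hence
\[
\liminf_i\zeta_i(0)^2\int_{\partial D_r}\mathcal D\ge0,
\]
which contradicts the strictly negative limit above. Therefore $\xi_i\to0$ is an isolated simple blow-up point.
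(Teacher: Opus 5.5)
Your proposal is correct and follows the paper's proof essentially step for step. The steps are: the second critical radius $\mu_i\geq r_i$ of $\bar w_i^{\,G}$; rescaling at $\mu_i$ so that $0$ becomes an isolated simple blow-up point with $\rho=1$; the B\"ocher-type limit $c\|\xi\|_{\mathbb H^n}^{2-Q}+A$ with $A=c>0$ forced by the weighted mean value property; the strict negativity from Proposition~\ref{pro 4.3 in asan}; and the opposite bound from Proposition~\ref{pro 5.1 in imrn} once the hypotheses are checked to rescale.

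One bookkeeping fix: in fact $\|\tilde a_i\|_{D_1}\leq\mu_i^{6}\|a_i\|_{D_1}$, since both $\|\tilde a_i\|_{L^\infty}\|\nabla^2_{\mathbb H^n}\tilde a_i\|_{L^\infty}$ and $\|\nabla^4_{\mathbb H^n}\tilde a_i\|_{L^\infty}$ carry $\mu_i^6$. It is this factor, times $\zeta_i(0)^{2(Q-8)/(Q-2)}\approx\mu_i^{Q-8}u_i(\xi_i)^{2(Q-8)/(Q-2)}$, that produces the matching power $\mu_i^{Q-2}$. With only the $\mu_i^4$ bound you wrote, the error would be of order $\mu_i^{Q-4}\gg\mu_i^{Q-2}$, and hypothesis (3) would not suffice; the same applies to hypothesis (2).
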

			
			\begin{proof}
By Proposition \ref{solution is bubble}, 
$\bar w_i^G(r)$ has precisely one critical point in
\[
0<r<r_i:=R_i u_i(0)^{-\frac{p_i-1}{2}}.
\]
If $0$ were not an isolated simple blow-up point, let
$\mu_i\geq r_i$ be the second critical point of $\bar w_i^G$.
Then
\[
\lim_{i\to\infty}\mu_i=0.
\]

Define
\[
w_i(\xi)
:=
\mu_i^{\frac{2}{p_i-1}}u_i(\delta_{\mu_i}\xi)
\quad \text{ and }\quad 
\tilde a_i(\xi)
:=
\mu_i^2a_i(\delta_{\mu_i}\xi).
\]
Then
\be\label{rescaled equation at second weighted critical radius}
\begin{cases}
-\Delta_{\mathbb H^n}w_i
=\tilde a_iw_i+w_i^{p_i},\\[1mm]
w_i(0)
=\mu_i^{\frac{2}{p_i-1}}u_i(\xi_i)
\geq R_i^{\frac{2}{p_i-1}}\rightarrow\infty,\\[1mm]
\|\xi\|_{\mathbb H^n}^{\frac{2}{p_i-1}}w_i(\xi)
\leq A_1.
\end{cases}
\ee
By the definition of $w_i$ and the choice of $\mu_i$, the function
\[
\bar w_i(s)
:=
s^{\frac{2}{p_i-1}}\mathcal M_0(w_i,s)
\]
has precisely one critical point in $(0,1)$, and
\be\label{6.6 in asan}
\left.
\frac{\mathrm d}{\mathrm ds}
\left[
s^{\frac{2}{p_i-1}}\mathcal M_0(w_i,s)
\right]\right|_{s=1}
=0.
\ee
Thus, $0$ is an isolated simple blow-up point of the sequence ${w_i}$.

Applying Lemma \ref{Harnack ineq 2}, Proposition \ref{pro 2.3 in jde},
the B\"ocher-type theorem and the interior subelliptic estimates, after
passing to a subsequence, we obtain
\be\label{bocher solution2}
w_i(0)w_i(\xi)
\rightarrow
w(\xi)
=
c_0\|\xi\|_{\mathbb H^n}^{2-Q}+k(\xi)
\quad\text{in }C^2_{\rm loc}(\mathbb H^n\setminus\{0\}),
\ee
where $c_0>0$ and
\[
\Delta_{\mathbb H^n}k=0
\]
in a neighborhood of the origin.

By the mean value property (see \cite[Theorem 2.1]{GL}), we have
\be\label{weighted mean of bocher limit}
\mathcal M_0(w,s)
=
c_0s^{2-Q}+k(0).
\ee
Multiplying \eqref{6.6 in asan} by $w_i(0)$ and passing to the limit,
using the $C^2$-convergence on an annulus containing $\partial D_1$ and
$p_i\to(Q+2)/(Q-2)$, we obtain
\[
\begin{aligned}
0
&=
\left.\frac{\mathrm d}{\mathrm ds}\right|_{s=1}
\left[
 s^{\frac{Q-2}{2}}\mathcal M_0(w,s)
\right]\\
&=
\left.\frac{\mathrm d}{\mathrm ds}\right|_{s=1}
\left[
 c_0s^{\frac{2-Q}{2}}
 +k(0)s^{\frac{Q-2}{2}}
\right]\\
&=
\frac{Q-2}{2}\bigl(k(0)-c_0\bigr).
\end{aligned}
\]
Consequently,
\be\label{positive regular part from weighted mean}
k(0)=c_0>0.
\ee
Therefore,
\[
w(\xi)
=
c_0\|\xi\|_{\mathbb H^n}^{2-Q}+c_0+c_0h(\xi),
\]
where $h(\xi):=c_0(k(\xi)-k(0))$.
By Proposition \ref{pro 4.3 in asan}, for all sufficiently small
$\sigma>0$,
\be\label{7.1 in imrn}
\int_{\partial D_\sigma}
\mathcal D(\xi,w,\nabla_{\mathbb H^n}w)
\,\mathrm d\mathcal H_{Q-2}<0.
\ee

				On the other hand, if $ n=2$,
				by Proposition \ref{pro 5.1 in imrn} and item (i) in the assumption we have,
				\begin{equation*}
\liminf_{\sigma\rightarrow 0}\ \liminf _{i \rightarrow \infty} w_i(0)^2 \int_{\partial D_\sigma} \mathcal{D}\left(\xi, w_i, \nabla_{\mathbb{H}^n} w_i\right) \geq 0,
\end{equation*}
				which contradicts \eqref{7.1 in imrn}. Hence, $\xi_i \rightarrow 0$ has to be an isolated simple blow-up point of $u_i$ upon passing to a subsequence.

				If $n \geq 3$, let
				$$
					\tilde{\beta}_i:  =\tilde{a}_i(0) w_i(0)^{\frac{2(Q-4 )}{Q-2}}+\frac{1}{2 n} \Delta_{\mathbb{H}^n} \tilde{a}_i(0) w_i(0)^{\frac{2(Q-6)}{Q-2 }} 
					 =(1+o(1)) \mu_i^{Q-2} \beta_i .
				$$
				Since $\|\tilde{a}_i\|_{C^4(D_1)} \leq  \mu_i^2A_0$ and $\|\tilde{a}_i\|_{D_1} \leq \mu_i^{6}\|a_i\|_{D_1}$, for $n=3$, we see that
				$$
				\begin{aligned}
					& \quad \tilde{\beta}_i-C_0\|\tilde{a}_i\|_{D_1} \ln w_i(0)-C_0\|\tilde{a}_i\|_{L^{\infty}(D_1)} \delta^{4-Q} \\
					& \geq(1+o(1)) \mu_i^{6} \beta_i-C_0 \mu_i^{6}\|a_i\|_{D_1} \ln u_i(0)-C_0 \mu_i^{6}\|a_i\|_{D_1} \ln \mu_i^{\frac{2}{p_i-1}} \\
					&\quad -C_0 \mu_i^{2}\|a_i\|_{L^{\infty}(D_1)}\delta^{-4} \\
					& \geq(1+o(1)) \mu_i^{6}(C_0+1)\|a_i\|_{D_1} \ln u_i(0)-C_0 \mu_i^{6}\|a_i\|_{D_1} \ln u_i(0)-C_0 \mu_i^{6}\|a_i\|_{D_1} \ln \mu_i^{\frac{2}{p_i-1}} \\
					& \quad-C_0 \mu_i^{2}\|a_i\|_{L^{\infty}(D_1)} \delta^{-4} \geq 0,
				\end{aligned}
				$$
				and for $n\geq4 $,
				$$
				\begin{aligned}
					& \quad \tilde{\beta}_i-C_0\|\tilde{a}_i\|_{D_1} w_i(0)^{\frac{2(Q-8)}{Q-2}}-C_0\|\tilde{a}_i\|_{L^{\infty}(D_1)} \delta^{4 -Q} \\
					& \geq(1+o(1)) \mu_i^{Q-2} \beta_i-C_0(1+o(1)) \mu_i^{Q-2}\|a_i\|_{D_1} u_i(0)^{\frac{2(Q-8)}{Q-2}}-C_0 \mu_i^{2}\|a_i\|_{L^{\infty}(D_1)} \delta^{4 -Q} \\
					& \geq(1+o(1)) \mu_i^{Q-2}(C_0+1)\|a_i\|_{D_1} u_i(0)^{\frac{2(Q- 8)}{Q-2}}-C_0(1+o(1)) \mu_i^{Q-2}\|a_i\|_{D_1} u_i(0)^{\frac{2(Q-8)}{Q-2}} \\
					& \quad-C_0 \mu_i^{2}\|a_i\|_{L^{\infty}(D_1)} \delta^{4-Q} \geq 0.
				\end{aligned}
				$$
				 By Proposition \ref{pro 5.1 in imrn}, we have
				$$
				\liminf_{\sigma\rightarrow 0} \ \liminf _{i \rightarrow \infty} w_i(0)^2 \int_{\partial D_\sigma} \mathcal{D}(\xi, w_i, \nabla_{\mathbb{H}^{n}} w_i) \geq 0 ,
				$$
				which contradicts \eqref{7.1 in imrn} again. Hence, $\xi_i \rightarrow 0$ has to be an isolated simple blow-up point of $\left\{u_i\right\}$ upon passing to a subsequence. Therefore, we complete the proof of Proposition \ref{isolated=isolated simple}.
			\end{proof}

\section{Proof of the Main Theorems}\label{section 7}

Let \(u\in C^2(D_3)\) be a positive solution of
\be\label{final equation}
\Delta_{\mathbb H^n}u+au+u^{p_\tau}=0
\qquad\text{in }D_3,
\ee
where
\[
0\le a\in C^4(D_3),
\qquad
p_\tau=\frac{Q+2}{Q-2}-\tau,
\qquad
0\le\tau<\frac{2}{Q-2}.
\]
			
			\begin{proposition}\label{6.1 in rnac}
				Assume the conditions above, then for any $0<\varepsilon<1$ and $R>1$, there exist large positive constants \(C_1\) and \(C_2\)
                depending only on \(n\), \(\|a\|_{C^2(D_2)}\),
                \(\varepsilon\), and \(R\), such that if u solves equation \eqref{final equation} 
				then the following statement holds. If
				$$
				\max _{\bar{D}_2} \operatorname{dist}_{\mathbb{H}^n}(\xi, \partial D_2)^{\frac{Q-2}{2}} u(\xi) \geq C_1,
				$$
				then $p_\tau \geq \frac{Q+2}{Q-2}-\varepsilon$ and there exists a finite set $S$ of local maximum points of $u$ in $D_2$ such that:
				
				$(i)$ For any $\eta \in S$, it holds
				\be\label{eq:73 in IMRN}
				\|u(\eta)^{-1} u(\eta\circ\delta_{ u(\eta)^{-(p_\tau-1) / 2 }} \cdot)-\Lambda_{0,1}(\cdot)\|_{C^2(D_{2 R})}<\varepsilon,
				\ee
				where $\Lambda_{0,1}$ is defined in the introduction.
				
				$(ii)$ If $\xi_1$, $\xi_2$ in $S$ and $\xi_1 \neq \xi_2$, then
				$$
				D_{R u(\xi_1)^{-(p_\tau-1) / 2 }}(\xi_1) \cap D_{R u(\xi_2)^{-(p_\tau-1) / 2 }}(\xi_2)=\emptyset .
				$$
				
				$(iii)$ $u(\xi) \leq C_2 \operatorname{dist}_{\mathbb{H}^n}(\xi, S)^{-2  /(p_\tau-1)}\quad  \text{ for } \xi \in D_{7 / 4}$.	
			\end{proposition}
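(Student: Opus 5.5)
This is the Heisenberg analogue of Schoen's selection lemma (Li--Zhu, Proposition 5.1 of Li's Yamabe paper; Proposition 6.1 in Niu--Peng--Xiong). I would prove it by blow-up and contradiction combined with an iterative selection procedure. The only genuinely new inputs are the Liouville theorem of Li--Liu--Ma / Li for entire positive solutions of \eqref{critical equation}, and the observation that left translations and the dilations $\delta_\lambda$ preserve both the equation and $d_{\mathbb H^n}$.

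\textbf{Step 1 (blow-up lemma).} Suppose $u$ solves \eqref{final equation} and $\xi_0\in D_2$ is a point with $\operatorname{dist}_{\mathbb H^n}(\xi_0,\partial D_2)^{2/(p_\tau-1)}u(\xi_0)$ large. First apply the usual doubling argument with the Kor\'anyi distance. Since $d_{\mathbb H^n}$ is a genuine quasi-metric (in fact a metric), one can replace $\xi_0$ by a point $\bar\xi$ such that
\[
u\le 2^{2/(p_\tau-1)}u(\bar\xi)\quad\text{on } D_{\kappa u(\bar\xi)^{-(p_\tau-1)/2}}(\bar\xi),
\]
with $\kappa$ large. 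Next rescale by
\[
w(\xi)=u(\bar\xi)^{-1}u\bigl(\bar\xi\circ\delta_{u(\bar\xi)^{-(p_\tau-1)/2}}\xi\bigr).
\]
Then $w$ solves $-\Delta_{\mathbb H^n}w=\varepsilon' a w+w^{p_\tau}$ with $w(0)=1$ and $w$ bounded on a large ball.

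Now argue by contradiction. Take a sequence with $C_1\to\infty$. Interior subelliptic estimates give $\Gamma^{2,\alpha}$ and then $C^2_{\rm loc}$ convergence (as in Lemma \ref{w's converges}) to an entire solution $w\ge0$, $w(0)=1$, of $-\Delta_{\mathbb H^n}w=w^{p}$, where $p=\lim p_\tau$. If $p<\frac{Q+2}{Q-2}$, the subcritical Liouville theorem on $\mathbb H^n$ (Birindelli--Capuzzo Dolcetta--Cutr\`i / Xu) gives $w\equiv0$, a contradiction. This forces $p_\tau\ge\frac{Q+2}{Q-2}-\varepsilon$. If $p$ is critical, the strong maximum principle gives $w>0$, and the classification makes $w$ a Jerison--Lee bubble $\Lambda_{\xi_0,\lambda}$.

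One subtle point is that the limit bubble must be centered at $0$ with $\lambda=1$. To arrange this, move to a nearby local maximum of $u$: the bubble has a unique nondegenerate maximum, so $C^2$ closeness produces a local max $\eta$ of $u$ near $\bar\xi$ with $u(\eta)\approx u(\bar\xi)\lambda^{(Q-2)/2}$. Recentering at $\eta$ gives \eqref{eq:73 in IMRN}, because the $C^2$ closeness is stable under this recentering.

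\textbf{Step 2 (selection).} Set $S_1=\{\eta_1\}$. Then look at
\[
\max_{\xi\in\bar D_2}\ \min\bigl\{\operatorname{dist}_{\mathbb H^n}(\xi,\partial D_2),\ \operatorname{dist}_{\mathbb H^n}(\xi,S_k)\bigr\}^{2/(p_\tau-1)}u(\xi).
\]
If this quantity is $\ge C_2$, rerun Step 1 with $\operatorname{dist}(\cdot,\partial D_2\cup S_k)$ in place of $\operatorname{dist}(\cdot,\partial D_2)$. This produces a new point $\eta_{k+1}$ satisfying (i).

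Property (ii) follows because $u$ near $\eta_{k+1}$ is bounded by about $2u(\eta_{k+1})$ on a ball of radius $\kappa u(\eta_{k+1})^{-(p_\tau-1)/2}\gg R u(\eta_{k+1})^{-(p_\tau-1)/2}$ that avoids $S_k$. In addition, inside $D_{Ru(\eta_j)^{-(p_\tau-1)/2}}(\eta_j)$, $u$ is comparable to the bubble profile, so the weight $\operatorname{dist}^{2/(p_\tau-1)}u$ is small there, and the new ball must lie outside the old ones.

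The process terminates. Each selected ball has disjoint interior and carries $\int u^{Q^*}\ge c(n)>0$ by bubble closeness, since the $L^{Q^*}$ norm of $\Lambda_{0,1}$ is fixed. Meanwhile $\int_{D_2}u^{p_\tau+1}$ is not a priori bounded. I would therefore instead bound the count using disjointness of the balls $D_{Ru(\eta)^{-(p_\tau-1)/2}}$ in the compact set $\bar D_2$, together with the lower bound $u(\eta)^{(p_\tau-1)/2}\operatorname{dist}(\eta,\partial D_2)\ge C_2$. This is the standard volume argument: each ball's radius is at least $\sim$ its distance-to-the-rest scale, so finiteness follows, though without a uniform bound. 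Once the process stops, (iii) holds on $D_{7/4}$ with the distance to $\partial D_2$ absorbed, since that distance is $\ge 1/4$ there.

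\textbf{Main obstacle.} I expect the hardest step to be the compactness in Step 1 in the degenerate setting. It requires uniform $\Gamma^{2,\alpha}$ bounds on bounded $w$, a Liouville theorem in the subcritical range $p_\tau<\frac{Q+2}{Q-2}$, and a critical-case classification with no decay assumption. All of these are available: Folland--Stein estimates, the subcritical Liouville theorem, and Li--Liu--Ma / Li, the last of which applies only for $n\ge2$; for $n=1$, Catino et al. cover it. The remaining difficulty is bookkeeping the quasi-triangle inequality for $d_{\mathbb H^n}$ in the doubling argument.
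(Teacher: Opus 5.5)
Your proposal follows essentially the same route as the paper. The paper also defers to the Schoen/Li-type point selection of Niu--Tang--Zhou, transplanted to $\mathbb{H}^n$ via left translations and the dilations $\delta_\lambda$. Its only new inputs are a subcritical Liouville theorem, used to force $p_\tau\ge\frac{Q+2}{Q-2}-\varepsilon$, and the Li--Liu--Ma/Li classification (Catino et al.\ for $n=1$), used to identify the blow-up profile. Two details in your write-up need fixing, though neither changes the structure.

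\textbf{The Liouville citation.} Since $\varepsilon$ is arbitrary, your contradiction argument needs nonexistence of positive entire solutions of $-\Delta_{\mathbb{H}^n}w=w^{p}$ for every $p$ up to, but excluding, $\frac{Q+2}{Q-2}$. It must hold with no symmetry or decay hypothesis. That full-range result is due to Ma--Ou, which is the reference the paper uses. As far as I know, the results of Birindelli--Capuzzo Dolcetta--Cutr\`i and Xu only cover a proper subrange, or require extra assumptions such as cylindrical symmetry. With your citations, the step is therefore not justified when $p_\infty$ is close to the critical exponent.

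\textbf{The termination argument.} Your reason for termination is backwards. The inequality $u(\eta)^{(p_\tau-1)/2}\operatorname{dist}_{\mathbb{H}^n}(\eta,\partial D_2)\ge C_2$ bounds the radii $Ru(\eta)^{-(p_\tau-1)/2}$ from above, not below. Likewise, the selection makes each new radius a small fraction of its distance to the previously chosen points, not comparable to it. Disjoint balls whose radii are only bounded above can be infinitely many.

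What actually stops the process is that the fixed solution $u\in C^2(D_3)$ is bounded on $\bar D_2$. Hence every selected radius is at least $R(\max_{\bar D_2}u)^{-(p_\tau-1)/2}$. Disjointness together with $|D_r(\xi)|=r^Q|D_1|$ then bounds the number of steps by a constant depending on $u$, which is all the statement requires.
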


		\begin{proof}

    The proof follows the blow-up and point-selection argument in
    \cite[Lemma 6.2 and Proposition 6.1]{Niu Tang Zhou IMRN},
    with the Euclidean translations and dilations replaced by the
    corresponding left translations and nonisotropic dilations on
    \(\mathbb H^n\). We only explain the dimension-dependent
    classification step.

    In the contradiction argument, let \(u_i\) be a blow-up sequence,
    let \(\eta_i\) be a point selected by the point-selection procedure,
    and set
    \[
    v_i(\xi)
    :=
    u_i(\eta_i)^{-1}
    u_i\bigl(\eta_i\circ\delta_{u_i(\eta_i)^{\frac{1-p_{\tau_i}}{2}}}\xi\bigr).
    \]
    The point-selection property gives a uniform bound for \(v_i\) on
    Kor\'anyi balls whose radii tend to infinity. Hence, by the interior
    sub-elliptic estimates, after passing to a subsequence,
    \[
    v_i\rightarrow v
    \qquad\text{in }C_{\mathrm{loc}}^2(\mathbb H^n),
    \]
    where
    \[
    v(0)=1,\qquad v>0,
    \]
    and
    \[
    -\Delta_{\mathbb H^n}v=v^{p_\infty}
    \qquad\text{in }\mathbb H^n
    \]
    for some $1<p_\infty\leq\frac{Q+2}{Q-2}$.
    The sub-critical Liouville theorem on the Heisenberg group
    \cite{Ma Ou} excludes the case
    $p_\infty<\frac{Q+2}{Q-2}$.
    Therefore,
    \[
    p_\infty=\frac{Q+2}{Q-2},
    \]
    which implies that \(v\) is a
    Jerison--Lee bubble by \cite{LLM,LiJungangArxiv}.

    Having established the bubble profile at every selected point,
    the remainder of the point-selection and iteration argument is
    identical to that in
    \cite[Proposition 6.1]{Niu Tang Zhou IMRN}.
    This yields $(i)$--$(iii)$.
		\end{proof}

			Here, we prove that the isolated simple blow-up points are separated by a fixed, positive distance.

			\begin{proposition}\label{pro 6.3 in rnac}
				Assume the conditions above. Suppose that $0 \leq a \in C^4(D_3)$, $\Delta_{\mathbb{H}^n} a \geq 0$ on $\{\zeta: a(\zeta)<d\} \cap D_2$ for some constant $d>0$, and further that $\Delta_{\mathbb{H}^n} a>\gamma>0$ on $\{\zeta: a(\zeta)<d\} \cap D_2$ for some constant $\gamma$ if $n \geq 3 $. Then for $\varepsilon>0$, $R>1$ and any solution of \eqref{final equation} with $\max _{\bar{D}_2} \operatorname{dist}_{\mathbb{H}^n}(\zeta, \partial D_2)^{(Q-2) / 2} u(\zeta) \geq C_1$, we have
				$$
				d_{\mathbb{H}^n}(\zeta_1,\zeta_2) \geq \delta^*>0 \quad \text { for any } \zeta_1, \zeta_2 \in S \cap D_{3 / 2} \text { and } \zeta_1 \neq \zeta_2,
				$$
				where \(\delta^*>0\) depends only on
                \(n,A_0,d,\varepsilon,R\).
   
			\end{proposition}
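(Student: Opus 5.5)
We argue by contradiction. Suppose there are solutions $u_i$ of \eqref{final equation} (with $a_i$, $\tau_i$) and points $\zeta_{1,i}\neq\zeta_{2,i}$ in $S_i\cap D_{3/2}$ with $\sigma_i:=d_{\mathbb H^n}(\zeta_{1,i},\zeta_{2,i})\to0$. Following the Euclidean argument (Li, Niu--Tang--Zhou), we choose $\zeta_{2,i}$ as the point of $S_i\setminus\{\zeta_{1,i}\}$ nearest to $\zeta_{1,i}$. By Proposition \ref{6.1 in rnac}(ii), $\sigma_i\ge R\max\{u_i(\zeta_{1,i})^{-(p_i-1)/2},\,u_i(\zeta_{2,i})^{-(p_i-1)/2}\}$. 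We rescale by
\[
w_i(\xi):=\sigma_i^{\frac{2}{p_i-1}}u_i(\zeta_{1,i}\circ\delta_{\sigma_i}\xi),\qquad
\tilde a_i(\xi):=\sigma_i^2a_i(\zeta_{1,i}\circ\delta_{\sigma_i}\xi).
\]
Then $-\Delta_{\mathbb H^n}w_i=\tilde a_iw_i+w_i^{p_i}$, and $\|\tilde a_i\|_{C^4}\to0$. The rescaled set $\tilde S_i$ contains $0$ and a point $e_i$ with $\|e_i\|_{\mathbb H^n}=1$, and any two points of $\tilde S_i$ in a fixed ball $D_{R'}$ are separated by a distance of order $1$. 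Proposition \ref{6.1 in rnac}(iii) then gives $w_i(\xi)\le C\,\mathrm{dist}(\xi,\tilde S_i)^{-2/(p_i-1)}$. So $0$ and $e_i$ are isolated blow-up points of $w_i$, provided $w_i(0),w_i(e_i)\to\infty$. This holds: otherwise $w_i$ is locally bounded near $0$, and since $w_i$ has a bubble profile of size $\sigma_i^{2/(p_i-1)}u_i(\zeta_{1,i})\gtrsim R^{(Q-2)/2}$, a large choice of $R$ (made in advance, via Proposition \ref{6.1 in rnac}) yields a contradiction in the standard way.

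The key step is to upgrade $0$ to an isolated simple blow-up point through Proposition \ref{isolated=isolated simple}, using the sign of $\Delta_{\mathbb H^n}a$ on $\{a<d\}$. For the rescaled data, $\tilde\beta_i\sim\sigma_i^{Q-2}\beta_i$. There are two cases.
\begin{itemize}
\item If $a_i(\zeta_{1,i})\ge d$, the term $a_i(\zeta_{1,i})u_i^{2(Q-4)/(Q-2)}$ dominates every error term in Proposition \ref{isolated=isolated simple}. For $n\ge3$ this uses that $\|a_i\|_{D_1}$ is bounded and that $u^{2(Q-4)/(Q-2)}\gg \ln u$ and $\gg u^{2(Q-8)/(Q-2)}$. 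For $n=2$ the Laplacian term carries only $\ln u$ and is dominated as well.
\item If $a_i(\zeta_{1,i})<d$, then $\Delta_{\mathbb H^n}a_i\ge0$ there, so $\beta_i\ge0$ for $n=2$. For $n\ge3$ we have $\Delta a_i>\gamma$, hence $\beta_i\ge\frac{\gamma}{2n}u^{2(Q-6)/(Q-2)}$. This dominates $(C_0+1)\|a_i\|_{D_1}\ln u$ when $Q=8$ and $(C_0+1)\|a_i\|_{D_1}u^{2(Q-8)/(Q-2)}$ when $Q\ge10$, for $u$ large.
\end{itemize}
The same applies at $e_i$. Hence both $0$ and $e_i\to\bar e$ are isolated simple blow-up points of $w_i$.

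We now apply Proposition \ref{pro 2.3 in jde} and the Bôcher-type Proposition \ref{bocher type} to $w_i(0)w_i$. Since $\tilde a_i\to0$, away from the blow-up set it converges to
\[
h(\xi)=c_1\|\xi\|_{\mathbb H^n}^{2-Q}+c_2\|\bar e^{-1}\circ\xi\|_{\mathbb H^n}^{2-Q}+k(\xi),
\]
with $c_1,c_2>0$ and $k\ge0$ $\mathcal L$-harmonic near $0$. Here the Harnack inequality ensures $w_i(e_i)w_i(0)$ stays comparable. The regular part at $0$ therefore has value $A=c_2\|\bar e\|^{2-Q}+k(0)>0$, and Proposition \ref{pro 4.3 in asan}(ii) gives $\int_{\partial D_\sigma}\mathcal D(\xi,h,\nabla_{\mathbb H^n}h)<0$ for small $\sigma$.

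On the other hand, Proposition \ref{pro 5.1 in imrn}, applied to $w_i$ at $0$ with the sign information just established for $\tilde\beta_i$, gives
\[
\liminf_{\sigma\to0}\liminf_i w_i(0)^2\int_{\partial D_\sigma}\mathcal D\ge0,
\]
exactly as in the proof of Proposition \ref{isolated=isolated simple}. This is a contradiction.

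The main obstacle is the isolated-simple upgrade, specifically checking that after rescaling the hypotheses (1)--(3) of Proposition \ref{isolated=isolated simple} survive with the correct powers of $\sigma_i$. A further difficulty is that $\|\tilde a_i\|_{D_1}$ scales like $\sigma_i^{6}$ while $\tilde\beta_i$ scales like $\sigma_i^{Q-2}$. We would handle this exactly as in the displayed computation there, working directly with the unrescaled $\beta_i$ and inserting the factor $\mu_i=\sigma_i$.
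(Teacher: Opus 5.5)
There is a genuine gap in how you choose the pair. Taking $\zeta_{2,i}$ to be the point of $S_i\setminus\{\zeta_{1,i}\}$ nearest to an arbitrary $\zeta_{1,i}$ only gives $\tilde S_i\cap D_1=\{0\}$. It says nothing about the neighbours of $e_i$, or of any other point of $\tilde S_i$ in $D_{R'}$. Nothing prevents $\zeta_{2,i}$ itself from having other points of $S_i$ at distance $o(\sigma_i)$. The reason is that Proposition \ref{6.1 in rnac}(ii) separates points only relative to their own concentration scales $Ru^{-(p_\tau-1)/2}$, not relative to $\sigma_i$.

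So your assertion that any two points of $\tilde S_i$ in a fixed ball are separated by a distance of order $1$ is unjustified, and everything you build on it at the second point fails:
\begin{itemize}
\item Proposition \ref{6.1 in rnac}(iii) no longer yields $w_i\le C\,d_{\mathbb H^n}(\xi,e_i)^{-2/(p_i-1)}$ on a fixed ball around $e_i$, so $e_i$ need not be an isolated blow-up point.
\item Consequently Proposition \ref{isolated=isolated simple}, Proposition \ref{pro 2.3 in jde} and the B\"ocher decomposition cannot be applied at $\bar e$.
\item The Harnack comparison of $w_i(0)$ with $w_i(e_i)$ is unavailable.
\item The singular set of the limit $h$ need not be discrete, yet discreteness is what you need to get $c_2>0$ and $k\ge0$, hence $A>0$.
\end{itemize}
Only the part of your argument concerning the point $0$ survives, and by itself it does not give a positive regular part at $0$.

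The missing idea is the localization step the paper imports from \cite[Lemma 2.1]{Niu Peng Xiong JFA}. Set $f_i(\zeta)=\min_{\eta\in S_i\setminus\{\zeta\}}d_{\mathbb H^n}(\zeta,\eta)$ and choose $\bar R_i\to\infty$ with $\bar R_if_i(\zeta_{i_j})\to0$. One then replaces the initial point by some nearby $\zeta_{i_1}$ such that $\sigma_i:=f_i(\zeta_{i_1})\to0$ and $f_i\ge\sigma_i/2$ on $S_i\cap D_{\bar R_i\sigma_i}(\zeta_{i_1})$. After rescaling, every point of $\tilde S_i$ in $D_{\bar R_i}$ has nearest-neighbour distance at least $1/2$. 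This is precisely the separation you assumed, now on balls of radius tending to infinity.

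A globally closest pair, as in Jin--Li--Xiong, is not a substitute here. In the bounded domain it may sit near $\partial D_2$, outside $D_{7/4}$, where Proposition \ref{6.1 in rnac}(iii) is unavailable.

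With this selection inserted, the rest of your argument follows the paper's Steps (1)--(3). Your explicit split into $a_i(\zeta_{1,i})\ge d$ and $a_i(\zeta_{1,i})<d$, when checking the hypotheses of Proposition \ref{isolated=isolated simple}, is if anything more careful than the paper's single displayed inequality.
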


                \begin{proof}
	The idea is similar to that of
	\cite[Proposition 5.2]{Jin Li Xiong}, but
	\cite[Lemma 2.1]{Niu Peng Xiong JFA} has to be used since
	our equation is defined in a bounded domain.

	Suppose the contrary, for some $\varepsilon$, $R$ and $d>0$, there exist sequence $\left\{p_{\tau_i}\right\}$ and nonnegative potentials 
				$a_i \rightarrow a$ in $C^4\left(D_3\right)$ with $\left\|a_i\right\|_{C^4\left(D_3\right)} \leq A_0$, 
				satisfying the assumptions for $a$, and a sequence of corresponding solutions $u_i$ such that
				$$
				\lim _{i \rightarrow \infty} \inf _{j \neq l} d_{\mathbb{H}^n}(\zeta_{i_j}, \zeta_{i_l})=0.
				$$
                where $\zeta_{i_j}$, $\zeta_{i_l} \in S_i \cap D_{3 / 2}$ associated to $u_i$ defined in Proposition \ref{6.1 in rnac}.

                Upon passing to a subsequence, we assume $\zeta_{i_j}, \zeta_{i_l} \rightarrow \bar{\zeta} \in D_{3 / 2}$. Define $f_i(\zeta): S_i \rightarrow$ $(0, \infty)$ by $f_i(\zeta)=\min _{\eta \in S_i \backslash\{\zeta\}}\|\eta^{-1}\circ\zeta\|_{\mathbb{H}^n}$. Let $\bar R_i \rightarrow \infty$ with $\bar R_i f_i\left(\zeta_{i_j}\right) \rightarrow 0$ as $i \rightarrow \infty$. By Lemma 2.1 in \cite{Niu Peng Xiong JFA}, one can find $\zeta_{i_1} \in S_i \cap D_{2\bar R_i f_i(\zeta_{i_ j})}(\zeta_{i_j})$ satisfying

$$
f_i\left(\zeta_{i_1}\right) \leq\left(2 \bar R_i+1\right) f_i\left(\zeta_{i_j}\right) \quad \text { and } \quad \min _{\zeta \in S_i \cap D_{\bar R_i f_i(\zeta_{i_1})(\zeta_{i_1})}} f_i(\zeta) \geq \frac{1}{2} f_i(\zeta_{i_1}) .
$$
	Let
	\[
	\sigma_i:=f_i(\zeta_{i_1})
	\]
	and choose $\zeta_{i_2}\in S_i$ such that
	\[
	d_{\mathbb H^n}(\zeta_{i_1},\zeta_{i_2})
	=
	\sigma_i.
	\]
	Define
	\[
	w_i(\xi)
	:=
	\sigma_i^{\frac{2}{p_i-1}}
	u_i\bigl(
	\zeta_{i_1}\circ\delta_{\sigma_i}\xi
	\bigr).
	\]
	Set
	\[
	\xi_i
	:=
	\delta_{\sigma_i^{-1}}
	\bigl(
	\zeta_{i_1}^{-1}\circ\zeta_{i_2}
	\bigr).
	\]
	Then
	\[
	\|\xi_i\|_{\mathbb H^n}=1.
	\]
	After passing to a subsequence,
	\[
	\xi_i\rightarrow\bar\xi,
	\qquad
	\|\bar\xi\|_{\mathbb H^n}=1.
	\]

	The rest of the proof is divided into three steps:

	\begin{enumerate}
		\item[(1)] Prove that $0$ and
		$\xi_i\to\bar\xi$ are two isolated blow-up points of
		$\{w_i\}$.

		\item[(2)] By Proposition
		\ref{isolated=isolated simple}, after passing to a subsequence,
		$0$ and $\xi_i\to\bar\xi$ have to be isolated simple
		blow-up points of $\{w_i\}$.

		\item[(3)] Since $w_i(0)w_i$ tends to a function with at least
		two poles, we can derive a contradiction by the Pohozaev
		identity.
	\end{enumerate}

	For Steps (1) and (3), the proof is the same as
	that of \cite[Proposition 5.2]{Jin Li Xiong}, with the
	localization property above replacing the global closest-pair
	property there, and with the Heisenberg left translations and
	dilations replacing the Euclidean ones.

	For Step (2), let
	\[
	\bar a_i(\xi)
	:=
	\sigma_i^2
	a_i\bigl(
	\zeta_{i_1}\circ\delta_{\sigma_i}\xi
	\bigr)
	\]
	and verify the assumptions in Proposition
	\ref{isolated=isolated simple}. We only show it if $n=3$;
	the other cases are similar.

	By the homogeneity of the sub-Laplacian,
	\[
	\bar a_i(0)
	=
	\sigma_i^2a_i(\zeta_{i_1}),
	\qquad
	\Delta_{\mathbb H^n}\bar a_i(0)
	=
	\sigma_i^4
	\Delta_{\mathbb H^n}a_i(\zeta_{i_1}).
	\]
	By the assumptions on $a_i$, for large $i$, we have
	\[
	\begin{aligned}
		&\quad
		\frac{
		\bar a_i(0)
		w_i(0)^{\frac{2(Q-4)}{Q-2}}
		+
		\frac1{2n}
		\Delta_{\mathbb H^n}\bar a_i(0)
		w_i(0)^{\frac{2(Q-6)}{Q-2}}
		}{
		\|\bar a_i\|_{D_{1/2}}
		}\\
		&\geq
		w_i(0)^{\frac{2(Q-6)}{Q-2}}
		\|a_i\|_{D_2}^{-1}
		\left\{
		a_i(\zeta_{i_1})
		w_i(0)^{\frac4{Q-2}}
		\sigma_i^{-2}
		+
		\frac1{2n}
		\Delta_{\mathbb H^n}a_i(\zeta_{i_1})
		\right\}\\
		&\geq
		w_i(0)^{\frac{2(Q-6)}{Q-2}}
		\|a_i\|_{D_2}^{-1}
		\frac{\gamma}{2n}.
	\end{aligned}
	\]
	Since
	\[
	\frac{
		w_i(0)^{\frac{2(Q-6)}{Q-2}}
		\|a_i\|_{D_2}^{-1}
		\frac{\gamma}{2n}
		}{
		\ln w_i(0)
		}
	\longrightarrow\infty
	\qquad\text{if }n=3,
	\]
	it follows from item (2) of Proposition
	\ref{isolated=isolated simple} that $0$ is an isolated simple
	blow-up point of $\{w_i\}$.

	Similarly, after translating $\xi_i$ to the origin, one can show
	that $\xi_i\to\bar\xi$ is another isolated simple blow-up point
	of $\{w_i\}$.

	Therefore, Step (2) follows, and Steps (1) and
	(3) give the desired contradiction. This completes the
	proof of Proposition \ref{pro 6.3 in rnac}.
\end{proof}

			After passing to a subsequence, if $\{u_i\}$ stays bounded in $L^{\infty}(\Omega)$, then sub-elliptic estimates further imply that it remains bounded in $C^{2, \alpha}$ for $0<\alpha<1$. Then, we prove our main theorems.
			
			\begin{proof}[Proof of Theorem \ref{main thm 3}]
				We first prove that $\|u\|_{L^{\infty}\left(D_{5 / 4}\right)} \leq C$. Suppose the contrary, that is, there exists a sequence of solutions $u_i$ of \eqref{final equation} satisfying all the hypotheses of the corresponding theorem
                such that
            $$
            \|u_i\|_{L^\infty(D_{5/4})}\to\infty.
            $$

            For any fixed $\varepsilon>0$ sufficiently small and $R \gg 1$, by Proposition \ref{pro 6.3 in rnac} the set $S_i$ associated to $u_i$ defined by Proposition \ref{6.1 in rnac} only consists of finite many points in $D_{3 / 2}$ with a uniform positive lower bound of distances between each two points, if $S_i \cap D_{3 / 2}$ has points more than 1. By the contradiction assumption $\left\|u_i\right\|_{L^{\infty}\left(D_{5 / 4}\right)} \rightarrow \infty$ and Proposition \ref{6.1 in rnac}, $S_i \cap D_{11 / 8}$ is not empty and has only isolated blow-up points of $u_i$ after passing to a subsequence. By Proposition \ref{isolated=isolated simple}, these isolated blow-up points have to be isolated simple blow-up points. Suppose that $\xi_i \rightarrow \bar{\xi} \in \bar{D}_{11 / 8}$ is an isolated simple blow-up point of $u_i$. By Proposition \ref{pro 2.3 in jde}, we have
				$$
				\left|u_i\left(\xi_i\right)^2 \int_{\partial D_r(\xi_i)}\mathcal{D}\left(\xi_i^{-1}\circ \xi, u_i, \nabla_{\mathbb{H}^{n}} u_i\right)\right| \leq C(r).
				$$
				
				On the other hand, by the assumption of $a$ and Proposition \ref{pro 5.1 in imrn} we have
				$$
				\liminf _{i \rightarrow \infty} u_i\left(\xi_i\right)^2 \int_{\partial D_r(\xi_i)}\mathcal{D}\left(\xi_i^{-1}\circ \xi, u_i, \nabla_{\mathbb{H}^{n}} u_i	\right)=\infty \quad\text { for some small } r>0,
				$$
				if $n \geq 1$. Hence, we obtain a contradiction and thus $\|u\|_{L^{\infty}\left(D_{5 / 4}\right)} \leq C$. The theorem then follows from interior estimates of solutions.
			\end{proof}
			
			\begin{proof}[Proof of Theorem \ref{main thm 2}]
			 For any fixed $\varepsilon>0$ sufficiently small and $R \gg 1$, let $S_i$ be the set associated to $u_i$ defined by Proposition \ref{6.1 in rnac}.
				
				If $n= 2$, by Proposition \ref{pro 6.3 in rnac} the set $S_i$ only consists of finite many points in $D_{3 / 2}$. Since $u_i\left(\xi_i\right) \rightarrow \infty$ and 
				$\xi_i \rightarrow \bar{\xi}$, by item (iii) of Proposition \ref{6.1 in rnac}, after passing to a subsequence, there exists $\xi_i^{\prime} \in S_i$ such that $\xi_i' \rightarrow \bar{\xi}$ 
				is an isolated blow-up point of $u_i$. By Proposition \ref{isolated=isolated simple}, it has to be an isolated simple blow-up point. By Proposition \ref{pro 2.3 in jde}, 
				we have
				$$
				\left|u_i\left(\xi_i^{\prime}\right)^2 \int_{\partial D_r(\xi_i^\prime)}\mathcal{D}\left({\xi_i^{\prime}}^{-1}\circ \xi, u_i, \nabla_{\mathbb{H}^{n}} u_i\right)\right| \leq C(r).
				$$
				By Proposition \ref{pro 5.1 in imrn}, this proves the theorem for $n=2$.

				If $n \geq 3$, suppose the contrary that for some subsequence, which we still denote as $i$,
				\be\label{80 in imrn}
				\begin{aligned}
					& a_i(\xi_i) u_i(\xi_i)^{\frac{4}{Q-2 }}+\frac{1}{2 n} \Delta_{\mathbb{H}^n} a_i(\xi_i)  
					\geq & \frac{1}{|o(1)|} \begin{cases}u_i\left(\xi_i\right)^{-\frac{4}{Q-2 }} \ln u_i\left(\xi_i\right) & \text { for } n=3, \\
						u_i\left(\xi_i\right)^{-\frac{4}{Q-2 }} & \text { for } n\geq 4. \end{cases}
				\end{aligned}
			\ee

				Let $\mu_i=\operatorname{dist}_{\mathbb{H}^n}\left\{\xi_i, S_i \backslash\left\{\xi_i\right\}\right\}$ and
				
				$$
				\Phi_i(\xi)=\mu_i^{(Q-2 ) / 2} u_i\left(\xi_i\circ \delta_{\mu_i} \xi\right).
				$$
				If $\xi_i \notin S_i$, we have $u_i\left(\xi_i\right) \leq C \mu_i^{-(Q-2) / 2}$. 
				Hence, $\Phi_i(0) \leq C<\infty$ and $\mu_i \rightarrow 0$. Since $\max _{D_{\bar{d}}\left(\xi_i\right)} u_i(\xi) \leq \bar{b} u_i\left(\xi_i\right)$, we conclude that $\Phi_i(\xi) \leq C \bar{b}$ for all $|\xi| \leq \bar{d} / \mu_i$. 

                By the interior subelliptic estimates, after passing to a
subsequence, we have
\[
\Phi_i\rightarrow\Phi
\qquad\text{in }C_{\mathrm{loc}}^2(\mathbb H^n),
\]
where \(\Phi\) is a nontrivial nonnegative entire solution of the
critical equation.
Hence, by the Jerison--Lee classification theorem,
\[
\Phi=\Lambda_{\xi_0,\lambda}
\]
for some \(\xi_0\in\mathbb H^n\) and \(\lambda>0\).
                
		Note that the limiting function has only one critical point. Suppose $\zeta_i \in S_i$ satisfying $d_{\mathbb{H}^n}(\xi_i,\zeta_i)=\mu_i$. Since both $\xi_i$ and $\zeta_i$ are local maximum points of $u_i$, $\nabla_{\mathbb{H}^n} \Phi_i(0)=0$, $\partial_t \Phi_i(0)=0$ and, after passing to subsequence,
		\begin{equation*}
			\begin{aligned}
			\delta_{{\mu_i}^{-1}}(\xi_i^{-1}\circ \zeta_i) \rightarrow \bar{\xi} \quad& \text { with }|\bar{\xi}|=1, \\ \nabla_{\mathbb{H}^n} \Phi_i(\delta_{{\mu_i}^{-1}}(\xi_i^{-1}\circ \zeta_i))=0,& \quad \partial_t \Phi_i(\delta_{{\mu_i}^{-1}}(\xi_i^{-1}\circ \zeta_i))=0.
		\end{aligned}
		\end{equation*}

		We obtain a contradiction. Hence, $\xi_i \in S_i$. Therefore, 0 is an isolated blow-up point of $\Phi_i$. By the assumptions of $a_i$, the contradiction assumption \eqref{80 in imrn} and Proposition \ref{isolated=isolated simple}, 0 is an isolated simple blow-up point. Making use of Propositions \ref{pro 2.3 in jde} and \ref{pro 5.1 in imrn}, we obtain a contradiction again. Therefore, we complete the proof.
			\end{proof}

		\end{document}